
\documentclass[12pt,amstex]{amsart}

\usepackage[top=25mm, bottom=25mm, left=30mm, right=30mm]{geometry}

\usepackage{mathptmx}%×ÖÌå
\usepackage{mathrsfs}

\usepackage{verbatim}
\usepackage{url}
\usepackage[all]{xy}
\usepackage{color}

\usepackage[colorlinks=true,citecolor=blue]{hyperref}%ÒýÓúͶ¨ÀíÑÕÉ«

\usepackage{stmaryrd}
\usepackage{epsfig}
\usepackage{amsmath}
\usepackage{amssymb}

\usepackage{amscd}
\usepackage{graphicx}
\usepackage{pstricks}
\usepackage{tocvsec2}

\newtheorem{prob}{Problem}
\newtheorem{conj}{Conjecture}
\newtheorem{ques}{Question}

\theoremstyle{plain}
\newtheorem{thm}{Theorem}[section]
\newtheorem{lem}[thm]{Lemma}

\newtheorem{prop}[thm]{Proposition}
\newtheorem{cor}[thm]{Corollary}

\theoremstyle{definition}
\newtheorem{de}[thm]{Definition}
\newtheorem{exam}[thm]{Example}
\theoremstyle{remark}
\newtheorem{rem}[thm]{Remark}

\numberwithin{equation}{section}

\def \N {\mathbb N}
\def \C {\mathbb C}
\def \Z {\mathbb Z}
\def \R {\mathbb R}
\def \E {\mathbb{E}}

\def \A {\mathcal{A}}
\def \B {\mathcal{B}}
\def \F {\mathcal{F}}

\def \I {\mathcal{I}}

\def \X {\mathcal{X}}
\def \Y {\mathcal{Y}}
\def \ZZ {\mathcal{Z}}
\def \O {\mathcal{O}}

\def \id {{\rm id}}
\def \h {\hat }

\def \HK {\interleave}

\def \a {\alpha }

\def \ep {\epsilon}
\def \d {\delta}
\def \D {\Delta}

\def\w {\omega}

\begin{document}
\title{Polynomial Furstenberg joinings and its applications}

\author{Wen Huang}
\author{Song Shao}
\author{Xiangdong Ye}

\address{School of Mathematical Sciences, University of Science and Technology of China, Hefei, Anhui, 230026, P.R. China}

\email{wenh@mail.ustc.edu.cn}
\email{songshao@ustc.edu.cn}
\email{yexd@ustc.edu.cn}

\subjclass[2010]{Primary: 37A05; 37A30}
%\keywords{regionally proximal relation; minimal flow}

\thanks{This research is supported by National Natural Science Foundation of China (12031019, 12090012, 11971455, 11731003).}

%\date{}

\begin{abstract}
%Furstenberg joining was introduced by Furstenberg in his study on ergodic behavior of diagonal measures and
%dynamical proof of Szemer\'edi theorem.
%To study the convergence of multiple ergodic averages along integral polynomials,
%the dynamical behavior of a measure preserving system along integral polynomials,
In this paper, a polynomial version of Furstenberg joining is introduced and its structure is investigated. Particularly, it is shown that if all  polynomials are non-linear, then almost every ergodic component of the joining is a direct product of an infinity-step pro-nilsystem and a Bernoulli system.

As applications, some new convergence theorems %along integral polynomials
are obtained. Particularly, it is proved that
if $T$ and $S$ are ergodic measure preserving transformations on a probability space $(X,\X,\mu)$ and $T$ has zero entropy, then for all $c_i\in \Z\setminus \{0\}$, all integral polynomials $p_j$ with $\deg {p_j}\ge 2$, and for all $f_i, g_j\in L^\infty(X,\mu)$, $1\le i\le m$ and
$1\le j\le d$,
$$\lim_{N\to\infty} \frac{1}{N}\sum_{n=0}^{N-1}f_1(T^{c_1n}x)\cdots f_m(T^{c_mn}x)\cdot g_1(S^{p_1(n)}x)\cdots g_d(S^{p_d(n)}x),$$
exists in $L^2(X,\mu)$,
which extends the recent result by Host and Frantzikinakis. % for $m=d=1$.

Moreover, it is shown that for an ergodic measure-preserving system $(X,\X,\mu,T)$, a non-linear integral polynomial $p$ and $f\in L^\infty(X,\mu)$,  the Furstenberg systems of $\big(f(T^{p(n)})x\big)_{n\in \Z}$ are ergodic and isomorphic to direct products of infinite-step pro-nilsystems and Bernoulli systems for almost every $x\in X$, which answers a problem  by Frantzikinakis.
\end{abstract}

\maketitle

%\markboth{ergodic}{S. Shao and X.D. Ye}

%\newpage

\tableofcontents \settocdepth{section}

%\newpage

\section{Introduction}

In the article, the set of integers (resp. natural numbers $\{1,2,\ldots\}$) is denoted by $\Z$ (resp. $\N$).
An {\em integral polynomial} is a polynomial taking integer values at the
integers. Two polynomials $p(n)$ and $q(n)$ are {\em essentially distinct} if $p(n)-q(n)$ is not a constant function.
If $n<m\in\Z$, then $[n,m]$ denotes the set $\{n,n+1,\ldots,m\}$.

\medskip

Throughout this paper, all probability spaces $(X,\X,\mu)$ considered are Lebesgue, meaning $X$ can be given the structure of
a compact metric space and $\X$ is its Borel $\sigma$-algebra. By a {\em measure-preserving system} (m.p.s. for short), we mean a quadruple
$(X,\X, \mu, T)$, where $(X,\X,\mu )$ is a Lebesgue probability space and $T: X \rightarrow X$ is an invertible measure preserving transformation.

\subsection{Furstenberg Joining}\
\medskip

The notion of joinings was introduced by Furstenberg in his seminal paper \cite{F67}, and now the theory of joinings is an
important part of ergodic theory \cite{Glasner}. Some joinings play a key role in the study of multiple recurrence and
multiple ergodic averages. To give a dynamical proof of the well-known theorem of Szemer\'{e}di \cite{Sz}, Furstenberg proved the following multiple recurrence theorem \cite{F77}, which says that for a m.p.s. $(X,\X,\mu,T)$, $d \in \N$ and  $A\in \mathcal{X}$ with positive measure, there is some $n\in \N$ such that
	\begin{equation*}
	\mu(A\cap T^{-n}A\cap T^{-2n}A\cap \cdots \cap T^{-dn}A)>0.
	\end{equation*}
In the same paper, Furstenberg introduced the following measure (now called {\em Furstenberg joining}):
%\begin{equation}\label{a2}
%  \mu^{(d)}=\lim_{N\to \infty}\frac{1}{N}\sum_{n=1}^{N}(\tau_d)_*^n \mu_\Delta^d,
%\end{equation}
%where $d\in \N$, $\tau_d=T\times T^2\times \ldots \times T^d$ and $\mu_\Delta^d$ is the {\em diagonal measure} on $X^d$ defined by
%$$\int_{X^d}f_1(x_1)f_2(x_2)\ldots f_d(x_d)d \mu^d_\Delta= \int_X f_1(x)f_2(x)\ldots f_d(x)d \mu,$$
%for all continuous functions $f_1,\ldots,f_d$. Equivalently, $\mu^{(d)}$ is defined as follows:
\begin{equation}\label{a2}
  \mu^{(d)}(\prod_{i=1}^dA_i)=\lim_{N\to\infty} \frac{1}{N} \sum_{n=1}^N \mu(T^{-n}A_1\cap T^{-2n}A_2\cap \cdots \cap T^{-dn}A_d),
\end{equation}
for all $A_1,\ldots, A_d\in \X$. Note that the existence of the limit was proved in \cite{HK05, Z},
and in \cite{F77} $\mu^{(d)}$ was in fact defined as a limit of a subsequence.
It is easy to verify that $\mu^{(d)}$ is invariant under the actions of $T^{(d)}=T\times \cdots \times T$ ($d$ times) and $\tau_d=T\times T^2\times \cdots \times T^d$. Thus we have a m.p.s. $(X^d,\X^d,\mu^{(d)}, \tau_d )$.

In \cite{F77}, it was shown that
$\mu^{(d)}$ is a conditional product measure in $X^d$ relative to $(Y_{d-1}, \mu_{d-1})$, where $\{(Y_n,\mu_n)\}_n$ is the distal sequence of $(X,\X,\mu,T)$ (\cite{F}). That is,
\begin{equation}\label{a3}
  \mu^{(d)}=\int_{Y_{d-1}^d}\mu_{y_1}\times \mu_{y_2}\times \cdots \times \mu_{y_d}d \mu_{d-1}^{(d)} (y_1,y_2,\ldots, y_d),
\end{equation}
where $\mu=\int \mu_{y}\ d\mu_{d-1}(y)$ is the disintegration of $\mu$ with respect to $\mu_{d-1}$, and
$\mu_{d-1}^{(d)}$ is defined as (\ref{a2}) related to $Y_{d-1}$.
This leads to the notion of ``characteristic factors'', which was first explicitly defined by Furstenberg and Weiss in \cite{FW96}. Let $(X,\X,\mu, T)$ be a m.p.s. and $(Y,\Y,\nu, T)$ be a factor of $X$. For $d\in \N$, we say that $Y$ is a {\em characteristic factor of $X$ for $\tau_d$} if for all $f_1,\ldots,f_d\in L^\infty(X,\mu)$,
		\begin{equation*}
		\begin{split}
		\frac{1}{N}\sum_{n=0}^{N-1} \prod_{i=1}^d f_i(T^{in}x)- \frac{1}{N}\sum_{n=0}^{N-1} \prod_{i=1}^d \E(f_i|\Y)(T^{in}x) \to 0,\ N\to\infty
        %\frac{1}{N}\sum_{n=0}^{N-1} & f_1(T^{n}x) f_2(T^{2n}x)\cdots f_d(T^{dn}x)\\
		%-  &\frac{1}{N}\sum_{n=0}^{N-1} \E(f_1|\Y)(T^{n}x) \E(f_2|\Y)(T^{2n}x)\cdots \E(f_d|\Y)(T^{dn}x)\to 0,\ N\to\infty
		\end{split}
		\end{equation*}
in $L^2(X,\mu)$. By (\ref{a3}) it is easy to see $Y_{d-1}$ is a characteristic factor of $X$ for $\tau_d$.
%equivalent to say that $Y_{d-1}$ is a characteristic factor for $\tau_d$ of $X$?.}
This enables one to give a reduction of the problem on the multiple recurrence to its distal factors.

In \cite{HK05} and in \cite{Z} the authors improved the above result significantly by showing that for an ergodic m.p.s. $(X,\X,\mu,T)$, its $(d-1)$-step pro-nilfactor is a characteristic factor for $\tau_d$, and using this result the authors proved that
$$\displaystyle \lim_{N\to\infty} \frac{1}{N} \sum_{n=0}^{N-1}  f_1(T^{n}x)  f_2(T^{2n}x) \cdots f_d(T^{dn}x)$$
exists in $L^2(X,\mu)$. % (see also \cite{Z}).

\subsection{Polynomial version of the Furstenberg joining}\
\medskip

Let $(X,\X,\mu,T)$ be a m.p.s. and let $\A=\{p_1,p_2,\ldots, p_d\}$ be a family of non-constant integral polynomials.
There is a natural way to generalize the Furstenberg joining to the polynomial setting as follows:
%\begin{equation}\label{}
%  \mu^{(d)}_\A=\lim_{N\to \infty}\frac{1}{N}\sum_{n=1}^{N}(T^{p_1(n)}\times \ldots \times T^{p_d(n)})_* \mu_\Delta^d.
%\end{equation}
%That is,
\begin{equation}\label{}
  \mu_\A^{(d)}(\prod_{i=1}^d A_i)=\lim_{N\to\infty} \frac{1}{N} \sum_{n=1}^N \mu(T^{-p_1(n)}A_1\cap T^{-p_2(n)}A_2\cap \cdots \cap T^{-p_d(n)}A_d),
\end{equation}
for all $A_1,\ldots, A_d\in \X$, where the limit exists \cite{HK051, Leibman05-Isr}. Of course, $\mu_\A^{(d)}$ is a probability measure on $X^d$ and it is $T^{(d)}$-invariant (in fact, it can be proved to be a self-joining). Unlike Furstenberg joining, we can not find some transformation $\tau$ related to $\A$ like $\tau_d$ such that $(X^d,\X^d,\mu_\A^{(d)}, \tau )$ becomes a m.p.s. Thus to study the dynamical behavior related to polynomials, we need to find another  way to generalize the Furstenberg joining.

\medskip

To overcome the difficulty, when studying the dynamical behavior concerning polynomials $\{p_1,\ldots,p_d\}$, we will consider  infinitely
many polynomials $p_i^{[k]}, 1\le i\le d, k\in \Z$ defined by $p_i^{[k]}(n)=p_i(n+k)-p_i(k), n\in\Z$ at the same time. For example, for $\A=\{n^2, n^3\}$, we define a measure $\mu^{(\infty)}_\A$ on $(X^2)^\Z$ as follows: for all $l\in \N$, $A_i^{(1)}, A_i^{(2)}\in \X$ and $-l\le i\le l$, put
\begin{equation*}
  \begin{split}
    & \mu^{ (\infty)}_{\A}\Big((\prod_{j=-\infty}^{-l-1} X^2)\times \prod_{i=-l}^l (A^{(1)}_i\times A^{(2)}_i)\times (\prod_{j=l+1}^\infty X^2)\Big)\\
&\quad \quad \quad \quad \quad \quad =\lim_{N\to\infty} \frac{1}{N} \sum_{n=1}^N \mu\Big(\bigcap_{i=-l}^l (T^{-(n+i)^2}A^{(1)}_i\cap T^{-(n+i)^3}A^{(2)}_i)\Big),
   \end{split}
\end{equation*}
where the  existence of the limit is guaranteed by \cite{HK051, Leibman05-Isr}.

Similarly, for $\A=\{p_1,\ldots, p_d\}$, we define a measure $\mu^{(\infty)}_\A$ on $(X^d)^\Z$. This measure
$\mu^{(\infty)}_\A$ is invariant under the product map $T^{\infty}=\cdots \times T^{(d)}\times T^{(d)}\times \cdots$ and the shift map $\sigma: (X^d)^\Z\rightarrow (X^d)^\Z$. %, where $T^{(d)}=T\times T\times \cdots \times T$ ($d$-times).
Thus for a given m.p.s. $(X,\X,\mu,T)$ and $\A=\{p_1,\ldots, p_d\}$, we associate two m.p.s. $$\big((X^d)^\Z,(\X^d)^\Z, \mu_\A^{(\infty)},  \sigma \big)\ \text{and}\ \big((X^d)^\Z,(\X^d)^\Z, \mu_\A^{(\infty)},
\langle T^\infty, \sigma\rangle \big),$$
where $\langle T^\infty, \sigma\rangle$ is the group generated by $\sigma$ and $T^\infty$. The first one is a $\Z$-system and the second one
is a $\Z^2$-system, as $\sigma\circ T^\infty=T^\infty\circ \sigma$.
The measure $\mu^{(\infty)}_\A$ is a joining on $(X^d)^\Z$, and we call it an {\em $\infty$-joining with respect to $\A$}, see Section \ref{section-furstenberg-joining} for detailed discussions. In this paper, we will focus on the dynamical properties of this joining and give its applications.

Note that in a companion paper by the same authors \cite{HSY22-1}, for a given topological system $(X,T)$ and $\A=\{p_1,\ldots,p_d\}$, we associate two topological dynamical systems $(W_x^\A,\sigma)$, and $(M_\infty(X,\A), \langle T^{\infty},\sigma\rangle)$. It is proved that if $(X,T)$ is minimal, then $(M_\infty(X,\A), \langle T^{\infty},\sigma\rangle)$ is transitive and has dense minimal points, which can be used to answer some open question.

\subsection{Main results}\
\medskip

Let $(X,\X,\mu, T)$ be an ergodic m.p.s. and let $\A=\{p_1,\ldots, p_d\}$ be a family of non-constant integral polynomials. First we will study some basic properties of  $((X^d)^\Z,(\X^d)^\Z, \mu_\A^{(\infty)}, \sigma)$ and $((X^d)^\Z,(\X^d)^\Z, \mu_\A^{(\infty)},
\langle T^\infty, \sigma\rangle)$.  It is shown that: % We will show that:

\begin{itemize}
  \item[$\blacklozenge$] $\big((X^d)^\Z,(\X^d)^\Z, \mu_\A^{(\infty)}, \langle T^\infty, \sigma\rangle \big)$ is ergodic (Theorem \ref{thm-ergodic-measures}-(1)).
  \item[$\blacklozenge$]One can characterize the ergodic decomposition of $\mu_\A^{(\infty)}$ under $\sigma$
  %: $\displaystyle \mu_\A ^{(\infty)}=\int_X \mu_{\A, x}^{(\infty)} d\mu(x)$
  (Theorem \ref{thm-ergodic-measures}-(2), (3)).
  \item[$\blacklozenge$] If the degree of each polynomial  in $\A$ is greater than 1, then for $\mu$ a.e. $x\in X$, $\big((X^d)^\Z, (\X^d)^\Z, {\mu}^{(\infty)}_{\A,x}, \sigma \big)$ is a direct product of an infinity-step pro-nilsystem
      %$\big((Z_\infty^d)^\Z, (\ZZ_\infty ^d)^\Z, {(\mu_\infty)}^{(\infty)}_{\A,x}, \sigma\big)$
      and a Bernoulli system (Theorem \ref{measure-like}). % $((U^d)^\Z,({\mathcal U}^d)^\Z, (\rho^d)^\Z,\sigma)$ (Theorem \ref{measure-like}).
   \item[$\blacklozenge$] If $(X,T)$ is a topological system, $\mu$ is an invariant Borel probability measure on $X$ with full support,
   then $\mu_\A^{(\infty)}$ is supported on $(N_\infty(X,\A), \langle T^{\infty},\sigma\rangle)$ (Theorem \ref{support}).
\end{itemize}

Note that $(N_\infty(X,\A), \langle T^{\infty},\sigma\rangle)$ is an equivalent form of $(M_\infty(X,\A), \langle T^{\infty},\sigma\rangle)$,
see \cite[Section 4]{HSY22-1} for details.

\medskip

Also we will use $\big((X^d)^\Z,(\X^d)^\Z, \mu_\A^{(\infty)}, \sigma\big)$ to study the convergence of multiple averages along polynomials.
Particularly it is proved that %For example, we show
\begin{itemize}
  \item[$\blacktriangledown$] Let $T$ and $S$ be ergodic measure preserving transformations on a probability space $(X,\X,\mu)$ and with $T$  having zero entropy. Let $c_1,\ldots, c_m\in \Z\setminus \{0\}$ and $p_i$ be an integral polynomial with $\deg {p_i}\ge 2, 1\le i\le d$. Then for all $f_1,\ldots, f_m, g_1,\ldots, g_d\in L^\infty(X,\mu)$, %the limit
\begin{equation*}
  \lim_{N\to\infty} \frac{1}{N}\sum_{n=0}^{N-1} f_1(T^{c_1n}x)\cdots f_m(T^{c_mn}x)\cdot g_1(S^{p_1(n)}x)\cdots g_d(S^{p_d(n)}x)
\end{equation*}
exists in $L^2(X,\mu)$ (Theorem \ref{thm-mean}).
 % \item[$\blacktriangledown$] Let $T,S$ be ergodic measure preserving transformations on a probability space $(X,\X,\mu)$ such that $T$ has zero entropy and $S$ is distal. Let $p_1,\ldots,p_d$ be integral polynomials with $\deg {p_i}\ge 2, 1\le i\le d$. Then for all $f, g_1,\ldots, g_d\in L^\infty(X,\mu)$, %the averages
%\begin{equation*}
%  \frac{1}{N}\sum_{n=0}^{N-1} T^{n}f\cdot S^{p_1(n)}g_1\cdots S^{p_d(n)}g_d
%\end{equation*}
%converge almost surely (Theorem \ref{thm-pw}).
\end{itemize}

When $m=d=1$, the result above was given in \cite{FranHost21}. The main point here is that $T$ and $S$ do not satisfy any commutativity assumptions.

%Note that in a forthcoming paper by the same authors \cite{HSY22-2}, for an ergodic and distal m.p.s. $(X,\X,\mu, T)$, $d\in \N$,
%integral polynomials $p_1, p_2, \cdots, d_d$, and $f_1, \cdots, f_d \in L^{\infty}(X, \mu)$, the pointwise convergence of the averages
%$$\frac 1 N\sum_{n=0}^{N-1}f_1(T^{p_1(n)}x) f_2(T^{p_2(n)}x)\cdots f_d(T^{p_d(n)}x)$$  is established.
%If $deg(p_i)\ge 2$, $1\le i\le d$ the limit
%of the averages is clear by the theory we develop in this paper.

\medskip

 Moreover, we will compare the system defined in this paper with Furstenberg systems of sequences defined in \cite{FranHost18,Fran22} (see Section \ref{section-Furst-systems} for definitions), and show the following result, which answers Problem 1 in \cite{Fran22}.
\begin{itemize}
  \item[$\blacktriangle$] Let $(X,\X,\mu, T)$ be an ergodic m.p.s., $p$ be a non-linear integral polynomial, and $f\in L^\infty(X,\mu)$.
  Then for any strictly increasing sequence of positive integers $\{N_k\}_{k\in \N}$ there is a subsequence $\{N'_k\}_{k\in \N}$  such that for a.e. $x\in X$ the Furstenberg systems of $\big(f(T^{p(n)})x\big)_{n\in \Z}$ along $\{N'_k\}_{k\in \N}$ are ergodic and isomorphic to the direct products of infinite-step
  pro-nilsystems and Bernoulli systems (Theorem \ref{thm-answer-Fran}).
\end{itemize}

\subsection{Organization of the paper}

We organize the paper as follows. In Section \ref{section-pre} we state some necessary notions and some known facts used in
the paper. In Section \ref{section-furstenberg-joining}, we define $\infty$-joinings $\mu^{(\infty)}_\A$ and $\widetilde{\mu}^{(\infty)}_\A$
with respect to a given m.p.s. and a family of integral polynomials, and explain them through certain examples. In Section \ref{section-reduing-nil},
we study the $\sigma$-algebra of invariant subsets under the shift map, and show how to relate it to the one in pro-nilfactors. In Section \ref{section-nilsystem} and Section \ref{section-ergodic-decom}, we give the ergodic decomposition theorem of $\mu_\A^{(\infty)}$ under the shift map $\sigma$. In the next two sections, we derive some applications of the theory we have developed. Namely, in Section \ref{section-applications}, we provide some new convergence theorems along integral polynomials, and in Section \ref{section-Furst-systems}, we compare our notion with Furstenberg systems related to sequences.
% and give a positive answer to a problem in \cite{Fran22}.
In the last section, we present some questions.

\bigskip

\noindent{\bf Acknowledgments:} We thank Hanfeng Li for useful discussions concerning Theorem~ \ref{top-model}.
Thank Hui Xu for very careful readings and corrections, and Thank Nikos Frantzikinakis for useful comments.

\section{Preliminaries}\label{section-pre}

In this section we give some necessary notions and some known facts used in the paper.
Note that, instead of just considering a single transformation $T$,
sometimes we study commuting
transformations $T_1$, $\ldots$ , $T_k$ of $X$. We only recall some basic
definitions and properties of systems for
one transformation, and the extensions to the general case are
straightforward.

\subsection{Some notion in ergodic theory and topological dynamics}

\subsubsection{Measurable systems}

Recall that in this paper we consider invertible m.p.s. \linebreak $(X,\X, \mu, T)$ defined on a Lebesgue probability space.
%A {\em measure preserving system} (m.p.s. for short) is a quadruple $(X,\X, \mu, T)$, where $(X,\X,\mu )$ is a Lebesgue probability space and $T : X %\rightarrow X$ is an invertible measure preserving transformation.
Thus $X$ can be given the structure of a compact metric space and $\X$ is its Borel $\sigma$-algebra.

For a m.p.s. $(X,\X,\mu,T)$, the {\em associated unitary operator (Koopman operator) of $T$}
 is $U_T: L^2(X,\mu)\rightarrow L^2(X,\mu), f\mapsto f\circ T$. In the sequel, we usually denote $U_Tf$ by $Tf$.

\medskip

For a m.p.s. $(X,\X, \mu, T)$ we write $\I(X,\X,\mu, T)$ for the $\sigma$-algebra
$\{A\in \X : T^{-1}A = A\}$ consisting of invariant sets. Sometimes we will use $\I$ or $\I (T)$ for short.
A m.p.s. is {\em ergodic} if all
the $T$-invariant sets have measures either $0$ or $1$. $(X,\X, \mu,
T)$ is {\em weakly mixing} if the product system $(X\times X,
\X\times \X, \mu\times \mu, T\times T)$ is erdogic.

\medskip
Let $X, Y$ be spaces and $\phi: X \to Y$ be a map.  For $n \geq 2$ let
$$\phi^{(n)}=\underbrace{\phi\times \cdots \times \phi }_{\text{($n$ times)}}: X^n\rightarrow Y^n.$$
For a m.p.s. $(X,\X,\mu,T)$, we write $(X^n,\X^n,\mu^d, T^{(n)})$ for the $n$-fold product system $(X\times	\cdots \times X,\X\times\cdots \times \X, \mu\times \cdots\times \mu, T\times \cdots \times T)$.
The diagonal of
$X^n$ is $$\Delta_n(X)=\{x^{\otimes n}=(x,\ldots,x)\in X^n: x\in X\}.$$
When $n=2$ we write	$\Delta(X)=\Delta_2(X)$.

\medskip

A {\em Bernoulli system} has the form $(Y^\Z,\Y^\Z,\nu^\Z,\sigma)$, where $(Y,\Y,\nu)$ is a probability space, $\sigma$ is the shift %transformation
on $Y^\Z$, $\Y^\Z$ is the product $\sigma$-algebra of $Y^\Z$, and $\nu^\Z$ is the product measure.

\medskip

A {\em factor map} or {\em homomorphism} from  $(X,\X, \mu, T)$ to $(Y,\Y, \nu,
S)$ is a measurable map $\pi : X_0 \rightarrow  Y_0$, where $X_0\in \X$ is
$T$-invariant and $Y_0\in \Y$ is $S$-invariant, both of full measure, such that $\pi_*\mu \triangleq \mu\circ
\pi^{-1}=\nu$ and $S\circ \pi(x)=\pi\circ T(x)$ for $x\in X_0$. When
we have such a homomorphism we say that $(Y,\Y, \nu, S)$
is a {\em factor} of  $(X,\X, \mu , T)$. If the factor map
$\pi: X_0\rightarrow  Y_0$ can be chosen to be bijective, then we
say that $(X,\X, \mu, T)$ and $(Y,\Y, \nu, S)$ are {\em
(measure theoretically) isomorphic} (bijective maps on Lebesgue
spaces have measurable inverses), and denote it by $(X,\X, \mu, T)\cong (Y,\Y,\nu, S)$.

Let $(Y,\Y,\nu, S)$ be a factor of $(X,\X,\mu,T)$. A factor can be characterized
(modulo isomorphism) by $\pi^{-1}(\Y)$, which is a $T$-invariant
sub-$\sigma$-algebra of $\X$, and conversely any $T$-invariant
sub-$\sigma$-algebra of $\X$ defines a factor. By a classical abuse
of terminology we denote by the same letter the $\sigma$-algebra
$\Y$ and its inverse image by $\pi$, $\pi^{-1}(\Y)$. In other words, if $(Y,\Y, \nu,
S)$ is a factor of $(X,\X, \mu, T)$, we think of $\Y$ as a
sub-$\sigma$-algebra of $\X$. Thus we may regard $L^2(Y,\Y,\nu)$ as a closed subspace of $L^2(X,\X,\mu)$. When there is no danger of confusion, we may abuse the notation and denote the transformation $S$ on $Y$ by $T$.

\medskip

We say that $(X,\X, \mu, T)$ is an {\em inverse limit} of a sequence
of factors $\{(X_j,\X_j ,\mu_j, T)\}_{j\in \N}$ if $(\X_j)_{j\in\N}$ is an increasing
sequence of $T$-invariant sub-$\sigma$-algebras such that
$\bigvee_{j\in \N}\X_j=\X$ up to sets of measure zero.

\subsubsection{Topological dynamical systems}

%A {\em transformation} of a compact metric space X is a
%homeomorphism of X to itself. A {\em topological dynamical system}
%is a pair $(X, T)$, where $X$ is a compact metric space and $T : X
%\rightarrow  X$ is a transformation.
%We use $\rho (\cdot, \cdot)$ to denote the metric in $X$.
A {\em topological dynamical system} (t.d.s. for short) is a pair $(X, T)$,
where $X$ is a compact metric space and $T$ is a homeomorphism from $X$ to itself.
A t.d.s. $(X, T)$ is {\em transitive} if there exists
some point $x\in X$ whose orbit $\O(x,T)=\{T^nx: n\in \Z\}$ is dense
in $X$ and we call such a point a {\em transitive point}. A t.d.s. $(X,T)$
is {\em minimal} if the orbit of any point is dense in $X$. This
property is equivalent to saying that X is the only closed non-empty invariant subset in $X$.
\medskip

A {\em topological factor} of a t.d.s. $(X, T)$ is another t.d.s. $(Y, S)$ such that there exists a continuous and
onto map $\phi: X \rightarrow Y$ satisfying $S\circ \phi = \phi\circ
T$. In this case, $(X,T)$ is called an {\em topological extension } of $(Y,S)$.
The map $\phi$ is called a {\em topological factor map}.

%\subsubsection{} We also make use of a more general definition of a
%measurable or topological system. That is, instead of just a single
%transformation $T$, we consider commuting transformations
%$T_1,\cdots , T_k$ of $X$ or a countable abelian group of
%transformations. We summarize some basic definitions and properties
%of systems in the classical setting of one transformation.
%Extensions to the general case are straightforward.

\subsubsection{${\mathcal M}(X)$ and ${\mathcal M}_T(X)$}

For a t.d.s. $(X,T)$, denote by ${\mathcal M}(X)$ the set of all Borel
probability measures on $X$. Let ${\mathcal M}_T(X)=\{\mu\in {\mathcal M}(X):
T_*\mu=\mu\circ T^{-1}=\mu\}$. %be the set of all $T$-invariant Borel probability measures of $X$.
With the weak$^*$-topology, ${\mathcal M}(X)$ and ${\mathcal M}_T(X)$ are compact convex spaces. By the Krylov-Bogolioubov
theorem ${\mathcal M}_T(X)\neq \emptyset$. Denote by ${\mathcal M}^{erg}_T(X)$ the set of ergodic measures of $(X,T)$, then ${\mathcal M}^{erg}_T(X)$ is the set of extreme points of ${\mathcal M}_T(X)$ and one can use the Choquet representation theorem to express each member
of ${\mathcal M}_T(X)$ in terms of the ergodic members of ${\mathcal M}_T(X)$. That is, for each $\mu\in {\mathcal M}_T(X)$ there is a unique measure
$\tau$ on the Borel subsets of the compact space ${\mathcal M}_T(X)$ such that $\tau\big({\mathcal M}^{erg}_T(X)\big)=1$ and $\displaystyle \mu=\int_{{\mathcal M}^{erg}_T(X)}m d\tau (m)$,
which is called the {\em ergodic decomposition} of $\mu$.
In particular, ${\mathcal M}_T^{erg}(X)\neq \emptyset$.

A t.d.s. $(X,T)$ is called {\em uniquely ergodic} if
there is a unique $T$-invariant Borel probability measure on $X$. It is
called {\em strictly ergodic} if it is uniquely ergodic and minimal.

%\subsubsection{Topological distal systems}

%Let $(X,T)$ be a t.d.s. and $(x,y)\in X^2$. It is a {\em proximal}
%pair if there is a sequence $\{n_i\}$ in $\Z$ such that
%$\lim_{n\to \infty} T^{n_i} x =\lim_{n\to \infty} T^{ n_i} y$; and
%it is a {\em distal} pair if it is not proximal.
%A t.d.s. $(X,T)$ (with metric $\rho$) is called {\em topologically distal} if $\inf_{n\in \Z} \rho (T^nx,T^nx')>0$ whenever $x,x'\in X$ are distinct.

\subsection{Topological models}

\subsubsection{Topological models}
Let $(X,\X, \mu, T)$ be an ergodic m.p.s. We say that  $(\h{X}, \hat{T})$
is a {\em topological model} (or just a {\em model}) for $(X,\X,
\mu, T)$ if $(\h{X}, \h{T})$ is a t.d.s. and there exists an invariant
probability measure $\h{\mu}$ on the Borel $\sigma$-algebra
$\mathcal{B}(\h{X})$ such that the systems $(X,\X, \mu, T)$ and
$(\h{X}, \mathcal{B}(\h{X}), \h{\mu}, \h{T})$ are measure theoretically
isomorphic.

\medskip

The well-known Jewett-Krieger's theorem  \cite{Jewett, Krieger}
states that every ergodic system has a strictly ergodic model.

\subsubsection{Weiss's Theorem}
We say that $\h{\pi}: \h{X}\rightarrow \h{Y}$ is a {\em topological
model} for a factor map $\pi: (X,\X, \mu, T)\rightarrow (Y,\Y, \nu, S)$ if
$\h{\pi}$ is a topological factor map and there exist measure
theoretical isomorphisms $\phi$ and $\psi$ such that the diagram
\[
\begin{CD}
X @>{\phi}>> \h{X}\\
@V{\pi}VV      @VV{\h{\pi}}V\\
Y @>{\psi }>> \h{Y}
\end{CD}
\]
is commutative, i.e. $\h{\pi}\phi=\psi\pi$. Weiss \cite{Weiss85}
generalized Jewett-Krieger Theorem to the relative case.
Namely, he proved that if $\pi: (X,\X, \mu, T)\rightarrow (Y,\Y, \nu,
S)$ is a factor map with $(X,\X, \mu, T)$ ergodic and
$(\h{Y},\h{\Y}, \h{\nu}, \h{S})$ is a uniquely ergodic model for $(Y,\Y,
\nu, T)$, then there is a uniquely ergodic model $(\h{X}, \h{\X},
\h{\mu}, \h{T})$ for $(X,\X, \mu, T)$ and a factor map $\h{\pi}:
\h{X}\rightarrow \h{Y}$ which is a model for $\pi: X\rightarrow Y$.
We will refer this theorem as {\it Weiss's Theorem}.
We note that in \cite{Weiss85} Weiss pointed that the relative case holds
for commutative group actions.

\subsection{The Rohlin skew-product theorem}\
\medskip

Let $(Y,\Y,\nu, S)$ be a m.p.s. and $(U, {\mathcal U},\rho)$ a probability space. Let $\a: Y\rightarrow {\rm Aut} (U,\rho)$ be a measurable map ($\a$ is called a {\em cocycle}), where ${\rm Aut} (U,\rho)$ is the Polish group of all invertible measure-preserving transformations of $(U,{\mathcal U},\rho)$. We define the {\em skew-product system} $(Y\times U, \Y\times {\mathcal U}, \nu\times \rho, T_\a)$ as follows:
$$T_\a(y,u)=(Sy, \a(y)u), \quad \forall (y,u)\in Y\times U.$$
The following is the well known Rohlin skew-product theorem (see \cite[Theorem 3.18]{Glasner}).

\begin{thm}[Rohlin skew-product theorem]\label{Rohlin}
Let $\pi: (X,\X,\mu,T)\rightarrow (Y,\Y,\nu,S)$ be a factor map of ergodic m.p.s. Then $(X,\X,\mu,T)$ is isomorphic to a skew-product over $(Y,\Y,\nu,S)$. That is, there exist a probability space $(U, {\mathcal U},\rho)$ and a measurable cocycle $\a: Y\rightarrow {\rm Aut} (U,\rho)$ such that
$$(X,\X,\mu,T)\cong (Y\times U, \Y\times {\mathcal U}, \nu\times \rho, T_\a).$$
\end{thm}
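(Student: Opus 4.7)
The plan is to exhibit the isomorphism in three steps: disintegrate $\mu$ along the factor map $\pi$, trivialize the fibers via a jointly measurable family of isomorphisms with a universal fiber model $(U,\mathcal{U},\rho)$, and then read off the cocycle $\alpha$ as the conjugate of the fiberwise action of $T$.

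First, since $(X,\X,\mu)$ is a Lebesgue probability space, I would invoke the standard disintegration theorem to write $\mu=\int_Y \mu_y\,d\nu(y)$, where for $\nu$-a.e.\ $y\in Y$ the measure $\mu_y$ is a Borel probability measure concentrated on the fiber $\pi^{-1}(y)$. Because $T$ preserves $\mu$ and lifts $S$ through $\pi$, the essential uniqueness of the disintegration forces the equivariance relation $T_*\mu_y=\mu_{Sy}$ for $\nu$-a.e.\ $y$. In particular the restriction $T|_{\pi^{-1}(y)}\colon(\pi^{-1}(y),\mu_y)\to(\pi^{-1}(Sy),\mu_{Sy})$ is a measure-space isomorphism for $\nu$-a.e.\ $y$.

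Second, I would identify a single universal fiber model. The isomorphism class of a standard probability space is determined by the mass of its diffuse part together with the multiset of weights of its atoms. By the equivariance above, this isomorphism type is $S$-invariant; together with ergodicity of $(Y,\Y,\nu,S)$ this forces a single type $(U,\mathcal{U},\rho)$ to appear on a $\nu$-conull set. The key technical step, which I expect to be the main obstacle, is constructing a family of measure-space isomorphisms $\phi_y\colon(\pi^{-1}(y),\mu_y)\to(U,\mathcal{U},\rho)$ depending measurably on $y$ in the strong sense that the map $(y,x)\mapsto \phi_y(x)$ is jointly measurable. I would handle this by a selection argument: use the cumulative distribution function construction on the diffuse part, paired with a measurable enumeration of the atoms whose weights vary measurably in $y$. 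The existence of a Borel such trivialization on a $\nu$-conull set is the content of the classical von Neumann/Rohlin structure theorem for fibered Lebesgue spaces.

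Third, with such a trivialization in hand, define $\Phi\colon X\to Y\times U$ by $\Phi(x)=(\pi(x),\phi_{\pi(x)}(x))$. The joint measurability above and the construction of each $\phi_y$ make $\Phi$ a measurable bijection modulo null sets with $\Phi_*\mu=\nu\times\rho$. Transporting $T$ through $\Phi$ gives
\[
\Phi\circ T\circ\Phi^{-1}(y,u)=\bigl(Sy,\ \phi_{Sy}\circ T|_{\pi^{-1}(y)}\circ\phi_y^{-1}(u)\bigr),
\]
so setting $\alpha(y):=\phi_{Sy}\circ T|_{\pi^{-1}(y)}\circ\phi_y^{-1}$ defines an element of $\mathrm{Aut}(U,\rho)$ for $\nu$-a.e.\ $y$, and the system $(X,\X,\mu,T)$ is conjugated to the skew-product $(Y\times U,\Y\times\mathcal{U},\nu\times\rho,T_\alpha)$. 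It remains to verify that $y\mapsto\alpha(y)$ is measurable into the Polish group $\mathrm{Aut}(U,\rho)$; this follows from the joint measurability of $(y,x)\mapsto\phi_y(x)$ and of $T$, since the Borel structure on $\mathrm{Aut}(U,\rho)$ is generated by the evaluation maps $\beta\mapsto\beta_*\mathbf 1_A$ for $A\in\mathcal{U}$. Thus the crux of the whole argument is genuinely the measurable trivialization of fibers in the second step, and once that is in place the rest is bookkeeping.
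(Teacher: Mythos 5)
The paper does not prove this statement at all: it is quoted as a classical result with a citation to Glasner's book (Theorem 3.18 there), so there is no in-paper argument to compare against. Your outline is the standard textbook proof — disintegrate $\mu$ over $\nu$, use $T_*\mu_y=\mu_{Sy}$ and ergodicity of the base to fix a single fiber type $(U,\mathcal U,\rho)$, build a jointly measurable trivialization $\phi_y$ of the fibers, and read off the cocycle $\a(y)=\phi_{Sy}\circ T|_{\pi^{-1}(y)}\circ\phi_y^{-1}$ — and it is essentially correct, with the genuinely hard step (the measurable field of isomorphisms with the universal fiber) correctly identified and correctly delegated to the von Neumann--Rohlin structure theory of fibered Lebesgue spaces.
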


\subsection{Conditional expectations}

\subsubsection{Conditional expectations}
Let $\pi: (X,\X,\mu,T)\rightarrow (Y,\Y,\nu, T)$ be a factor map. As mentioned above, $\Y$ is also regarded as a $T$-invariant sub-$\sigma$-algebra of $\X$. For each $f\in
L^1(X, \mu)$, we write $\E(f|\Y)$, or $\E_\mu(f|\Y)$ if needed, for the
{\em conditional expectation} of $f$ with respect to $\Y$.
We will frequently make use of the
identities $$\int_X \E(f|\Y) \ d\mu = \int_X f \ d\mu \quad
\text{and}\quad T \E(f|\Y) = \E(Tf|\Y).$$ We say that a function $f\in L^2(X,\mu)$
is {\em orthogonal} to $\Y$, and we write $f \perp \Y$, when it has a zero
conditional expectation on $\Y$.

%If a function $f\in L^1(\mu)$ is measurable with respect to the factor $\Y$, we write $f \in L^1(Y, \Y, \nu)$.

%The conditional expectation $\E(f|\Y)$ is characterized as the unique $\Y$-measurable function in $L^2(Y,\Y, \nu)$ such that
%\begin{equation}
%    \int_Y g \E(f|\Y)d\nu = \int_X  g\circ \pi f d\mu
%\end{equation}
%for all $g\in L^2(Y,\Y, \nu)$.

\subsubsection{}
The {\em  disintegration of $\mu$ over $\nu$}, written as $\displaystyle \mu=\int_Y \mu_y\ d \nu(y)$,
is given by a measurable map
$y \mapsto \mu_y$ from $Y$ to the space of Borel probability measures on
$X$  such  that
\begin{equation}
    \E(f|\Y)(y)=\int_X f d\mu_y
\end{equation}
$\nu$-almost everywhere.

\subsection{Joinings}

\subsubsection{Joinings and conditional product measures}

The notions of joining and conditional product measure were introduced by Furstenberg in \cite{F67} and \cite{F77}.
Let $(X_i, \X_i, \mu_i,T_i), i\in \{1,2,\ldots, k\}$ be m.p.s. A Borel probability measure $\lambda$ on
$X=\prod_{i=1}^k X_i$ defines a {\em joining} of the measures on $X_i$ if
it is invariant under $\prod_{i=1}^k T_i=T_1\times \cdots \times T_k$ and maps onto
$\mu_j$ under the natural map $\prod_{i=1}^k X_i\rightarrow X_j, \forall j\in \{1,2,\ldots, k\}$. When $(X_1,\X_1,\mu_1, T_1)=(X_2,\X_2,\mu_2, T_2)=\cdots=(X_k, \X_k, \mu_k, T_k)$, we then say that
$\lambda$ is a {\em $k$-fold self-joining}. One can generalize the notion to infinitely many m.p.s. easily.

Let $(X_i, \X_i, \mu_i,T_i)$ be a m.p.s., $(Y_i,\Y_i, \nu_i,S_i)$ be a corresponding factor, and
$\pi_i:X_i\rightarrow Y_i$ be a factor map, $1\le i\le k$.
Let $\nu$ be a joining of the measures on $Y_i, i=1,\cdots, k$, and
let $\mu_i=\int \mu_{X_i, y_i}\ d\nu_i(y_i)$ represent the
disintegration of $\mu_i$ with respect to $\nu_i$. Let $\mu$ be a
measure on $X=\prod_i X_i$ defined by
\begin{equation}\label{}
    \mu=\int_Y \mu_{X_1,y_1}\times \mu_{X_2,y_2}\times \cdots \times
    \mu_{X_k,y_k}\ d\nu(y_1,y_2,\cdots,y_k).
\end{equation}
Then $\mu$ is called the {\em conditional product measure with
respect to $\nu$}.

Equivalently, $\mu$ is a conditional product measure relative to $\nu$
if and only if for all $k$-tuple $f_i\in L^\infty(X_i,\mu_i),
i=1,\ldots, k$
\begin{equation}
\begin{split}
\int_X
   &f_1(x_1)f_2(x_2)\cdots f_k(x_k)\ d\mu(x_1,x_2,\cdots,x_k)\\
   &\quad = \int_Y \E(f_1|\Y_1)(y_1)\E(f_2|\Y_2)(y_2)\cdots \E(f_{k}|\Y_k)(y_{k})\
   d\nu (y_1,y_2,\cdots,y_{k}).
\end{split}
\end{equation}

\subsubsection{Relatively independent joining}

Let $(X_1,\X_1,\mu_1,T_1), (X_2,\X_2,\mu_2,T_1)$ be m.p.s. and let
$(Y,\Y,\nu, S)$ be a common factor of them with $\pi_i: X_i\rightarrow Y$ for
$i = 1, 2$ the factor maps. Let $\mu_i=\int_Y \mu_{i,y}\ d\nu(y)$
represent the disintegration of $\mu_i$ with respect to $\nu$. Let
$\mu_1\times_{Y} \mu_2$ denote the measure defined by
$$\mu_1\times_{Y} \mu_2(A)=\int_Y \mu_{1,y}\times \mu_{2,y}(A)\ d\nu(y),$$
for all $A\in \X_1\times \X_2$. The system $(X_1\times X_2,
\X_1\times \X_2,\mu_1\times_Y \mu_2, T_1\times T_2)$ is called the
{\em relative product} of $X_1$ and $X_2$ with respect to $Y$ and is
denoted $X_1\times_{Y} X_2$. $\mu_1\times_{Y} \mu_2 $ is also
called {\em a relatively independent joining} of $X_1$ and $X_2$ over
$Y$.

\subsection{Furstenberg Joinings}\
%\medskip

\subsubsection{Furstenberg Joinings}
Let $T: X\rightarrow X$ be a map and $d\in \N$. Set
$$\tau_{\vec{a}}=T^{a_1}\times T^{a_2} \times \cdots \times T^{a_d},$$
where $\vec{a}=(a_1,\ldots, a_d)\in \Z^d$.

\begin{de}\label{de-Furstenberg-selfjoining}
Let $(X,\X,\mu, T)$ be a m.p.s. For $d\ge 1$ let $\mu^{(d)}_{\vec{a}}$ be the measure on $(X^d,\X^d)$ defined by
$$\int_{X^d} f_1(x_1)\cdots f_d(x_d) d\mu^{(d)}_{\vec{a}}(x_1, \ldots, x_d)=\lim_{N\to\infty} \frac{1}{N} \sum_{n=0}^{N-1}\int_X \prod_{j=1}^d f_j(T^{a_jn}x) d\mu(x)$$
for $f_i\in L^{\infty}(X,\mu)$, $1\le j\le d$, where the limits exists by \cite[Theorem 1.1]{HK05}.

We call $\mu^{(d)}_{\vec{a}}$ the {\em Furstenberg self-joining}. Clearly, it is invariant under $\tau_{\vec{a}}$ and $T^{(d)}$.
\end{de}

When in addition we assume that $X$ is a compact metric space and $T$ is a homeomorphism, it is easy to see that
\begin{equation*}
\frac {1}{N} \sum_{n=0}^{N-1} (\tau_{\vec{a}})_*^n \mu_\D^d\longrightarrow
\mu^{(d)}_{\vec{a}},\ N\to \infty, \quad \text{weak$^*$ in ${\mathcal M}(X^d)$},
\end{equation*}
where $\mu_\D^d$ is the diagonal measure on $X^d$ as defined in
\cite{F77}, i.e. it is defined on $X^d$ as follows
\begin{equation*}
    \int_{X^d} f_1(x_1)\cdots f_d(x_d)\ d \mu_\D^d(x_1,\ldots,x_d)=
    \int_X f_1(x)\cdots f_d(x)\ d\mu(x),
\end{equation*}
where $f_1,\ldots, f_d\in C(X)$.

\subsubsection{The ergodic decomposition of $\mu_{\vec{a}}^{(d)}$}

\begin{thm}\cite[Theorem D.]{HSY19}\label{thm-HSY}
Let $(X,\X,\mu,T)$ be an ergodic system and $d\in \N$. Let $a_1,\ldots, a_d$ be distinct non-zero integers. Then there exists a family $\{\mu^{(d)}_{\vec{a},x}\}_{x\in X}$ of Borel probability measures on $(X^d,\X^d)$ such that
\begin{enumerate}
  \item the disintegrations of $\mu_{\vec{a}}^{(d)}$ over $\mu$ is $\displaystyle \mu_{\vec{a}} ^{(d)}=\int_X \mu_{\vec{a}, x}^{(d)} d\mu(x),$

  \item for $\mu$ a.e. $x\in X$, $\mu^{(d)}_{\vec{a},x}$ is ergodic under $\tau_{\vec{a}}=T^{a_1}\times T^{a_2}\times \cdots \times T^{a_d}$,

  \item for all $f_1,\ldots, f_d\in L^\infty(X, \mu)$,
  \begin{equation*}%\label{ss}
\begin{split}
   \frac{1}{N} \sum_{n=0}^{N-1}\prod_{i=1}^df_i(T^{a_in}x) \overset{L^2(\mu)}\longrightarrow \int_{X^d} \prod_{i=1}^df_i(x_i)\
   d \mu^{(d)}_{\vec{a},x}(x_1,\ldots,x_{d}),\ N\to \infty.
\end{split}
\end{equation*}
%as $N\to \infty$, where the convergence is in $L^2(X, \mu)$.

%\frac{1}{N} \sum_{n=0}^{N-1}
  % &f_1(T^{a_1n}x)f_2(T^{a_2n}x)\cdots f_d(T^{a_dn}x)\\
  % &\quad \longrightarrow \int_{X^d} f_1(x_1)f_2(x_2)\cdots f_{d}(x_{d})\
  % d \mu^{(d)}_{\vec{a},x}(x_1,x_2,\ldots,x_{d})

 %\item  for $\mu$ a.e. $x\in X$, $(p_j)_*(\mu_x^{(d)})\ll \mu$ for $1\le j\le d$, where $p_j: X^d\rightarrow X$ is the projection to the $j$-th coordinate.
\end{enumerate}
\end{thm}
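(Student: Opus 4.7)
The plan is to construct the family $\{\mu_{\vec{a},x}^{(d)}\}$ directly from the Host--Kra structure of $(X,\X,\mu,T)$. By the Jewett--Krieger theorem I may first pass to a uniquely ergodic topological model, so that $X$ is a compact metric space and every measure in sight is a Borel probability. Let $\pi\colon(X,\X,\mu,T)\to(Z,\mu_Z,T)$ be the factor map onto the Host--Kra $(d-1)$-step pro-nilfactor $Z=Z_{d-1}$, and write $\mu=\int_Z \mu_z\,d\mu_Z(z)$ for the disintegration. By Leibman's nilpotent equidistribution theorem, for \emph{every} $z\in Z$ the Cesaro averages $\frac1N\sum_{n=0}^{N-1}\delta_{\tau_{\vec{a}}^n(z,\ldots,z)}$ converge weak-$*$ to a $\tau_{\vec{a}}$-invariant and $\tau_{\vec{a}}$-ergodic probability measure $\nu_z$ on $Z^d$ (the Haar measure of the orbit-closure sub-nilmanifold). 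I would then define
\[
\mu_{\vec{a},x}^{(d)}:=\int_{Z^d}\mu_{z_1}\times\cdots\times\mu_{z_d}\,d\nu_{\pi(x)}(z_1,\ldots,z_d),
\]
the conditional product measure relative to $\nu_{\pi(x)}$; this is measurable in $x$, manifestly $\tau_{\vec{a}}$-invariant, and depends on $x$ only through $\pi(x)$.

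To verify (1), exchanging integrals gives $\int_X \mu_{\vec{a},x}^{(d)}\,d\mu(x)=\int_Z\int_{Z^d}\mu_{z_1}\times\cdots\times\mu_{z_d}\,d\nu_z\,d\mu_Z(z)$, which I would identify with $\mu_{\vec{a}}^{(d)}$ via the Furstenberg--Host--Kra description of $\mu_{\vec{a}}^{(d)}$ as a conditional product over the pro-nilfactor self-joining $\mu_{\vec{a},Z}^{(d)}$ (the direct analogue of (\ref{a3})). For (3), the key input is the Host--Kra theorem that $Z_{d-1}$ is characteristic for the averages $\frac1N\sum_{n=0}^{N-1}\prod_i f_i(T^{a_in}x)$: replacing $f_i$ by $\tilde f_i=\E(f_i\mid Z)$ produces an $L^2$-close expression to which Leibman's uniform pointwise equidistribution on $Z^d$ applies, yielding the limit $\int\prod_i\tilde f_i(z_i)\,d\nu_{\pi(x)}(z_1,\ldots,z_d)$. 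Via the identity $\int f_i\,d\mu_{z_i}=\tilde f_i(z_i)$ and the construction of $\mu_{\vec{a},x}^{(d)}$, this limit equals $\int\prod_i f_i\,d\mu_{\vec{a},x}^{(d)}$, which is exactly (3).

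The principal obstacle is (2), $\tau_{\vec{a}}$-ergodicity of $\mu_{\vec{a},x}^{(d)}$ for $\mu$-a.e.\ $x$. The base $\nu_{\pi(x)}$ is $\tau_{\vec{a}}$-ergodic on $Z^d$, so the task is to promote ergodicity through the relative-product extension. My approach is to take any $\tau_{\vec{a}}$-invariant $F\in L^2(X^d,\mu_{\vec{a},x}^{(d)})$, push it down to $Z^d$ by fiberwise integration to obtain a $\tau_{\vec{a}}$-invariant (hence $\nu_{\pi(x)}$-a.s.\ constant) function, and then argue that the residual dependence of $F$ on the $X$-fibers vanishes. This last step is the crux and uses the fact that the extension $X\to Z_{d-1}$ is relatively weakly mixing for the joinings controlling $\tau_{\vec{a}}$, which follows from the Host--Kra seminorm characterization of $Z_{d-1}$ as the \emph{maximal} pro-nilfactor relevant to these averages. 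An alternative route is to combine (1) with the Host--Kra--Ziegler description of the $\tau_{\vec{a}}$-ergodic components of $\mu_{\vec{a}}^{(d)}$ and match them uniquely with the family $\{\mu_{\vec{a},x}^{(d)}\}$.
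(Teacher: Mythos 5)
This theorem is not proved in the present paper --- it is imported from \cite{HSY19} --- but the paper does carry out the same argument in full for the polynomial analogue (Theorem \ref{thm-ergodic-measures}, whose case $s=d$ is exactly this statement), so that is the natural benchmark. Your overall architecture matches it: pass to a uniquely ergodic model, define $\mu^{(d)}_{\vec a,x}$ as the conditional product over the ergodic nilfactor measure $\nu_{\pi(x)}$, get (1) from the conditional-product description of $\mu^{(d)}_{\vec a}$ and (3) from the characteristic-factor property plus unique ergodicity of nilsystems. Those parts are fine.

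The gap is in your primary argument for (2). The extension $X\to Z_{d-1}$ is \emph{not} relatively weakly mixing in general: if $X$ is itself an ergodic $d$-step nilsystem, then $X\to Z_{d-1}$ is an isometric (compact group) extension, and the Host--Kra seminorm characterization gives you maximality of $Z_{d-1}$ as a characteristic factor, not relative weak mixing of the extension above it. So "the residual dependence of $F$ on the $X$-fibers vanishes" cannot be justified that way. A second, subtler problem with running the argument fiber by fiber is that the individual measures $\mu^{(d)}_{\vec a,x}$ are generally \emph{not} self-joinings of $(X,\X,\mu,T)$ --- their coordinate marginals need not equal $\mu$ (e.g.\ when some $T^{a_i}$ is not ergodic) --- so the seminorm/van der Corput estimate of Lemma \ref{lem-AP-vdc}, which is stated for self-joinings, cannot be applied directly to a single fiber.

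The correct repair is precisely your parenthetical "alternative route," and it is what the paper does in Section \ref{section-ergodic-decom}: first prove the \emph{global} statement that $\I(X^d,\mu^{(d)}_{\vec a},\tau_{\vec a})$ coincides (mod $0$) with $\I(Z^d,(\mu_Z)^{(d)}_{\vec a},\tau_{\vec a})$, which does follow from Lemma \ref{lem-AP-vdc} applied to the honest self-joining $\mu^{(d)}_{\vec a}$ (this is the analogue of Proposition \ref{prop-ergodic-exten} and Theorem \ref{thm-ergodic-extension}); then use unique ergodicity of nilsystems to see that the ergodic decomposition of $(\mu_Z)^{(d)}_{\vec a}$ under $\tau_{\vec a}$ is exactly $z\mapsto\nu_z$; and finally invoke uniqueness of disintegration to match the ergodic components of $\mu^{(d)}_{\vec a}$ with the conditional products $\int\mu_{z_1}\times\cdots\times\mu_{z_d}\,d\nu_{z}$. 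If you replace your relative-weak-mixing step by this global-then-decompose argument, the proof is complete and agrees with the paper's.
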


\subsection{Host-Kra seminorms}\
\medskip

When $f_i$, $i\in I$, are functions on the set $X$, we define a function
$\bigotimes_{i\in I} f_i$ on $X^I$ by
\begin{equation}\label{de-otimes}
    \bigotimes_{i\in I} f_i ({\bf x})=\prod_{i\in I}
    f_i(x_i),
\end{equation}
where ${\bf x}=(x_i)_{i\in I}\in X^I$ and $I$ is an index set.

\subsubsection{The measure $\mu^{[k]}$}
Let $(X,\X, \mu,T)$ be an ergodic system and $k\in \N$. We define a probability measure $\mu^{[k]}$ on $X^{2^k}$ invariant under $T^{[k]}=T\times T\times \cdots \times T$ ($2^k$ times), by
\begin{equation*}
 % \mu^{[1]}=\mu\mathop{\times}_{\I(T)}\mu=\mu\times \mu;
   \mu^{[1]}=\mu\times_{\I(T)}\mu=\mu\times \mu;
\end{equation*}
and for $k\ge 1$,
\begin{equation*}
% \mu^{[k+1]}=\mu^{[k]}\mathop{\times}_{\I ( T^{[k]} ) } \mu^{[k]}.
 \mu^{[k+1]}=\mu^{[k]}\times_{\I ( T^{[k]} ) } \mu^{[k]}.
\end{equation*}

\subsubsection{Host-Kra seminorms}
Write ${\bf x}=(x_0,x_1,\ldots,x_{2^k-1})$ for a point of $X^{2^k}$, we define a seminorm $\HK \cdot \HK_k$ on $L^\infty(\mu)$ by
\begin{equation}\label{}
    \HK f\HK_k=\Big( \int_{X^{2^k}} \bigotimes_{i\in \{0,1,\ldots, 2^k-1\}}
f( {\bf x})d\mu^{[k]}({\bf x})\Big)^{1/2^k}=\Big( \int_{X^{2^k}} \prod_{i=0}^{2^k-1}
f(x_i)d\mu^{[k]}({\bf x})\Big)^{1/2^k}.
\end{equation}
That $\HK \cdot \HK_k$  is a seminorm \footnote{Here for simplicity we give the formula for real functions, and one can give the formula for complex functions similarly.} can be proved as in \cite[Lemma 3.2]{HK05}, and we call it
{\em Host-Kra seminorm} (HK seminorm for short).
As $X$ is assumed to be ergodic, the $\sigma$-algebra $\I^{[0]}$ is
trivial and $\mu^{[1]}=\mu \times \mu$. We therefore have
$$\HK f\HK_1=\Big(\int_{X^2}f(x_0){f(x_1)}d(\mu \times \mu)(x_0,x_1)\Big)^{1/2}
=\Big|\int fd\mu\Big|.$$

%It is showed in \cite{HK05} that for all $f_i\in L^\infty(\mu), i\in \{0,1,\cdots,2^k-1\}$,
%$$\Big|\int \bigotimes_{i\in \{0,1,\cdots, 2^k-1\}}f_i d\mu^{[k]}\Big|\le \prod_{i=0}^{2^k-1}\HK f_i\HK_k.$$
%The following lemma follows immediately from the definition of the measures and the Ergodic Theorem.

\subsubsection{}
It was shown in the proof of \cite[Theorem 12.1]{HK05} that
\begin{lem}\label{lemmaE1}
For every integer $k\ge 0$ and every $f\in L^\infty(\mu)$, one has
\begin{equation}\label{lem2.1}
    \HK f\HK_{k+1}=\Big(\lim_{N\to \infty} \frac{1}{N}\sum_{n=0}^{N-1}
    \HK f\cdot T^n {f}\HK_k^{2^k}\Big)^{1/2^{k+1}}.
\end{equation}
\end{lem}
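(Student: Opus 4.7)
The plan is to unwind the recursive definition of $\mu^{[k+1]}$ as a relatively independent joining over the invariant $\sigma$-algebra $\I(T^{[k]})$, rewrite the $2^{k+1}$-fold integral as an $L^2$-norm of a conditional expectation, and then apply the von Neumann mean ergodic theorem for $T^{[k]}$ acting on $(X^{2^k},\mu^{[k]})$.

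First I would introduce the tensor function $F=\bigotimes_{i\in\{0,\dots,2^k-1\}}f$ on $X^{2^k}$, so that by definition
\[
\HK f\HK_k^{2^k}=\int_{X^{2^k}}F\,d\mu^{[k]}.
\]
Using $\mu^{[k+1]}=\mu^{[k]}\times_{\I(T^{[k]})}\mu^{[k]}$ and the standard identity $\int g\cdot h\,d(\mu\times_{\mathcal J}\mu)=\int \E(g|\mathcal J)\,\E(h|\mathcal J)\,d\mu$, a direct computation gives
\[
\HK f\HK_{k+1}^{2^{k+1}}=\int_{X^{2^k}}F\otimes F\,d\mu^{[k+1]}=\int_{X^{2^k}}\bigl(\E(F\mid\I(T^{[k]}))\bigr)^2 d\mu^{[k]}.
\]

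Next I would apply the von Neumann mean ergodic theorem to $T^{[k]}$ on the Hilbert space $L^2(X^{2^k},\mu^{[k]})$: the ergodic averages $\frac1N\sum_{n=0}^{N-1}(T^{[k]})^nF$ converge in $L^2(\mu^{[k]})$ to $\E(F\mid\I(T^{[k]}))$. Hence, passing to the limit inside the inner product $\langle F,\cdot\rangle$,
\[
\int_{X^{2^k}}\bigl(\E(F\mid\I(T^{[k]}))\bigr)^2 d\mu^{[k]}=\lim_{N\to\infty}\frac1N\sum_{n=0}^{N-1}\int_{X^{2^k}}F\cdot\bigl((T^{[k]})^nF\bigr)d\mu^{[k]}.
\]

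Finally I would observe the key algebraic identity
\[
F(\mathbf x)\cdot\bigl((T^{[k]})^nF\bigr)(\mathbf x)=\prod_{i=0}^{2^k-1}f(x_i)f(T^nx_i)=\bigotimes_{i=0}^{2^k-1}(f\cdot T^nf)(\mathbf x),
\]
so that each integral on the right equals $\HK f\cdot T^nf\HK_k^{2^k}$. Combining these steps and taking the $2^{k+1}$-th root yields \eqref{lem2.1}. The only delicate point is the interchange of the limit with the integral, which is justified by the $L^2(\mu^{[k]})$-convergence above together with the boundedness of $F$; I do not expect any further obstacle beyond bookkeeping of the relatively independent joining.
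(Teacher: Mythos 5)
Your argument is correct and is exactly the standard one: the paper does not prove this lemma itself but cites it from the proof of Theorem 12.1 in Host--Kra, where the identity is obtained precisely by unwinding $\mu^{[k+1]}=\mu^{[k]}\times_{\I(T^{[k]})}\mu^{[k]}$, writing $\HK f\HK_{k+1}^{2^{k+1}}=\int \E(F\mid\I(T^{[k]}))^2\,d\mu^{[k]}$ for $F=\bigotimes f$, and applying the mean ergodic theorem together with $\langle\E(F\mid\I),\E(F\mid\I)\rangle=\langle F,\E(F\mid\I)\rangle$. The only cosmetic slip is that the middle integral should be over $X^{2^{k+1}}$ rather than $X^{2^k}$; otherwise the bookkeeping is sound.
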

Note that (\ref{lem2.1}) can be considered as an alternate definition of the seminorms. For each $f\in L^\infty(X,\mu)$, one has that \cite[Lemma 3.9]{HK05}
\begin{equation}\label{a6}
  \HK f\HK_k\le \HK f\HK_{k+1}, \ \forall k\in \N.
\end{equation}

%\subsubsection{}

%We will use the following results in the sequel.

\begin{lem}\label{lem-AP-vdc}\cite[Lemma 3.2]{HSY19}
Let $(X,\X,\mu,T)$ be an ergodic m.p.s. and $d\ge 1$ be an integer.
Suppose that $\lambda$ is a $d$-fold self-joining of $(X,\X,\mu,T)$.
Assume that $f_j\in L^\infty(X,\mu)$ with
$\|f_j\|_\infty\le 1$ for $j=1,\ldots,d$. Then
\begin{equation}\label{AP-VDC}
\limsup_{N\to\infty}\Big\|
\frac{1}{N}\sum_{n=0}^{N-1}f_1(T^nx_1)f_2(T^{2n}x_2)\cdots
f_d(T^{dn}x_d) \Big\|_{L^2(X^d, \lambda)}\le \min_{1\le l\le d}\{l\cdot
\interleave f_l\interleave_d \}
\end{equation}
\end{lem}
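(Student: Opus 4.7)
My plan is to iterate the van der Corput (VdC) inequality $d-1$ times in order to isolate a chosen index $l\in\{1,\ldots,d\}$, and then identify the resulting average with the $d$-th Host--Kra seminorm of $f_l$ computed with respect to $T^l$. Set $u_n({\bf x})=\prod_{j=1}^d f_j(T^{jn}x_j)\in L^2(X^d,\lambda)$; since $\lambda$ projects to $\mu$ on each coordinate and $\|f_j\|_\infty\le 1$, I have $\|u_n\|_{L^2(\lambda)}\le 1$. A first application of VdC gives
$$\limsup_N\Big\|\frac{1}{N}\sum_{n=0}^{N-1}u_n\Big\|_{L^2(\lambda)}^{2}\le \limsup_H\frac{1}{H}\sum_{h=1}^H\Big|\limsup_N\frac{1}{N}\sum_n\langle u_{n+h},u_n\rangle_\lambda\Big|,$$
and a direct calculation shows $\langle u_{n+h},u_n\rangle_\lambda=\int\prod_j(T^{jh}f_j\cdot\overline{f_j})(T^{jn}x_j)\,d\lambda$, which has exactly the same shape as the original expression with each $f_j$ replaced by $T^{jh}f_j\cdot\overline{f_j}$.

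I then iterate this step $d-1$ times in total, introducing auxiliary parameters $\vec h=(h_1,\ldots,h_{d-1})$. After $k$ iterations the $j$-th coordinate carries a multiplicative derivative of $f_j$ along $T^{j}$ in the directions $jh_1,\ldots,jh_k$, namely a product of $2^k$ translates of $f_j$ by powers of $T^{j}$. At the end of the $(d-1)$-th iteration I use $\|f_j\|_\infty\le 1$ for $j\neq l$ to majorize the integrand coordinatewise by the $l$-th factor, and then use that $\lambda$ projects onto $\mu$ on the $l$-th coordinate to reduce the iterated integral to one taken on $(X,\mu)$ in the single variable $x_l$.

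Next I average this expression over $n$ and over $\vec h$. By the recursive form of Lemma \ref{lemmaE1}, iterated $d-1$ times and applied to the transformation $T^l$, the limit equals $\interleave f_l\interleave_{d,T^l}^{2^{d-1}}$, the $d$-th HK seminorm of $f_l$ computed with respect to $T^l$. Combining everything yields
$$\limsup_N\Big\|\frac{1}{N}\sum_{n=0}^{N-1} u_n\Big\|_{L^2(\lambda)}\le \interleave f_l\interleave_{d,T^l}.$$
Finally, I will establish $\interleave f\interleave_{d,T^l}\le l\cdot\interleave f\interleave_{d,T}$ by splitting the averages in (\ref{lem2.1}) into the $l$ residue classes of $\Z$ modulo $l$ and using $\|f\|_\infty\le 1$ to bound the cross-class contributions. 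Taking the minimum over $l$ then delivers the stated bound.

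The points I expect to be the most delicate are the bookkeeping in the iterated VdC procedure run coordinate-by-coordinate against a non-product joining (the coordinatewise majorization must be carried out carefully, since $\lambda$ is not a product measure and each VdC step twists the $j$-th coordinate by $T^{jh}$ rather than by $T^h$), and the final comparison $\interleave\cdot\interleave_{d,T^l}\le l\cdot\interleave\cdot\interleave_{d,T}$ which produces the advertised factor $l$.
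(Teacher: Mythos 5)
This lemma is not proved in the paper at all: it is quoted verbatim from \cite[Lemma 3.2]{HSY19}, whose argument is a Host--Kra--type induction on $d$. Your general plan (iterated van der Corput plus the recursion \eqref{lem2.1}) is the right family of ideas, but the step where you isolate the $l$-th coordinate breaks the argument. After your $d-1$ van der Corput iterations the innermost quantity is a scalar of the form $\limsup_N\big|\frac1N\sum_n\int_{X^d}\prod_jG_{j,\vec h}(T^{jn}x_j)\,d\lambda\big|$, where $G_{j,\vec h}$ is a product of $2^{d-1}$ translates of $f_j$. To ``majorize the integrand coordinatewise by the $l$-th factor'' you are forced to pass the absolute value inside the $\lambda$-integral, giving $\int|G_{l,\vec h}(T^{ln}x_l)|\,d\lambda=\int_X|G_{l,\vec h}|\,d\mu$. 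This is fatal twice over. First, the absolute value now sits inside the $\mu$-integral, so the $\vec h$-average of these quantities is not $\interleave f_l\interleave_d^{2^{d-1}}$ in any guise: if $|f_l|\equiv1$ but $\E(f_l|\ZZ_{d-1})=0$ (e.g.\ $f_l=1_A-1_{A^c}$, $\mu(A)=1/2$, in a weakly mixing system), your bound degenerates to $1$ while the right-hand side of \eqref{AP-VDC} is $0$. Second, the expression no longer depends on $n$, so the $n$-average --- which is precisely what the mean ergodic theorem, i.e.\ ergodicity, must convert into the level-one quantity $|\int\cdot\,d\mu|=\interleave\cdot\interleave_1$ --- is wasted, and you retain only $d-1$ averaging parameters against the $d$ needed to assemble $\interleave\cdot\interleave_d$. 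Your argument in fact never invokes ergodicity, which this lemma cannot dispense with.

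The repair is to interleave the isolation with the van der Corput steps rather than postpone it to the end: argue by induction on $d$, over all $d$-tuples of distinct nonzero exponents and all self-joinings. One van der Corput application replaces $f_j$ by $T^{jh}f_j\cdot\bar f_j$; the invariance of $\lambda$ under $T^{(d)}$ then lets you replace $T^{jn}$ by $T^{(j-a)n}$ for a fixed $a\neq l$, freezing the single coordinate $a$, which you bound by its sup norm and discard, passing to the marginal of $\lambda$ on the remaining $d-1$ coordinates (again a self-joining of the ergodic system) and invoking the inductive hypothesis; the base case, with only the $l$-th coordinate left, is the mean ergodic theorem, which produces $|\int G_{l,\vec h}\,d\mu|$ with the absolute value \emph{outside} the integral. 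Finally, the factor $l$ does not come from a comparison $\interleave f\interleave_{d,T^l}\le l\,\interleave f\interleave_{d,T}$ --- that same-level comparison already fails at level one when $T^l$ is not ergodic --- but from the elementary positivity estimate $\limsup_H\frac1H\sum_{h=1}^H\interleave f\cdot T^{lh}\bar f\interleave_k^{2^k}\le l\,\limsup_H\frac1{lH}\sum_{m=1}^{lH}\interleave f\cdot T^{m}\bar f\interleave_k^{2^k}=l\,\interleave f\interleave_{k+1}^{2^{k+1}}$, in which every seminorm is taken with respect to $T$ and only the translation parameter runs along $l\Z$.
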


In particular, let $\lambda$ be the diagonal joining, and we have

\begin{lem}\label{lem-AP-HK}\cite[Theorem 12.1]{HK05}
Let $(X,\X,\mu,T)$ be an ergodic m.p.s. and $d\ge 1$ be an integer.
Assume that $f_j\in L^\infty(X,\mu)$ with
$\|f_j\|_\infty\le 1$ for $j=1,\ldots,d$. Then
\begin{equation}
\limsup_{N\to\infty}\Big\|
\frac{1}{N}\sum_{n=0}^{N-1}f_1(T^nx)f_2(T^{2n}x)\cdots
f_d(T^{dn}x) \Big\|_{L^2(X, \mu)}\le \min_{1\le l\le d}\{l\cdot
\interleave f_l\interleave_d \}
\end{equation}
\end{lem}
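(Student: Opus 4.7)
The plan is to observe that Lemma \ref{lem-AP-HK} is the specialization of Lemma \ref{lem-AP-vdc} to the diagonal self-joining, as the paragraph just above the statement advertises. So the whole proof reduces to verifying that (i) the diagonal measure $\mu_\Delta^d$ on $X^d$ qualifies as a $d$-fold self-joining of $(X,\X,\mu,T)$, and (ii) evaluating the $L^2$-norm with respect to $\mu_\Delta^d$ of any function on $X^d$ recovers the $L^2(X,\mu)$-norm of its restriction to the diagonal.

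First I would recall that the diagonal measure $\mu_\Delta^d$ is defined on $X^d$ by
\begin{equation*}
\int_{X^d} F(x_1,\ldots,x_d)\, d\mu_\Delta^d(x_1,\ldots,x_d)
= \int_X F(x,x,\ldots,x)\, d\mu(x).
\end{equation*}
From this formula it is immediate that every coordinate projection of $\mu_\Delta^d$ equals $\mu$, and that $\mu_\Delta^d$ is invariant under $T^{(d)}=T\times\cdots\times T$, since $T$ preserves $\mu$. Therefore $\lambda:=\mu_\Delta^d$ is a $d$-fold self-joining of $(X,\X,\mu,T)$ in the sense of Lemma \ref{lem-AP-vdc}.

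Next, applying the identity above to $F_N(x_1,\ldots,x_d)=\frac{1}{N}\sum_{n=0}^{N-1} f_1(T^n x_1)\cdots f_d(T^{dn} x_d)$ gives
\begin{equation*}
\Bigl\|\tfrac{1}{N}\sum_{n=0}^{N-1} f_1(T^n x_1)\cdots f_d(T^{dn} x_d)\Bigr\|_{L^2(X^d,\lambda)}
= \Bigl\|\tfrac{1}{N}\sum_{n=0}^{N-1} f_1(T^n x)\cdots f_d(T^{dn} x)\Bigr\|_{L^2(X,\mu)}.
\end{equation*}
Now I would take $\limsup_{N\to\infty}$ on both sides and invoke Lemma \ref{lem-AP-vdc} with $\lambda=\mu_\Delta^d$ to bound the left-hand side by $\min_{1\le l\le d}\{l\cdot\interleave f_l\interleave_d\}$, which yields exactly the inequality claimed.

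There is essentially no obstacle here — the entire content is packaged into Lemma \ref{lem-AP-vdc}, and the only thing to check is that the diagonal is a joining (trivial) and that integration against the diagonal measure is restriction to the diagonal (tautological from the definition of $\mu_\Delta^d$). If anything, the only minor subtlety worth flagging is that $\mu_\Delta^d$ is a legitimate probability measure on $X^d$ rather than a singular object, but this is already the definition recalled in the excerpt after Definition \ref{de-Furstenberg-selfjoining}.
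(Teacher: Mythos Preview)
Your proposal is correct and matches the paper's approach exactly: the paper presents Lemma~\ref{lem-AP-HK} as the specialization of Lemma~\ref{lem-AP-vdc} to the diagonal joining (the sentence ``In particular, let $\lambda$ be the diagonal joining, and we have'' is its entire proof), and your verification that $\mu_\Delta^d$ is a $d$-fold self-joining together with the identification of the $L^2(X^d,\mu_\Delta^d)$-norm with the $L^2(X,\mu)$-norm on the diagonal is precisely the content of that specialization.
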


\subsection{Pro-nilsystems}

\subsubsection{Nilpotent groups} Let $G$ be a group. For $g, h\in G$, we write $[g, h] =
ghg^{-1}h^{-1}$ for the commutator of $g$ and $h$ and we write
$[A,B]$ for the subgroup spanned by $\{[a, b] : a \in A, b\in B\}$.
The commutator subgroups $G_j$, $j\ge 1$, are defined inductively by
setting $G_1 = G$ and $G_{j+1} = [G_j ,G]$. Let $k \ge 1$ be an
integer. We say that $G$ is {\em $k$-step nilpotent} if $G_{k+1}$ is
the trivial subgroup.

\subsubsection{Nilsystems}
Let $G$ be a $k$-step nilpotent Lie group and $\Gamma$ a discrete
cocompact subgroup of $G$. The compact manifold $X = G/\Gamma$ is
called a {\em $k$-step nilmanifold}. The group $G$ acts on $X$ by
left translations and we write this action as $(g, x)\mapsto gx$.
The Haar measure $\nu$ of $X$ is the unique probability measure on
$X$ invariant under this action. Let $t\in G$ and $T$ be the
transformation $x\mapsto t x$ of $X$. Then $(X, \nu, T)$ is
called a {\em $k$-step nilsystem}.

\begin{de}
Let $k\in \N$. A m.p.s. $(X,\X,\mu,T)$ is called a {\em $k$-step pro-nilsystem} or a {\em system of order $k$} if it is a (measure theoretic) inverse limit of $k$-step nilsystems.
\end{de}

\subsubsection{}
Here are some basic properties of nilsystems, and they also hold for pro-nilsystems.

\begin{thm}\cite{P, Leibman}\label{thm-ParryLeibman}
Let $(X = G/\Gamma, \nu , T )$ be a $k$-step nilsystem with $T$ the
translation by the element $t\in G$. Then:

\begin{enumerate}
\item $(X, T )$ is uniquely ergodic if and only if $(X, \nu , T )$ is
ergodic if and only if $(X, T )$ is minimal if and only if $(X, T )$
is transitive.

\item Let $Y$ be the closed orbit of some point $x\in X$. Then $Y$ can
be given the structure of a nilmanifold, $Y = H/\Lambda$, where $H$
is a closed subgroup of $G$ containing $t$, and $\Lambda$ is a closed
cocompact subgroup of $H$.

\end{enumerate}
\end{thm}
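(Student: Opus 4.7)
I would prove (1) by the chain of implications unique ergodicity $\Rightarrow$ ergodicity $\Rightarrow$ minimality $\Rightarrow$ transitivity $\Rightarrow$ unique ergodicity, where the last implication invokes (2) in an essential way. Both parts of the theorem are then obtained by induction on the nilpotency class $k$, exploiting the presentation of a $k$-step nilmanifold as a torus bundle over a $(k-1)$-step nilmanifold via the factor map $\pi: G/\Gamma \to G/(G_2\Gamma)$ onto the maximal torus quotient.

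\textbf{Proof sketch for (1).} The implications unique ergodicity $\Rightarrow$ ergodicity (the unique invariant measure cannot admit a non-trivial ergodic decomposition) and minimality $\Rightarrow$ transitivity are immediate, so the substance lies in ergodicity $\Rightarrow$ minimality. For the base case $k=1$, $X$ is a torus and the statement reduces to Kronecker/Weyl equidistribution. For the inductive step, the factor system on $G/(G_2\Gamma)$ is ergodic as a factor of an ergodic system, hence minimal by the base case. The fibers of $\pi$ inherit the structure of $(k-1)$-step nilmanifolds, and because $G_2$ is central modulo $G_3$, the first-return map of $T$ to a fixed fiber is again a translation on a $(k-1)$-step nilmanifold. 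A standard lifting argument shows that ergodicity of $T$ on $X$ forces ergodicity of this return map, so the inductive hypothesis yields minimality of the return map and hence of $T$. To close the circle, transitivity $\Rightarrow$ unique ergodicity proceeds by invoking (2) to realize the orbit closure $X$ as $H/\Lambda$ with $t\in H$, so that $T$ is minimal on this sub-nilmanifold; Parry's equivariant disintegration of Haar measure along the tower \cite{P}, together with unique ergodicity of minimal translations on the torus base, then upgrades minimality to unique ergodicity of the Haar measure.

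\textbf{Proof sketch for (2).} I would follow Leibman's inductive strategy \cite{Leibman}. Project the orbit closure $Y = \overline{\{T^n x : n \in \Z\}}$ to the maximal torus factor $G/(G_2\Gamma)$; there, orbit closures of translations are cosets of closed subtori, so the image has the form $\overline{H}/(\overline{H}\cap (G_2\Gamma))$ for some closed subgroup $\overline{H}$ of the torus containing the projection of $t$. Lifting back produces a $T$-invariant union of fibers in $X$; inside one such fiber, the first-return map is a translation on a $(k-1)$-step nilmanifold, so by induction its orbit closure is a sub-nilmanifold. Assembling these fiber pieces as $t$ cycles through the base and letting $H \le G$ be the closed subgroup generated by $t$ together with the group elements responsible for the fiberwise sub-nilmanifold structure, one arrives at the presentation $Y = H/\Lambda$ with $\Lambda = H \cap \Gamma$. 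The main obstacle is the bookkeeping to verify that the assembled $H$ is genuinely closed in $G$ and that $\Lambda$ is cocompact in $H$; this is where Malcev's rationality theory for nilmanifolds and careful use of the commutator relations in $G$ come in, and it forms the technical heart of Leibman's theorem.
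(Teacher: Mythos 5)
This statement is quoted verbatim from Parry and Leibman (the citation is built into the theorem header), and the paper supplies no proof of it; there is therefore no internal argument to compare yours against, and the right benchmark is the classical proofs in \cite{P, Leibman}. Your overall plan (a cycle of implications for (1), induction on the nilpotency class for both parts) matches the standard route, but as written it contains a structural gap that would make the inductive step fail.

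The problem is the direction of your tower. You induct via the factor map $\pi: G/\Gamma \to G/(G_2\Gamma)$, so the \emph{base} is the maximal torus and the \emph{fibers} are $(k-1)$-step nilmanifolds, and you propose to apply the inductive hypothesis to ``the first-return map of $T$ to a fixed fiber.'' When the base rotation is minimal and aperiodic (the generic and essential case), the orbit of a point never returns to the same fiber, so this return map does not exist; and replacing it by an induced map on a cross-section does not produce a translation on a nilmanifold. The classical induction runs the tower the other way: one quotients by the last nontrivial commutator subgroup $G_k$, which is central, so that $X \to G/(G_k\Gamma)$ is a compact abelian group extension (the fiber is the torus $G_k/(G_k\cap\Gamma)$) over a $(k-1)$-step nilsystem. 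Ergodicity, minimality and unique ergodicity then pass up through such isometric group extensions by Furstenberg's criterion (an ergodic compact group extension of a uniquely ergodic minimal system, equipped with the Haar lift, is uniquely ergodic and minimal), and the induction closes. A similar reorientation is needed in your sketch of (2): Leibman's construction of $H$ also proceeds by descending through the central filtration rather than by assembling fiberwise data over the torus base. The remaining implications you list for (1) (unique ergodicity $\Rightarrow$ ergodicity, minimality $\Rightarrow$ transitivity) are fine, and your appeal to (2) to pass from transitivity back to minimality is in the right spirit, but the engine of the whole argument --- the inductive step --- needs to be rebuilt on the correct factor.
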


\begin{rem}
One can generalize the above results to the action of several translations. For example,
let $X= G/\Gamma$ be a nilmanifold with Haar measure $\nu$ and let
$t_1,\ldots , t_k$ be commuting elements of $G$. If the group
spanned by the translations $t_1, \ldots , t_k$ acts ergodically on
$(X,\nu)$, then $X$ is uniquely ergodic for this group, see \cite{Leibman} for details.
\end{rem}

\subsubsection{Nilfactors}
By the Host-Kra seminorm we can define factors  $(Z_{d-1},\mathcal{Z}_{d-1},\mu_{d-1},T)$.
\begin{de}
Let $(X,\X,\mu , T)$ be an ergodic m.p.s.
For $d\in \N$, there exists a $T$-invariant $\sigma$-algebra $\ZZ_{d-1}$  of $\X$ such that for $f\in L^\infty(\mu)$,
$$\HK f\HK_d=0 \ \text{ if and only if}\  \E(f|\ZZ_{d-1})=0.$$
Let $(Z_{d-1},\mathcal{Z}_{d-1},\mu_{d-1},T)$ be the factor of $X$ associated to the sub $\sigma$-algebra $\ZZ_{d-1}$.

%If $X=Z_{d-1}$, then $X$ is called a system of {\em of order $d-1$}.
\end{de}

By \eqref{a6}, the factors $Z_k, k\ge 1,$ form an increasing sequence of factors of $X$.
In \cite{HK05}, it was showed that $(Z_{d-1}, \ZZ_{d-1}, \mu_{d-1}, T)$ has a very nice structure.

\begin{thm}\cite[Theorem 10.1]{HK05}
Let $(X,\X,\mu , T)$ be an ergodic m.p.s. and $d\in \N$. Then the
system $(Z_{d-1}, \ZZ_{d-1}, \mu_{d-1}, T)$ is a $(d-1)$-step pro-nilsystem.
\end{thm}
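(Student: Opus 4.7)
My plan is to proceed by induction on $d$, following the now-standard Host--Kra strategy. The base case $d=1$ is trivial since ergodicity makes $\ZZ_0 = \I(T)$ trivial, so $Z_0$ is a point (a $0$-step pro-nilsystem). For $d=2$, one checks that $\HK f\HK_2 = 0$ iff $f$ is orthogonal to every eigenfunction of $T$, so $Z_1$ is the Kronecker factor; by Halmos--von Neumann this is a rotation on a compact abelian group, i.e., a $1$-step pro-nilsystem. For the inductive step, assuming $(Z_{d-1},\mu_{d-1},T)$ is a $(d-1)$-step pro-nilsystem, the goal is to show $(Z_d,\mu_d,T)$ is a $d$-step pro-nilsystem.

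The first structural step is to show that $Z_d \to Z_{d-1}$ is an \emph{abelian extension}, i.e.\ by Rohlin's skew-product theorem there is a compact abelian group $U$, a cocycle $\rho\colon Z_{d-1} \to U$, and an isomorphism $Z_d \cong Z_{d-1} \times_\rho U$ with $T(z,u) = (Tz, \rho(z)u)$. This is where the measure $\mu^{[d]}$ enters critically: by analyzing the relative product structure of $\mu^{[d]}$ over $\mu^{[d-1]}$ and exploiting the fact that the side $\sigma$-algebra $\I(T^{[d-1]})$ captures exactly $\ZZ_{d-1}$, one shows that $Z_d$ is an isometric extension of $Z_{d-1}$ whose fibers carry a transitive abelian group action. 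The key computation identifies a ``gluing cocycle'' on $Z_{d-1}$, reducing the fiber structure to a torus (or inverse limit of tori) bundle.

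The decisive and hardest step is the \emph{Conze--Lesigne analysis} of the cocycle $\rho$. Using the invariance of $\mu^{[d]}$ under the ``face'' transformations, one derives that for every $s$ in the group $\mathcal{G}(Z_{d-1})$ of measure-preserving transformations of $Z_{d-1}$ coming from translations in its nilmanifold structure, the difference $\rho(sz)\rho(z)^{-1}$ is a coboundary of a specific controlled type --- a higher-order Conze--Lesigne equation. Solving this equation (via harmonic analysis on $U$ character by character, then integrating cocycle identities along the nilpotent group $G(Z_{d-1})$) shows that $\rho$ is cohomologous to a cocycle of ``nilpotent type'', meaning the skew product $Z_{d-1} \times_\rho U$ can be realized as $H/\Lambda$ for a $d$-step nilpotent Lie group $H$ obtained as a central extension of $G(Z_{d-1})$ by $U$. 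Taking inverse limits to reduce to the case where $Z_{d-1}$ is a nilmanifold (rather than only a pro-nilsystem) gives the result in general.

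The principal obstacle is this final cocycle step: extracting a \emph{genuine} nilpotent group structure from the abstract functional equations satisfied by $\rho$. One must produce explicit translations on the fiber, verify the commutator relations defining $(d)$-step nilpotency, and pass to an inverse limit in a compatible way --- none of which follows formally from the preceding structural observations. Everything else, including the reduction to abelian extensions and the manipulation of $\mu^{[d]}$, is by comparison standard ergodic-theoretic bookkeeping.
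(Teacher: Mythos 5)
This theorem is not proved in the paper at all: it is quoted verbatim from Host--Kra \cite[Theorem 10.1]{HK05}, so there is no internal argument to compare yours against. Judged on its own terms, your proposal is an accurate roadmap of the original Host--Kra proof --- induction on $d$, identification of $Z_1$ with the Kronecker factor, the reduction of $Z_d\to Z_{d-1}$ to a compact abelian group extension via the structure of $\mu^{[d]}$ over $\mu^{[d-1]}$, the higher-order Conze--Lesigne equation for the cocycle, and the assembly of a $d$-step nilpotent Lie group as a central extension followed by an inverse limit. Each of these is indeed a step of the actual argument, and you correctly locate where the difficulty sits.

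But as a proof it has a genuine gap, and you say so yourself: the ``decisive and hardest step'' is announced rather than carried out. Three things are asserted without argument. First, that the extension $Z_d\to Z_{d-1}$ is abelian (not merely isometric) --- in \cite{HK05} this already requires the full machinery of the group $\mathcal{G}(X)$ of automorphisms preserving $\mu^{[d]}$ and a nontrivial analysis of the ``gluing'' structure. Second, that the cocycle $\rho$ satisfies, and that one can solve, the higher-order Conze--Lesigne functional equation; the solution is not a character-by-character exercise but the technical heart of the paper (their notion of cocycles of type $k$). Third, that the resulting group of fiber translations together with lifts of $\mathcal{G}(Z_{d-1})$ satisfies the commutator identities making it $d$-step nilpotent and acts transitively on a manifold model, and that the inverse-limit reduction from pro-nilsystems to genuine nilmanifolds is compatible with all of this. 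Since these three items constitute essentially the entire content of the theorem, the proposal is a correct table of contents for a proof rather than a proof. If your intent is to cite the result, do as the paper does and reference \cite[Theorem 10.1]{HK05}; if your intent is to prove it, each of the three steps above must be supplied in full.
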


\subsubsection{$\infty$-step pro-nilsystems}
An $\infty$-step pro-nilsystem was introduced and studied in \cite{DDMSY}.
\begin{de}
Let $(X,\X,\mu,T)$ be an ergodic m.p.s. The factors $\ZZ_k, k\in \N$, form an increasing sequence of $T$-invariant sub-$\sigma$-algebras of $\X$, and we let $\ZZ_\infty\triangleq \bigvee_{k\in \N} \ZZ_k$ and $(Z_\infty, \ZZ_\infty, \mu_\infty, T)$ be the factor associated with the $\ZZ_\infty$. Then this system is the inverse limit of the systems $(Z_k, \ZZ_k,\mu_k,T), k\in \N$. We say $(Z_\infty, \ZZ_\infty, \mu_\infty, T)$ is an {\em $\infty$-step pro-nilsystem}.
\end{de}

Let $(X,\X,\mu,T)$ be an ergodic m.p.s. and $f\in L^\infty(X,\mu)$.
We define $\HK f\HK_\infty \triangleq \sup_{k\in \N} \HK f\HK_k$. Note that
$\HK f\HK_\infty=0$ if and only if $\E(f|\ZZ_\infty)=0$.

\subsection{Topological pro-nilsystems}\

\subsubsection{}
One also has the topological version of this notion, i.e. the topological inverse limit of nilsystems. First recall the
definition of an inverse limit of t.d.s. If
$(X_i,T_i)_{i\in \N}$ are t.d.s. with $diam(X_i)\le 1$ and
$\pi_i: X_{i+1}\rightarrow X_i$ are factor maps, the {\em topological inverse
limit} of the systems is defined to be the compact subset of
$\prod_{i\in \N}X_i$ given by $\{ (x_i)_{i\in \N }: \pi_i(x_{i+1}) =
x_i, i\in \N\}$, and we denote it by
$\lim\limits_{\longleftarrow}(X_i,T_i)_{i\in\N}$. It is a compact
metric space endowed with the distance $\rho((x_{i})_{i\in\N}, (y_{i})_{i\in
\N}) = \sum_{i\in \N} 1/2^i \rho_i(x_i, y_i )$, where $\rho_{i}$ is the metric in
$X_{i}$. We note that the
maps $T_i$ induce naturally a transformation $T$ on the inverse
limit.

\begin{de}\cite{HKM}
Let $d\in \N$. A topological inverse limit of $d$-step minimal
nilsystems is called a {\em topological $d$-step pro-nilsystem} or a {\em topological system of order $d$}.
\end{de}

See \cite{HKM} and \cite{SY} for more about topological pro-nilsystems.

\subsubsection{}
A minimal t.d.s. is a {\em topological  $\infty$-step pro-nilsystem} if and only if it is
a topological inverse limit of minimal nilsystems \cite{DDMSY}.

By Theorem \ref{thm-ParryLeibman}, a topological $d$-step pro-nilsystem is uniquely ergodic for each $d\in \N\cup\{\infty\}$.

\begin{thm}\cite[Subsection 5.1]{HKM}\label{thm-HKM}
Let $d\in \N\cup\{\infty\}$. Any $d$-step pro-nilsystem is isomorphic in the measure theoretic sense to a topological $d$-step pro-nilsystem.
\end{thm}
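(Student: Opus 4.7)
\medskip

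\noindent\textbf{Proof plan for Theorem \ref{thm-HKM}.}

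The plan is to handle the finite-step case first, then obtain the infinite-step case by an additional inverse-limit step. For finite $d$, start from the given measure-theoretic presentation $(X,\X,\mu,T)=\varprojlim_i (X_i,\X_i,\mu_i,T)$, where each $(X_i,\X_i,\mu_i,T)$ is an ergodic $d$-step nilsystem arising from some nilmanifold $G_i/\Gamma_i$ with Haar measure $\nu_i$ and translation by $t_i\in G_i$. By Theorem \ref{thm-ParryLeibman}(1), each $(G_i/\Gamma_i,T)$ is automatically uniquely ergodic and minimal as a t.d.s., so it is tautologically a \emph{topological} $d$-step nilsystem for which the abstract measure system $(X_i,\X_i,\mu_i,T)$ is a model.

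The first genuine step is to realize the connecting measure-theoretic factor maps $\pi_i:X_{i+1}\to X_i$ by continuous factor maps between the minimal nilsystems $G_{i+1}/\Gamma_{i+1}\to G_i/\Gamma_i$. I would proceed inductively using Weiss's Theorem in its relative form: having produced a uniquely ergodic topological model $\h X_i$ of $X_i$ (namely $G_i/\Gamma_i$), apply Weiss's Theorem to $\pi_i:X_{i+1}\to X_i$ to obtain a uniquely ergodic topological model $\h X_{i+1}$ of $X_{i+1}$ together with a continuous factor map $\h\pi_i:\h X_{i+1}\to\h X_i$ making the measurable diagram commute up to measurable isomorphism. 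Since any uniquely ergodic model of an ergodic nilsystem must itself be a minimal nilsystem (both sides are measurably isomorphic uniquely ergodic systems supported on the same nilmanifold structure, and the isomorphism is forced to be almost everywhere a nilmorphism), $\h X_{i+1}$ is again a minimal $d$-step nilsystem and $\h\pi_i$ is a continuous nilfactor map.

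Once the coherent tower $(\h X_i,T)$ with continuous factor maps $\h\pi_i$ is in hand, form the topological inverse limit $\h X:=\varprojlim(\h X_i,T)$ as defined in the excerpt; this is by definition a topological $d$-step pro-nilsystem. It carries the projective limit measure $\h\mu$ of the Haar measures $\nu_i$, which is well-defined and $\h T$-invariant because the $\h\pi_i$ are continuous and measure preserving; moreover $\h\mu$ is the unique $\h T$-invariant Borel probability measure on $\h X$. The compatible measurable isomorphisms $X_i\cong\h X_i$ assemble into a measure-theoretic isomorphism $(X,\X,\mu,T)\cong(\h X,\mathcal{B}(\h X),\h\mu,\h T)$ by the universal property of inverse limits on both sides.

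For $d=\infty$, the same reasoning applies: write the given $\infty$-step pro-nilsystem as a measurable inverse limit of $k$-step pro-nilsystems (each of which has been handled in the finite case), and then take the topological inverse limit of their topological models, again using Weiss's Theorem to make the new connecting maps continuous. I expect the main obstacle to lie in the inductive step that promotes a measurable factor map between two ergodic nilsystems to a continuous nilfactor map in a manner that is coherent across the tower; this is precisely where the relative Jewett--Krieger theorem of Weiss is indispensable, together with the rigidity that any uniquely ergodic model of a nilsystem is forced to be itself a nilsystem via a continuous isomorphism.
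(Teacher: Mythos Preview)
The paper does not prove this statement itself; it is quoted from \cite{HKM}. Nonetheless, your plan contains a genuine gap that would prevent it from going through, and it diverges from the actual argument in \cite{HKM}.

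The failure is in your inductive step. After applying Weiss's Theorem to the factor map $\pi_i:X_{i+1}\to X_i$, you obtain \emph{some} uniquely ergodic topological model $\h X_{i+1}$ together with a continuous factor map onto $\h X_i$. You then assert that ``any uniquely ergodic model of an ergodic nilsystem must itself be a minimal nilsystem \ldots\ the isomorphism is forced to be almost everywhere a nilmorphism''. This is false: unique ergodicity and measurable isomorphism to a nilsystem do not force the topological model to be a nilmanifold. A Sturmian subshift, for example, is a uniquely ergodic model of an irrational circle rotation but is certainly not a topological nilsystem. Weiss's Theorem gives no control over the topological type of the model it produces, so your tower $(\h X_i)$ need not consist of nilsystems, and the resulting inverse limit need not be a topological pro-nilsystem.

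The argument in \cite{HKM} bypasses Weiss's Theorem entirely. The point is a rigidity property specific to nilsystems: a measurable factor map between two ergodic nilsystems agrees almost everywhere with a \emph{continuous} (indeed algebraic) factor map. Concretely, a measurable factor of an ergodic nilsystem $G/\Gamma$ is itself a nilsystem $G/(\Gamma\Lambda)$ for a suitable closed rational subgroup $\Lambda$, and the factor map is the canonical projection. Hence the connecting maps in the given measurable inverse limit are already continuous (after modification on null sets), and one can form the topological inverse limit directly without ever changing the spaces $X_i$. Your proposal correctly identifies the coherence of the connecting maps as the crux, but attempts to resolve it with the wrong tool.
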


\begin{lem}\cite[Lemma A.3]{DDMSY}\label{DDMSY}
Let $d\in \N\cup\{\infty\}$. Let $(X,T)$ be a topological $d$-step nilsystem,
then the maximal measurable and topological $j$-step pro-nilfactors of $(X,T)$
coincide, where $j\leq d$.
\end{lem}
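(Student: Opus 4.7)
The plan is to reduce to the case of a single minimal nilsystem via the inverse limit presentation of $(X,T)$, to identify the Host--Kra factor of each approximating nilsystem with an explicit geometric quotient, and finally to transport this identification back across the inverse limit.

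First, I would write $(X,T)$ as a topological inverse limit $\lim\limits_{\longleftarrow}(X_n, T_n)$ of minimal nilsystems $X_n = G_n/\Gamma_n$ of step at most $d$ (if $d=\infty$, the step of $X_n$ is allowed to grow with $n$). By Theorem \ref{thm-ParryLeibman} and the remark after it, each $(X_n,T_n)$, and hence $(X,T)$, is uniquely ergodic; denote the unique invariant measures by $\nu_n$ and $\mu$. The projections $\pi_n: X\to X_n$ satisfy $(\pi_n)_*\mu = \nu_n$, and the Borel $\sigma$-algebra of $X$ is generated by $\bigcup_n \pi_n^{-1}\mathcal{B}(X_n)$.

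Second, fix $j \leq d$ and analyze a single minimal nilsystem $(X_n,\nu_n,T_n)$. Let $X_n^{(j)} := G_n/((G_n)_{j+1}\Gamma_n)$, where $(G_n)_{j+1}$ denotes the $(j+1)$-th term of the lower central series of $G_n$. Standard structural results imply that $X_n^{(j)}$ is a minimal $j$-step nilsystem and is the maximal topological $j$-step pro-nilfactor of $X_n$. On the other hand, the Host--Kra factor $Z_j(X_n)$ is a $j$-step pro-nilsystem by \cite[Theorem 10.1]{HK05}. Since $X_n^{(j)}$ is a fortiori a measurable $j$-step pro-nilfactor, it is a factor of $Z_j(X_n)$; conversely, one invokes rigidity of nilsystem factors to show that $Z_j(X_n)$ is topologically realized and must factor through $X_n^{(j)}$. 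Hence $Z_j(X_n) = X_n^{(j)}$.

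Third, pass to the inverse limit. The $X_n^{(j)}$ are compatible with the bonding maps, so topologically the maximal $j$-step pro-nilfactor of $X$ is $\lim\limits_{\longleftarrow} X_n^{(j)}$. On the measurable side, $L^\infty(X,\mu)$ functions can be $L^2$-approximated by functions pulled back from some $X_n$; combined with the HK seminorm monotonicity \eqref{a6} and continuity of $\HK \cdot \HK_{j+1}$ under such approximation, this gives $\ZZ_j(X) = \bigvee_n \pi_n^{-1}\ZZ_j(X_n) = \bigvee_n \pi_n^{-1}\mathcal{B}(X_n^{(j)})$, which is precisely the Borel $\sigma$-algebra of $\lim\limits_{\longleftarrow} X_n^{(j)}$. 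The main obstacle is the rigidity step in paragraph two, namely that every measurable $j$-step pro-nilfactor of a minimal nilsystem is topological and arises from a closed normal subgroup of $G_n$ containing $(G_n)_{j+1}$; this leans on the structure theory for nilsystems developed in \cite{HK05} and \cite{Leibman}. Once this is in hand, the inverse-limit step reduces to a routine density argument.
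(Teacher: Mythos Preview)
The paper does not prove this lemma at all: it is quoted verbatim as \cite[Lemma A.3]{DDMSY} and used as a black box, so there is no ``paper's own proof'' to compare against. Your outline is a reasonable sketch of the standard argument (reduce to a single minimal nilsystem, identify $Z_j$ with the geometric quotient $G_n/((G_n)_{j+1}\Gamma_n)$, then take inverse limits), and you have correctly located the one nontrivial step, namely the rigidity assertion that every measurable $j$-step pro-nilfactor of a minimal nilsystem is already a topological quotient by a closed normal subgroup containing $(G_n)_{j+1}$; this is exactly what is established in \cite{DDMSY} and the references therein.
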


\subsection{Nilsequencs}

\begin{de}\cite{BHK05}
Let $d\ge 1$ be an integer and let $X = G/\Gamma$ be a
$d$-step nilmanifold. Let $\phi$ be a continuous real (or complex)
valued function on $X$ and let $t \in G$ and $x \in X$. The sequence
$\{\phi(t^n\cdot x)\}_{n\in \mathbb{Z}}$ is called a {\em  basic $d$-step nilsequence}.
A {\em $d$-step nilsequence} is a uniform limit of basic $d$-step
nilsequences.
\end{de}

We will need the following theorem.

\begin{thm}\cite[Theorem 16.10]{HK18}\label{thm-CFH}
Let $(X,\X,\mu,T)$ be a m.p.s. (not necessarily ergodic) and let $k\in \N$. Then for all $p\in [1,\infty)$ and $\ep>0$, every $f\in L^\infty(X,\mu)$ admits a decomposition
$$f=f_{\rm unif}+f_{\rm nil}+f_{\rm sml},$$
where
\begin{enumerate}
  \item the function $f_{\rm unif}$ satisfies $\HK f_{\rm unif}\HK_{k+1}=0$;
  \item for $\mu$-almost every $x\in X$, the sequence $\big(f_{\rm nil}(T^n x)\big)_{n\in \Z}$ is a $k$-step nilsequence;
  \item the function $f_{\rm sml}\in L^\infty(X,\mu)$ satisfies $\|f_{\rm sml}\|_{L^p(X,\mu)}\le \ep$.
\end{enumerate}
Furthermore, $\|f_{\rm nil}\|_{L^\infty(X,\mu)}\le \|f\|_{L^\infty(X,\mu)}$ and $\|f_{\rm nil}+f_{\rm sml}\|_{L^\infty(X,\mu)}\le \|f\|_{L^\infty(X,\mu)}$.
\end{thm}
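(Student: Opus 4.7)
The plan is to project $f$ onto the $k$-step pro-nilfactor and then approximate this projection in $L^p$ by a continuous function pulled back from a single finite-step nilsystem. The decomposition will read $f = f_{\rm unif} + f_{\rm nil} + f_{\rm sml}$, where $f_{\rm unif} = f - \E(f\mid \ZZ_k)$, $f_{\rm nil}$ is the pulled-back continuous approximation, and $f_{\rm sml}$ is the $L^p$-small error.

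First I would handle the ergodic case. Set $g := \E(f\mid \ZZ_k)$ and $f_{\rm unif} := f - g$. Since $\E(f_{\rm unif}\mid \ZZ_k)=0$, the defining property of $\ZZ_k$ yields $\HK f_{\rm unif}\HK_{k+1}=0$, which is condition~(1). By \cite[Theorem~10.1]{HK05} the factor $(Z_k,\ZZ_k,\mu_k,T)$ is a $k$-step pro-nilsystem, and by Theorem~\ref{thm-HKM} it is measure-theoretically isomorphic to a topological $k$-step pro-nilsystem $\lim\limits_{\longleftarrow}(Y_j,T_j)$, where each $Y_j = G_j/\Gamma_j$ is a $k$-step nilmanifold and $T_j$ is translation by some $t_j\in G_j$. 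Continuous cylinder functions coming from finite levels are dense in $L^p$ of such an inverse limit, so I can choose an index $j_0$ and a continuous $h_0\in C(Y_{j_0})$ with $\|g - h_0\circ \pi_{j_0}\|_{L^p}<\ep$, where $\pi_{j_0}\colon Z_k\to Y_{j_0}$ is the projection; truncating $h_0$ pointwise at level $\|f\|_\infty$ keeps it continuous and preserves the approximation while ensuring $\|h_0\|_\infty \le \|f\|_\infty$. Setting $f_{\rm nil} := h_0\circ \pi_{j_0}\circ \pi$ (with $\pi\colon X\to Z_k$) and $f_{\rm sml}:=g-f_{\rm nil}$, the sequence $(f_{\rm nil}(T^n x))_{n\in\Z}$ equals $(h_0(t_{j_0}^n y))_{n\in\Z}$ for $y=\pi_{j_0}(\pi(x))$, which is by definition a basic $k$-step nilsequence; condition~(3) is immediate; and the bound $\|f_{\rm nil}+f_{\rm sml}\|_\infty = \|g\|_\infty \le \|f\|_\infty$ is just the contractivity of conditional expectation.

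For a general (not necessarily ergodic) system, the HK seminorms and the factors $\ZZ_k$ are defined globally through $\I(T^{[k]})$ and the measures $\mu^{[k]}$, so the definitions of $g$ and $f_{\rm unif}$ go through unchanged. The point is to produce a single continuous function $h_0$ on a single nilfactor that $L^p$-approximates $g$ across all ergodic components at once. To this end I would invoke Weiss's theorem to pass to a topological model in which $(X,T)$ is a t.d.s., $\mu$ is a Borel invariant measure, and the factor $Z_k$ is realized as a topological factor whose ergodic components are topological $k$-step pro-nilsystems; the density of continuous cylinder functions from any fixed finite level of the topological pro-nilfactor in $L^p(\mu)$ then produces a single pair $(j_0,h_0)$ that serves all ergodic components simultaneously.

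\textbf{Main obstacle.} The principal technical difficulty is precisely the non-ergodic case: the statement requires $(f_{\rm nil}(T^nx))_n$ to be a genuine $k$-step nilsequence for a.e.\ $x$, based on a nilmanifold independent of $x$. A naive fibrewise application of the ergodic case produces, for each ergodic component, a possibly different index $j_0(\omega)$ and a different function $h_{0,\omega}\in C(Y_{j_0(\omega)})$, and the pointwise glued object need not be a nilsequence in the sense of the paper. The required uniformity is supplied by the topological inverse-limit structure of pro-nilsystems (Theorem~\ref{thm-HKM}): once a single topological model has been chosen, the same $h_0\in C(Y_{j_0})$ works globally, and its pullback to $X$ automatically gives a basic $k$-step nilsequence along every orbit. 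A secondary but routine point is preserving the $L^\infty$ bounds during the continuous approximation, which is handled by the truncation argument noted above.
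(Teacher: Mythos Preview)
The paper does not prove this theorem; it is quoted verbatim from \cite[Theorem~16.10]{HK18} and used as a black box, so there is no in-paper proof to compare against. Your ergodic argument is essentially the standard one (and is in fact the content of \cite[Proposition~3.1]{CFH11}, cited in Remark~\ref{rem-CFH}(2)): project onto $\ZZ_k$, realize $Z_k$ as a topological inverse limit of $k$-step nilsystems, and approximate in $L^p$ by a continuous function from a finite stage, truncating to control the sup norm. That part is fine.

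The genuine gap is in the non-ergodic case, and you have correctly located it but not resolved it. Your proposed fix---invoke Weiss's theorem to get a topological model in which $Z_k$ is a topological factor whose ergodic components are $k$-step pro-nilsystems, then pull back a single continuous function---does not work as stated: Weiss's theorem, both in this paper and in \cite{Weiss85}, requires the system to be \emph{ergodic}. There is no Jewett--Krieger/Weiss machinery that hands you, for a general non-ergodic $(X,\mu,T)$, a single topological pro-nilfactor on which one continuous function simultaneously $L^p$-approximates $\E(f\mid\ZZ_k)$ across all ergodic components. That uniformity is precisely the content that has to be supplied. The proof in \cite{HK18} proceeds differently: it builds $f_{\rm nil}$ out of dual functions (or limits thereof), for which the nilsequence property along orbits is established directly and uniformly via the structure theory, rather than by choosing a single topological model after the fact. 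Without an argument of that kind, your non-ergodic step is a restatement of the difficulty rather than a solution to it.
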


\begin{rem}\label{rem-CFH}
\begin{enumerate}
  \item Let $(X,\X,\mu,T)$ be a m.p.s. and let $k\in \N$. As a corollary of Theorem \ref{thm-CFH},
 for all $\ep>0$, every $f\in L^\infty(X,\mu)$ admits a decomposition
$$f=f_{\rm unif}+f_{\rm nil},$$
such that for $\mu$-almost every $x\in X$, the sequence $\big(f_{\rm nil}(T^n x)\big)_{n\in \Z}$ is a $k$-step nilsequence and the function $f_{\rm unif}\in L^\infty(X,\,u)$ satisfies $\HK f_{\rm unif}\HK_{k+1}<\ep$ (\cite[Theorem 16.11]{HK18}).

  \item Let $(X,\X,\mu,T)$ be a m.p.s. and let $k\in \N$. If $f\in L^\infty(\ZZ_k,\mu)$, then for every $\ep>0$ there is a function $\widetilde{f}\in L^\infty(X,\mu)$ such that \begin{itemize}
                  \item $\widetilde{f}\in L^\infty(\ZZ_k, \mu)$, $\|\widetilde{f}\|_{L^\infty(\mu)}\le \|f\|_{L^\infty(X,\mu)}$ and $\|f-\widetilde{f}\|_{L^1(\mu)}\le \ep$,
                  \item for $\mu$-almost every $x\in X$, the sequence $\big(\widetilde{f}(T^n x)\big)_{n\in \Z}$ is a $k$-step nilsequence.
                \end{itemize}
  (see \cite[Proposition 3.1]{CFH11})
\end{enumerate}
\end{rem}

\subsection{Characteristic factors}\
\medskip

In the study of multiple ergodic averages, the idea of characteristic factors plays a very important role. This idea was suggested by Furstenberg in \cite{F77}, and the notion of ``characteristic factors'' was
first introduced in a paper by Furstenberg and Weiss \cite{FW96}.

\begin{de}
Let $(X,\X,\mu, T)$ be a m.p.s. and $(Y,\Y,\mu, T)$ be a factor of $X$. Let $\{p_1,\cdots, p_d\}$ be a family of integral polynomials, $d\in \N$. We say that $Y$ is a {\em $L^2$(resp. a.e.)-characteristic factor} of $X$ for the scheme $\{p_1,\ldots, p_d\}$ if for all $f_1,\ldots,f_d\in L^\infty(X,\X,\mu)$,
\begin{equation*}
\begin{split}
  \frac{1}{N}\sum_{n=0}^{N-1} & f_1(T^{p_1(n)}x) f_2(T^{p_2(n)}x)\cdots f_d(T^{p_d(n)}x)\\
  -  &\frac{1}{N}\sum_{n=0}^{N-1} \E(f_1|\Y)(T^{p_1(n)}x) \E(f_2|\Y)(T^{p_2(n)}x)\cdots \E(f_d|\Y)(T^{p_d(n)}x)\to 0
\end{split}
\end{equation*}
in $L^2(X,\X,\mu)$ (resp. almost everywhere).
\end{de}

In \cite{HK05} the authors showed that the factor $Z_{d-1}$ is characteristic for the scheme $\{n,2n,\cdots, dn\}$, and
hence proved that the averages $ \frac 1 N\sum_{n=0}^{N-1}f_1(T^{n}x)f_2(T^{2n}x)\cdots f_d(T^{dn}x)$ converge in $L^2(X,\mu)$.

\subsection{Furstenberg's Theorem and Bourgain's Theorem}

\subsubsection{}
Let $(X,\X,\mu, T)$ be a m.p.s. Let
$${\mathcal H}_{\rm rat}=\overline{\{f\in L^2(X,\mu): \exists a \in \N \ s.t.\ T^a f=f\}},$$
and let $\X_{\rm rat}\subseteq \X$ such that
${\mathcal H}_{\rm rat}=L^2(X,\X_{\rm rat},\mu).$

\begin{thm}[Furstenberg]\cite[Lemma 3.14]{F}\label{Furstenberg}
Let $(X,\X,\mu, T)$ be a m.p.s. and let $q(n)$ be a non-constant integral polynomial. Then for all $f\in L^2(X, \mu)$,
\begin{equation*}
\frac{1}{N}\sum_{n=0}^{N-1}T^{q(n)}f- \frac{1}{N}\sum_{n=0}^{N-1}T^{q(n)} \E(f|\X_{\rm rat})\longrightarrow 0, \ N\to\infty ,
\end{equation*}
in $L^2(X,\mu)$.
In particular, if $(X,\X,\mu,T)$ is totally ergodic \footnote{A m.p.s. $(X,\X,\mu,T)$ is totally ergodic if $(X,\X,\mu,T^k)$ is ergodic for all $k\in \N$}, then
\begin{equation*}
\frac{1}{N}\sum_{n=0}^{N-1}T^{q(n)}f \longrightarrow \int_X f d\mu, \ N\to\infty ,
\end{equation*}
in $L^2(X,\mu)$.
\end{thm}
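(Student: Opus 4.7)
My plan is to reduce to the case $\E(f\mid\X_{\rm rat})=0$ and show the averages then tend to $0$ in $L^2$. Setting $g=f-\E(f\mid\X_{\rm rat})$, one has $\E(g\mid\X_{\rm rat})=0$, i.e.\ $g\perp{\mathcal H}_{\rm rat}$, and the difference of averages in the statement equals $\frac{1}{N}\sum_{n=0}^{N-1}T^{q(n)}g$; so it suffices to show that this $L^2$-norm tends to $0$. For this, I would apply the spectral theorem to the unitary $U_T$: there is a Borel measure $\sigma_g$ on $S^1$ such that $\langle U_T^k g,g\rangle=\int_{S^1}z^k\,d\sigma_g(z)$ for every $k\in\Z$. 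The subspace ${\mathcal H}_{\rm rat}$ is the closed span of eigenfunctions of $U_T$ with eigenvalues being roots of unity (if $T^a f=f$, the spectral support of $f$ must lie in the $a$-th roots of unity), so $g\perp{\mathcal H}_{\rm rat}$ is equivalent to $\sigma_g(\mathcal{R})=0$, where $\mathcal{R}\subset S^1$ denotes the countable set of all roots of unity.

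The key computation is then
\[
\Big\|\frac{1}{N}\sum_{n=0}^{N-1}U_T^{q(n)}g\Big\|_{L^2(\mu)}^2=\int_{S^1}\Big|\frac{1}{N}\sum_{n=0}^{N-1}z^{q(n)}\Big|^2 d\sigma_g(z),
\]
to which I would apply Weyl's equidistribution theorem. For $z=e^{2\pi i\alpha}$ with $\alpha\notin\mathbb{Q}$, the polynomial $q(n)\alpha$ has a positive-degree coefficient of the form $a_i\alpha$ with $a_i\in\mathbb{Q}\setminus\{0\}$ (since $q$ is a non-constant integral polynomial), which is irrational, so Weyl gives equidistribution of $(q(n)\alpha\bmod 1)$ and hence the inner sum tends to $0$. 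On $\mathcal{R}$, where the inner sum may stay bounded away from $0$, we have $\sigma_g(\mathcal{R})=0$ by the previous step. Since the integrand is bounded by $1$, dominated convergence finishes the argument.

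For the totally ergodic case, if $T^a f=f$ for some $a\in\N$ then total ergodicity of $T^a$ forces $f$ to be $\mu$-a.e.\ constant; thus ${\mathcal H}_{\rm rat}$ consists only of constants and $\E(f\mid\X_{\rm rat})=\int f\,d\mu$. Substituting this into the first part of the theorem (already proved) gives $\frac{1}{N}\sum_{n=0}^{N-1}T^{q(n)}f\to\int f\,d\mu$ in $L^2$. The only step that really requires care is the spectral identification of ${\mathcal H}_{\rm rat}$ with the set of functions whose spectral measures charge no root of unity; once that is in place, the rest is a routine combination of Weyl's theorem and dominated convergence.
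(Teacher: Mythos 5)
The paper does not prove this statement; it is quoted directly from Furstenberg's book \cite[Lemma 3.14]{F}, so there is no in-paper argument to compare against. Your proof is correct and is essentially the classical argument: the spectral identification of ${\mathcal H}_{\rm rat}$ with the range of the spectral projection onto the roots of unity (via $\|U_T^a f-f\|^2=\int|z^a-1|^2\,d\sigma_f$ and the fact that $U_T$ restricted to $\ker(U_T^a-I)$ has order $a$), followed by the computation $\|\frac1N\sum_n U_T^{q(n)}g\|^2=\int_{S^1}|\frac1N\sum_n z^{q(n)}|^2\,d\sigma_g$, Weyl equidistribution for $q(n)\alpha$ with $\alpha$ irrational, and dominated convergence using $\sigma_g(\mathcal R)=0$. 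The totally ergodic reduction is also handled correctly.
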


\begin{thm}[Bourgain]\cite[Theorem 1]{B89}\label{Bouragain}
Let $(X,\X,\mu, T)$ be a m.p.s. and let $q(n)$ be a non-constant integral polynomial and $p>1$. Then there exists a constant $C(p,q)>0$ such that for all $f\in L^p(X, \mu)$,
\begin{equation*}
\Big \| \sup_{N\in \N} \big| \frac{1}{N}\sum_{n=0}^{N-1}T^{q(n)}f\big|\Big \|_{L^p(\mu)}\le C(p,q) \|f\|_{L^p(\mu)}.
\end{equation*}
Moreover, $\displaystyle \frac{1}{N}\sum_{n=0}^{N-1}T^{q(n)}f$ converges almost surely, as $N\to\infty$. \end{thm}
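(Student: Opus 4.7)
The plan is to reduce to a maximal inequality on $\ell^p(\Z)$ via the Calder\'on transference principle, and then combine Fourier analysis in the style of the circle method with an oscillation argument to obtain pointwise convergence. Concretely, I would first show that it suffices to prove the analogous statement for the averages
\begin{equation*}
A_N g(m)=\frac{1}{N}\sum_{n=1}^N g(m-q(n)),\quad g\in \ell^p(\Z),
\end{equation*}
acting by the shift on $\Z$; once the maximal function $g\mapsto \sup_N|A_N g|$ is bounded on $\ell^p(\Z)$, the Calder\'on transference principle (applied to the unitary Koopman action of $T$ on $L^p(X,\mu)$) will hand back the desired maximal inequality for $\frac{1}{N}\sum_{n=0}^{N-1}T^{q(n)}f$.

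Next I would focus on the $\ell^2$ maximal inequality, which is the technical core. Taking Fourier transforms on $\Z$, the operator $A_N$ becomes multiplication by
\begin{equation*}
m_N(\xi)=\frac{1}{N}\sum_{n=1}^N e^{2\pi i q(n)\xi},\qquad \xi\in\mathbb{T}.
\end{equation*}
The strategy is a Hardy--Littlewood circle-method dissection: for each $N$ partition $\mathbb T$ into \emph{major arcs} (small neighborhoods of rationals $a/b$ with $b$ not too large) and \emph{minor arcs}. On minor arcs, Weyl's inequality together with Van der Corput's lemma gives power-savings decay $|m_N(\xi)|\ll N^{-\delta}$, so these contributions sum in $\ell^2$. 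On major arcs, one writes $m_N(\xi)$ as a Gauss-sum factor times an oscillatory integral, compares it to an Ionescu--Wainger type model multiplier of the form $\sum_{a/b}S(a,b)\Phi_N(\xi-a/b)$, and shows the error term is again summable. For the Ionescu--Wainger model one uses multi-frequency maximal inequalities (or, in Bourgain's original argument, an argument based on logarithmic loss in $\log N$ that can be removed via a square-function trick); this furnishes the desired $\ell^2$ maximal bound.

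With the maximal inequality on $\ell^2$ in hand, interpolation with the trivial $\ell^\infty$ bound extends it to $\ell^p$ for all $p>1$ (the $L^p$ theory also requires, strictly speaking, the full Ionescu--Wainger machinery to reach all $1<p<\infty$). To upgrade to pointwise convergence, I would establish convergence on a dense subclass and then use the maximal inequality. A convenient dense class is obtained from the spectral decomposition of $T$ together with the rational Kronecker factor $\X_{\rm rat}$ of Theorem~\ref{Furstenberg}: on eigenfunctions with rational eigenvalue convergence is explicit, while on the orthogonal complement one controls oscillations by exploiting $L^2$-decay of $m_N(\xi)$ away from rationals (an oscillation/variation-norm inequality suffices to pass from convergence along $N=2^k$ to convergence along all $N$). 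Finally, a standard Banach principle argument upgrades convergence from the dense class to every $f\in L^p(X,\mu)$.

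The main obstacle by a wide margin is the $\ell^2$ maximal inequality for $\sup_N|m_N|$: the non-linearity of $q$ means $m_N$ has no simple closed form, so essentially all the work is in the circle-method analysis---proving Weyl-type savings on minor arcs and matching $m_N$ to a tractable model on major arcs with summable error. The transference step, the interpolation, the oscillation estimate and the Banach-principle conclusion are comparatively routine once that Fourier-analytic heart is in place.
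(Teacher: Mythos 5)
This statement is quoted verbatim from Bourgain's 1989 paper (\cite[Theorem 1]{B89}); the paper under review gives no proof of it and simply uses it as a black box, so there is nothing internal to compare your argument against. Your outline does faithfully describe the route Bourgain actually takes: Calder\'on transference to $\ell^p(\Z)$, the circle-method dissection of the multiplier $m_N$, Weyl/van der Corput decay on minor arcs, comparison with a model multiplier built from Gauss sums on major arcs, and interpolation. As a roadmap it is accurate.

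As a proof, however, it has two substantive gaps. First, the entire analytic core --- the $\ell^2$ maximal inequality for $\sup_N|m_N(\xi)|$, i.e.\ the minor-arc power savings, the major-arc approximation with summable error, and the multi-frequency maximal estimate for the model operator --- is named but not carried out; this is not a routine verification but the bulk of Bourgain's paper, and nothing in your sketch would let a reader reconstruct it. Second, and more delicately, your treatment of almost-sure convergence understates the difficulty. For nonlinear $q$ there is no known dense class of functions on which pointwise convergence of $\frac{1}{N}\sum_{n=0}^{N-1}T^{q(n)}f$ is elementary: on the orthocomplement of $\X_{\rm rat}$ one knows the $L^2$-limit is $0$ (Theorem~\ref{Furstenberg}), but identifying the $L^2$-limit says nothing about a.e.\ convergence, and coboundary tricks fail because $q$ is not linear. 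This is precisely why Bourgain proves a quantitative oscillation inequality
\begin{equation*}
\sum_{j=1}^{J}\Big\|\sup_{N_j\le N<N_{j+1}}\big|A_Nf-A_{N_j}f\big|\Big\|_{L^2}^2
\le o(J)\,\|f\|_{L^2}^2
\end{equation*}
for lacunary $N_j$, and that inequality requires running the circle-method analysis a second time, not merely invoking the maximal function plus a Banach principle. Your phrase ``an oscillation/variation-norm inequality suffices'' is correct, but that inequality is the second hard theorem here, not a routine consequence of the first. Finally, note that interpolating the $\ell^2$ maximal bound with $\ell^\infty$ only reaches $p\ge 2$; the range $1<p<2$ asserted in the statement needs the additional machinery you mention only parenthetically.
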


\begin{rem}
By Furstenberg's theorem and Bourgain's theorem, if
$(X,\X,\mu,T)$ is totally ergodic (in particular, when $T$ is weakly mixing), then for all $f\in L^2(X,\mu)$
\begin{equation*}
\frac{1}{N}\sum_{n=0}^{N-1}T^{q(n)}f \longrightarrow \int_X f d\mu, \ N\to\infty,
\end{equation*}
converges almost surely.
\end{rem}

\subsubsection{}
By Theorem \ref{Bouragain} one can show that:

\begin{prop}\cite[Corollary 2.2]{DL1996}\label{prop-DL}
Let $T_1,T_2, \cdots, T_d$ be invertible measure preserving transformations acting on a probability space $(X,\X,\mu)$, and let
$p_1, \ldots, p_d$ be integral polynomials.
Let $q_1,\ldots, q_d \in \R$ be positive real numbers such that $\frac{1}{q}=\sum_{i=1}^d \frac{1}{q_i}<1$. Then all the $d$-tuples $(f_1,\ldots, f_d)$ for which the limit of the averages
\begin{equation}\label{MEA-pol}
  \frac{1}{N}\sum_{n=0}^{N-1} f_1(T_1^{p_1(n)}x)f_2(T_2^{p_2(n)}x)\cdots f_d(T_2^{p_d(n)}x)
\end{equation}
exits a.e. is closed in $L^{q_1}(X,\mu)\times \cdots \times L^{q_d}(X,\mu)$.
\end{prop}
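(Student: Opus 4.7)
The plan is to combine Bourgain's maximal inequality (Theorem \ref{Bouragain}) applied to each individual transformation with a multilinear H\"older estimate, and then to invoke a standard Banach principle argument to close the set of a.e.-convergent tuples. Note that the hypothesis $\sum_{i=1}^d 1/q_i < 1$ forces $q_i>1$ for each $i$, so Bourgain's theorem applies to each coordinate.

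First I would set up a multilinear maximal inequality. For $1\le i\le d$, let
\[
M_i g(x)=\sup_{N\ge 1}\frac{1}{N}\sum_{n=0}^{N-1}|g|\bigl(T_i^{p_i(n)}x\bigr),\qquad
M(f_1,\dots,f_d)(x)=\sup_{N\ge 1}\Bigl|\frac{1}{N}\sum_{n=0}^{N-1}\prod_{i=1}^d f_i\bigl(T_i^{p_i(n)}x\bigr)\Bigr|.
\]
Applying H\"older's inequality to the averaging sum with exponents $q_i/q$ (which sum correctly to $1$ since $\sum q/q_i = 1$; note $q_i\ge q$ so these exponents are $\ge 1$), I obtain
\[
M(f_1,\dots,f_d)(x)\le \prod_{i=1}^d \bigl(M_i(|f_i|^{q_i/q})(x)\bigr)^{q/q_i}.
\]
By Theorem \ref{Bouragain} each $M_i$ is bounded on $L^q(X,\mu)$ (as $q>1$), so $\|M_i(|f_i|^{q_i/q})\|_{L^q}\le C\|f_i\|_{L^{q_i}}^{q_i/q}$. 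A second application of H\"older in the spatial variable, with the same exponents, yields the desired bound
\[
\|M(f_1,\dots,f_d)\|_{L^q(X,\mu)}\le C^d\prod_{i=1}^d \|f_i\|_{L^{q_i}(X,\mu)}.
\]

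Second, I would run the Banach principle. Set
\[
D(f_1,\dots,f_d)(x)=\limsup_{N\to\infty}A_N(x)-\liminf_{N\to\infty}A_N(x),
\]
where $A_N(x)$ is the average in \eqref{MEA-pol}. Clearly $0\le D\le 2M$. Suppose $(f_1^{(k)},\dots,f_d^{(k)})\to(f_1,\dots,f_d)$ in $L^{q_1}\times\cdots\times L^{q_d}$ and each tuple in the sequence yields a.e. convergence, i.e.\ $D(f_1^{(k)},\dots,f_d^{(k)})=0$ a.e. Writing $f_i=f_i^{(k)}+h_i^{(k)}$ with $h_i^{(k)}=f_i-f_i^{(k)}$ and expanding the product $\prod_i f_i$ into $2^d$ terms, each mixed term (i.e.\ each term other than $\prod_i f_i^{(k)}$) contains at least one factor $h_j^{(k)}$. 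Since $D$ is subadditive in each slot, we get
\[
D(f_1,\dots,f_d)\ \le\ D(f_1^{(k)},\dots,f_d^{(k)})\ +\ 2\sum_{\text{mixed}} M(\dots),
\]
where the sum runs over the $2^d-1$ mixed tuples. The first term vanishes a.e.; for each mixed term the maximal inequality gives an $L^q$ bound with at least one factor $\|h_j^{(k)}\|_{L^{q_j}}\to 0$ and the remaining factors uniformly bounded. Hence $\|D(f_1,\dots,f_d)\|_{L^q}=0$, so $D=0$ $\mu$-a.e., and the limit of the averages exists a.e.

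The main technical step is the multilinear H\"older argument bounding the product maximal function by single-coordinate maximal functions of powers of the $f_i$'s; once the exponents are correctly calibrated against the hypothesis $\sum 1/q_i=1/q<1$, the Banach-principle closure follows in routine fashion from Bourgain's scalar maximal inequality.
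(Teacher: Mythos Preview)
Your proof is correct and follows essentially the same approach as the paper (which cites \cite{DL1996} for this proposition and reproduces the argument in the appendix, proof of Theorem~\ref{thm-A2}): bound the product maximal function via H\"older by a product of single-term maximal functions, invoke Bourgain's maximal inequality on each factor, and then run the standard oscillation/Banach-principle argument with a telescoping expansion. The only cosmetic difference is that the paper's appendix specializes to $q_i=2d$, $q=2$ (since there only $L^\infty$ functions are needed), whereas you carry the general exponents $q_i$ throughout.
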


This proposition says that if $\{(f_1^{(k)}, \ldots, f_d^{(k)})\}_{k\in \N}\subseteq \prod_{i=1}^dL^{q_i}(X,\mu)$ and the limit of the averages $(\ref{MEA-pol})$ exists a.e. for each $(f_1^{(k)}, \ldots, f_d^{(k)}), k\in\N$, then for any limit point $(f_1,\ldots,f_d)$ of $\{(f_1^{(k)}, \ldots, f_d^{(k)})\}_{k\in \N}$ in $\prod_{i=1}^dL^{q_i}(X,\mu)$, $\displaystyle \frac{1}{N} \sum_{n=0}^{N-1} f_1(T_1^{p_1(n)}x)\cdots f_d(T_d^{p_d(n)}x)$ exists a.e. Hence by this result, to study $(\ref{MEA-pol})$ in $\prod_{i=1}^dL^{q_i}(X,\mu)$, one may assume that $f_1,\ldots,f_d\in L^\infty(X,\mu)$ or other dense subsets.

\subsection{Multiple polynomial ergodic theorems}\
\medskip

The following theorem was proven under some supplementary assumption about the polynomials in \cite{HK051} and
was proven in its general form in \cite{Leibman05-Isr}. See \cite{Walsh} for a more general result.

\begin{thm}[Multiple Polynomial Ergodic Theorem]\label{thm-Leibman-main}
Let $(X,\X,\mu, T)$ be an ergodic m.p.s. and $d\in \N$. Let $p_1,p_2,\ldots, p_d$ be integral polynomials. Then for any F{\o}lner sequence $\{\Phi_N\}_{N=1}^\infty$ in $\Z$ and for any $f_1,f_2,\ldots, f_d\in L^\infty(X)$, the averages
$$\frac{1}{|\Phi_N|}\sum_{{n}\in \Phi_N} T^{p_1({n})}f_1\cdots T^{p_d({ n})}f_d$$
converge in $L^2(X,\mu)$ as $N\to\infty$.
%Furthermore there exists $k\in \N$, depending only on the polynomials $p_1,\ldots, p_d$, such that the limit is measurable with respect to the factor $Z_k(X)$ of order $k$ of $X$.
\end{thm}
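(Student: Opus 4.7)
My proof proposal for the Multiple Polynomial Ergodic Theorem follows the characteristic factor strategy of Host--Kra and Bergelson--Leibman. The plan is to show that some nilfactor $(Z_{k-1},\ZZ_{k-1},\mu_{k-1},T)$ is characteristic for the scheme $\{p_1,\ldots,p_d\}$, and then to establish convergence on the nilfactor by invoking Leibman's equidistribution theory for polynomial orbits on nilmanifolds.

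First, I would reduce to bounded functions $f_i\in L^\infty(X,\mu)$ with $\|f_i\|_\infty\le 1$ (by density this suffices, together with a uniform norm bound on the averages). For the Følner sequence, it is enough to treat $\Phi_N=[0,N-1]$; the general case follows by comparing partial sums, since any Følner sequence in $\Z$ can be approximated in density by intervals and the ergodic averages admit a universal $L^2$-bound.

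The heart of the proof is the \emph{PET induction} of Bergelson. Fix a polynomial family $\A=\{p_1,\ldots,p_d\}$ and a complexity ordering on such families (the Bergelson--Leibman weight vector, ordered lexicographically by $(\deg,\text{leading coefficient multiplicity},\ldots)$). Applying the van der Corput lemma to the averaged product $\prod_i T^{p_i(n)}f_i$ yields a bound on $\limsup_N\|\cdot\|_{L^2}^2$ by an average over $h$ of the norms of the shifted family $\{p_i(n+h)-p_j(n)\}_{i,j}$ for a distinguished index $j$. Choosing $j$ as the polynomial of lowest degree decreases the PET weight, and iterating this reduction finitely many times reduces the whole expression to an expression involving only a single polynomial, or to averages of linear expressions $T^{a_1n}\otimes\cdots\otimes T^{a_kn}$ along diagonal-like joinings. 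Combining this iterated van der Corput estimate with Lemma \ref{lem-AP-vdc} and Lemma \ref{lem-AP-HK}, one produces an integer $k=k(\A)$ and an index $i_0$ such that
\[
\limsup_{N\to\infty}\Big\|\frac{1}{N}\sum_{n=0}^{N-1}\prod_{i=1}^d T^{p_i(n)}f_i\Big\|_{L^2(\mu)}\le C(\A)\cdot\HK f_{i_0}\HK_k.
\]
By the definition of $\ZZ_{k-1}$ (and symmetry in the indices, by permuting the roles), this forces $Z_{k-1}$ to be a characteristic factor: the limit is unchanged when each $f_i$ is replaced by $\E(f_i|\ZZ_{k-1})$.

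Second, by the Host--Kra structure theorem, $(Z_{k-1},\mu_{k-1},T)$ is a $(k-1)$-step pro-nilsystem, and by Theorem \ref{thm-HKM} it has a topological model that is an inverse limit of minimal $(k-1)$-step nilsystems; a standard approximation argument reduces convergence to a single nilsystem $X=G/\Gamma$ with continuous $f_i$. On such an $X$, the polynomial orbit $n\mapsto (t^{p_1(n)}x,\ldots,t^{p_d(n)}x)$ lies on a closed subnilmanifold of $X^d$ on which, by Leibman's polynomial equidistribution theorem on nilmanifolds, it is uniformly distributed with respect to the Haar measure. This yields convergence of the averages $\frac{1}{N}\sum_{n=0}^{N-1}\prod_i f_i(t^{p_i(n)}x)$ uniformly in $x$, hence in $L^2$. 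Transferring back through the nilfactor gives the desired $L^2$-convergence on $X$.

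The main obstacle, and the technically heaviest step, is the PET induction together with the verification that it actually terminates and produces a control by a fixed Host--Kra seminorm $\HK\cdot\HK_k$: one must carefully verify that the chosen weight function strictly decreases under each van der Corput step and that the residual ``boundary'' terms (arising from shifts $p_i(n+h)-p_j(n)$ that become constant) do not disrupt the reduction. The equidistribution step on nilmanifolds is also nontrivial but is entirely handled by Leibman's theorem, which I would use as a black box.
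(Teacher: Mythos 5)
The paper does not prove this theorem at all: it is stated in the preliminaries as an imported result, with the remark that it ``was proven under some supplementary assumption about the polynomials in \cite{HK051} and was proven in its general form in \cite{Leibman05-Isr}.'' So there is no in-paper proof to compare against; what you have written is a reconstruction of the argument from those references, and as a high-level outline it is faithful to them. The two pillars you identify --- (i) PET induction via the van der Corput lemma to show that some Host--Kra factor $Z_{k-1}$ is characteristic for the scheme $\{p_1,\ldots,p_d\}$, so that each $f_i$ may be replaced by $\E(f_i|\ZZ_{k-1})$, and (ii) convergence on the pro-nilfactor via Leibman's pointwise equidistribution theorem for polynomial orbits $n\mapsto g(n)\Gamma$ on nilmanifolds, applied to $g(n)=(t^{p_1(n)},\ldots,t^{p_d(n)})$ in $G^d$ --- are exactly the structure of the Host--Kra/Leibman proof. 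A few of your intermediate claims are stated more loosely than the literature warrants but do not hide real gaps: the reduction from a general F{\o}lner sequence in $\Z$ to intervals requires the \emph{uniform} Ces\`aro convergence over windows $[M,N)$ with $N-M\to\infty$ (which the van der Corput/characteristic-factor machinery does deliver), together with the observation that a F{\o}lner set in $\Z$ has average run length tending to infinity; the PET step yields, in general, only the qualitative statement that $\HK f_{i_0}\HK_k=0$ forces the limit to vanish (for each index in turn), rather than a clean quantitative bound $C(\A)\HK f_{i_0}\HK_k$, but the qualitative version suffices for characteristicity; and Leibman's theorem gives equidistribution on a \emph{finite union} of subnilmanifolds rather than a single one, which still yields convergence of the averages for every base point. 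Since the heaviest step (termination of the PET induction with control by a fixed seminorm) is explicitly deferred to the cited sources rather than carried out, your proposal should be read as a correct roadmap rather than a self-contained proof --- which is consistent with how the paper itself treats the statement.
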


%\begin{rem}
%\begin{enumerate}
%  \item Theorem \ref{thm-Leibman-main} was proven under some supplementary assumption about the polynomials in \cite{HK051} and in its general form was proven in \cite{Leibman05-Isr}.
%  \item See \cite{Walsh} for a more general result.
%\end{enumerate}
%\end{rem}
The final result we state here is the following

\begin{thm}\cite[Theorem A$_0$]{BL96}\label{thm-mul-rec}
Let $(X,\X,\mu)$ be a probability space, let $T_1,\ldots, T_d$ be commuting measure preserving transformations of $X$, let $p_1,p_2,\cdots, p_d$ be integral polynomials with $p_1(0)=\cdots =p_d(0)=0$, and let $A\in X$ with $\mu(A)>0$. Then
$$\liminf_{N\to\infty} \frac{1}{N}\sum_{n=0}^{N-1} \mu(T_1^{-p_1(n)}A\cap T_2^{-p_2(n)}A\cap \cdots \cap T_d^{-p_d(n)}A)>0.$$
\end{thm}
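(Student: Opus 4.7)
The plan is to prove a positive $L^2$ lower bound for the averages
$$A_N = \frac{1}{N} \sum_{n=0}^{N-1} \prod_{i=1}^d T_i^{p_i(n)} \mathbf{1}_A,$$
and then integrate against $\mathbf{1}_A$ to obtain the claim. First I would use the ergodic decomposition of $\mu$ with respect to the $\Z^d$-action generated by the commuting transformations $T_1, \ldots, T_d$: since the integrand is non-negative and the property ``$\liminf > 0$'' is stable under integration over ergodic components of positive measure, we may reduce to the case where $\mu$ is ergodic for this joint action. Existence of the $L^2$ limit along each such component is already furnished by Theorem \ref{thm-Leibman-main} (or Walsh's extension), so the liminf is actually a limit.

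The central engine is Bergelson's \emph{PET (polynomial exhaustion technique) induction}. One assigns to the family $\{p_1, \ldots, p_d\}$ a complexity invariant — the PET weight — and inductively reduces it by repeated applications of the van der Corput inequality: squaring $\|A_N\|_{L^2}$ and averaging over an auxiliary shift $h$ replaces each polynomial $p_i(n)$ by the difference $p_i(n+h) - p_i(n)$, and after choosing a suitable pivot polynomial and a pivot transformation the PET weight strictly decreases. After finitely many such steps the configuration is reduced to a purely linear one $\{c_i n\}$ acting by a finite family of commuting transformations on (a product extension of) the original system.

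At the base of the induction I would invoke the Furstenberg--Katznelson multiple recurrence theorem for commuting transformations, proved via the Furstenberg structure theorem: any ergodic $\Z^k$-system is a (possibly transfinite) tower of primitive extensions that are either compact or weakly mixing, and the SZ (Szemer\'edi) property lifts through compact extensions by the IP / color-focusing argument and through weakly mixing extensions by another application of van der Corput. This produces the required positive lower bound at the linear base case. For bookkeeping purposes it helps to know that the characteristic factor for the linear averages is a pro-nilfactor (cf.\ Host--Kra), on which Leibman's equidistribution theorem identifies the limit explicitly and certifies its positivity.

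The part I expect to be most delicate is that PET induction was designed to control convergence and shrink seminorms, not obviously to preserve the \emph{positivity} of the liminf. This is handled by interleaving the two inductions: at each PET step one verifies, via a Gowers-type inequality, that the original average is controlled below by a simpler average in which the indicators $\mathbf{1}_A$ are replaced by non-negative dual functions still ``essentially concentrated'' on $A$, so that a strict positivity bound at the reduced level pulls back to a strict positivity bound at the original level. Modulo these technicalities, chaining the PET reduction with the Furstenberg--Katznelson linear theorem delivers the desired $\liminf > 0$.
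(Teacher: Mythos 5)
You should first note that the paper does not prove this statement at all: it is quoted verbatim as \cite[Theorem A$_0$]{BL96} (the Bergelson--Leibman polynomial Szemer\'edi theorem) and used as a black box, so the only meaningful benchmark for your sketch is the actual Bergelson--Leibman argument. Measured against that, your outline has the right cast of characters (ergodic decomposition for the $\Z^d$-action, the Furstenberg--Katznelson structure theorem, PET induction, van der Corput), but the way you chain them together contains a genuine gap --- one you yourself flag. Van der Corput and PET induction produce \emph{upper} bounds on $L^2$-norms of averages; they can be used to discard terms or to identify characteristic factors, but they cannot transport a \emph{lower} bound on a linear average back to a lower bound on the polynomial average. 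Your proposed repair --- ``a Gowers-type inequality [showing] the original average is controlled below by a simpler average'' --- is not available: all the inequalities in this circle of ideas (van der Corput, Cauchy--Schwarz--Gowers, Lemma \ref{lem-AP-vdc} of this paper) point in the wrong direction, and the polynomial recurrence theorem is strictly stronger than the linear Furstenberg--Katznelson theorem, so no soft reduction of the former to the latter can exist.

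What Bergelson and Leibman actually do is run the Furstenberg--Katznelson structure-theoretic induction \emph{directly on the polynomial configuration}: they show that the SZ property for the family $\{(T_i,p_i)\}$ lifts through weakly mixing extensions (this is where PET and van der Corput enter, and where upper bounds suffice) and through compact extensions. The compact-extension step is the genuinely new part and requires a \emph{polynomial} van der Waerden / Hales--Jewett-type coloring theorem, which they prove separately via topological dynamics of IP-systems; it does not follow from the linear van der Waerden theorem used in the classical color-focusing argument, which is precisely why your ``reduce to the linear base case'' plan cannot close. Your closing remark that the characteristic factor is a pro-nilfactor ``(cf.\ Host--Kra)'' also does not apply here: Host--Kra theory identifies nilfactors for powers of a \emph{single} transformation, whereas this theorem concerns arbitrary commuting $T_1,\ldots,T_d$, for which no such description of characteristic factors is available (and none is needed in the Bergelson--Leibman proof).
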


\section{Polynomial Furstenberg joinings}\label{section-furstenberg-joining}

In this section we introduce polynomial Furstenberg joinings and give some basic properties.

\subsection{Standard measures on product spaces }\
\medskip

In this subsection, we explain why we need to assume that $X$ is a compact metric space in a m.p.s. $(X,\X,\mu,T)$.
For more details see \cite{F77} and \cite{F}.

\subsubsection{}
Recall that two probability spaces $(X,\X,\mu)$, $(X', \X',\mu')$ are isomorphic if there exist null sets $N\subseteq X, \widetilde{N}\subseteq \widetilde{X}$ and an one to one measurable, measure preserving map $\phi: X\setminus N\rightarrow \widetilde{X}\setminus \widetilde{N}$. We say that $(X,\X,\mu)$ is a {\em regular measure space}\footnote{Here we use definitions from \cite{F77, F}. This notion consists with the  Lebesgue space used in the previous sections.} if it is isomorphic to a  measure space $(\widetilde{X},\widetilde{\X},\widetilde{\mu})$, where $\widetilde{X}$ is a compact metric space, $\widetilde{\X}$ the $\sigma$-algebra of Borel sets, and $\widetilde{\mu}$ a regular Borel measure (in this case, we say $(\widetilde{X},\widetilde{\X},\widetilde{\mu})$ is a {\em topological model} of $(X,\X,\mu)$). The advantage in dealing with compact metric spaces is that in this situation measures are determined by positive linear functionals on the algebra of continuous functions $C(X)$. If $X$ is compact metric, this algebra is separable and the functional is determined by its values on a dense countable set.

A m.p.s. $(X,\X,\mu, T)$ is called {\em regular} if $(X,\X,\mu)$ is a regular measure space. A regular m.p.s, always has a compact metric model $(\widetilde{X},\widetilde{\X},\widetilde{\mu},\widetilde{T})$, where $\widetilde{T}$ is a homeomorphism of the compact metric space $\widetilde{X}$ \cite[Theorem 5.1.5]{F}.

\subsubsection{}
Let $(X_i,\X_i,\mu_i)$ be regular probability spaces, $i\in I$ and $I$ be a finite or countable index set. Let $ \Omega=\prod_{i\in I} X_i$ with the product $\sigma$-algebra $ \B=\prod_{i\in I} \X_i$, i.e. the least $\sigma$-algebra with respect to which the projections
$\Omega\rightarrow X_i$ are measurable for all $i\in I$.

A measure $\lambda$ on $(\Omega,\B)$ is called {\em standard} if its image in each $X_i$ is $\mu_i$.\footnote{Let  $(X_i,\X_i,\mu_i, T_i)$ be m.p.s., where $i\in I$ and $I$ is an index set. Then a Borel probability measure $\lambda$ on $\prod_{i\in I} X_i$ is a joining of $\{(X_i,\X_i,\mu_i, T_i)\}_{i\in I}$ if it is a $\prod _{i\in I} T_i$-invariant standard measure on $\prod_{i\in I} X_i$.} For $i_1,\ldots, i_n\in I$ and $f_{i_j}: X_{i_j}\rightarrow \C, 1\le j\le n$, let $\displaystyle f_{i_1}\otimes f_{i_2}\otimes \cdots \otimes f_{i_n}: \Omega=\prod_{i\in I} X_i\rightarrow \C$ be  defined as follows:
\begin{equation}\label{de-otimes-2}
  f_{i_1}\otimes f_{i_2}\otimes \cdots \otimes f_{i_n}\big( (x_i)_{i\in I}\big)= f_{i_1}(x_{i_1})f_{i_2}(x_{i_2})\cdots f_{i_n}(x_{i_n}).
\end{equation}
It is easy to see that if $f_{i_j}\in L^\infty(X_{i_j},\mu_{i_j}), 1\le j\le n$, then $\displaystyle f_{i_1}\otimes f_{i_2}\otimes \cdots \otimes f_{i_n}\in L^\infty (\Omega, \lambda)$ since $\lambda$ is standard.

%A function on $\Omega$ will be called {\em standard} if it is a uniform limit of combinations $\displaystyle \sum_{t=1}^k f^{(t)}_{i^{(t)}_1}\otimes f^{(t)}_{i^{(t)}_2}\otimes \ldots \otimes f^{(t)}_{i^{(t)}_{n_t}}$, where $k\in \N$, $i^{(t)}_1,\ldots, i_{n_t}^{(t)}\in I, 1\le t\le k$ and $f^{(t)}_{i^{(t)}_j}: X_{i^{(t)}_j}\rightarrow \C$.

Note that if each $(X_i,\X_i,\mu_i)$ is regular, and $\lambda$ on $\Omega$ is standard, then $(\Omega,\B,\lambda)$ is also regular. Let $\{(\widetilde{X}_i, \widetilde{\X}_i, \widetilde{\mu}_i)\}_{i\in I}$ be compact metric models of $\{(X_i,\X_i,\mu_i)\}_{i\in I}$. Let $\{O_i\}_{i\in I}$ and $\{\widetilde{O}_i\}_{i\in I}$ be null sets in each $X_i$ and $\widetilde{X}_i$ respectively such that $\phi_i: X_i\setminus O_i\rightarrow \widetilde{X}_i\setminus \widetilde{O}_i$ is isomorphic. Since $\mu_i(O_i)=0$, we have
$$\lambda \big(\prod_{i\in I}(X_i\setminus O_i)\big)=1.$$
Thus $\lambda$ may be carried over to a unique regular Borel measure in $\prod_{i\in I} \widetilde{X}_i $.

%{\red In the sequel we shall in fact always attach to such a product of regular spaces the compact metric space which is the product of the corresponding component spaces.}

%Note that the continuous functions in the product space will then correspond to standard functions.

\subsubsection{}
Now assume that each $X_i$ is a compact metric space and $\Omega=\prod_{i\in I} X_i$. Let
%$$\F =\Big\{\sum_{t=1}^k f_{n_1}^{(t)}\otimes f_{n_2}^{(t)}\otimes \ldots \otimes f_{n_t}^{(t)}: k\in \N,  n_j\in I, f_{n_j}^{(t)}\in C( X_{n_j}), %1\le j\le t, 1\le t\le k\Big\}.$$
$$\F =\Big\{\sum_{t=1}^k f^{(t)}_{i^{(t)}_1}\otimes f^{(t)}_{i^{(t)}_2}\otimes \cdots \otimes f^{(t)}_{i^{(t)}_{n_t}}: k\in \N,  i^{(t)}_j\in I, f^{(t)}_{i^{(t)}_j}\in C( X_{i^{(t)}_j}), 1\le j\le n_t , 1\le t\le k\Big\}.$$

$\F$ is a subalgebra of $C(\Omega)$ and separates points. By the Stone-Weierstrass theorem, $\F$ is dense in $C(\Omega)$. Thus each positive linear functional on $C(\Omega)$ is determined uniquely by its value on $\F$. In particular, if we want to define a measure $\lambda$ on $\Omega$, we only need to check $\lambda (f_{i_1}\otimes f_{i_2}\otimes \cdots \otimes f_{i_n})$ for all $n\in \N$, and for all $f_{i_1}\in C(X_{i_1}),\ldots, f_{i_n}\in C(X_{i_n})$.

\medskip

{\bf In the sequel we always assume that $(X,\X,\mu, T)$ is regular, i.e. $X$ is a compact metric space.}
%and $T$ is a homeomorphism

\subsection{A self-joining $\mu_\A^{(\infty)}$}

%Let ${\mathcal P}_0$ be the set of non-constant integral polynomials $p(n)$ with $p(0)=0$.
\subsubsection{Some notations}
Let $(X,\X,\mu, T)$ be a m.p.s. and $d\in \N$. Let $\A=\{p_1, p_2,\ldots, p_d\}$ be a family of non-constant integral polynomials.
We will define a joining $\mu^{(\infty)}_\A$ on $(X^d)^\Z$.

\medskip

The point of $(X^d)^{\Z}$ is denoted by
$${\bf x}=({\bf x}_n)_{n\in \Z}=(\ldots, {\bf x}_{-1}, \underset{\bullet}{\bf x_0},{\bf x}_1,\ldots )
=\Big((x^{(1)}_n, x^{(2)}_n,\ldots, x^{(d)}_n) \Big)_{n\in \Z},$$
where $`` \ \underset{\bullet } \ "$ means the $0$-th coordinate.
Let $\vec{p}=(p_1,p_2,\ldots, p_d)$ and let $T^{\vec{p}(n)}: X^d\rightarrow X^d$ be defined by
\begin{equation}\label{}
  T^{\vec{p}(n)}\Big((x_1, x_2, \ldots, x_d)\Big)=(T^{p_1(n)}x_1, T^{p_2(n)}x_2,\ldots, T^{p_d(n)}x_d).
\end{equation}

Define $T^\infty: (X^d)^{\Z}\rightarrow (X^d)^{\Z}$ such that
$T^\infty= \cdots \times T^{(d)}\times T^{(d)} \times \cdots \ (\infty \ \text{times}),$ i.e.
$$T^\infty ({\bf x}_n)_{n\in \Z}=(T^{(d)}{\bf x}_n)_{n\in \Z}.$$
Let $\sigma: (X^d)^{\Z}\rightarrow (X^d)^{\Z}$
be the shift map, i.e.,  for all ${\bf x}=({\bf x}_n)_{n\in \Z}\in (X^d)^{\Z}$,
$$(\sigma {\bf x})_n={\bf x}_{n+1} , \ \forall n\in \Z.$$
Let $x^{\otimes d}=(x,x,\ldots, x)\in X^d$ and
$$\D_{\infty}(X)=\{x^{(\infty)}\triangleq (\ldots, x^{\otimes d}, x^{\otimes d},\ldots ) \in (X^d)^{\Z}: x\in X\}.$$
For each $x\in X$, put
\begin{equation}\label{}
 \begin{split}
  \w_x^\A &\triangleq (T^{\vec{p}(n)}x^{\otimes d})_{n\in \Z}\\
   &=(\cdots, T^{\vec{p}(-1)}(x^{\otimes d}), \underset{\bullet}{T^{\vec{p}(0)}x^{\otimes d}},T^{\vec{p}(1)}(x^{\otimes d}),T^{\vec{p}(2)}(x^{\otimes d}), \cdots)\in (X^d)^{\Z}.
  \end{split}
\end{equation}
%where $$T^{\vec{p}(n)}(x^{\otimes d})=(T^{p_1(n)}x, T^{p_2(n)}x,\cdots, T^{p_d(n)}x), \forall n\in \Z.$$

\subsubsection{An $\infty$-joining with respect to $\A$} Now we will define an $\infty$-joining with respect to a given m.p.s.
$(X, \X, \mu, T)$ and $\A=\{p_1,\ldots,p_d\}$.
\begin{de}\label{de-infty-measure}
Let $(X, \X, \mu, T)$ be a m.p.s., $d\in\N$ and $\A=\{p_1, p_2,\ldots, p_d\}$ be a set of non-constant integral polynomials. Let $\mu_\A ^{(\infty)}$ be the measure on $(X^d)^{\Z}$ defined by
\begin{equation}\label{f1}
  \int_{(X^d)^{\Z}} F({\bf x}) d\mu_\A^{(\infty)}({\bf x})
    = \lim_{N\rightarrow +\infty} \frac{1}{N}\sum_{n=0}^{N-1} \int_X F( \sigma^n(\w_x^\A)) d\mu(x)
\end{equation}
for all $F\in C((X^d)^{\Z})$. We call $\mu^{(\infty)}_\A$ the {\em $\infty$-joining with respect to $\A$}.

\end{de}
 Note that the limits exist by the remark below and the equality
\begin{equation*}
\begin{split}
     \sigma^n(\w_x^\A)=(T^{\vec{p}(n+j)}x^{\otimes d})_{j\in \Z}=
   (\ldots, T^{\vec{p}(n-1)}x^{\otimes d}, \underset{\bullet} {T^{\vec{p}(n)}x^{\otimes d}},T^{\vec{p}(n+1)}x^{\otimes d}, \ldots).
  \end{split}
\end{equation*}
Clearly, the measure $\mu_\A^{(\infty)}$ is invariant under $T^\infty$ and $\sigma$.

\begin{rem}
Now we explain the existence of \eqref{f1}. Let $f^\otimes$ be elements of $C(X^d)$ with the following forms:
$$f^\otimes\triangleq f^{(1)}\otimes f^{(2)} \otimes \cdots \otimes f^{(d)},$$
where $f^{(1)}, f^{(2)},\cdots, f^{(d)}\in C(X)$.
By the Stone-Weierstrass theorem, the set
$$\F= \left\{\bigotimes_{j=-l}^l f_j^\otimes : l\in \N, f^\otimes _1, f^\otimes _2, \ldots, f^\otimes _l\in C(X^d)\right\}$$
is dense in $C((X^d)^{\Z})$. Note that by \eqref{de-otimes-2}, $\bigotimes_{j=-l}^l f_j^\otimes$ can be regarded as an element of $C((X^d)^\Z)$.

Thus \eqref{f1} holds if and only if for all $l \in \N$ and all $f_j^\otimes = f_j^{(1)}\otimes f_j^{(2)} \otimes \cdots \otimes f_j^{(d)} \in C(X^d), -l \le j\le l$,
\begin{equation}\label{f2}
\begin{split}
  &\int_{(X^d)^{\Z}} \Big( \bigotimes_{j=-l}^l f_j^\otimes \Big)({\bf x})d\mu_\A^{(\infty)}({\bf x})\\
  &=\lim_{N\rightarrow +\infty} \frac{1}{N}\sum_{n=0}^{N-1}\int_X \prod_{j=-l}^l f_j^\otimes (T^{\vec{p}(n+j)}x^{\otimes d}) d\mu(x) \\
  &= \lim_{N\rightarrow +\infty} \frac{1}{N}\sum_{n=0}^{N-1}\int_X \prod_{j=-l}^l f_j^{(1)}(T^{p_1(n+j)}x) f_j^{(2)} (T^{p_2(n+j)}x) \cdots f_j^{(d)}(T^{p_d(n+j)}x) d\mu(x) ,
\end{split}
\end{equation}
where the limits exist by Theorem \ref{thm-Leibman-main}. Thus $\mu^{(\infty)}_\A$ is well defined.

\medskip

Another way to see $\mu_\A^{(\infty)}$ is as follows:
\begin{equation}\label{f3}
\frac {1}{N} \sum_{n=0}^{N-1} (\cdots \times T^{\vec{p}(n-1)}\times  \underset{\bullet}{T^{\vec{p}(n)}}\times T^{\vec{p}(n+1)}\times \cdots)_* \mu_\D^{\infty}\longrightarrow
\mu_\A^{(\infty)},
\end{equation}
as $N\to \infty$ in the weak$^*$ topology of ${\mathcal M}((X^d)^{\Z})$, where $\mu_\D^{\infty}$ is the diagonal measure on $(X^d)^{\Z}$ as defined as follows
\begin{equation*}
    \int_{(X^d)^{\Z}} \Big( \bigotimes_{j=-l}^l f_j^\otimes \Big)({\bf x})\ d \mu_\D^{\infty}({\bf x})=
    \int_X \prod_{j=-l}^l f_j^\otimes (x^{\otimes d}) \ d\mu(x),
\end{equation*}
for all $l \in \N$ and  all $f_j^\otimes = f_j^{(1)}\otimes f_j^{(2)} \otimes \cdots \otimes f_j^{(d)} \in C(X^d), -l \le j\le l$.
\end{rem}

\begin{rem}\label{rem-3.5}
Note that $\mu_\A^{(\infty)}$ is a standard measure. To see this fact, let $\Pi: (X^d)^{\Z}\rightarrow X$ be the projection to some coordinate. Let $\widetilde{\mu}=(\Pi)_*\mu_\A^{(\infty)}$. Then for all $f\in C(X)$,
$$\widetilde{\mu}(f)=\lim_{N\rightarrow +\infty} \frac{1}{N}\sum_{n=0}^{N-1}\int_X f(T^{q(n)}x) d\mu(x) =\int_Xfd\mu =\mu(f),$$
where $q(n)=p(n+m)$ for some $p\in \A, m\in \Z$.
Thus $\widetilde{\mu}=\mu$, and $\mu^{(\infty)}_\A$ is a standard measure.
\end{rem}

\subsubsection{}
Now we show \eqref{f2} still holds if we replace the continuous functions by bounded measurable ones, and the definition of $\mu^{(\infty)}_\A$ is independent of the choice of topological models of $(X,\X,\mu,T)$. That is, we will show

\begin{prop}\label{prop-mesurable}
Let $(X, \X, \mu, T)$ be a m.p.s., and let $\A=\{p_1, \ldots, p_d\}$ be a family of non-constant integral polynomials. Then for all $l \in \N$ and all $f_j^\otimes = f_j^{(1)}\otimes f_j^{(2)} \otimes \cdots \otimes f_j^{(d)}$ (where $ f^{(i)}_j\in L^\infty(X,\mu), 1\le i\le d,  -l \le j\le l$)
\begin{equation}\label{f7}
\begin{split}
   \int_{(X^d)^{\Z}} \Big( \bigotimes_{j=-l}^l f_j^\otimes \Big)({\bf x})d\mu_\A^{(\infty)}({\bf x})=\lim_{N\rightarrow +\infty} \frac{1}{N}\sum_{n=0}^{N-1}\int_X \prod_{j=-l}^l f_j^\otimes (T^{\vec{p}(n+j)}x^{\otimes d}) d\mu(x),
\end{split}
\end{equation}
where $f_j^\otimes (T^{\vec{p}(n+j)}x^{\otimes d})=f_j^{(1)}(T^{p_1(n+j)}x) f_j^{(2)} (T^{p_2(n+j)}x) \cdots f_j^{(d)}(T^{p_d(n+j)}x).$
\end{prop}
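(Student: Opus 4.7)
The plan is to pass from the continuous case established by Definition~\ref{de-infty-measure} (equation~\eqref{f2}) to bounded measurable $f_j^{(i)}\in L^\infty(X,\mu)$ by an $L^1$-approximation combined with a telescoping-product argument.

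Fix $\epsilon>0$. By density of $C(X)$ in $L^1(X,\mu)$ together with a continuous radial truncation at level $M:=\max_{j,i}\|f_j^{(i)}\|_\infty$, choose for each pair $(j,i)$ a function $\tilde f_j^{(i)}\in C(X)$ with $\|\tilde f_j^{(i)}\|_\infty\le M$ and $\|f_j^{(i)}-\tilde f_j^{(i)}\|_{L^1(\mu)}<\epsilon$. Set $K=(2l+1)d$ and list the $K$ factors $f_j^{(i)}$ as $f_1,\ldots,f_K$, with $\tilde f_k$ the corresponding approximant; each $f_k$ is attached to a distinct coordinate of $(X^d)^{\Z}$.

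For the left-hand side of~\eqref{f7}, apply the telescoping identity
\[
\prod_{k=1}^K f_k-\prod_{k=1}^K \tilde f_k=\sum_{k=1}^K\Big(\prod_{m<k}\tilde f_m\Big)(f_k-\tilde f_k)\Big(\prod_{m>k}f_m\Big),
\]
integrate against $\mu_\A^{(\infty)}$, and use Remark~\ref{rem-3.5}, which says the marginal of $\mu_\A^{(\infty)}$ on each single $X$-coordinate is $\mu$. Since the factors other than $f_k-\tilde f_k$ are bounded by $M$, each telescoped term is at most $M^{K-1}\epsilon$, so the total difference on the LHS is bounded by $KM^{K-1}\epsilon$. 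For the right-hand side of~\eqref{f7}, apply the same telescoping pointwise in $x$ to the integrands $\prod_{j,i}f_j^{(i)}(T^{p_i(n+j)}x)$ and $\prod_{j,i}\tilde f_j^{(i)}(T^{p_i(n+j)}x)$; after integrating over $\mu$, the $T$-invariance of $\mu$ collapses each telescoped term to $M^{K-1}\|f_k-\tilde f_k\|_{L^1(\mu)}\le M^{K-1}\epsilon$, uniformly in both $n$ and the Ces\`aro index $N$.

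The existence of the RHS limit for $L^\infty$ functions is ensured by Theorem~\ref{thm-Leibman-main} applied to the product system $(X^d,T^{(d)})$ with the polynomials $q_{j,i}(n)=p_i(n+j)$: one gets $L^2$-convergence of the averages, which transfers to convergence of their scalar $\mu$-integrals. For the continuous approximants $\tilde f_j^{(i)}$, equation~\eqref{f7} is exactly equation~\eqref{f2} in Definition~\ref{de-infty-measure}. Letting $\epsilon\to 0$ via a standard three-$\epsilon$ argument yields~\eqref{f7} in general. The main point of attention is making the telescoping estimate uniform in $N$ on the RHS, but the $T$-invariance of $\mu$ delivers this with no further work, so I expect no real obstacle. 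As a byproduct, \eqref{f7} shows that $\mu_\A^{(\infty)}$ is determined by the measure-theoretic structure of $(X,\X,\mu,T)$ alone and is therefore independent of the chosen topological model.
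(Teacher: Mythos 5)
Your proof is correct and follows essentially the same route as the paper's (Appendix~\ref{appendix3}): approximate each $f_j^{(i)}$ in $L^1(\mu)$ by a continuous function with the same sup-norm bound, telescope the product as in Lemma~\ref{lem-product-1}, control the left-hand side via the standardness of $\mu_\A^{(\infty)}$ (Remark~\ref{rem-3.5}) and the right-hand side via the $T$-invariance of $\mu$, and conclude by a three-$\epsilon$ argument; the paper writes out only the case $\A=\{p\}$, $d=1$, while you handle the general index set directly. One cosmetic slip: the averages on the right-hand side live on the single system $(X,T)$ with the shifted polynomials $p_i(n+j)$, not on the product system $(X^d,T^{(d)})$, but this is immaterial since your own uniform estimate already forces the averages to converge to the left-hand value.
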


See Appendix \ref{appendix3} for a proof.

\begin{prop}\label{prop-iso}
Let $(X_i,\X_i,\mu_i,T_i), i=1,2$, be two isomorphic m.p.s. such that
$$\phi: (X_1,\X_1,\mu_1,T) \rightarrow (X_2,\X_2,\mu_2,T)$$ is an isomorphism. Let $\A=\{p_1, \ldots, p_d\}$ be a set of non-constant integral polynomials. Then
$$\phi^\infty: ((X_1^d)^\Z,(\X_1^d)^\Z, (\mu_1)_\A^{(\infty)}, \langle T_1^\infty, \sigma\rangle)\rightarrow ((X_2^d)^\Z,(\X_2^d)^\Z, (\mu_2)_\A^{(\infty)}, \langle T_2^\infty, \sigma\rangle) $$
is also an isomorphism.
\end{prop}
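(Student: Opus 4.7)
The plan is to verify the four defining conditions of an isomorphism of measure-preserving systems: that $\phi^\infty$ is well defined on a set of full measure, that it is measurable, that it intertwines the respective group actions of $\langle T_i^\infty,\sigma\rangle$, and that it pushes $(\mu_1)_\A^{(\infty)}$ to $(\mu_2)_\A^{(\infty)}$. Bijectivity then follows by the same construction applied to $\phi^{-1}$.

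First I would set up the domain. The isomorphism $\phi$ is defined on some $T_1$-invariant full-measure set $X_1^0\subseteq X_1$ with image a full-measure $T_2$-invariant set $X_2^0\subseteq X_2$. By Remark \ref{rem-3.5}, every coordinate projection $\Pi_{n,i}\colon (X_1^d)^\Z\to X_1$ pushes $(\mu_1)_\A^{(\infty)}$ to $\mu_1$, so each set $\Pi_{n,i}^{-1}(X_1^0)$ has full measure. Since there are only countably many coordinates, the intersection $\widetilde{X}_1^0=\bigcap_{n\in\Z,\,1\le i\le d}\Pi_{n,i}^{-1}(X_1^0)$ has full $(\mu_1)_\A^{(\infty)}$-measure, is invariant under $T_1^\infty$ and $\sigma$, and on it $\phi^\infty$ is defined coordinatewise and measurable. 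Intertwining is immediate: $\phi\circ T_1=T_2\circ\phi$ on $X_1^0$ gives $\phi^\infty\circ T_1^\infty=T_2^\infty\circ\phi^\infty$, and the shift $\sigma$ commutes with any coordinatewise map by construction.

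The main content is measure preservation, i.e.\ $(\phi^\infty)_\ast (\mu_1)_\A^{(\infty)}=(\mu_2)_\A^{(\infty)}$. By Proposition \ref{prop-mesurable}, both measures are determined by their integrals against functions of the form $F=\bigotimes_{j=-l}^l f_j^\otimes$ with $f_j^{(i)}\in L^\infty(X_2,\mu_2)$. A direct computation gives
\begin{equation*}
F\circ\phi^\infty=\bigotimes_{j=-l}^l\bigl(f_j^{(1)}\circ\phi\bigr)\otimes\cdots\otimes\bigl(f_j^{(d)}\circ\phi\bigr),
\end{equation*}
and the functions $f_j^{(i)}\circ\phi$ lie in $L^\infty(X_1,\mu_1)$. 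Using the polynomial formula \eqref{f7} for $(\mu_1)_\A^{(\infty)}$, the identity $\phi\circ T_1^{p_i(n+j)}=T_2^{p_i(n+j)}\circ\phi$ on $X_1^0$, and the change of variables $y=\phi(x)$ with $\phi_\ast\mu_1=\mu_2$, each integrand inside the Ces\`aro average transforms as
\begin{equation*}
\int_{X_1}\prod_{j=-l}^l\prod_{i=1}^d f_j^{(i)}\bigl(\phi(T_1^{p_i(n+j)}x)\bigr)\,d\mu_1(x)=\int_{X_2}\prod_{j=-l}^l\prod_{i=1}^d f_j^{(i)}\bigl(T_2^{p_i(n+j)}y\bigr)\,d\mu_2(y).
\end{equation*}
Taking $N\to\infty$ and reapplying \eqref{f7}, now to $(\mu_2)_\A^{(\infty)}$, yields $\int F\circ\phi^\infty\,d(\mu_1)_\A^{(\infty)}=\int F\,d(\mu_2)_\A^{(\infty)}$, as required.

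Finally, applying the identical construction to the inverse isomorphism $\phi^{-1}\colon X_2^0\to X_1^0$ produces $(\phi^{-1})^\infty$, which is a measure-preserving measurable map in the reverse direction; removing the null sets where the two compositions fail to be the identity yields a genuine measure-theoretic inverse of $\phi^\infty$, establishing that $\phi^\infty$ is an isomorphism of the two $\Z^2$-systems. I do not expect any substantial obstacle here: the only point requiring care is the passage from continuous test functions (the original definition of $\mu_\A^{(\infty)}$) to $L^\infty$ test functions that include compositions with $\phi$, which is precisely what Proposition \ref{prop-mesurable} is designed to enable.
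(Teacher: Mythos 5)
Your proposal is correct and follows essentially the same route as the paper: reduce to checking $(\phi^\infty)_*(\mu_1)_\A^{(\infty)}=(\mu_2)_\A^{(\infty)}$ on product test functions via Proposition \ref{prop-mesurable}, then use the intertwining $\phi\circ T_1=T_2\circ\phi$ and $\phi_*\mu_1=\mu_2$ to change variables inside the Ces\`aro averages. The extra care you take with the full-measure domain (via the standard-measure property) and the explicit construction of the inverse are points the paper leaves implicit, but the core argument is identical.
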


\begin{proof}
Note that $\phi^\infty $ is defined by $\phi^\infty ({\bf x}_n)_{n\in \Z}=(\phi^{(d)}{\bf x}_n)_{n\in \Z}, \forall ({\bf x}_n)_{n\in \Z}\in (X_1^d)^\Z.$ It is easy to verify that $\phi^\infty\circ T_1^\infty=T_2^\infty \circ \phi^\infty$ and $\sigma\circ \phi^\infty=\phi^\infty \circ \sigma$.

It suffices to show that $\phi^\infty_* (\mu_1)_\A^{(\infty)}=(\mu_2)_\A^{(\infty)}$. By Proposition \ref{prop-mesurable},
for all $l \in \N$ and all $f_j^\otimes = f_j^{(1)}\otimes f_j^{(2)} \otimes \cdots \otimes f_j^{(d)}$ (where $ f^{(i)}_j\in L^\infty(X_2,\mu_2), 1\le i\le d,  -l \le j\le l$)
\begin{equation*}
\begin{split}
   & \int_{(X_2^d)^{\Z}} \Big( \bigotimes_{j=-l}^l f_j^\otimes \Big)({\bf x})d(\mu_2)_\A^{(\infty)}({\bf x})=\lim_{N\rightarrow +\infty} \frac{1}{N}\sum_{n=0}^{N-1}\int_{X_2} \prod_{j=-l}^l f_j^\otimes (T_2^{\vec{p}(n+j)}x^{\otimes d}) d\mu_2(x)\\
   &\quad = \lim_{N\rightarrow +\infty} \frac{1}{N}\sum_{n=0}^{N-1}\int_{X_2} \prod_{j=-l}^l f_j^{(1)}(T_2^{p_1(n+j)}x)  \cdots f_j^{(d)}(T_2^{p_d(n+j)}x) d\mu_2(x)\\
   &\quad = \lim_{N\rightarrow +\infty} \frac{1}{N}\sum_{n=0}^{N-1}\int_{X_2} \prod_{j=-l}^l f_j^{(1)}(T_2^{p_1(n+j)}x)  \cdots f_j^{(d)}(T_2^{p_d(n+j)}x) d(\phi_*\mu_1)(x)\\
   &\quad = \lim_{N\rightarrow +\infty} \frac{1}{N}\sum_{n=0}^{N-1}\int_{X_1} \prod_{j=-l}^l f_j^{(1)}\circ\phi (T_1^{p_1(n+j)}z) \cdots f_j^{(d)}\circ\phi(T_1^{p_d(n+j)}z) d\mu_1(z)\\
   &\quad = \int_{(X_1^d)^{\Z}} \Big( \bigotimes_{j=-l}^l f_j^\otimes \Big)\circ \phi^\infty({\bf z})d(\mu_1)_\A^{(\infty)}({\bf z})= \int_{(X_2^d)^{\Z}} \Big( \bigotimes_{j=-l}^l f_j^\otimes \Big)({\bf x})d\big(\phi^\infty_*(\mu_1)_\A^{(\infty)}\big)({\bf x}).
\end{split}
\end{equation*}
The proof is complete.
\end{proof}

%\begin{rem}
%In Definition \ref{de-infty-measure}, we use continuous functions to define $\mu^{(\infty)}_\A$.
%By Proposition \ref{prop-mesurable}, we see that the definition of $\mu^{(\infty)}_\A$ does not depend on the topological model of $(X,\X,\mu)$.
%\end{rem}

\subsection{Some special cases}\
\medskip

To get a better understanding of $\mu^{(\infty)}_\A$, we explain it by some examples.

\subsubsection{The case when $\A=\{p\}$ with $p(n)=n$}\label{subsection-linear-one}\
\medskip

In this case we show that
\begin{equation}\label{}
  (X^\Z, \X^\Z, \mu^{(\infty)}_\A,\sigma)\cong (X,\X,\mu, T),
\end{equation}
where $\cong$ means that two m.p.s. are isomorphic.

\medskip

Now $p(n)=n$, and we have for $x\in X$
$$\omega_x^\A=(\ldots, T^{-1}x, \underset{\bullet}x, Tx, T^2x,\ldots )=(T^nx)_{n\in \Z}.$$
Note that
$$\sigma \w_x^\A=(\ldots, T^{-1}x, x, \underset{\bullet}{Tx}, T^2x, \ldots )=T^\infty \w_x^\A.$$
For all $l\in \N$, and all $f_{-l}, f_{-l+1}, \ldots, f_l\in L^\infty(X,\mu)$,
\begin{equation*}
\begin{split}
  &\int_{X^{\Z}} f_{-l}(x_{-l})f_{-l+1}(x_{-l+1})\cdots f_l(x_l)d\mu_\A^{(\infty)}(\ldots, x_{-1},x_0,x_1,\ldots)\\
  &\quad =\lim_{N\rightarrow +\infty} \frac{1}{N}\sum_{n=0}^{N-1}\int_X f_{-l}(T^{n-l}x)f_{-l+1}(T^{n-l+1}x)\cdots f_l (T^{n+l}x) d\mu(x) \\
  &\quad = \int_X f_{-l}(T^{-l}x)f_{-l+1}(T^{-l+1}x)\cdots f_l(T^{l}x) d\mu(x) .
\end{split}
\end{equation*}
Equivalently, for all $l\in \N$ and $A_{-l},A_{-l+1},\ldots, A_l\in \X$,
$$\mu_\A^{(\infty)}(A_{-l}\times A_{-l+1}\times \cdots \times A_l)=\mu(T^{l}A_{-l}\cap T^{l-1}A_{-l+1}\cap \cdots \cap T^{-l}A_l).$$
That means
\begin{equation}\label{}
  \mu_\A^{(\infty)}=(\cdots \times T^{-1}\times \id\times T\times T^2\times \cdots )_*\mu^\infty_\Delta.
\end{equation}
Thus $(X^\Z, \X^\Z, \mu_\A^{(\infty)})$ is isomorphic to $(X^\Z,\X^\Z, \mu_\Delta^\infty)$, and hence it is isomorphic to $(X,\X,\mu)$.
That is, if we define $$\phi:(X,\X,\mu)\rightarrow (X^\Z,\X^\Z,\mu^{(\infty)}_\A), \ x\mapsto \omega_x ^\A, \ \forall x\in X$$
then it is an isomorphism between them.
So
$ (X^\Z, \X^\Z, \mu^{(\infty)}_\A,\sigma)\cong (X,\X,\mu, T).$

\medskip

\subsubsection{The case when $\A=\{p_1, p_2, \ldots, p_d\}$ with $p_i(n)=a_in$, $1\le i\le d$}\
\medskip

Let $\A=\{p_1, p_2, \ldots, p_d\}$ with $p_i(n)=a_in, 1\le i\le d$, where $a_1,\ldots,a_d$ are distinct non-zero integers. In this case we show that
\begin{equation}\label{}
  ((X^d)^\Z, (\X^d)^\Z, \mu^{(\infty)}_\A, \sigma)\cong (X^d, \X^d, \mu^{(d)}_{\vec{a}}, \tau_{\vec{a}}),
\end{equation}
where $\tau_{\vec{a}}=T^{a_1}\times \cdots\times T^{a_d}$ and $\mu^{(d)}_{\vec{a}}$ is the Furstenberg joining.

\medskip

As $p_i(n)=a_in, 1\le i\le d$, we have that
$$\omega_x^\A=\Big((T^{a_1n}x, T^{a_2n}x, \ldots, T^{a_dn}x)\Big)_{n\in \Z}\in (X^d)^\Z.$$
Note that
\begin{equation*}
\begin{split}
\sigma \omega_x^\A&= \Big((T^{a_1(n+1)}x, T^{a_2(n+1)}x, \ldots, T^{a_d(n+1)}x)\Big)_{n\in \Z}\\
&=\Big(\tau_{\vec{a}} (T^{a_1n}x, T^{a_2n}x, \ldots, T^{a_dn}x)\Big)_{n\in \Z}=\tau_{\vec{a}}^\infty \omega_x^\A,
\end{split}
\end{equation*}
where $\tau_{\vec{a}}^\infty=\cdots \times \tau_{\vec{a}}\times \tau_{\vec{a}} \times \cdots.$

Now we calculate the $\infty$-joining $\mu_\A^{(\infty)}$.
For all $l \in \N$ and all $f_j^\otimes = f_j^{(1)}\otimes \cdots \otimes f_j^{(d)} \in C(X^d), -l \le j\le l$,
\begin{equation*}
\begin{split}
  &\int_{(X^d)^{\Z}} \Big( \bigotimes_{j=-l}^l f_j^\otimes \Big)({\bf x})d\mu_\A^{(\infty)}({\bf x})\\
  &\quad =\lim_{N\rightarrow +\infty} \frac{1}{N}\sum_{n=0}^{N-1}\int_X \prod_{j=-l}^l f_j^\otimes (T^{a_1(n+j)}x,\ldots, T^{a_d(n+j)}x) d\mu(x) \\
   &\quad  =\lim_{N\rightarrow +\infty} \frac{1}{N}\sum_{n=0}^{N-1}\int_X (T^{a_1}\times T^{a_2}\times \cdots \times T^{a_d})^n \Big(\prod_{j=-l}^l f_j^\otimes (T^{a_1j}x,\ldots, T^{a_dj}x) \Big) d\mu(x)\\
   &\quad = \int_{X^d} \prod_{j=-l}^l f_j^\otimes (T^{a_1j}x_1,\ldots, T^{a_dj}x_d) d\mu^{(d)}_{\vec{a}}(x_1,x_2,\ldots, x_d) \\
  &\quad = \int_{X^d} \prod_{j=-l}^l f_j^{(1)}(T^{a_1j}x_1) f_j^{(2)} (T^{a_2j}x_2) \cdots f_j^{(d)}(T^{a_dj}x_d) d\mu^{(d)}_{\vec{a}}(x_1, x_2, \ldots, x_d) .
\end{split}
\end{equation*}
Equivalently, for all $l\in \N$ and $A_j^\otimes=A_j^{(1)}\times \cdots \times  A_j^{(d)}\in \X^d, -l \le j\le l$,
\begin{equation}\label{p1}
  \mu^{(\infty)}_\A(A_{-l}^\otimes \times A_{-l+1}^\otimes \times \cdots \times A_l^\otimes)=\mu^{(d)}_{\vec{a}}\big(\tau_{\vec{a}}^{l}A^\otimes _{-l}\cap \tau_{\vec{a}}^{l-1} A_{-l+1}^{\otimes }\cap \cdots \cap \tau_{\vec{a}}^{-l} A^\otimes_l \big).
\end{equation}

Similar to the definition of $\mu_\Delta^\infty$, we define $(\mu^{(d)}_{\vec{a}})^\infty_\Delta$ on $(X^d)^\Z$ as follows:
$$\int_{(X^d)^\Z}{\bf f}_{-l}({\bf x}_{-l})\cdots {\bf f}_l({\bf x}_l)d(\mu^{(d)}_{\vec{a}})^\infty_\Delta(\ldots, {\bf x}_{-1}, {\bf x}_0, {\bf x}_1, \ldots) = \int_{X^d} {\bf f}_{-l}({\bf x})\cdots {\bf f}_l({\bf x}) d\mu^{(d)}_{\vec{a}}({\bf x}), $$
for all $l\in \N$ and all ${\bf f}_{-l}, \ldots, {\bf f}_l\in C(X^d)$.
It is clear that $((X^d)^\Z, (\X^d)^\Z, (\mu^{(d)}_{\vec{a}})^\infty_\Delta)$ is isomorphic to $(X^d, \X^d, \mu^{(d)}_{\vec{a}})$.

Thus \eqref{p1} means that
$$\mu^{(\infty)}_\A=(\cdots \times \tau_{\vec{a}}^{-1} \times \id_{X^d}\times \tau_{\vec{a}} \times \tau_{\vec{a}}^2\times \cdots)_*(\mu^{(d)}_{\vec{a}})_\Delta^{\infty}.$$
It follows that $\big((X^d)^\Z, (\X^d)^\Z, \mu_\A^{(\infty)} \big)$ is isomorphic to $((X^d)^\Z, (\X^d)^\Z, (\mu^{(d)}_{\vec{a}})^\infty_\Delta)$,  and hence also isomorphic to $(X^d, \X^d, \mu^{(d)}_{\vec{a}})$.
That is, if we define
\begin{equation*}
  \begin{split}
     \phi:(X^d, \X^d, \mu^{(d)}_{\vec{a}}, \tau_{\vec{a}}) & \rightarrow \big((X^d)^\Z, (\X^d)^\Z, \mu_\A^{(\infty)},\sigma \big), \\
     {\bf y}& \mapsto (\ldots, \tau^{-1}_{\vec{a}} {\bf y}, {\bf y}, \tau_{\vec{a}} {\bf y}, \tau_{\vec{a}}^2 {\bf y},\ldots ), \ \forall {\bf y}\in X^d
   \end{split}
\end{equation*}
then it is an isomorphism.

\subsubsection{Weakly mixing m.p.s.}\
\medskip

\paragraph{\em (I)\ Case $\A=\{n^2\}$}
\
\medskip

Let $(X,\X,\mu, T)$ be a weakly mixing m.p.s. and $\A=\{p\}$ with $p(n)=n^2$. In this case we show that
\begin{equation}\label{}
  (X^\Z, \X^\Z, \mu^{(\infty)}_\A, \sigma)\cong (X^\Z, \X^\Z, \mu^{\Z}, \sigma),
\end{equation}
that is, it is a Bernoulli system.

Recall that for $x\in X$,
$\omega_x^\A=(\ldots, T^{p(-1)}x, \underset{\bullet}x,T^{p(1)}x, T^{p(2)}x, \ldots)=(T^{p(n)}x)_{n\in \Z}.$
For all $l\in \N$, and all $f_{-l}, \ldots, f_l\in C(X)$,
\begin{equation*}
\begin{split}
  &\int_{X^{\Z}} f_{-l}(x_{-l})f_{-l+1}(x_{-l+1})\cdots f_l(x_l)d\mu_\A^{(\infty)}(\ldots, x_{-1}, x_0, x_1,\ldots)\\
  &\quad =\lim_{N\rightarrow +\infty} \frac{1}{N}\sum_{n=0}^{N-1}\int_X \prod_{j=-l}^l f_j (T^{p(n+j)}x) d\mu(x) \\
  &\quad = \big(\int_X f_{-l} d\mu\big)\big( \int_X f_{-l+1} d\mu \big)\cdots \big(\int_X f_l d\mu\big),
\end{split}
\end{equation*}
where the last equation holds by \cite[Theorem 1.1]{Bergelson87}.
Thus $\mu^{(\infty)}_\A=\cdots \times \mu\times \mu \times \cdots =\mu^\Z.$

\medskip

\paragraph{\em (II)\ Case $\A=\{n, n^2\}$}
\
\medskip

Now we study the case when $\A=\{p_1, p_2\}$ with $p_1(n)=n, p_2(n)=n^2$. In this case we show that
\begin{equation}\label{}
  ((X^2)^\Z, (\X^2)^\Z, \mu^{(\infty)}_\A, \sigma)\cong (X\times X^\Z, \X\times \X^\Z, \mu\times \mu^{\Z}, T\times \sigma).
\end{equation}

For $x\in X$,
$\omega_x^\A=\Big((T^{n}x, T^{n^2}x)\Big)_{n\in \Z}.$ Thus, for all $l\in \N$, and all $f_{-l}^{(1)}, f_{-l}^{(2)},  \ldots, f_l^{(1)}, f_l^{(2)}\in C(X)$, ${\bf x}=\big((x_n^{(1)}, x_n^{(2)})\big)_{n\in \Z}$,
\begin{equation*}
\begin{split}
  &\int_{(X^2)^{\Z}}\prod_{j=-l}^l f_j^{(1)}(x_j^{(1)})f_j^{(2)}(x_j^{(2)}) d\mu_\A^{(\infty)}({\bf x})\\
  &\quad =\lim_{N\rightarrow +\infty} \frac{1}{N}\sum_{n=0}^{N-1}\int_X \prod_{j=-l}^l  f_j^{(1)}(T^{n+j}x)f_j^{(2)}(T^{(n+j)^2}x) d\mu(x))\\
   %& \quad \int_{(X^2)^{\Z}}\prod_{j=-l}^l f_j^{(1)}(x_j^{(1)})f_j^{(2)}(x_j^{(2)}) d\mu_\A^{(\infty)}({\bf x})\\
  &\quad =\lim_{N\rightarrow +\infty} \frac{1}{N}\sum_{n=0}^{N-1}\int_X \prod_{j=-l}^l \big(f_j^{(1)}\circ T^{j}\big)(T^{n}x)
  \prod_{j=-l}^lf_j^{(2)}(T^{(n+j)^2}x) d\mu(x) \\
  &\quad = \Big(\int_X \prod_{j=-l}^l\big( f_j^{(1)}\circ T^{j} \big) d\mu \Big)\prod_{j=-l}^l\int_X f_{j}^{(2)} d\mu,
  %\big( \int_X f_{-l}^{(2)} d\mu\big) \big(\int_X f_{-l+1}^{(2)} d\mu \big)\cdots \big(\int_X f_l^{(2)} d\mu\big),
\end{split}
\end{equation*}%&=\lim_{N\rightarrow +\infty} \frac{1}{N}\sum_{n=0}^{N-1}\int_X \big(\prod_{j=-l}^l f_j^{(1)}\circ T^{j}\big)(T^{n}x) %f_{-l}^{(2)}(T^{{(n-l)}^2}x) %f_{-l+1}^{(2)}(T^{(n-l+1)^2}x)\cdots f_l^{(1)}(T^{(n+l)^2}x) d\mu(x) \\
where the last equation holds by \cite[Theorem 1.1]{Bergelson87}.
Hence
\begin{equation*}
  \mu^{(\infty)}_\A=\big((\cdots \times T^{-1}\times \underset{\bullet}{\id} \times T\times \cdots )_*\mu^\infty_\Delta \big)\times \mu^\Z.
\end{equation*}
It follows that $\mu^{(\infty)}_\A$ is isomorphic to $\mu\times \mu^\Z$.
If we define
\begin{equation*}
  \begin{split}
    \phi:(X\times X^\Z, & \X\times \X^d, \mu\times \mu^{\Z}) \rightarrow ((X^2)^\Z, (\X^2)^\Z, \mu^{(\infty)}_\A), \\
     & \big(x, (x_n)_{n\in \Z}\big) \mapsto \big( T^nx, x_n\big )_{n\in \Z},\ \forall \big(x, (x_n)_{n\in \Z}\big)\in X\times X^\Z,
   \end{split}
\end{equation*}
then it is an isomorphism between them.

\subsection{An equivalent way to see $\mu^{(\infty)}_\A$} \label{subsection-linear-terms}\
\medskip

In previous subsections we see that when $\A=\{p_1, \ldots, p_d\}$ with $p_i(n)=a_in$, (where $a_1, \ldots, a_d$ are distinct non-zero integers), $((X^d)^\Z, (\X^d)^\Z, \mu^{(\infty)}_\A, \sigma)\cong (X^d, \X^d, \mu^{(d)}_{\vec{a}}, \tau_{\vec{a}})$. In this subsection,
we give an equivalent way to see $\mu^{(\infty)}_\A$, and it will be more convenient to deal with certain problems when $\A$ contains linear elements.

\subsubsection{}
We assume that $\A=\{p_1, \ldots, p_{s}, p_{s+1}, \ldots, p_{d}\}$, where $s\ge 0$, $a_1,\ldots, a_s$ are distinct non-zero integers, $p_i(n)=a_in$ ($1\le i\le s$) and $\deg p_{i}\ge 2, p_i(0)=0, s+1\le i\le d$.
Recall that
$$\omega_x^\A = \Big( (T^{a_1n}x, \ldots, T^{a_{s}n}x, T^{p_{s+1}(n)}x,\ldots, T^{p_d(n)}x)\Big)_{n\in \Z}\in (X^d)^\Z.$$

%Now we {\red check?} the $\infty$-joining $\mu_\A^{(\infty)}$.
For all $l \in \N$ and all $f_j^\otimes = f_j^{(1)}\otimes f_j^{(2)} \otimes \cdots \otimes f_j^{(d)} \in C(X^d), -l \le j\le l$, denote
$$f_j^{\otimes_I} = f_j^{(1)}\otimes f_j^{(2)} \otimes \cdots \otimes f_j^{(s)},\  f_j^{\otimes_{II}} = f_{j}^{(s+1)}\otimes f_j^{(s+2)} \otimes \cdots \otimes f_j^{(d)}. $$
Thus $f_j^\otimes = f_j^{\otimes_I} \otimes f_j^{\otimes_{II}}$.
Now
\begin{equation}\label{g1}
\begin{split}
  &\quad \int_{(X^d)^{\Z}} \Big( \bigotimes_{j=-l}^l f_j^\otimes \Big)({\bf x})d\mu_\A^{(\infty)}({\bf x})\\
  &\quad =\lim_{N\rightarrow +\infty} \frac{1}{N}\sum_{n=0}^{N-1}\int_X \prod_{j=-l}^l f_j^\otimes (T^{a_1(n+j)}x, T^{a_2(n+j)}x, \ldots, T^{p_d(n+j)}x) d\mu(x)\\
%    \end{split}
%\end{equation}
%which is equal to
%\begin{equation*}
%\begin{split}
  &\quad =\lim_{N\rightarrow +\infty} \frac{1}{N}\sum_{n=0}^{N-1}\int_X \prod_{j=-l}^l f_j^{\otimes_I} (T^{a_1(n+j)}x,\ldots, T^{a_s(n+j)}x)\cdot \\
  &\quad \quad \quad \quad \quad \quad \quad \quad \quad \quad f_l^{\otimes _{II}}( T^{p_{s+1}(n+j)}x, \ldots, T^{p_d(n+j)}x) d\mu(x) \\
  &\quad =\lim_{N\rightarrow +\infty} \frac{1}{N}\sum_{n=0}^{N-1}\int_X (T^{a_1}\times  \cdots \times T^{a_s})^n \Big( \prod_{j=-l}^l f_j^{\otimes_I} (T^{a_1j}x,\ldots, T^{a_s j}x) \Big)\cdot \\
  & \quad \quad \quad \quad \quad \quad \quad \quad \quad \quad  \prod_{j=-l}^l f_j^{\otimes _{II}}( T^{p_{s+1}(n+j)}x, \ldots, T^{p_d(n+j)}x) d\mu(x) .
\end{split}
\end{equation}

%Now by the definition
%\begin{equation*}%\label{g1}
%\begin{split}
 % \int_{(X^d)^{\Z}} \Big( \bigotimes_{j=-l}^l f_j^\otimes \Big)({\bf x})d\mu_\A^{(\infty)}({\bf x})=\lim_{N\rightarrow +\infty} %\frac{1}{N}\sum_{n=0}^{N-1}\int_X \prod_{j=-l}^l f_j^\otimes \sigma^n(\omega_x^A)d\mu(x)
%\end{split}
%\end{equation*}

%It is easy to see that

%Recall that $T^{\vec{p}(n)}\Big((x_1, x_2, \cdots, x_d)\Big)=(T^{p_1(n)}x_1, T^{p_2(n)}x_2,\cdots, T^{p_d(n)}x_d).$

%\begin{equation*}%\label{g1}
%\begin{split}
%  & \prod_{j=-l}^l f_j^\otimes (T^{a_1(n+j)}x,\ldots, T^{a_s(n+j)}x, T^{p_{s+1}(n+j)}x, \ldots, T^{p_d(n+j)}x) \\
%  &\quad = \prod_{j=-l}^l f_j^{\otimes_I} (T^{a_1(n+j)}x,\ldots, T^{a_s(n+j)}x) f_l^{\otimes _{II}}( T^{p_{s+1}(n+j)}x, \ldots, T^{p_d(n+j)}x)  \\
%  &\quad =(\tau_{\vec{a}})^n \Big( \prod_{j=-l}^l f_j^{\otimes_I} (\tau_{\vec{a}})^j(x^{\otimes s}) \Big)\cdot \prod_{j=-l}^l f_j^{\otimes _{II}}( %T^{p_{s+1}(n+j)}x, \ldots, T^{p_d(n+j)}x),
%\end{split}
%\end{equation*}
%where $\tau_{\vec{a}}=T^{a_1}\times \ldots\times T^{a_d}$.
%Thus we obtain
%\begin{equation}\label{g1}
%\begin{split}
%&\int_{(X^d)^{\Z}} \Big( \bigotimes_{j=-l}^l f_j^\otimes \Big)({\bf x})d\mu_\A^{(\infty)}({\bf x})\\
%&\quad =\lim_{N\rightarrow +\infty} \frac{1}{N}\sum_{n=0}^{N-1}\int_X (\tau_{\vec{a}})^n \Big( \prod_{j=-l}^l f_j^{\otimes_I}
% (\tau_{\vec{a}})^j(x^{\otimes s}) \Big)\cdot\\
% &\quad \quad \quad \quad \quad \quad  \prod_{j=-l}^l f_j^{\otimes _{II}}( T^{p_{s+1}(n+j)}x, \ldots, T^{p_d(n+j)}x) d\mu(x).
%\end{split}
%\end{equation}

\subsubsection{Definition of $\widetilde{\mu}^{(\infty)}_\A$}
Now we define the  new system.

Let $\Omega=X^s\times (X^{d-s})^\Z$. Define $\widetilde{\sigma}: \Omega\rightarrow \Omega$ as follow: for
$(x_1,x_2, \ldots, x_s)\times {\bf x}\in \Omega=X^s\times (X^{d-s})^\Z,$
\begin{equation}\label{x1}
 \widetilde{\sigma}\Big((x_1,x_2, \ldots, x_s)\times {\bf x}\Big)=\Big((T^{a_1}x_1, T^{a_2}x_2, \ldots, T^{a_s}x_s)\times \sigma_{II} {\bf x},
\end{equation}
where $\sigma_{II}$ is the shift on $(X^{d-s})^\Z$. That is, $\widetilde{\sigma}=\tau_{\vec{a}}\times \sigma_{II}$, where $\tau_{\vec{a}}=T^{a_1}\times \cdots \times T^{a_s}$.
Let
$$\xi_x^\A = \Big((x, \ldots , x), (T^{p_{s+1}(j)}x,\ldots, T^{p_d(j)}x)_{j\in \Z}\Big)\in X^s\times (X^{d-s})^\Z.$$
Then
$$\widetilde {\sigma}^n\xi^\A_x=\Big((T^{a_1n }x, \ldots , T^{a_sn}x), (T^{p_{s+1}(n+j)}x,\ldots, T^{p_d(n+j)}x)_{j\in \Z}\Big)\in X^s\times (X^{d-s})^\Z.$$
Note that when $s=0$, $\xi_x^\A=\w_x^\A$.

Next we define a measure $\widetilde{\mu}^{(\infty)}_\A$ on $\Omega$.
For all $f_1,f_2,\ldots, f_s\in C(X)$, all $l \in \N$ and all $f_j^{\otimes_{II}} = f_j^{(s+1)}\otimes f_j^{(s+2)} \otimes \cdots \otimes f_j^{(d)} \in C(X^{d-s}), -l \le j\le l$,
\begin{equation}\label{g2}
\begin{split}
  & \int_{X^s\times (X^{d-s})^{\Z}} \big(f_1\otimes  \cdots \otimes f_s\big) \otimes  \Big( \bigotimes_{j=-l}^l f_j^{\otimes_{II}} \Big)((x_1,\ldots, x_s),{\bf x})d\widetilde{\mu}_\A^{(\infty)}\\
  &\quad =\lim_{N\rightarrow +\infty} \frac{1}{N}\sum_{n=0}^{N-1}\int_X \prod_{i=1}^sf_i(T^{a_in}x) \prod_{j=-l}^l f_j^{\otimes_{II}} (T^{{p_{s+1}}(n+j)}x, \ldots, T^{p_d(n+j)}x) d\mu(x) \\
  &\quad =\lim_{N\rightarrow +\infty} \frac{1}{N}\sum_{n=0}^{N-1}\int_X (T^{a_1}\times \cdots \times T^{a_s})^n (f_1\otimes  \cdots \otimes f_s)(x,\ldots, x) \\
  & \quad \quad \quad \quad \quad \prod_{j=-l}^l f_j^{\otimes_{II}} (T^{{p_{s+1}}(n+j)}x, \ldots, T^{p_d(n+j)}x) d\mu(x) ,
\end{split}
\end{equation}
where $((x_1,x_2,\ldots, x_s),{\bf x}) =((x_1,x_2,\ldots, x_s),({\bf x}_n)_{n\in \Z} )\in X^s\times (X^{d-s})^\Z$.

\subsubsection{$\mu^{(\infty)}_\A$ and $\widetilde{\mu}^{(\infty)}_\A$ are isomorphic}
Compared \eqref{g1} with \eqref{g2}, we see that
$$((X^d)^\Z, (\X^d)^\Z, \mu^{(\infty)}_\A, \sigma)\cong (X^s\times (X^{d-s})^\Z, \X^s\times (\X^{d-s})^\Z, \widetilde{\mu}^{(\infty)}_\A, \widetilde{\sigma}).$$
That is, if we define
\begin{equation}\label{a9}
  \begin{split}
     \phi: & (X^s\times (X^{d-s})^\Z,  \X^s\times (\X^{d-s})^\Z, \widetilde{\mu}^{(\infty)}_\A) \rightarrow ((X^d)^\Z, (\X^d)^\Z, \mu^{(\infty)}_\A), \\
       & ({\bf y}, {\bf x}) \mapsto \big( \tau_{\vec{a}}^j {\bf y}, {\bf x}_j\big)_{j\in \Z},\ \forall ({\bf y}, {\bf x})=\Big({\bf y}, \big({\bf x}_j\big)_{j\in \Z}\Big)\in X^s\times (X^{d-s})^\Z,
   \end{split}
\end{equation}
then it is an isomorphism.

Note that if $s=0$, then $\widetilde{\mu}^{(\infty)}_\A=\mu^{(\infty)}_\A$; and if $s=d$, then $\widetilde{\mu}^{(\infty)}_\A=\mu^{(d)}_{\vec{a}}$.

\subsubsection{Compare $\mu^{(\infty)}_\A$ with $\widetilde{\mu}^{(\infty)}_\A$}
We have shown that
$((X^d)^\Z, (\X^d)^\Z, \mu^{(\infty)}_\A, \sigma)$ is isomorphic to $(X^s\times (X^{d-s})^\Z, \X^s\times (\X^{d-s})^\Z, \widetilde{\mu}^{(\infty)}_\A, \widetilde{\sigma}).$
%Now we compare them.
It is clear that the definition of $\mu^{(\infty)}_\A$ is cleaner than $\widetilde{\mu}^{(\infty)}_\A$, as we do not care whether there are linear terms  in $\A$ or not. But when we study certain dynamical properties, $\widetilde{\mu}^{(\infty)}_\A$ is easier to handle than $\mu^{(\infty)}_\A$. For example, in the next section, we use $\widetilde{\mu}^{(\infty)}_\A$ in Lemma \ref{lem-vdc}, and it much more complicated if we use $\mu^{(\infty)}_\A$ instead.

\subsubsection{}
Similar to Remark \ref{rem-3.5}, we have that $\mu_\A^{(\infty)}$ is a standard measure.
And similar to the proof of Proposition \ref{prop-mesurable}, in \eqref{g2} we can replace continuous functions by measurable ones.
That is, we have the following proposition.
\begin{prop}
Let $(X, \X, \mu, T)$ be a m.p.s. and $d\in \N$.  Let
$\A=\{p_1, \ldots, p_s, p_{s+1}, \ldots, p_{d}\}$ be a family of non-constant integral polynomials, where $s\ge 0$, $p_i(n)=a_in$ ($1\le i\le s$), $a_1,a_2,\ldots, a_s$ are distinct non-zero integers, and $\deg p_{i}\ge 2, p_i(0)=0, s+1\le i\le d$.
For all $f_1,f_2,\ldots, f_s\in L^\infty(X,\mu)$, all $l \in \N$ and all $f_j^{\otimes_{II}} = f_j^{(s+1)}\otimes f_j^{(s+2)} \otimes \cdots \otimes f_j^{(d)}$ (where $f^{(t)}_j\in L^\infty(X,\mu)$, $s+1\le t\le d, -l \le j\le l$),
\begin{equation}\label{g3}
\begin{split}
  & \quad \int_{X^s\times (X^{d-s})^{\Z}} \big(\bigotimes_{i=1}^sf_i\big) \otimes  \Big( \bigotimes_{j=-l}^l f_j^{\otimes_{II}} \Big)((x_1,x_2,\ldots, x_s),{\bf x})d\widetilde{\mu}_\A^{(\infty)}\\
  &\quad =\lim_{N\rightarrow +\infty} \frac{1}{N}\sum_{n=0}^{N-1}\int_X \prod_{i=1}^sf_i(T^{a_in}x) \prod_{j=-l}^l f_j^{\otimes_{II}} (T^{{p_{s+1}}(n+j)}x, \ldots, T^{p_d(n+j)}x) d\mu(x)  ,
\end{split}
\end{equation}
where $((x_1,x_2,\ldots, x_s),{\bf x}) =((x_1,x_2,\ldots, x_s),({\bf x}_n)_{n\in \Z} )\in X^s\times (X^{d-s})^\Z$.
\end{prop}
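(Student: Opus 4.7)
My plan is to extend the identity \eqref{g2} from continuous test functions to bounded measurable ones by a routine density argument, paralleling the proof sketched for Proposition \ref{prop-mesurable}. First, the LHS of \eqref{g3} is well defined: by the same argument as in Remark \ref{rem-3.5}, every coordinate projection of $\widetilde{\mu}^{(\infty)}_\A$ is $\mu$, so $\widetilde{\mu}^{(\infty)}_\A$ is a standard probability measure and the tensor product of $L^\infty$ functions lies in $L^1(\widetilde{\mu}^{(\infty)}_\A)$. Second, the Ces\`aro limit on the RHS exists for bounded measurable $f_i$ and $f^{(t)}_j$: this follows from the Multiple Polynomial Ergodic Theorem (Theorem \ref{thm-Leibman-main}) applied, after the rearrangement of summands made explicit in \eqref{g1}, to the polynomials $\{a_i n\}_{i=1}^s$ together with $\{p_t(n+j)\}_{s+1\le t\le d,\,-l\le j\le l}$.

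For the equality, I will use approximation. Using density of $C(X)$ in $L^2(X,\mu)$ (via Lusin's theorem combined with truncation), for each $\varepsilon>0$ I pick continuous functions $\widetilde f_i$ and $\widetilde f^{(t)}_j$ with $\|\widetilde f_i\|_\infty \le \|f_i\|_\infty$, $\|\widetilde f^{(t)}_j\|_\infty \le \|f^{(t)}_j\|_\infty$, and each $L^2(\mu)$-error at most $\varepsilon$. Since \eqref{g2} holds for the continuous tuple by definition of $\widetilde{\mu}^{(\infty)}_\A$, it suffices to bound, on each side of \eqref{g3}, the change caused by swapping a measurable function for its continuous approximant by a uniform constant times $\varepsilon$, where the constant depends only on the number of functions and their $L^\infty$ bounds.

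On the LHS, applying the telescoping identity
\[
\prod_{\alpha} A_\alpha - \prod_\alpha B_\alpha = \sum_\alpha \Bigl(\prod_{\beta<\alpha} B_\beta\Bigr)(A_\alpha-B_\alpha)\Bigl(\prod_{\beta>\alpha} A_\beta\Bigr),
\]
followed by Cauchy--Schwarz in $L^2(\widetilde{\mu}^{(\infty)}_\A)$ and the fact that each coordinate pushforward is $\mu$, the difference is bounded by a product of $L^\infty$-norms times the sum of the $L^2(\mu)$-errors, hence by a constant times $\varepsilon$. On the RHS, one applies the same telescoping inside the integrand before averaging over $n$. For each resulting term, Cauchy--Schwarz on $(X,\mu)$ bounds its absolute value by products of quantities of the form $\|h\|_{L^2(\mu)}$, using only the measure-preserving property $\|g\circ T^{q(n)}\|_{L^2(\mu)}=\|g\|_{L^2(\mu)}$ valid for any integral polynomial $q$; this bound is uniform in $n$ and hence in $N$, so it survives the Ces\`aro average and passes to the $\limsup$.

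The main obstacle is not a deep one but rather organizational: one must order the $s + (2l+1)(d-s)$ factors for the telescoping, and pair each error term with the correct application of Cauchy--Schwarz so that the invariance of $\mu$ under $T^{p_i(n+j)}$ can be invoked without accumulating constants that blow up with $l$ or $d$. No dynamical input beyond Theorem \ref{thm-Leibman-main} and the $L^2$-isometry of the Koopman operator is needed; the argument is essentially identical in spirit to the proof of Proposition \ref{prop-mesurable}, which is why it is safe to defer the bookkeeping to the appendix.
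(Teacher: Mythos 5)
Your proposal is correct and follows essentially the same route as the paper, which proves this proposition by the density-plus-telescoping argument of Proposition \ref{prop-mesurable} (Appendix \ref{appendix3}): approximate each $L^\infty$ function by a continuous one with the same sup-norm bound, use Lemma \ref{lem-product-1} to telescope the product, control the left-hand side via standardness of $\widetilde{\mu}^{(\infty)}_\A$ and the right-hand side via $T$-invariance of $\mu$. The only cosmetic difference is that you invoke $L^2$ errors and Cauchy--Schwarz where the paper works directly with $L^1$ errors bounded by $\varepsilon/(2l+1)$ per factor; both give the same uniform-in-$N$ bound, and your worry about constants blowing up with $l$ or $d$ is moot since these are fixed before $\varepsilon$ is chosen.
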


Also similar to Proposition \ref{prop-iso}, if $(X_i,\X_i,\mu_i,T_i), i=1,2$, are two isomorphic m.p.s., then
$(X_1\times (X_1^{d-s})^\Z, \X_1\times (\X_1^{d-s})^\Z, (\widetilde{\mu_1})_\A^{(\infty)}, \langle T_1^\infty, \widetilde{\sigma}\rangle)$ and $$(X_2\times (X_2^{d-s})^\Z, \X_2\times (\X_2^{d-s})^\Z, (\widetilde{\mu_2})_\A^{(\infty)}, \langle T_2^\infty, \widetilde{\sigma}\rangle) $$
are also isomorphic.

\subsection{Topological correspondences}\
%\medskip

\subsubsection{$N_\infty(X,\A)$ and $M_\infty(X,\A)$}
Let $(X,T)$ be a t.d.s.
We set
\begin{equation*}\label{}
  N_\infty(X,\A)=\overline{\bigcup\{\O(\w_x^\A,\sigma): x\in X\}}=\overline{\bigcup\{\sigma^n\w_x^\A: x\in X, n\in \Z\}}\subseteq (X^d)^{\Z},
\end{equation*}
and
\begin{equation*}\label{}
  M_\infty(X,\A)=\overline{\bigcup\{\O(\xi_x^\A, \widetilde{\sigma}): x\in X\}}=\overline{\bigcup\{\widetilde{\sigma}^n\xi_x^\A: x\in X, n\in\Z\}}\subseteq X^s\times (X^{d-s})^{\Z}.
\end{equation*}
It is clear that $N_\infty(X,\A )$ is invariant under the action of $T^\infty$ and $\sigma$, and
$M_\infty(X,\A )$ is invariant under the action of $T^\infty$ and $\widetilde{\sigma}$. Thus  $(N_\infty(X,\A ), \langle T^\infty, {\sigma}\rangle)$ and $(M_\infty(X,\A), \langle T^\infty, \widetilde{\sigma}\rangle)$ are $\Z^2$-t.d.s., which were introduced and investigated in \cite{HSY22-1}.

\subsubsection{}
The following theorem indicates the connection between them and the systems we introduced in the current paper.

\begin{thm}\label{support}
Let $({X},T)$ be a t.d.s. and let $\A=\{p_1,\cdots, p_d\}$ be a family of non-constant integral polynomials with $p_i(0)=0, 1\le i\le d$. Then for all $\mu\in M_T(X)$, we have
$${\rm supp}\ \mu_\A^{(\infty)}\subseteq  N_\infty(X,\A)\ \text{and}\ {\rm supp}\  \widetilde{\mu}_\A^{(\infty)}\subseteq  M_\infty(X,\A).$$
If in addition ${\rm supp}\ \mu=X$, then ${\rm supp}\ \mu_\A^{(\infty)}=  N_\infty(X,\A)$ and ${\rm supp}\  \widetilde{\mu}_\A^{(\infty)}=  M_\infty(X,\A).$
\end{thm}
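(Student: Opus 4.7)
The plan is to establish both inclusions for $\mu_\A^{(\infty)}$; the corresponding statements for $\widetilde{\mu}_\A^{(\infty)}$ and $M_\infty(X,\A)$ follow by the same strategy, replacing $\w_x^\A$ and $\sigma$ with $\xi_x^\A$ and $\widetilde{\sigma}$ and tracking the extra linear coordinates $(T^{a_1n}x,\ldots,T^{a_sn}x)$.

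First I would read off from formula (\ref{f3}) the representation
\begin{equation*}
\mu_\A^{(\infty)}=\lim_{N\to\infty}\frac{1}{N}\sum_{n=0}^{N-1}\sigma^n_*(\phi^\A_*\mu)
\end{equation*}
in the weak$^*$ topology of $\mathcal{M}((X^d)^\Z)$, where the continuous map $\phi^\A:X\to (X^d)^\Z,\ x\mapsto \w_x^\A,$ sends $X$ into $N_\infty(X,\A)$. Since $N_\infty(X,\A)$ is closed and $\sigma$-invariant, each Cesaro averaging measure is supported there; the Portmanteau theorem applied to the open complement of $N_\infty(X,\A)$ then forces $\mathrm{supp}\,\mu_\A^{(\infty)}\subseteq N_\infty(X,\A)$.

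For the reverse inclusion, assume $\mathrm{supp}\,\mu=X$. Since $\mathrm{supp}\,\mu_\A^{(\infty)}$ is closed and $\sigma$-invariant, it suffices to show $\w_{x_0}^\A\in \mathrm{supp}\,\mu_\A^{(\infty)}$ for an arbitrary $x_0\in X$. Fix a basic cylinder neighborhood
\begin{equation*}
U=\prod_{j<-l}X^d\times \prod_{j=-l}^{l}\prod_{i=1}^{d}U_j^{(i)}\times \prod_{j>l}X^d
\end{equation*}
of $\w_{x_0}^\A$ with each $U_j^{(i)}$ open and $T^{p_i(j)}x_0\in U_j^{(i)}$, and set $A:=\bigcap_{i,j}T^{-p_i(j)}U_j^{(i)}$; this set is open, contains $x_0$, and therefore satisfies $\mu(A)>0$. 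The polynomials $q_{i,j}(n):=p_i(n+j)-p_i(j)$ are integral with $q_{i,j}(0)=0$, so Theorem \ref{thm-mul-rec} (applied to $T$ commuting with itself) gives
\begin{equation*}
\liminf_{N\to\infty}\frac{1}{N}\sum_{n=0}^{N-1}\mu\Big(\bigcap_{i,j}T^{-q_{i,j}(n)}A\Big)>0.
\end{equation*}
Unwinding the definitions, $x\in \bigcap_{i,j}T^{-q_{i,j}(n)}A$ implies $T^{p_i(n+j)}x\in U_j^{(i)}$, so $\bigcap_{i,j}T^{-q_{i,j}(n)}A\subseteq \bigcap_{i,j}T^{-p_i(n+j)}U_j^{(i)}$ for each $n$. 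Proposition \ref{prop-mesurable} applied with the indicator functions $\mathbf{1}_{U_j^{(i)}}$ then identifies $\mu_\A^{(\infty)}(U)$ with the limit of $\mu\big(\bigcap_{i,j}T^{-p_i(n+j)}U_j^{(i)}\big)$, whence $\mu_\A^{(\infty)}(U)>0$ and $\w_{x_0}^\A\in \mathrm{supp}\,\mu_\A^{(\infty)}$.

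The main obstacle is producing a positive Cesaro lower bound for an intersection involving many polynomial shifts, and this is exactly where the Bergelson--Leibman recurrence enters; the hypothesis $p_i(0)=0$ is used precisely to guarantee that the shifted polynomials $q_{i,j}$ still vanish at $0$, making Theorem \ref{thm-mul-rec} applicable. For $M_\infty(X,\A)$ and $\widetilde{\mu}_\A^{(\infty)}$ one enlarges the polynomial family with the linear shifts $a_in$, $1\le i\le s$, and shrinks the $X^s$-component of the basic neighborhood using continuity of $x\mapsto x^{\otimes s}$ to reduce to an open neighborhood of $x_0$ in $X$; the rest of the argument runs verbatim.
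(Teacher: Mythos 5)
Your proof is correct and follows essentially the same route as the paper's: the first inclusion comes from the weak$^*$ approximation of $\mu_\A^{(\infty)}$ by averages of Dirac masses supported on $N_\infty(X,\A)$, and the reverse inclusion reduces to showing $\w_{x_0}^\A\in\mathrm{supp}\,\mu_\A^{(\infty)}$ via a nonempty open set inside $\bigcap_{i,j}T^{-p_i(j)}U_j^{(i)}$ and the Bergelson--Leibman recurrence theorem applied to the shifted polynomials $p_i(n+j)-p_i(j)$. One minor remark: $q_{i,j}(0)=0$ holds automatically for any integral polynomial, so the hypothesis $p_i(0)=0$ is not what makes Theorem \ref{thm-mul-rec} applicable here; otherwise the argument matches the paper's.
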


\begin{proof}
We only show the first inclusion, %${\rm supp}\ \mu_\A^{(\infty)}= N_\infty(X,\A)$,
 and the second one can be proved similarly.

The measure $\mu_\A^{(\infty)}$ is a weak limit of averages of Dirac masses at points
of the form $\sigma^n \w_x^\A$ for $n\in \Z$ and $x\in X$. Since all of these points belong to $N_\infty(X,\A)$, the measure $\mu^{(\infty)}_\A$ is concentrated on this set, i.e. ${\rm supp}\ \mu_\A^{(\infty)}\subseteq  N_\infty(X,\A)$.

It remains to show that $N_\infty(X,\A)={\rm supp}\ \mu_\A^{(\infty)}$ if ${\rm supp}\ \mu=X$. Since ${\rm supp}\ \mu_\A^{(\infty)}$ is invariant under the actions of $T^\infty$ and $\sigma$, it suffices to show that for any $x\in X$, we have $\w_x^\A\in {\rm supp}\ \mu_\A^{(\infty)}$. Thus we need to show that if $U$ is an open neighbourhood of $\w_x^\A$, then $ \mu_\A^{(\infty)}(U)>0$.
Without loss of generality, we assume that $U=\big(\prod_{j=-\infty}^{-l-1} X^d \big)\times \big(\prod_{j=-l}^l U^{\otimes}_j\big)\times\big(\prod_{j=l+1}^{\infty} X^d \big),$
where $l \in \N$, $U_j^\otimes = U_j^{(1)}\times U_j^{(2)} \times \cdots \times U_j^{(d)}$, and $U_j^{(t)}$ is open in $X$, $1\le t\le d,  -l \le j\le l$. Since $\w_x^\A=(T^{\vec{p}(n)}x^{\otimes d})_{n\in \Z}\in U$, we have $T^{p_t{(j)}}x\in U^{(t)}_j$, $1\le t\le d,  -l \le j\le l$. Choose a non-empty open subset $V$ of $X$ such that
$$V\subseteq T^{-p_t(j)}U_j^{(t)}, \ \forall 1\le t\le d,  -l \le j\le l.$$
Thus we have
\begin{equation*}
  \begin{split}
    \mu_\A^{(\infty)}(U)
    &= \mu_\A^{(\infty)}\Big(\big(\prod_{j=-\infty}^{-l-1} X^d \big)\times \big(\prod_{j=-l}^l U^{\otimes}_j\big)\times\big(\prod_{j=l+1}^{\infty} X^d \big) \Big)\\
    &=\lim_{N\to\infty}\frac{1}{N} \sum_{n=0}^{N-1} \mu\Big(\bigcap_{j=-1}^l \big(T^{-p_1(n+j)}U_j^{(1)}\cap \cdots \cap T^{-p_d(n+j)}U_j^{(d)} \big)\Big)\\
&= \lim_{N\to\infty}\frac{1}{N} \sum_{n=0}^{N-1} \mu\Big(\bigcap_{j=-1}^l \bigcap_{k=1}^d T^{-(p_k(n+j)-p_k(j))-p_k(j)}U_j^{(k)}\big)\\
%\cap \ldots \cap T^{-(p_d(n+j)-p_d(j))-p_d(j)}U_j^{(d)} \big)\Big)\\
&\ge \lim_{N\to\infty} \frac{1}{N}\sum_{n=0}^{N-1} \mu\Big(\bigcap_{j=-1}^l \big(T^{-(p_1(n+j)-p_1(j))}V\cap \cdots \cap T^{-(p_d(n+j)-p_d(j))}V \big)\Big).
   \end{split}
\end{equation*}
Since ${\rm supp} \ \mu=X$, we have $\mu(V)>0$.
By Theorem \ref{thm-mul-rec}, $$\lim_{N\to\infty} \frac{1}{N} \sum_{n=0}^{N-1} \mu\Big(\bigcap_{j=-1}^l \big(T^{-(p_1(n+j)-p_1(j))}V\cap \cdots \cap T^{-(p_d(n+j)-p_d(j))}V \big)\Big)>0,$$
which implies that $\mu_\A^{(\infty)}(U)>0$. The proof is complete.
\end{proof}

\begin{rem} We have
\begin{enumerate}
\item By the proof of Theorem \ref{support}, we actually proved the following equality:
$${\rm supp}\ \mu_\A^{(\infty)}=\overline{\{\sigma^n\w_x^\A: x\in {\rm supp} \mu, \ n\in \Z\}}.$$
When $(X,T)$ is minimal, we have ${\rm supp}\ \mu=X$ for each $\mu\in {\mathcal M}_T(X)$ and ${\rm supp}\ \mu_\A^{(\infty)}=  N_\infty(X,\A)$.
Similar statement holds for  $\widetilde{\mu}_\A^{(\infty)}$.

\item By Theorem \ref{support}, if $(X,T)$ is minimal, then $(N_\infty({X},\A), \langle T^\infty, \sigma\rangle)$ is an $E$-system, i.e. it is transitive and have an invariant measure with a full support. In fact we can say more, i.e. it was proved \cite{HSY22-1} that
$(N_\infty({X},\A), \langle T^\infty, \sigma\rangle)$ is an $M$-system, i.e., it is transitive and the set of $\langle T^\infty, \sigma\rangle$-minimal points is dense.
\end{enumerate}
\end{rem}

\section{The $\sigma$-algebra of invariant sets of $((X^d)^\Z, (\X^d)^\Z, \mu^{(\infty)}_\A, \sigma)$ }\label{section-reduing-nil}

In this section the $\sigma$-algebra of invariant measurable sets of
$((X^d)^\Z, (\X^d)^\Z, \mu^{(\infty)}_\A, \sigma)$ will be studied. The main result of this section is that its $\sigma$-algebra of invariant measurable sets
%of $((X^d)^\Z, (\X^d)^\Z, \mu^{(\infty)}_\A, \sigma)$
is isomorphic to the $\sigma$-algebra of invariant measurable sets of its $\infty$-step pro-nilfactors.

\subsection{Multiple ergodic averages along polynomials of several variable}\
\medskip

We  need the following multiple ergodic averages along polynomials of several variable.

\begin{thm}\cite[Theorem 3]{Leibman05-Isr}\label{thm-Leibman}
Let $(X,\X,\mu, T)$ be an ergodic m.p.s. and $m, d\in \N$. Let $p_1,p_2,\ldots, p_d: \Z^m\rightarrow \Z$ be non-constant essentially distinct polynomials (i.e. $p_i-p_j$ is not constant for $i\neq j$). Then there is $k\in \N$ such that for any $f_1,f_2,\ldots, f_d\in L^\infty(X)$ with $\HK f_j\HK_k=0$ for some $j\in \{1,2,\ldots, d\}$, one has
$$\lim_{N\to\infty} \frac{1}{|\Phi_N|}\sum_{{\bf n}\in \Phi_N} T^{p_1({\bf n})}f_1\cdots T^{p_d({\bf n})}f_d=0$$
in $L^2(X,\mu)$ for any F{\o}lner sequence $\{\Phi_N\}_{N=1}^\infty$ in $\Z^m$.
\end{thm}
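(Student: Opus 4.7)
The plan is to establish this via Bergelson's PET (Polynomial Exhaustion Technique) induction combined with the van der Corput (vdC) inequality, reducing the general polynomial average to a single-variable linear average that is directly controlled by Host--Kra seminorms through Lemma~\ref{lem-AP-HK}. A suitable ``weight'' on systems of integer polynomials $\{p_1,\ldots,p_d\}$ will be defined (following Bergelson: type, degree vector, leading coefficients of differences $p_i-p_j$), so that van der Corput applied to one well-chosen index strictly decreases this weight. Iterating finitely many times terminates in a linear one-variable system.

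The first step is to apply vdC to the vector-valued sequence $v_{\mathbf{n}}=\prod_{i=1}^d T^{p_i(\mathbf{n})}f_i$ in $L^2(X,\mu)$ along the F\o lner sequence $\{\Phi_N\}$. This bounds $\limsup_N\|\tfrac1{|\Phi_N|}\sum_{\mathbf{n}\in\Phi_N}v_{\mathbf{n}}\|_2^2$ by an average over shifts $\mathbf{h}$ of quantities of the form
\begin{equation*}
\limsup_{N\to\infty}\Big\|\frac{1}{|\Phi_N|}\sum_{\mathbf{n}\in\Phi_N}\prod_{i=1}^d T^{p_i(\mathbf{n}+\mathbf{h})}\bar f_i\cdot \prod_{i=1}^d T^{p_i(\mathbf{n})}f_i\Big\|_{L^1(\mu)}.
\end{equation*}
After composing with $T^{-p_{j_0}(\mathbf{n})}$ for a distinguished index $j_0$ (chosen to have maximal PET weight), the system of polynomials becomes $\{p_i(\mathbf{n})-p_{j_0}(\mathbf{n}),\ p_i(\mathbf{n}+\mathbf{h})-p_{j_0}(\mathbf{n})\}_{i}$, and the key combinatorial verification is that for all but finitely many $\mathbf{h}$ this new system has strictly smaller Bergelson weight than the original. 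The new ``functions'' are products $g_i=f_i\cdot T^{p_{j'}(\mathbf h)}\bar f_{j'}$ (or the like), with the original $f_j$ still appearing as a factor of exactly one of them.

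The induction terminates with a one-variable linear scheme, at which point Lemma~\ref{lem-AP-HK} provides $k_0\in\N$ and a bound $\min_{l}\HK g_l\HK_{k_0}$ on the limit. Unwinding the vdC reductions, the $L^2$ norm of the original average is controlled by a power of $\HK f_j\HK_k$ for some $k=k(d,\deg p_1,\ldots,\deg p_d,m)$: at each vdC step the seminorm index grows by a bounded amount (using the recursive formula of Lemma~\ref{lemmaE1} which expresses $\HK f\HK_{k+1}$ as an average of $\HK f\cdot T^nf\HK_k^{2^k}$), so that after finitely many reductions the original $\HK f_j\HK_k=0$ forces the whole expression to be zero. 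The choice of $k$ is uniform in $\mathbf h$ precisely because the PET induction terminates in a number of steps depending only on the polynomial data, not on $\mathbf h$.

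The main obstacle is the bookkeeping of PET induction: defining the weight precisely (so that leading monomials, repetitions, and ``type'' are tracked), checking that a generic choice of $j_0$ produces a strictly smaller weight, and ensuring that the designated function $f_j$ is preserved as an isolated factor throughout the reduction so that its Host--Kra seminorm genuinely controls the final bound. A secondary subtlety is handling the passage between F\o lner averages on $\Z^m$ and the one-variable base case; this requires either slicing $\Phi_N$ into fibers along the distinguished direction introduced by vdC or invoking a multiparameter vdC lemma, both of which are available in Leibman's framework but require care to produce a uniform seminorm index $k$.
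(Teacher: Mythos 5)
This statement is imported into the paper as \cite[Theorem 3]{Leibman05-Isr} and is not proved there, so there is no internal argument to compare against; your outline is essentially the proof in the cited source (and in Host--Kra's treatment of polynomial averages): PET induction driven by van der Corput, terminating in a linear one-variable scheme controlled by Lemma~\ref{lem-AP-HK}, with the seminorm index incremented at each reduction via the recursive formula of Lemma~\ref{lemmaE1}, and with $k$ uniform in $\mathbf{h}$ because the number of PET steps depends only on the weight of the polynomial system. The subtleties you flag --- the choice of pivot so that the weight strictly decreases, keeping the distinguished $f_j$ isolated through the reductions, and the multiparameter F{\o}lner version of van der Corput --- are precisely what Leibman's weight bookkeeping settles, so the proposal is sound and follows the same route.
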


\begin{rem}
The integer $k$ in Theorem \ref{thm-Leibman} depends only on the weight of polynomials $\{p_1,\ldots, p_d\}$ (see \cite{Leibman05-Isr} for the definition of the weight and the proof of this fact).
\end{rem}

\subsection{Condition $(\spadesuit)$}\
\medskip

For a given integral polynomial $p$ with $p(0)=0$, for each $j\in\Z$ let $p^{[j]}$ be defined by
$$p^{[j]}(n)=p(n+j)-p(j), \ \forall n\in\Z.$$

%appears naturally in the definition of $M_\infty(X,\A)$ and $N_\infty(X,\A)$. This causes problems when applying the saturation theorems (it requires the polynomials are distinct) to polynomials $\{p_1,\ldots,p_d\}$, it may happen that $$p_i^{[k_1]}=p_j^{[k_2]}$$ for some $1\le i\not=j\le d$ and $k_1,k_2\in\Z$. For example when $p_1(n)=n^2$ and $p_2(n)=n^2+2n$, we have $p_1^{[1]}=p_2^{[0]}$. To overcome this difficulty we introduce the following condition $(\spadesuit)$.
%. To show Theorem E, we need some saturation result for $M_\infty(X,\A)$.

\begin{de}
Let $\A=\{p_1, p_2, \ldots, p_{d}\}$ be a family of integral polynomials. We say $\A$ satisfies {\em condition $(\spadesuit)$} if $p_1(0)=\ldots =p_d(0)=0$ and
\begin{enumerate}
  \item $p_i(n)=a_i n, 1\le i\le s$, where $s\ge 0$, and $a_1,a_2,\ldots, a_s$ are distinct non-zero integers;
  \item $\deg p_{j}\ge 2, s+1\le j\le d$;
  \item for each $i\neq j\in \{s+1,s+2,\ldots, d\}$, $p_j^{[k]}\neq p_i^{[t]}$ for any $k,t\in \Z$.

 % That is, any two of $p_{s+1},\ldots, p_d$ will not appear in the same sequence $(q^{(j)})_{j\in\Z}$ %$\Big(q(n),q(n+1)-q(1),q(n+2)-q(2), \ldots, \Big)$
 %for some       integral polynomial $q$ with $q(0)=0$.
\end{enumerate}
\end{de}

\begin{rem}
The key point in the definition of condition $(\spadesuit)$ is the term (3). %We take an example to explain why we need this condition.
As we explained in the introduction, when we study $\A=\{p\}$ with $p(n)=n^2$, we actually consider infinitely many polynomials
$p^{[k]}$ ($k\in\Z$) defined by $p^{[k]}(n)=p(n+k)-p(k)=n^2+2kn, n\in \Z$. But when we study $\A=\{n^2, n^2+2n, n^2+8n\}$, we still consider the same  polynomials $p^{[k]}, k\in \Z$. To exclude this kind of situation, we add the term (3) when we study $\A$.
\end{rem}

We have the following simple observation concerning the condition $(\spadesuit)$.

\begin{lem}\label{ww=dem}
Let $k\in\N$ and $Q=\{q_1,q_2,\ldots,q_k\}$ be integral polynomials with $q_i(0)=0, 1\le i\le k$. Then there is $1\le d\le k$
and $\A=\{p_1,\ldots,p_d\}\subset \{q_1,\ldots,q_k\}$ such that $\A$ satisfies condition $(\spadesuit)$, and $\{q_1, \ldots, q_k\}\subseteq \{p_i^{[m]}: 1\le i\le d, m\in \Z \}$.

%Moreover, for each $1\le i\le k$, and any $j\in\Z$, there are $1\le l\le d$ and $m\in \Z$ such that $q_i^{(j)}=p_l^{(m)}$.
\end{lem}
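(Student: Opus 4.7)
The plan is to reduce the lemma to a standard transversal argument for an appropriate equivalence relation on the space of integral polynomials vanishing at $0$. Specifically, define a relation on such polynomials by $q\sim r$ if and only if $q=r^{[m]}$ for some $m\in\Z$, i.e.\ $q(n)=r(n+m)-r(m)$ for all $n\in\Z$. I would first verify that $\sim$ is an equivalence relation: reflexivity holds via $q=q^{[0]}$; symmetry follows from the identity $(r^{[m]})^{[-m]}=r$, which relies crucially on $r(0)=0$; and transitivity follows from the direct computation $(r^{[l]})^{[m]}=r^{[m+l]}$.

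Next I would record two elementary structural facts about $\sim$. Since $p^{[m]}(n)=p(n+m)-p(m)$, the polynomials $p$ and $p^{[m]}$ share the same leading term, so $\sim$ preserves degree (and leading coefficient). In particular, if $p(n)=an$ is linear, then $p^{[m]}=p$ for every $m\in\Z$, so the equivalence class of $an$ inside $Q$ is the singleton $\{an\}$; two distinct linear polynomials are therefore never equivalent.

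With these preparations, I would partition $Q$ into its $\sim$-equivalence classes $C_1,\ldots,C_d$ (with $1\le d\le k$), pick a representative $p_i\in C_i$ in each, and relabel so that the classes containing a linear polynomial come first, say $C_1,\ldots,C_s$, with representatives $p_i(n)=a_in$ for $1\le i\le s$. Then $\A=\{p_1,\ldots,p_d\}$ satisfies condition $(\spadesuit)$: the coefficients $a_1,\ldots,a_s$ are distinct (different equivalence classes) and non-zero (non-constancy of the $q_j$), giving (1); the remaining representatives have $\deg p_j\ge 2$ by degree preservation, giving (2); and for (3), if $p_j^{[k]}=p_i^{[t]}$ for some $i\neq j$ in $\{s+1,\ldots,d\}$ and some $k,t\in\Z$, then this common polynomial would be $\sim$-equivalent to both $p_i$ and $p_j$, forcing $p_i\sim p_j$ by transitivity and contradicting that they represent distinct classes.

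Finally, the containment $\{q_1,\ldots,q_k\}\subseteq\{p_i^{[m]}:1\le i\le d,\ m\in\Z\}$ is immediate from the construction: each $q_j$ lies in some class $C_{i(j)}$, hence $q_j\sim p_{i(j)}$, so $q_j=p_{i(j)}^{[m]}$ for some $m\in\Z$. There is no real obstacle here; the only point requiring attention is that the symmetry of $\sim$ genuinely uses the hypothesis $q_i(0)=0$, which is what makes $p^{[m]}(0)=0$ and thereby keeps the whole argument inside the class of polynomials vanishing at the origin.
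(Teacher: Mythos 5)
Your proof is correct and is essentially the paper's argument made rigorous: the paper's one-line greedy removal of $q_j$ whenever $q_i^{[k_1]}=q_j^{[k_2]}$ is exactly a greedy extraction of a transversal for your equivalence relation $\sim$, and your explicit verification that $\sim$ is an equivalence relation (using $q(0)=0$), preserves degree, and is trivial on linear polynomials supplies the details the paper leaves implicit. No gaps.
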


\begin{proof}
If there are $1\le i\not=j\le k$ and $k_1,k_2\in \Z$ such that $q_i^{[k_1]}=q_j^{[k_2]}$ we just remove one of them (say $q_j$) to get a proper subset $Q'=Q\setminus\{q_j\}$ of $Q$. Continuing this process for finitely many steps, the argument will stop, and the resulting $\A$ is what we want.
\end{proof}

%{\blue

\begin{exam}\label{exam1}
Let $\A'=\{n^2, n^2+6n, n^2+10n\}$ and $\A=\{n^2\}$. Then by the definition, $\A'$ does not satisfy the condition $(\spadesuit)$, $\A$ satisfies the condition $(\spadesuit)$ and $\A'\subseteq \{p^{[m]}: p(n)=n^2\in \A, m\in \Z\}$. Let $(X,\X,\mu, T)$ be a m.p.s. Now we show that $(X^\Z,\X^\Z, \mu_\A^{(\infty)},\sigma)$ is isomorphic to $((X^3)^\Z, (\X^3)^\Z, \mu_{\A'}^{(\infty)},\sigma)$.

Recall that
$$\w_x^\A=(T^{n^2}x)_{n\in \Z}\in X^\Z, \ \text{and}\ \w_x^{\A'}=\Big((T^{n^2}x, T^{n^2+6n}x, T^{n^2+10n}x)\Big)_{n\in \Z}\in (X^3)^\Z .$$
Note that
$$\sigma^k\w_x^\A=(T^{(n+k)^2}x)_{n\in \Z}=(T^{n^2+2kn+k^2}x)_{n\in Z}=(T^{\infty})^{k^2}(T^{n^2+2kn}x)_{n\in \Z}, \forall k\in \Z.$$
We have $\w_x^{\A'}=\Big(\w_x^\A, (T^\infty)^{-9}\sigma^3 \w_x^\A, (T^\infty)^{-25}\sigma^5 \w_x^\A\Big)$ for all $x\in X$.

Define $\Psi=\id \times (T^\infty)^{-9}\sigma^3\times (T^\infty)^{-25}\sigma^5$:
$$\Psi: X^\Z \rightarrow (X^3)^\Z, \ {\bf x}\mapsto ({\bf x},(T^\infty)^{-9}\sigma^3{\bf x}, (T^\infty)^{-25}\sigma^5 {\bf x}), \ \forall {\bf x}\in X^\Z.$$
It is easy to verify that
$$\Psi_*(\mu_\A^{(\infty)})=\mu_{\A'}^{(\infty)},$$
and $\Psi$ is an isomorphism from $(X^\Z,\X^\Z, \mu_\A^{(\infty)},\sigma)$  to $((X^3)^\Z, (\X^3)^\Z, \mu_{\A'}^{(\infty)},\sigma)$.
\end{exam}

Similar to Example \ref{exam1}, we have the following general result:

\begin{thm}
Let $(X,\X,\mu, T)$ be a m.p.s. and $\A'=\{q_1,\ldots, q_k\}$ be a family of integral polynomials with $q_i(0)=0, 1\le i\le k$. Then there is a family of integral polynomials $\A=\{p_1,\ldots, p_d\}$ satisfying the condition $(\spadesuit)$ such that
$$((X^d)^\Z, (\X^d)^\Z, \mu^{(\infty)}_\A, \sigma)\cong ((X^k)^\Z, (\X^k)^\Z, \mu^{(\infty)}_{\A'}, \sigma),$$
and
$$(X^s\times (X^{d-s})^\Z, \X^s\times (\X^{d-s})^\Z, \widetilde{\mu}^{(\infty)}_\A, \widetilde{\sigma}) \cong (X^t\times (X^{k-t})^\Z, \X^t\times (\X^{k-t})^\Z, \widetilde{\mu}^{(\infty)}_{\A'}, \widetilde{\sigma}),$$
where $t$ is the number of linear polynomials in $\A'$.
\end{thm}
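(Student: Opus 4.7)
The plan is to generalise the computation in Example \ref{exam1} by using Lemma \ref{ww=dem} to extract a reduced family $\A\subseteq\A'$ satisfying condition $(\spadesuit)$ and then writing down an explicit isomorphism as a composition of coordinate projections, shifts $\sigma^m$, and powers of $T^\infty$. First I would apply Lemma \ref{ww=dem} to produce $\A=\{p_1,\ldots,p_d\}\subseteq\A'$ satisfying $(\spadesuit)$. By unwinding the reduction argument, for each $1\le i\le k$ one can find indices $\alpha(i)\in\{1,\ldots,d\}$ and $m_i\in\Z$ such that
\begin{equation*}
q_i(n)=p_{\alpha(i)}^{[m_i]}(n)=p_{\alpha(i)}(n+m_i)-p_{\alpha(i)}(m_i),\qquad \forall n\in\Z,
\end{equation*}
and moreover for each $1\le j\le d$ at least one index $i_j$ satisfies $\alpha(i_j)=j$, $m_{i_j}=0$, so that $q_{i_j}=p_j$. (If $q_j$ is linear, then $q_j^{[m]}=q_j$ for every $m$, so linear entries of $\A'$ automatically match linear entries of $\A$ with the same slope, which will identify $t$ with a possibly-repeated bookkeeping of the linear entries of $\A$.)

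Next, I would define $\Psi:(X^d)^\Z\to(X^k)^\Z$ coordinate-wise by
\begin{equation*}
\bigl(\Psi({\bf x})\bigr)_n^{(i)}=T^{-p_{\alpha(i)}(m_i)}\bigl({\bf x}_{n+m_i}\bigr)^{(\alpha(i))},\qquad 1\le i\le k,\ n\in\Z,
\end{equation*}
where superscripts label factors. Equivalently, $\Psi$ is built as a product of $k$ maps of the form $(T^\infty)^{-p_{\alpha(i)}(m_i)}\circ\sigma^{m_i}\circ\pi_{\alpha(i)}$, where $\pi_j:(X^d)^\Z\to X^\Z$ is the $j$-th projection. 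From the formula it is immediate that $\Psi\circ\sigma=\sigma\circ\Psi$ and $\Psi\circ T^\infty=T^\infty\circ\Psi$, and a direct calculation using the defining identity of $m_i$ and $\alpha(i)$ shows
\begin{equation*}
\Psi(\sigma^n\w_x^\A)=\sigma^n\w_x^{\A'},\qquad \forall x\in X,\ n\in\Z.
\end{equation*}
Plugging this into the Ces\`aro-limit definition \eqref{f1} of the $\infty$-joining and invoking Proposition \ref{prop-mesurable} to pass from continuous to bounded measurable test functions gives $\Psi_*\mu_\A^{(\infty)}=\mu_{\A'}^{(\infty)}$.

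For the inverse, define $\Phi:(X^k)^\Z\to(X^d)^\Z$ by $(\Phi({\bf y}))_n^{(j)}={\bf y}_n^{(i_j)}$. Since $q_{i_j}=p_j$, this is a continuous $\Z^2$-equivariant map satisfying $\Phi\circ\Psi=\mathrm{id}$ on the orbit set $\{\sigma^n\w_x^\A:x\in X,n\in\Z\}$, which has full $\mu_\A^{(\infty)}$-measure by the definition of $\mu_\A^{(\infty)}$ (cf.\ Theorem \ref{support}). Hence $\Psi$ is a measure-theoretic isomorphism between the $\langle T^\infty,\sigma\rangle$-systems, proving the first claim.

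For the second isomorphism I would transport $\Psi$ through the explicit isomorphism \eqref{a9} of subsection \ref{subsection-linear-terms} that links $\mu_\A^{(\infty)}$ with $\widetilde\mu_\A^{(\infty)}$ (and similarly for $\A'$). Because $p_{\alpha(i)}^{[m_i]}=p_{\alpha(i)}$ for linear $p_{\alpha(i)}$, the map $\Psi$ splits cleanly into a ``linear part'' $X^s\to X^t$ of the form $(x_1,\ldots,x_s)\mapsto(x_{\alpha(1)},\ldots,x_{\alpha(t)})$ and a ``non-linear part'' $(X^{d-s})^\Z\to(X^{k-t})^\Z$ of the same coordinate-projection-plus-shift shape as above, and one checks directly that this product intertwines the respective $\widetilde\sigma$-actions defined in \eqref{x1}. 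I expect no serious obstacle beyond careful bookkeeping of the indexing $\alpha$ and the shifts $m_i$; the main point is the identity $\Psi(\sigma^n\w_x^\A)=\sigma^n\w_x^{\A'}$, which follows tautologically from the definition of $p_{\alpha(i)}^{[m_i]}$.
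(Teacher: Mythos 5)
Your proposal is correct and is exactly the argument the paper intends: the paper gives no proof of this theorem beyond the remark ``Similar to Example \ref{exam1}'', and your construction — extracting $\A\subseteq\A'$ via Lemma \ref{ww=dem}, writing $q_i=p_{\alpha(i)}^{[m_i]}$, and taking $\Psi$ to be the product of the maps $(T^\infty)^{-p_{\alpha(i)}(m_i)}\circ\sigma^{m_i}\circ\pi_{\alpha(i)}$ so that $\Psi(\sigma^n\w_x^\A)=\sigma^n\w_x^{\A'}$ — is precisely the generalization of that example. (One small simplification: since $p_j(0)=0$ and $m_{i_j}=0$, your $\Phi$ satisfies $\Phi\circ\Psi=\mathrm{id}$ on all of $(X^d)^\Z$, not merely on the orbit set, so injectivity of $\Psi$ is immediate.)
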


%}

\subsection{Some lemmas}

\begin{lem}[van der Corput lemma]\cite{Bergelson87}\label{vanderCoput}
Let $\{\zeta_n\}$ be a bounded sequence in a Hilbert space $\mathcal{H}$
with norm $\parallel \cdot \parallel$ and inner product $\langle \cdot,
\cdot \rangle$. Then
\begin{equation*}
    \limsup_{N\to \infty} \Big\| \frac{1}{N}\sum _{n=1}^N
    \zeta_n\Big\|^2
    \le \limsup_{H\to \infty}\frac{1}{H} \sum_{h=1}^H\limsup_{N\to\infty}
    \left|\frac{1}{N}\sum_{n=1}^N \langle \zeta_n, \zeta_{n+h} \rangle  \right|.
\end{equation*}
\end{lem}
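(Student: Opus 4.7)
The proof I would give is the classical Hilbert-space van der Corput argument. First I would fix a parameter $H\in\N$ and observe that, writing $C=\sup_n\|\zeta_n\|$, for every $h\in\{0,\dots,H-1\}$ the shifted average $\frac{1}{N}\sum_{n=1}^N\zeta_{n+h}$ differs from $\frac{1}{N}\sum_{n=1}^N\zeta_n$ by a quantity of norm at most $2hC/N$. Averaging over $h$ and letting $N\to\infty$ gives
$$\limsup_N \Big\|\frac{1}{N}\sum_{n=1}^N\zeta_n\Big\|^2=\limsup_N \Big\|\frac{1}{N}\sum_{n=1}^N\Big(\frac{1}{H}\sum_{h=0}^{H-1}\zeta_{n+h}\Big)\Big\|^2.$$
Applying the convexity of $\|\cdot\|^2$ (Cauchy--Schwarz on the outer Cesaro average) pulls the norm inside, and then expanding $\|\frac{1}{H}\sum_h\zeta_{n+h}\|^2$ as a double sum of inner products yields the bound
$$\limsup_N \frac{1}{H^2}\sum_{h_1,h_2=0}^{H-1}\frac{1}{N}\sum_{n=1}^N \langle \zeta_{n+h_1},\zeta_{n+h_2}\rangle.$$

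Next I would pull the $\limsup$ inside the finite double sum, at the cost of passing to absolute values. A boundary-term estimate identical to the first step gives shift invariance in the Cesaro sense, namely $\frac{1}{N}\sum_n\langle \zeta_{n+h_1},\zeta_{n+h_2}\rangle-\frac{1}{N}\sum_n\langle\zeta_n,\zeta_{n+h_2-h_1}\rangle\to 0$. Using also $\langle\zeta_n,\zeta_{n-h}\rangle=\overline{\langle\zeta_{n-h},\zeta_n\rangle}$ followed by a shift, I can pair the $(h_1,h_2)$ and $(h_2,h_1)$ terms and isolate the diagonal. Setting $a_h:=\limsup_N\bigl|\frac{1}{N}\sum_{n=1}^N\langle\zeta_n,\zeta_{n+h}\rangle\bigr|$, this produces
$$\limsup_N\Big\|\frac{1}{N}\sum_{n=1}^N\zeta_n\Big\|^2\;\le\;\frac{C^2}{H}+\frac{2}{H^2}\sum_{h=1}^{H-1}(H-h)\,a_h.$$

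Finally I would let $H\to\infty$. The diagonal term $C^2/H$ vanishes. For the main term, a change of summation order (or summation by parts) gives $\sum_{h=1}^{H-1}(H-h)a_h=\sum_{j=1}^{H-1}\sum_{k=1}^{j}a_k=\sum_{j=1}^{H-1}j\,\bar a_j$ where $\bar a_j=\frac{1}{j}\sum_{k=1}^j a_k$. Splitting this sum at any index beyond which $\bar a_j<\limsup_H \bar a_H+\varepsilon$ and using $\sum_{j=1}^{H-1}j\sim H^2/2$ shows $\limsup_H \frac{2}{H^2}\sum_{j=1}^{H-1}j\bar a_j\le \limsup_H \bar a_H$, which is exactly the right-hand side of the asserted inequality. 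The only delicate point is that slightly subtle bookkeeping: the factor $2$ in front of $\frac{1}{H^2}\sum(H-h)a_h$ is absorbed by the weighting $(1-h/H)$ (which ``integrates to $1/2$''), so one has to carry out the Cesaro comparison carefully rather than bound $(1-h/H)$ by $1$.
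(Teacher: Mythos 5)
Your proof is correct, and it is the standard Hilbert-space van der Corput argument; the paper itself gives no proof of this lemma, citing it to Bergelson's 1987 paper, so there is nothing internal to compare against. You correctly handle the one genuinely delicate point: after isolating the diagonal one is left with the weighted sum $\frac{2}{H^2}\sum_{h=1}^{H-1}(H-h)a_h$, and bounding the weight $(H-h)/H$ crudely by $1$ would produce an extra factor of $2$ on the right-hand side; your summation-by-parts rewriting as $\frac{2}{H^2}\sum_{j=1}^{H-1} j\,\bar a_j$, a convex combination (weights summing to $(H-1)/H$) concentrated at large $j$, correctly yields the bound $\limsup_H \bar a_H$, which is exactly the stated right-hand side.
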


%Using the lemma we can show the following lemma.

Recall that $\widetilde{\sigma}: X^s\times (X^{d-s})^\Z\rightarrow X^s\times (X^{d-s})^\Z$
%$\widetilde{\sigma}: (X^s\times (X^{d-s})^\Z, \X^s\times (\X^{d-s})^\Z, \widetilde{\mu}^{(\infty)}_\A )\rightarrow (X^s\times (X^{d-s})^\Z, %\X^s\times (\X^{d-s})^\Z, \widetilde{\mu}^{(\infty)}_\A )$
is defined as follows: for
$\big((x_1, \ldots, x_s), {\bf x}\big)\in X^s\times (X^{d-s})^\Z,$
%\begin{equation*}
$ \widetilde{\sigma}\Big((x_1, \ldots, x_s),{\bf x}\Big)=\Big(\tau_{\vec{a}}(x_1,  \ldots, x_s), \sigma_{II} {\bf x}\Big),$
%\end{equation*}
where $\tau_{\vec{a}}=T^{a_1}\times \cdots \times T^{a_s}$ and $\sigma_{II}$ is the shift on $(X^{d-s})^\Z$.

%Recall that $\tau_{\vec{a}}=T^{a_1}\times \ldots \times T^{a_s}$, $f_j^{\otimes_{II}} = f_j^{(s+1)}\otimes f_j^{(s+2)} \otimes \cdots \otimes f_j^{(d)}$, and $\big( f_1\otimes \ldots \otimes f_s\big) \otimes \bigotimes_{j=-l}^{l} f_j^{\otimes_{II}} $ is regarded as an element of $L^\infty(X^s\times (X^{d-s})^{\Z}, \widetilde{\mu}_\A^{(\infty)})$.

We now show
\begin{lem}\label{lem-vdc}
Let $(X, \X, \mu, T)$ be an ergodic m.p.s. and $d\in \N$.  Let $\A=\{p_1,\ldots, p_d\}$ be a family of non-constant integral polynomials satisfying the condition $(\spadesuit)$.
Then for
all $f_1,f_2,\ldots, f_s\in L^\infty(X,\mu)$, all $l \in \N$ and all $f^{(t)}_j\in L^\infty(X,\mu)$ with $\HK f_i \HK_\infty=0$ for some $1\le i\le s$ or $\HK f_j^{(t)}\HK_\infty=0$ for some $s+1\le t\le d, -l\le j\le l$, one has
\begin{equation}\label{AP-VDC}
\limsup_{N\to\infty}\left \|
\frac{1}{N}\sum_{n=0}^{N-1} \Big(\big(\bigotimes_{j=1}^sf_j \big)\otimes \bigotimes_{j=-l}^{l} f_j^{\otimes_{II}} \Big) \Big(\tau_{\vec{a}}^n(x_1,\ldots, x_s), \sigma_{II}^n {\bf x}\Big)\right\|_{L^2( \widetilde{\mu}_\A^{(\infty)})}=0.
\end{equation}\end{lem}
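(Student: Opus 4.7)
The strategy is to compute the $L^2$-norm directly using the $\widetilde\sigma$-invariance of $\widetilde\mu_\A^{(\infty)}$, rewrite the resulting correlations via the extended form of the defining formula \eqref{g3}, and apply Leibman's multi-polynomial ergodic theorem (Theorem~\ref{thm-Leibman}) in \emph{two} variables $(k,h)\in\Z^2$.

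Set $F=\bigotimes_{i=1}^s f_i\otimes\bigotimes_{j=-l}^l f_j^{\otimes_{II}}$ and $d_h:=\langle F,F\circ\widetilde\sigma^h\rangle_{L^2(\widetilde\mu_\A^{(\infty)})}$. Expanding the square and using $\widetilde\sigma$-invariance,
\[
\Big\|\frac{1}{N}\sum_{n=0}^{N-1}F\circ\widetilde\sigma^n\Big\|^2_{L^2(\widetilde\mu_\A^{(\infty)})}=\frac{1}{N}\sum_{|h|<N}\Big(1-\frac{|h|}{N}\Big)d_h.
\]
A standard Toeplitz-type argument (using boundedness of $d_h$ and $d_{-h}=\overline{d_h}$) reduces the desired limit to showing $\lim_{H\to\infty}\frac{1}{H}\sum_{h=1}^H d_h=0$. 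Applying the analogue of Proposition~\ref{prop-mesurable} for $\widetilde\mu_\A^{(\infty)}$ to the bounded function $F\cdot\overline{F\circ\widetilde\sigma^h}$,
\[
\frac{1}{H}\sum_{h=1}^H d_h=\lim_{K\to\infty}\frac{1}{KH}\sum_{k=0}^{K-1}\sum_{h=1}^H\int_X C(k,h,x)\,d\mu(x),
\]
where $C(k,h,x):=\prod_{i=1}^s f_i(T^{a_ik}x)\overline{f_i(T^{a_i(k+h)}x)}\prod_{j=-l}^l\prod_{t=s+1}^d f_j^{(t)}(T^{p_t(k+j)}x)\overline{f_j^{(t)}(T^{p_t(k+h+j)}x)}$.

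Regard $C(k,h,x)=\prod_\alpha F_\alpha(T^{q_\alpha(k,h)}x)$ as a polynomial average in the two variables $(k,h)\in\Z^2$. The relevant polynomials $q_\alpha$---namely $a_ik$, $a_i(k+h)$, $p_t(k+j)$, $p_t(k+h+j)$---are pairwise essentially distinct as elements of $\Z[k,h]$: the only collision in the single variable $k$, between $a_ik$ and $a_i(k+h)=a_ik+a_ih$, is resolved in two variables because their difference $a_ih$ is non-constant in $(k,h)$, and all other pairs are separated by the three clauses of condition $(\spadesuit)$. Under the hypothesis that $\HK f_i\HK_\infty=0$ or $\HK f_j^{(t)}\HK_\infty=0$ for some index, the corresponding factor $F_\alpha$ (or its conjugate, which shares the same HK seminorm) has vanishing $\HK\cdot\HK_\infty$, so Theorem~\ref{thm-Leibman} applied in $\Z^2$ forces the F{\o}lner average to vanish. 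Choosing a F{\o}lner subsequence in which $K$ grows faster than $H$ identifies this joint limit with the iterated limit $\lim_H\frac{1}{H}\sum_h d_h$, yielding the required conclusion.

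The main obstacle is the collision between the linear polynomials $a_ik$ and $a_i(k+h)$, which would break essential distinctness in any single-variable reduction; this is precisely what forces the passage to two-variable averaging, where the difference $a_ih$ ceases to be constant and the hypothesis on $\HK\cdot\HK_\infty$ can be fed directly into Leibman's theorem.
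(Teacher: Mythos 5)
Your proof is correct and rests on the same pillars as the paper's: expressing the $L^2$-norm through autocorrelations of the observable, converting those correlations into a polynomial multiple average over a Følner sequence in several variables via the defining formula for $\widetilde\mu_\A^{(\infty)}$, checking essential distinctness from condition $(\spadesuit)$, and invoking Theorem \ref{thm-Leibman}. The organization differs slightly: the paper splits into three cases ($s=d$, $s=0$, $1\le s\le d-1$) and, in the mixed case, keeps both outer indices $n_1,n_2$ separate, producing the three-variable family $\{a_i(m+n_1),\,a_i(m+n_2),\,p_t(m+n_1+j),\,p_t(m+n_2+j)\}$ averaged over $\Phi_N=[0,N-1]^2\times[0,M(N)]$, whereas you first exploit $\widetilde\sigma$-invariance to collapse the double sum to the single difference parameter $h$, so Leibman's theorem is applied in only two variables $(k,h)$ and all cases are handled uniformly. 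The two resolutions of the linear collision ($a_ik$ vs.\ $a_i(k+h)$, resp.\ $a_i(m+n_1)$ vs.\ $a_i(m+n_2)$) are the same idea, and your diagonal choice of $K(H)$ growing fast relative to $H$ plays exactly the role of the paper's $M(N)$ chosen via \eqref{z4}. One point worth making explicit: for $1\le h\le 2l$ the windows $[-l,l]$ and $[h-l,h+l]$ overlap, so some coordinate functions coalesce in $F\cdot\overline{F\circ\widetilde\sigma^{h}}$; this is harmless --- either discard these finitely many $h$, which contribute $O(l/H)$ to $\frac{1}{H}\sum_{h\le H}d_h$, or note that $p_t(k+j)$ and $p_{t'}(k+h+j')$ remain essentially distinct as polynomials of $(k,h)$ in every case because the latter genuinely depends on $h$.
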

Note that here $f_j^{\otimes_{II}} = f_j^{(s+1)}\otimes f_j^{(s+2)} \otimes \cdots \otimes f_j^{(d)}$, and
$\big( \bigotimes_{j=1}^sf_j \big) \otimes \bigotimes_{j=-l}^{l} f_j^{\otimes_{II}} $ is regarded as an element of $L^\infty(X^s\times (X^{d-s})^{\Z}, \widetilde{\mu}_\A^{(\infty)})$.
\begin{proof}
Let $\A=\{p_1, \ldots, p_s, p_{s+1}, \ldots, p_{d}\}$ be a family of non-constant integral polynomials, where $s\ge 0$, $p_i(n)=a_in$ ($1\le i\le s$) satisfying $(\spadesuit)$.
Let $f_1,\ldots, f_s\in L^\infty(X,\mu)$, $l \in \N$ and $f^{(t)}_j\in L^\infty(X,\mu)$ with $\HK f_i \HK_\infty=0$ for some $1\le i\le s$ or $\HK f_j^{(t)}\HK_\infty=0$ for some $s+1\le t\le d, -l\le j\le l$. Without loss of generality, we assume that $\|f_i\|_\infty\le 1$ for all $i$
%$i\in \{1,2,\ldots, s\}$
and $\|f_j^{(t)}\|_\infty\le 1$ for all $j, t$.  Write
\begin{equation*}
  \begin{split}
     \zeta_n& =\Big( \big(f_1\otimes \cdots \otimes f_s\big) \otimes \bigotimes_{j=-l}^{l} f_j^{\otimes_{II}} \Big) \Big(\tau_{\vec{a}}^n(x_1,\ldots, x_s), \sigma_{II}^n {\bf x}\Big)\\
     & = f_1(T^{a_1n}x_1)f_2(T^{a_2n}x_2)\cdots f_s(T^{a_sn}x_s)f_{-l}^{\otimes_{II}} ({\bf x}_{n-l}) f_{-l+1}^{\otimes_{II}} ({\bf x}_{n-l+1}) \cdots f_{l}^{\otimes_{II}}({\bf x}_{n+l})  \\
&= f_1(T^{a_1n}x_1)f_2(T^{a_2n}x_2)\cdots f_s(T^{a_sn}x_s) \prod_{j=-l}^l f_j^{(s+1)}(x^{(s+1)}_{n+j}) f_j^{(s+2)} (x_{n+j}^{(s+2)}) \cdots f_j^{(d)}(x_{n+j}^{(d)}),
   \end{split}
\end{equation*}
where ${\bf x}=({\bf x}_n)_{n\in \Z}=\Big((x^{(s+1)}_n, x^{(s+2)}_n,\ldots, x^{(d)}_n) \Big)_{n\in \Z} \in (X^{d-s})^{\Z}$.

By van der Corput lemma (Lemma \ref{vanderCoput}),
\begin{equation*}
    \limsup_{N\to \infty} \big\| \frac{1}{N}\sum _{n=0}^{N-1}
    \zeta_n\big\|^2_{L^2(\widetilde{\mu}_\A^{(\infty)})}
     \le \limsup_{H\to \infty}\frac{1}{H} \sum_{h=0}^{H-1}\limsup_{N\to\infty}
    \left|\frac{1}{N}\sum_{n=0}^{N-1} \int_{X^s\times (X^{d-s})^{\Z}} \overline{\zeta}_{n+h}\cdot\zeta_n d\widetilde{\mu}_\A^{(\infty)} \right|.
\end{equation*}
For simplicity, we only consider real-valued functions. Firstly we show two special cases, and then give the proof of the general case.

\medskip

\noindent {\bf Case I}: \ {\em  $s=d$}.

\medskip

In this case all polynomials involved are linear and $\widetilde{\mu}_\A^{(\infty)}={\mu}^{(d)}_{\vec{a}}$, where $\vec{a}=(a_1, \ldots, a_d)$. The results follows from Lemma \ref{lem-AP-vdc}.

\medskip

\noindent {\bf Case II}:\ {\em $s=0$}.

\medskip

In this case there are no linear items in $\A$ and $\widetilde{\mu}_\A^{(\infty)}={\mu}_\A^{(\infty)}$.
For $h> 2l$, we have
\begin{equation*}
\begin{split}
    & \left|\frac{1}{N}\sum_{n=0}^{N-1} \int_{(X^d)^{\Z}} {\zeta}_{n+h}\cdot\zeta_n d\mu_\A^{(\infty)}
    \right| \\
   &\quad = \left | \frac{1}{N}\sum_{n=0}^{N-1} \int_{(X^d)^{\Z}}
   \prod_{j=-l}^l {f_j^\otimes} ({\bf x}_{n+h+j}) \prod_{j=-l}^l f_j^\otimes ({\bf x}_{n+j}) d \mu_\A^{(\infty)}({\bf x})\right|\\
& \quad= \left | \frac{1}{N}\sum_{n=0}^{N-1} \int_{(X^d)^{\Z}}
    \prod_{j=-l}^l f_j^\otimes ({\bf x}_{n+j}) {f_j^\otimes} ({\bf x}_{n+h+j})  d \mu_\A^{(\infty)}({\bf x})\right|.
\end{split}
\end{equation*}
By the definition of $\mu_\A^{(\infty)}$,
\begin{equation}\label{z1}
\begin{split}
    & \int_{(X^d)^{\Z}}
    \prod_{j=-l}^l f_j^\otimes ({\bf x}_{n+j}) {f_j^\otimes} ({\bf x}_{n+h+j})  d \mu_\A ^{(\infty)}({\bf x}) \\
&\quad = \lim_{M\to\infty}\frac{1}{M}\sum_{m=0}^{M-1}
\int_{X} \prod_{j=-l}^l f_j^\otimes (T^{\vec{p}(m+n+j)}x^{\otimes d}) {f_j^\otimes} (T^{\vec{p}(m+n+h+j)}x^{\otimes d}) d\mu (x)
    \\
&\quad  =\lim_{M\to\infty}\frac{1}{M}\sum_{m=0}^{M-1}
\int_{X} \prod_{j=-l}^l  f_j^{(1)}(T^{p_1(m+n+j)}x) f_j^{(2)}(T^{p_2(m+n+j)}x)\cdots f_j^{(d)}(T^{p_d(m+n+j)}x)\cdot\\
& \quad \quad \quad \quad  {f_j^{(1)}}(T^{p_1(m+n+h+j)}x) {f_j^{(2)}}(T^{p_2(m+n+h+j)}x)\cdots { f_j^{(d)}}(T^{p_d(m+n+h+j)}x)  d \mu (x).
\end{split}
\end{equation}
By (3) of the condition $(\spadesuit)$ on $\A$, all $m$-polynomials
$$\{p_i(m+n+j), p_i(m+n+h+j), 1\le i\le d, -l\le j\le l\}$$ are essentially distinct, and by Theorem \ref{thm-Leibman}, the equation \eqref{z1} is $0$.
Thus for $h>2l$,
\begin{equation*}
 \left|\frac{1}{N}\sum_{n=0}^{N-1} \int_{(X^d)^{\Z}} {\zeta}_{n+h}\cdot\zeta_n d\mu_\A ^{(\infty)}
    \right| =0.
\end{equation*}
So by  Lemma \ref{vanderCoput},
\begin{equation*}
    \limsup_{N\to \infty} \Big\| \frac{1}{N}\sum _{n=0}^{N-1}
    \zeta_n\Big\|^2_{L^2(\mu_\A ^{(\infty)})} \le \limsup_{H\to \infty}\frac{1}{H} \sum_{h=0}^{H-1}\limsup_{N\to\infty}
    \left|\frac{1}{N}\sum_{n=0}^{N-1} \int_{X^s\times (X^{d-s})^{\Z}} {\zeta}_{n+h}\cdot\zeta_n d{\mu}_\A^{(\infty)} \right|=0.
\end{equation*}

\medskip

\noindent {\bf Case III}:\ {\em $1\le s\le  d-1$}.

\medskip
It is easy to see

$$  \Big\| \frac{1}{N}\sum _{n=0}^{N-1}
    \zeta_n\Big \|^2_{L^2(\widetilde{\mu}_\A^{(\infty)})}=
    \Big\langle\frac{1}{N}\sum _{n=0}^{N-1}
    \zeta_n,\frac{1}{N}\sum _{n=0}^{N-1}
    \zeta_n \Big\rangle=\frac{1}{N^2}\sum _{n_1,n_2=0}^{N-1}
    \langle\zeta_{n_1}, \zeta_{n_2}\rangle.
$$
For each $(n_1,n_2)\in [0,N-1]^2$ with $|n_1-n_2|>2l$, we have that
\begin{equation}\label{z3}
\begin{split}
& \langle\zeta_{n_1}, \zeta_{n_2}\rangle= \int_{X^s\times (X^{d-s})^{\Z}} {\zeta}_{n_1}\cdot\zeta_{n_2} d\widetilde{\mu}_\A^{(\infty)}\\
& \quad= \int_{X^s\times (X^{d-s})^{\Z}} f_1(T^{a_1n_1}x_1)\cdots f_s(T^{a_sn_1}x_s)f_{-l}^{\otimes_{II}} ({\bf x}_{n_{1}-l})\ldots f_{l}^{\otimes_{II}}({\bf x}_{n_1+l})\cdot\\
& \quad \quad \quad \quad \quad \quad \quad f_1(T^{a_1n_2}x_1)\cdots f_s(T^{a_sn_2}x_s) f_{-l}^{\otimes_{II}} ({\bf x}_{n_2-l}) \cdots f_{l}^{\otimes_{II}}({\bf x}_{n_2+l})  d\widetilde{\mu}_\A^{(\infty)}\\
&\quad = \int_{X^s\times (X^{d-s})^{\Z}} \Big( \prod_{i=1}^s f_i(T^{a_in_1}x_i)f_i(T^{a_in_2}x_i) \Big)\Big(\prod_{j=-l}^l f_j^{\otimes_{II}} ({\bf x}_{n_1+j}) f_j^{\otimes_{II}} ({\bf x}_{n_2+j})\Big) d\widetilde{\mu}_\A^{(\infty)}\\
&\quad =\lim_{M\to\infty}\frac{1}{M}\sum_{m=0}^{M-1}\int_{X} B(n_1,n_2,m) d\mu (x),
\end{split}
\end{equation}
where
\begin{equation}\label{z6}
\begin{split}
    &B(n_1,n_2,m)\\
    &=\Big( \prod_{i=1}^s f_i(T^{a_i(m+n_1)}x)f_i(T^{a_i(m+n_2)}x)\Big)\cdot
    \prod_{j=-l}^l f_j^{\otimes_{II}} (T^{p_{s+1}(m+n_1+j)}x,\ldots, T^{p_d(m+n_1+j)}x)\cdot\\
& \quad \quad \quad \quad \quad \quad \quad \quad f_j^{\otimes_{II}} (T^{p_{s+1}(m+n_2+j)}x,\ldots, T^{p_d(m+n_2+j)}x).
\end{split}
\end{equation}

For each $N\in \N$, by \eqref{z3} we choose an $M(N)\in \N$ such that $\lim_{N\to\infty} M(N)=+\infty$ and for each $(n_1,n_2)\in [0,N-1]^2$ with $|n_1-n_2| > 2l$, we have that
\begin{equation}\label{z4}
\begin{split}
\Big|\langle\zeta_{n_1}, \zeta_{n_2}\rangle-\frac{1}{M(N)}\sum_{m=0}^{M(N)-1}\int_{X} B(n_1,n_2,m) d\mu (x)\Big|<\frac{1}{N^3} .
\end{split}
\end{equation}

Let $\Phi_N=[0,N-1]^2 \times [0,M(N)]$. Then $\{\Phi_N\}_{N=1}^\infty$ is a
F{\o}lner sequence of $\Z^3$. Let $C_l^N=|\{(n_1,n_2)\in [0,N-1]^2: |n_1-n_2|\le 2l\}|$. By \eqref{z4}
\begin{equation}\label{z5}
\begin{split}
    & \ \ \ \big\| \frac{1}{N}\sum _{n=0}^{N-1}
    \zeta_n\big\|^2_{L^2(\widetilde{\mu}_\A^{(\infty)})}=\frac{1}{N^2}\sum _{n_1,n_2=0}^{N-1}
    \langle\zeta_{n_1}, \zeta_{n_2}\rangle \le \frac{1}{N^2}\sum _{n_1,n_2\in [0,N-1]^2\setminus C_l^N}
    \langle\zeta_{n_1}, \zeta_{n_2}\rangle +\frac{C_l^N}{N^2} \\
    &\quad \le \frac{1}{N^2}\sum _{n_1,n_2\in [0,N-1]^2\setminus C_l^N}
   \Big(\frac{1}{M(N)}\sum_{m=0}^{M(N)-1}\int_{X} B(n_1,n_2,m) d\mu (x)+\frac{1}{N^3} \Big) +\frac{C_l^N}{N^2}\\
   &\quad \le \frac{1}{N^2}\sum _{n_1,n_2\in [0,N-1]^2}\frac{1}{M(N)}\sum_{m=0}^{M(N)-1}\int_{X} B(n_1,n_2,m) d\mu (x) + \frac{1}{N^3}+\frac{2C_l^N}{N^2}\\
     &\quad = \frac{1}{|\Phi_N|} \sum_{(n_1,n_2,m)\in \Phi_N} \int_{X} B(n_1,n_2,m) d\mu (x) + \frac{1}{N^3}+\frac{2C_l^N}{N^2}.
\end{split}
\end{equation}
Since $\lim_{N\to\infty}\frac{2C_l^N}{N^2} =0$, we have
\begin{equation}\label{z7}
\begin{split}
    &\limsup_{N\to\infty}\big\| \frac{1}{N}\sum _{n=0}^{N-1}
    \zeta_n\big\|^2_{L^2(\widetilde{\mu}_\A^{(\infty)})}= \limsup_{N\to\infty}\frac{1}{N^2}\sum _{n_1,n_2=0}^{N-1}
    \langle\zeta_{n_1}, \zeta_{n_2}\rangle
     \\ & \le \limsup_{N\to\infty} \frac{1}{|\Phi_N|} \sum_{(n_1,n_2,m)\in \Phi_N} \int_{X} B(n_1,n_2,m) d\mu (x) \\
     &\le \limsup_{N\to\infty} \left\| \frac{1}{|\Phi_N|} \sum_{(n_1,n_2,m)\in \Phi_N} B(n_1,n_2,m)\right\|_{L^1(\mu)}\\
     & \le \limsup_{N\to\infty} \left\| \frac{1}{|\Phi_N|} \sum_{(n_1,n_2,m)\in \Phi_N} B(n_1,n_2,m)\right\|_{L^2(\mu)}\\
     & = \limsup_{N\to\infty} \Big\| \frac{1}{|\Phi_N|} \sum_{(n_1,n_2,m)\in \Phi_N} \prod_{i=1}^s f_i(T^{a_i(m+n_1)}x)f_i(T^{a_i(m+n_2)}x)\cdot \\
     &\quad \quad \quad \quad \quad \quad\prod_{j=-l}^l f_j^{\otimes_{II}} (T^{p_{s+1}(m+n_1+j)}x,\ldots, T^{p_d(m+n_1+j)}x)\cdot \\
     &\quad \quad \quad \quad \quad \quad \quad \quad \quad \quad f_j^{\otimes_{II}} (T^{p_{s+1}(m+n_2+j)}x,\ldots, T^{p_d(m+n_2+j)}x)\Big\|_{L^2(\mu)}.
\end{split}
\end{equation}

Recall that we assume $\HK f_i \HK_\infty=0$ for some $1\le i\le s$ or $\HK f_j^{(t)}\HK_\infty=0$ for some $s+1\le t\le d, 1\le j\le l$. By the condition $(\spadesuit)$ on $\A$, all $(n_1,n_2,m)$-polynomials $$\{p_t(m+n_1+j), p_t(m+n_2+j), s+1\le t\le d, -l \le j\le l\}$$ and $\{a_i(m+n_1), a_i(m+n_2): 1\le i\le s\}$ as polynomials of several variable $\Z^3\rightarrow \Z$ are essentially distinct.
Thus by Theorem \ref{thm-Leibman}, the last line in \eqref{z7} is equal to $0$. So by \eqref{z7},
\begin{equation*}
    \limsup_{N\to \infty} \Big\| \frac{1}{N}\sum _{n=0}^{N-1}
    \zeta_n\Big\|^2_{L^2(\widetilde{\mu}_\A ^{(\infty)})}=0.
\end{equation*}
The whole proof is complete.
\end{proof}

The final lemma we need is the following
\begin{lem}\label{lem-vdc2}
Let $(X,\X,\mu,T)$ be an ergodic m.p.s. and $d\in \N$. Let $\A= \{p_1, \cdots,  p_d \}$ be a family of non-constant integral polynomials satisfying $(\spadesuit)$. Let $\pi_\infty: (X,\X,\mu,T) \rightarrow (Z_\infty,\ZZ_\infty,\mu_\infty,T)$ be the factor map from $X$ to its $\infty$-step pro-nilfactor. Then for all $f_1,\ldots, f_s\in L^\infty(X,\mu)$, all $l \in \N$ and
all $f_j^{\otimes_{II}} = f_j^{(s+1)}\otimes f_j^{(s+2)} \otimes \cdots \otimes f_j^{(d)}$ (where $f^{(t)}_j\in L^\infty(X,\mu)$, $s+1\le t\le d, -l \le j\le l$), one has
\begin{equation}\label{b3}
\begin{split}
   & \E_{\widetilde{\mu}_\A^{(\infty)}}\Big(\big(\bigotimes_{k=1}^s f_k\big)\otimes \bigotimes_{j=-l}^{l} f_j^{\otimes_{II}} \Big|\I(X^s\times (X^{d-s})^{\Z}, \widetilde{\mu}_\A^{(\infty)},  \widetilde{\sigma})\Big) \\
   &\quad =\E_{\widetilde{\mu}_\A^{(\infty)}} \Big( \bigotimes_{k=1}^s\E_\mu(f_k | \ZZ_\infty) \otimes  \bigotimes_{j=-l}^l
    \E_\mu(f_j^{\otimes_{II}} |\ZZ_{\infty})\Big |\I(X^s\times (X^{d-s})^{\Z}, \widetilde{\mu}_\A^{(\infty)}, \widetilde{\sigma}) \Big),
\end{split}
\end{equation}
where
$$\E_\mu(f^{\otimes_{II}} _j|\ZZ_\infty) \triangleq \E_\mu(f^{(s+1)}_j|\ZZ_\infty)\otimes \E_\mu(f^{(2)}_j|\ZZ_\infty)\otimes\cdots \otimes \E_\mu(f^{(d)}_j|\ZZ_\infty),$$ $-l\le j\le l$, and $ \widetilde{\sigma}=\tau_{\vec{a}}\times \sigma_{II}$.
\end{lem}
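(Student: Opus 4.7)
\medskip

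\noindent\textbf{Proof proposal for Lemma \ref{lem-vdc2}.} My plan is to prove the identity (\ref{b3}) by the standard ``telescoping'' decomposition into conditional expectations, using the previous lemma to kill every cross term via the mean ergodic theorem. The core observation is that for any $f \in L^\infty(X,\mu)$ one has $\HK f - \E_\mu(f|\ZZ_\infty) \HK_\infty = 0$, since $\HK \cdot \HK_\infty = 0$ characterizes functions orthogonal to $\ZZ_\infty$.

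First, write each factor as a sum of its projection onto the $\infty$-step pro-nilfactor plus an orthogonal remainder:
\begin{equation*}
 f_k = \E_\mu(f_k|\ZZ_\infty) + g_k, \quad 1 \le k \le s,
\end{equation*}
and for $-l \le j \le l$ and $s+1 \le t \le d$,
\begin{equation*}
 f_j^{(t)} = \E_\mu(f_j^{(t)}|\ZZ_\infty) + g_j^{(t)},
\end{equation*}
where $\HK g_k \HK_\infty = 0$ and $\HK g_j^{(t)} \HK_\infty = 0$ for all indices. By multilinearity of the tensor product, the difference
\begin{equation*}
 \Big(\bigotimes_{k=1}^s f_k\Big) \otimes \bigotimes_{j=-l}^l f_j^{\otimes_{II}} \ - \ \Big(\bigotimes_{k=1}^s \E_\mu(f_k|\ZZ_\infty)\Big) \otimes \bigotimes_{j=-l}^l \E_\mu(f_j^{\otimes_{II}}|\ZZ_\infty)
\end{equation*}
expands as a finite sum of terms of tensor-product form, each of which involves at least one factor belonging to the orthogonal complement (i.e.\ having vanishing $\HK \cdot \HK_\infty$). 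Call such a term $F$.

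Next, for each such ``cross term'' $F$, the von Neumann mean ergodic theorem applied to $\widetilde{\sigma}$ on $L^2(X^s \times (X^{d-s})^\Z, \widetilde{\mu}^{(\infty)}_\A)$ gives
\begin{equation*}
 \frac{1}{N}\sum_{n=0}^{N-1} F \circ \widetilde{\sigma}^n \ \longrightarrow \ \E_{\widetilde{\mu}^{(\infty)}_\A}\!\big(F \,\big|\, \I(X^s \times (X^{d-s})^\Z, \widetilde{\mu}^{(\infty)}_\A, \widetilde{\sigma})\big)
\end{equation*}
in $L^2(\widetilde{\mu}^{(\infty)}_\A)$ as $N \to \infty$. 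But this average on the left-hand side is precisely the expression appearing in Lemma \ref{lem-vdc} for the tensor product defining $F$, and since at least one factor of $F$ satisfies $\HK \cdot \HK_\infty = 0$, Lemma \ref{lem-vdc} yields that this average converges to $0$ in $L^2$. Hence the conditional expectation of each such cross term is zero, and taking conditional expectations of both sides of the expansion yields (\ref{b3}).

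The main work has already been done in Lemma \ref{lem-vdc}; the only subtlety here is bookkeeping—ensuring that the expansion really produces only cross terms with at least one orthogonal factor, and that Lemma \ref{lem-vdc} applies uniformly (the lemma's hypothesis explicitly allows any single factor to be the one with $\HK \cdot \HK_\infty = 0$). Thus no genuine new obstacle appears; the proof is a clean combination of multilinear expansion, the characterization of $\ZZ_\infty$ by HK seminorms, the mean ergodic theorem, and the vanishing statement in Lemma \ref{lem-vdc}.
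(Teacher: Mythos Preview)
Your proposal is correct and follows essentially the same route as the paper: telescoping into cross terms with at least one factor orthogonal to $\ZZ_\infty$, then identifying the conditional expectation with the $L^2$-limit of ergodic averages via the mean ergodic theorem, and finally invoking Lemma~\ref{lem-vdc} to kill each cross term. The paper's write-up is terser (it simply says ``by the telescoping, it suffices to show\ldots'' and then applies the ergodic theorem and Lemma~\ref{lem-vdc}), but the logical structure is identical.
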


\begin{proof}
Let $f_1,\ldots, f_s\in L^\infty(X,\mu)$, $l \in \N$ and $f^{(s+1)}_j,\cdots, f_j ^{(d)} \in L^\infty(X,\mu)$  with $-l \le j\le l$. Since $\widetilde{\mu}_\A^{(\infty)}$ is a standard measure,
$\big( \bigotimes_{k=1}^s f_k\big) \otimes \bigotimes_{j=-l}^{l} f_j^{\otimes_{II}} $ is an element of $L^\infty(X^s\times (X^{d-s})^{\Z}, \widetilde{\mu}_\A^{(\infty)})$.

By the telescoping, it suffices to show that sum
\begin{equation}\label{}
\E_{\widetilde{\mu}_\A^{(\infty)}}\Big(\big(\bigotimes_{k=1}^s f_k \big) \otimes \bigotimes_{j=-l}^l \bigotimes_{t=s+1}^d f_j^{(t)}\Big|\I(X^s\times (X^{d-s})^{\Z}, \widetilde{\mu}_\A^{(\infty)},  \widetilde{\sigma})\Big)
=0
\end{equation}
whenever $\E_\mu(f_i|\ZZ_\infty)=0$ for some $1\le i\le s$ or $\E_\mu(f_j^{(t)}|\ZZ_{\infty})=0$ for some $-l\le j\le l, s+1\le t\le d$, i.e.
$\HK f_i \HK_\infty=0$ for some $1\le i\le s$ or $\HK f_j^{(t)}\HK_\infty=0$ for some $-l\le j\le l, s+1\le t\le d$.

By the Ergodic theorem we have
\begin{equation*}
\begin{split}
&\Big\|\E_{\widetilde{\mu}_\A^{(\infty)}}\Big(\big(\bigotimes_{k=1}^s f_k \big)\otimes \bigotimes_{j=-l}^l \bigotimes_{t=s+1}^d f_j^{(t)}\Big|\I(X^s\times (X^{d-s})^{\Z}, \widetilde{\mu}_\A^{(\infty)},  \widetilde{\sigma})\Big) \Big\|_{L^2(\widetilde{\mu}_\A^{(\infty)})}
\\ &\quad = \Big\|\E_{\widetilde{\mu}_\A^{(\infty)}}\Big(\big(\bigotimes_{k=1}^s f_k  \big)\otimes \bigotimes_{j=-l}^l f_j^{\otimes_{II} }\Big|\I(X^s\times (X^{d-s})^{\Z}, \widetilde{\mu}_\A^{(\infty)},  \widetilde{\sigma})\Big) \Big\|_{L^2( \widetilde{\mu}_\A^{(\infty)})} \\
&\quad =\lim_{N\to\infty}\left \|
\frac{1}{N}\sum_{n=0}^{N-1} \Big( \big(\bigotimes_{k=1}^s f_k  \big)\otimes \bigotimes_{j=-l}^{l} f_j^{\otimes_{II}} \Big) \Big(\tau_{\vec{a}}^n(x_1,\ldots, x_s), \sigma_{II}^n {\bf x}\Big)\right\|_{L^2(\widetilde{\mu}_\A^{(\infty)})}\\
&\quad =0,
\end{split}
\end{equation*}
where the last equality holds following from Lemma \ref{lem-vdc} since $\HK f_i \HK_\infty=0$ for some $1\le i\le s$ or $\HK f_j^{(t)}\HK_\infty=0$ for some $-l\le j\le l, s+1\le t\le d$. This ends the proof.
%Now the proof of Lemma \ref{lem-vdc2} is complete.
\end{proof}

Now we are able to show

\begin{prop}\label{prop-ergodic-exten}
Let $(X,\X,\mu,T)$ be an ergodic m.p.s. and $d\in \N$. Let $\A= \{p_1, \cdots,  p_d \}$ be a family of non-constant integral polynomials satisfying $(\spadesuit)$. Let $\pi_\infty: (X,\X,\mu,T) \rightarrow (Z_\infty,\ZZ_\infty,\mu_\infty,T)$ be the factor map from $X$ to its $\infty$-step pro-nilfactor.
Then the $\sigma$-algebra $\I(X^s\times (X^{d-s})^{\Z}, \widetilde{\mu}_\A^{(\infty)},  \widetilde{\sigma})$ is measurable with
respect to $\ZZ_\infty^s\times (\ZZ_{\infty}^{d-s})^{\Z}$.
\end{prop}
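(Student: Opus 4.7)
The plan is to show $\I\subseteq \G$ modulo null sets, where $\G:=\ZZ_\infty^s\times (\ZZ_\infty^{d-s})^{\Z}$; equivalently, that every $\widetilde{\sigma}$-invariant function in $L^2(\widetilde{\mu}_\A^{(\infty)})$ is $\G$-measurable. The key ingredients are Lemma \ref{lem-vdc2} together with the density in $L^2(\widetilde{\mu}_\A^{(\infty)})$ of tensor products $F=\bigotimes_{k=1}^s f_k \otimes \bigotimes_{j=-l}^l f_j^{\otimes_{II}}$ with factors in $L^\infty(X,\mu)$, which follows from the Stone-Weierstrass theorem and the fact that $\widetilde{\mu}_\A^{(\infty)}$ is a standard measure. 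For each such tensor, I split each factor as $f=f^{\parallel}+f^{\perp}$ with $f^{\parallel}:=\E_\mu(f|\ZZ_\infty)$ and $f^{\perp}:=f-f^{\parallel}$, and expand $F=F^{\parallel}+\sum_\alpha F_\alpha$, where $F^{\parallel}$ is the tensor of all $f^{\parallel}$-pieces (hence $\G$-measurable) while each ``mixed'' tensor $F_\alpha$ has at least one factor $f^{\perp}$ satisfying $\E_\mu(f^{\perp}|\ZZ_\infty)=0$.

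The first observation is that $\E_{\widetilde{\mu}_\A^{(\infty)}}(F_\alpha|\I)=0$ for every mixed tensor $F_\alpha$: by Lemma \ref{lem-vdc2}, this conditional expectation equals the conditional expectation of the ``projected'' tensor on the right of \eqref{b3}, which vanishes because one of its factors is $0$. Hence for any $\widetilde{\sigma}$-invariant $G\in L^2(\widetilde{\mu}_\A^{(\infty)})$ (so $\E(G|\I)=G$) and any mixed $F_\alpha$,
$$\langle G,F_\alpha\rangle=\langle\E(G|\I),F_\alpha\rangle=\langle G,\E(F_\alpha|\I)\rangle=0.$$
The second observation is that each mixed $F_\alpha$ also lies in $L^2(\G)^\perp$. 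To see this, test $F_\alpha$ against any tensor $\phi=\bigotimes\phi_k\otimes\bigotimes\phi_j^{\otimes_{II}}$ with $\ZZ_\infty$-measurable factors; then $F_\alpha\cdot\phi$ is again a tensor, and the factor $f^{\perp}\phi_k$ still satisfies $\E_\mu(f^{\perp}\phi_k|\ZZ_\infty)=\phi_k\,\E_\mu(f^{\perp}|\ZZ_\infty)=0$ since $\phi_k$ is $\ZZ_\infty$-measurable. Lemma \ref{lem-vdc2} then yields $\E(F_\alpha\phi|\I)=0$, so $\int F_\alpha\phi\,d\widetilde{\mu}_\A^{(\infty)}=0$, and by density of $\G$-measurable tensors in $L^2(\G)$ one gets $F_\alpha\perp L^2(\G)$.

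Combining the two observations finishes the proof. Since every tensor decomposes as $F=F^{\parallel}+\sum_\alpha F_\alpha$ with $F^{\parallel}\in L^2(\G)$ and each $F_\alpha\in L^2(\G)^\perp$, and tensor products are dense in $L^2(\widetilde{\mu}_\A^{(\infty)})$, the closed span of mixed tensors is exactly $L^2(\G)^\perp$. The orthogonality $G\perp F_\alpha$ for every mixed $F_\alpha$ then forces $G\perp L^2(\G)^\perp$, so $G\in L^2(\G)$, as required. The main technical point is the preservation of the hypothesis ``$\E_\mu(\cdot|\ZZ_\infty)=0$'' of Lemma \ref{lem-vdc2} under multiplication by $\ZZ_\infty$-measurable functions; this is what allows the single key lemma to be applied twice to yield both ingredients.
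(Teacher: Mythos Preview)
Your proof is correct and, like the paper, hinges on Lemma \ref{lem-vdc2} together with the density of tensor products in $L^2(\widetilde{\mu}_\A^{(\infty)})$. The organization differs, however. The paper argues more directly: any bounded $\I$-measurable function is an $L^2$-limit of expressions $\E_{\widetilde{\mu}_\A^{(\infty)}}(F\mid\I)$ with $F$ a tensor; by Lemma \ref{lem-vdc2} this equals $\E_{\widetilde{\mu}_\A^{(\infty)}}(F^\parallel\mid\I)$, and since $\G=\ZZ_\infty^s\times(\ZZ_\infty^{d-s})^{\Z}$ is $\widetilde\sigma$-invariant, the Birkhoff averages $\frac1N\sum_{n=0}^{N-1}F^\parallel\circ\widetilde\sigma^{\,n}$ remain $\G$-measurable, hence so does their $L^2$-limit $\E(F^\parallel\mid\I)$. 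This single observation replaces your second application of Lemma \ref{lem-vdc2} and the identification of the closed span of mixed tensors with $L^2(\G)^\perp$. On the other hand, your Observation 2 amounts to proving $\E_{\widetilde{\mu}_\A^{(\infty)}}(F\mid\G)=F^\parallel$, which is exactly the conditional product structure of $\widetilde\mu_\A^{(\infty)}$ over $(\widetilde{\mu_\infty})_\A^{(\infty)}$ recorded in Theorem \ref{thm-measure-over-nil}; so your route, while longer for the present proposition, recovers in passing a fact the paper establishes separately.
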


\begin{proof}
Every bounded function on $X^s \times (X^{d-s})^{\Z}$ which is measurable with respect
to $\I(X^s\times (X^{d-s})^{\Z}, \widetilde{\mu}_\A^{(\infty)},  \widetilde{\sigma})$ can be approximated in
$L^2(X^s\times (X^{d-s})^\Z, \widetilde{\mu}_\A ^{(\infty)})$ by finite sums of functions of the form
$$\E_{\widetilde{\mu}_\A^{(\infty)}}\Big(\big(f_1\otimes \ldots \otimes f_s\big) \otimes \bigotimes_{j=-l}^l f_j^{\otimes_{II} }\Big|\I(X^s\times (X^{d-s})^{\Z}, \widetilde{\mu}_\A^{(\infty)},  \widetilde{\sigma} )\Big),$$ where $f_1,\ldots, f_s\in L^\infty(X, \mu)$,
$l\in \N$ and $f^{(s+1)}_j,\ldots, f_j ^{(d)} \in L^\infty(X,\mu), -l \le j\le l$.
By Lemma \ref{lem-vdc2}, one can assume that these functions are
measurable with respect to $\ZZ_{\infty}$. In this case
$\big(f_1\otimes \ldots \otimes f_s\big) \otimes \bigotimes_{j=-l}^l f_j^{\otimes_{II} }$ is measurable with respect to
$\ZZ_\infty^s\times (\ZZ_{\infty}^{d-s})^{\Z}$. Since this $\sigma$-algebra $\ZZ^s_\infty \times (\ZZ^{d-s}_{\infty})^{\Z}$ is
invariant under $\widetilde{\sigma}=\tau_{\vec{a}}\times \sigma_{II}$, we have
\begin{equation*}
  \begin{split}
   & \E_{\widetilde{\mu}_\A^{(\infty)}}\Big(\big(f_1\otimes \cdots \otimes f_s\big) \otimes \bigotimes_{j=-l}^l f_j^{\otimes_{II} }\Big|\I(X^s\times (X^{d-s})^{\Z}, \widetilde{\mu}_\A^{(\infty)},  \widetilde{\sigma} )\Big)\\
   &\quad =\lim \limits_{N\to \infty}\frac{1}{N}\sum \limits_{n=0}^{N-1}
\Big( \big(f_1\otimes \cdots \otimes f_s\big) \otimes \bigotimes_{j=-l}^l f_j^{\otimes_{II} } \Big)\circ \widetilde{\sigma}^n
   \end{split}
\end{equation*}
is also measurable with respect to
$\ZZ^s_\infty\times (\ZZ_{\infty}^{d-s})^{\Z}$. Therefore $\I(X^s\times (X^{d-s})^{\Z}, \widetilde{\mu}_\A^{(\infty)},  \widetilde{\sigma} )$ is measurable with
respect to $\ZZ_\infty^s\times (\ZZ_{\infty}^{d-s})^{\Z}$.
\end{proof}

\subsection{The $\sigma$-algebra of invariant sets of $((X^d)^\Z, (\X^d)^\Z, \mu^{(\infty)}_\A, \sigma)$}\
\medskip

Before going on, we recall that when $(Y,\Y,\nu, S)$ is a factor of $(X,\X,\mu,T)$, we denote by the same letter the $\sigma$-algebra
$\Y$ and its inverse image by $\pi$, $\pi^{-1}(\Y)$. In other words, if $(Y,\Y, \nu,
S)$ is a factor of $(X,\X, \mu, T)$, we think of $\Y$ as a
sub-$\sigma$-algebra of $\X$.

\begin{de}
Let $\pi: (X,\X,\mu,T)\rightarrow (Y,\Y,\nu,S)$ be a homomorphism. $\pi$ is {\em  ergodic}
or $(X,\X,\mu,T)$ is an {\em ergodic extension} of $(Y,\Y,\nu,S)$ if $T$-invariant sets of $\X$ is contained in $\Y$, i.e. $\I(X,\X, \mu, T)\subseteq \Y$.
\end{de}

By this definition, we can rephrase Proposition \ref{prop-ergodic-exten} as follows:

\begin{thm}\label{thm-ergodic-extension}
Let $(X,\X,\mu,T)$ be an ergodic m.p.s. and $d\in \N$. Let $\A= \{p_1, \ldots,  p_d \}$ be a family of non-constant integral polynomials satisfying $(\spadesuit)$. Let $\pi_\infty: (X,\X,\mu,T) \rightarrow (Z_\infty,\ZZ_\infty,\mu_\infty,T)$ be the factor map from $X$ to its $\infty$-step pro-nilfactor. Then the factor map
\begin{equation*}
  \begin{split}
\pi_\infty^{(s)}\times \pi_\infty^\infty:\ &(X^s\times (X^{d-s})^\Z, \X^s\times (\X^{d-s})^\Z, \widetilde{\mu}^{(\infty)}_\A, \widetilde{\sigma}) \\ &\quad \quad \quad \quad \longrightarrow
(Z_{\infty}^s\times ({Z_\infty^{d-s}})^\Z, \ZZ_{\infty}^s\times (\ZZ_{\infty}^{d-s})^\Z, \widetilde{(\mu_\infty)}^{(\infty)}_\A, \widetilde{\sigma})
\end{split}
\end{equation*}
is ergodic.

In particular, one has that the $\sigma$-algebra $\I(X^s\times (X^{d-s})^{\Z}, \widetilde{\mu}_\A^{(\infty)}, \widetilde{\sigma})$ is equal to (modulo isomorphisms) the $\sigma$-algebra $\I(Z_{\infty}^s\times ({Z_\infty^{d-s}})^\Z, (\widetilde{\mu_\infty})_\A^{(\infty)},\widetilde{\sigma})$.
\end{thm}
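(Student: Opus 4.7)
The plan is to deduce Theorem \ref{thm-ergodic-extension} directly from Proposition \ref{prop-ergodic-exten}, via the definition of an ergodic extension, the only additional work being to verify that $\pi_\infty^{(s)}\times \pi_\infty^\infty$ is a bona fide factor map between the two $\widetilde{\sigma}$-systems in the statement. For this first step, the commutation with $\widetilde{\sigma}=\tau_{\vec{a}}\times \sigma_{II}$ is immediate from the coordinate-wise product structure, since on each coordinate $\pi_\infty$ intertwines $T$ on $X$ with $T$ on $Z_\infty$. To verify the pushforward identity $(\pi_\infty^{(s)}\times \pi_\infty^\infty)_*\widetilde{\mu}_\A^{(\infty)}=\widetilde{(\mu_\infty)}_\A^{(\infty)}$, by Proposition \ref{prop-mesurable} it suffices to test against functions of the form $(\bigotimes_{i=1}^s f_i)\otimes \bigotimes_{j=-l}^{l}f_j^{\otimes_{II}}$ with $f_i, f_j^{(t)}\in L^\infty(Z_\infty,\mu_\infty)$; using $f\circ T^m\circ \pi_\infty = f\circ \pi_\infty\circ T^m$ and $(\pi_\infty)_*\mu = \mu_\infty$, the Cesaro limit defining $\widetilde{\mu}_\A^{(\infty)}$ on $X$ reduces term by term to the Cesaro limit defining $\widetilde{(\mu_\infty)}_\A^{(\infty)}$ on $Z_\infty$, as required.

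The ergodicity of the extension is then Proposition \ref{prop-ergodic-exten} rephrased through the definition of ergodic extension: a factor map is ergodic precisely when every invariant set upstairs lies, modulo null sets, in the pulled-back factor $\sigma$-algebra, and Proposition \ref{prop-ergodic-exten} asserts exactly $\I(X^s\times (X^{d-s})^{\Z}, \widetilde{\mu}_\A^{(\infty)}, \widetilde{\sigma})\subseteq \ZZ_\infty^s\times (\ZZ_{\infty}^{d-s})^{\Z}$, viewed as a sub-$\sigma$-algebra of $\X^s\times (\X^{d-s})^{\Z}$ under the usual abuse of notation.

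For the ``in particular'' assertion, I combine the ergodic extension property with the tautological inclusion in the other direction: if $A$ is a $\widetilde{\sigma}$-invariant set upstairs, then by the previous step, modulo null, $A = (\pi_\infty^{(s)}\times \pi_\infty^\infty)^{-1}(B)$ for some measurable $B$ downstairs; the intertwining relation together with the pushforward identity from the first step force $\widetilde{\sigma}^{-1}B=B$ modulo null in the base, and conversely pullbacks of invariant sets are invariant. This yields the claimed identification, modulo isomorphism, of $\I(X^s\times (X^{d-s})^{\Z}, \widetilde{\mu}_\A^{(\infty)}, \widetilde{\sigma})$ with $\I(Z_{\infty}^s\times ({Z_\infty^{d-s}})^\Z, \widetilde{(\mu_\infty)}_\A^{(\infty)},\widetilde{\sigma})$. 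The genuine analytic content of the theorem is entirely absorbed into Proposition \ref{prop-ergodic-exten}, which itself rests on the Leibman--van der Corput estimates of Lemmas \ref{lem-vdc}--\ref{lem-vdc2}; at the present stage there is no substantive obstacle, only the routine bookkeeping described above.
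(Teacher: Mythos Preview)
Your proposal is correct and follows essentially the same approach as the paper: both deduce the ergodicity of the extension directly from Proposition~\ref{prop-ergodic-exten} via the definition of ergodic extension, and both obtain the identification of invariant $\sigma$-algebras by combining the inclusion from Proposition~\ref{prop-ergodic-exten} with the trivial reverse inclusion coming from the factor map. Your additional explicit verification that $\pi_\infty^{(s)}\times\pi_\infty^\infty$ pushes $\widetilde{\mu}_\A^{(\infty)}$ forward to $\widetilde{(\mu_\infty)}_\A^{(\infty)}$ is routine bookkeeping that the paper takes for granted (and, strictly speaking, uses the $\widetilde{\mu}$-analogue of Proposition~\ref{prop-mesurable} stated just after \eqref{g3}).
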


\begin{proof}
By Proposition \ref{prop-ergodic-exten}, the $\sigma$-algebra $\I(X^s\times (X^{d-s})^{\Z}, \widetilde{\mu}_\A^{(\infty)},  \widetilde{\sigma})$ is measurable with
respect to $\ZZ_\infty^s\times (\ZZ_{\infty}^{d-s})^{\Z}$. Thus, it means that $\pi_\infty^{(s)}\times \pi_\infty^\infty$ is ergodic by the definition.

Since $(Z_{\infty}^s\times ({Z_\infty^{d-s}})^\Z, \ZZ_{\infty}^s\times (\ZZ_{\infty}^{d-s})^\Z, \widetilde{(\mu_\infty)}^{(\infty)}_\A, \widetilde{\sigma})$ is the factor of
$$(X^s\times (X^{d-s})^\Z, \X^s\times (\X^{d-s})^\Z, \widetilde{\mu}^{(\infty)}_\A, \widetilde{\sigma}),$$ $\I(Z_{\infty}^s\times ({Z_\infty^{d-s}})^\Z, (\widetilde{\mu_\infty})_\A^{(\infty)},\widetilde{\sigma})$ is contained in
$\I(X^s\times (X^{d-s})^{\Z}, \widetilde{\mu}_\A^{(\infty)}, \widetilde{\sigma})$.
On the other hand, as $$\I(X^s\times (X^{d-s})^{\Z}, \widetilde{\mu}_\A^{(\infty)},  \widetilde{\sigma})\subseteq \ZZ_\infty^s\times (\ZZ_{\infty}^{d-s})^{\Z},$$ we have that $\I(X^s\times (X^{d-s})^{\Z}, \widetilde{\mu}_\A^{(\infty)}, \widetilde{\sigma})$ is contained in $\I(Z_{\infty}^s\times ({Z_\infty^{d-s}})^\Z, (\widetilde{\mu_\infty})_\A^{(\infty)},\widetilde{\sigma})$.
\end{proof}

\begin{thm}\label{thm-measure-over-nil}
Let $(X,\X,\mu,T)$ be an ergodic m.p.s. and $d\in \N$. Let $\A= \{p_1,  p_2, \cdots,  p_d \}$ be a family of non-constant integral polynomials satisfying $(\spadesuit)$.
Let $\pi_\infty: (X,\X,\mu,T) \rightarrow (Z_\infty,\ZZ_\infty,\mu_\infty,T)$ be the factor map from $X$ to its $\infty$-step pro-nilfactor, and let $\mu=\int _{Z_{\infty}} \nu_z \ d\mu_{\infty}(z)$ be the disintegration of $\mu$ over $\mu_{\infty}$.
Then $\widetilde{\mu}_\A^{(\infty)}$ is the conditional product measure with
respect to $(\widetilde{\mu_\infty})_\A^{(\infty)}$.
That is,
$$\widetilde{\mu}_\A ^{(\infty)}=\int_{Z^s_\infty\times(Z_\infty^{d-s})^{\Z}}\big(\nu_{z_1}\times \cdots \times \nu_{z_s}\big)\times \prod_{n\in \Z} \Big( \nu_{z_n^{(s+1)}}\times \cdots \times \nu_{z_n^{(d)}} \Big) d (\widetilde{\mu_\infty})_\A ^{(\infty)} ((z_1,\ldots, z_s),{\bf z}),$$
where $\big((z_1,\ldots, z_s),{\bf z}\big)%=\big((z_1,\ldots, z_s),({\bf z}_n)_{n\in \Z}\big)
=\big((z_1,\ldots, z_s),(z_n^{(s+1)},\ldots, z_n^{(d)})_{n\in \Z}\big)\in Z^s_\infty\times (Z_\infty^{d-s})^{\Z}$.
\end{thm}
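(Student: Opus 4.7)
The plan is to reduce the theorem to an identity for integrals of tensor functions, and then to apply Lemma \ref{lem-vdc2} together with the ergodic theorem. Recall from the preliminaries that $\widetilde{\mu}_\A^{(\infty)}$ is the conditional product measure relative to $(\widetilde{\mu_\infty})_\A^{(\infty)}$ precisely when, for all test functions of the form $F = \bigotimes_{k=1}^s f_k \otimes \bigotimes_{j=-l}^l f_j^{\otimes_{II}}$ with $f_k, f_j^{(t)} \in L^\infty(X,\mu)$, one has
$$\int F\, d\widetilde{\mu}_\A^{(\infty)} = \int G\, d(\widetilde{\mu_\infty})_\A^{(\infty)}, \qquad G := \bigotimes_{k=1}^s \E_\mu(f_k|\ZZ_\infty) \otimes \bigotimes_{j=-l}^l \E_\mu(f_j^{\otimes_{II}}|\ZZ_\infty).$$
Since $\widetilde{\mu}_\A^{(\infty)}$ is a standard measure (an $L^\infty$-analogue of Proposition \ref{prop-mesurable}), these $F,G$ lie in $L^\infty(\widetilde{\mu}_\A^{(\infty)})$, so the integrals make sense.

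First I would apply Lemma \ref{lem-vdc2} to $F$ and $G$: it asserts that the conditional expectations of $F$ and $G$ with respect to the invariant $\sigma$-algebra $\I(X^s \times (X^{d-s})^\Z, \widetilde{\mu}_\A^{(\infty)}, \widetilde{\sigma})$ agree. Integrating this identity against $\widetilde{\mu}_\A^{(\infty)}$ and using $\int \E(\cdot|\I)\, d\widetilde{\mu}_\A^{(\infty)} = \int \cdot\, d\widetilde{\mu}_\A^{(\infty)}$ yields
$$\int F\, d\widetilde{\mu}_\A^{(\infty)} = \int G\, d\widetilde{\mu}_\A^{(\infty)}.$$
Now $G$ is measurable with respect to the factor $\sigma$-algebra $\ZZ_\infty^s \times (\ZZ_\infty^{d-s})^\Z$, so its integral against $\widetilde{\mu}_\A^{(\infty)}$ coincides with its integral against the push-forward of $\widetilde{\mu}_\A^{(\infty)}$ under $\pi_\infty^{(s)} \times \pi_\infty^\infty$. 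By the very definition of $(\widetilde{\mu_\infty})_\A^{(\infty)}$ (obtained by applying the limit formula \eqref{g3} inside the factor $Z_\infty$), this push-forward is exactly $(\widetilde{\mu_\infty})_\A^{(\infty)}$. Hence $\int G\, d\widetilde{\mu}_\A^{(\infty)} = \int G\, d(\widetilde{\mu_\infty})_\A^{(\infty)}$, which is the identity we wanted.

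To finish and to recognize this as the stated disintegration formula, I would use the identity $\int h\, d\nu_z = \E_\mu(h|\ZZ_\infty)(z)$ for bounded measurable $h$ on $X$. This gives, for each fixed $(z_1,\ldots,z_s) \in Z_\infty^s$ and $(z_n^{(t)})_{n,t} \in (Z_\infty^{d-s})^\Z$,
$$\int F\, d\big((\nu_{z_1}\times\cdots\times\nu_{z_s}) \times \prod_{n\in\Z}(\nu_{z_n^{(s+1)}}\times\cdots\times\nu_{z_n^{(d)}})\big) = G\big((z_1,\ldots,z_s),\, (z_n^{(s+1)},\ldots,z_n^{(d)})_{n\in\Z}\big).$$
Integrating this pointwise identity against $(\widetilde{\mu_\infty})_\A^{(\infty)}$ and applying Fubini converts $\int G\, d(\widetilde{\mu_\infty})_\A^{(\infty)}$ into $\int F$ against the conditional product measure on the right-hand side of the theorem. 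Combined with the equality from the previous paragraph, this shows that $\widetilde{\mu}_\A^{(\infty)}$ agrees with the conditional product measure on the dense family of test functions $F$, hence on all of $L^\infty(\widetilde{\mu}_\A^{(\infty)})$, completing the proof.

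The main obstacle is essentially already resolved: the real work lies in Lemma \ref{lem-vdc2}, whose van der Corput / Leibman-type PET argument provides the conditional-expectation equality. Once that lemma is available, the present theorem is a bookkeeping exercise unpacking the definitions of conditional product measure and disintegration; care is only required in matching the $X^s$ coordinates (handled by $\tau_{\vec a}$) with the $(X^{d-s})^\Z$ coordinates (handled by $\sigma_{II}$) inside $\widetilde\sigma$.
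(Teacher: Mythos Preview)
Your proposal is correct and follows the paper's own argument essentially verbatim: the paper's proof also reduces to verifying the tensor-function identity and obtains it by integrating both sides of equation \eqref{b3} from Lemma \ref{lem-vdc2}. Your write-up simply spells out the intermediate steps (identifying $\int G\,d\widetilde{\mu}_\A^{(\infty)}$ with $\int G\,d(\widetilde{\mu_\infty})_\A^{(\infty)}$ via the push-forward, and unpacking the disintegration formula) that the paper leaves implicit.
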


\begin{proof}
We need to show that  for
all $f_1,\ldots, f_s\in L^\infty(X,\mu)$, all $l \in \N$ and
all $f_j^{\otimes_{II}} = f_j^{(s+1)}\otimes f_j^{(s+2)} \otimes \cdots \otimes f_j^{(d)}$ (where $f^{(t)}_j\in L^\infty(X,\mu)$, $s+1\le t\le d, -l \le j\le l$), one has
\begin{equation*}
  \begin{split}
     & \int_{X^s\times (X^{d-s})^\Z} \big(f_1\otimes \cdots \otimes f_s\big) \otimes \bigotimes_{j=-l}^l f_j^{\otimes_{II} } d \widetilde{\mu}_\A ^{(\infty)}\\
     &= \int_{Z^s_\infty\times(Z_\infty^{d-s})^{\Z}} \E_\mu(f_1 | \ZZ_\infty) \otimes \cdots \otimes \E_\mu (f_s | \ZZ_{\infty}) \otimes  \bigotimes_{j=-l}^l
    \E_\mu(f_j^{\otimes_{II}} |\ZZ_{\infty}) d (\widetilde{\mu_\infty})_\A ^{(\infty)},
   \end{split}
\end{equation*}
where $$\E_\mu(f^{\otimes_{II}} _j|\ZZ_\infty)=\E_\mu(f^{(s+1)}_j|\ZZ_\infty)\otimes \E_\mu(f^{(2)}_j|\ZZ_\infty)\otimes\cdots \otimes \E_\mu(f^{(d)}_j|\ZZ_\infty),$$
$ -l\le j\le l$. But this follows from Lemma \ref{lem-vdc2} by taking integrals both sides of \eqref{b3}.
\end{proof}

\medskip
Since by \eqref{a9} we have the isomorphism
$$\phi: ((X^d)^\Z, (\X^d)^\Z, \mu^{(\infty)}_\A, \sigma)\cong (X^s\times (X^{d-s})^\Z, \X^s\times (\X^{d-s})^\Z, \widetilde{\mu}^{(\infty)}_\A, \widetilde{\sigma}),$$
we have the following commuting diagram
\begin{equation*}
  \xymatrix{
 ((X^d)^\Z, (\X^d)^\Z, \mu^{(\infty)}_\A, \sigma) \ar[d]_{\pi_\infty^\infty} \ar[r]_{}
                & (X^s\times (X^{d-s})^\Z, \X^s\times (\X^{d-s})^\Z, \widetilde{\mu}^{(\infty)}_\A, \widetilde{\sigma}) \ar[d]^{\pi_\infty^{(s)}\times \pi_\infty^\infty} \\
  ((Z_{\infty}^d)^{\Z}, (\ZZ_\infty^d)^{\Z}, (\mu_\infty)_\A^{(\infty)},\sigma)\ar[r]^{}
                & (Z_{\infty}^s\times ({Z_\infty^{d-s}})^\Z, \ZZ_{\infty}^s\times (\ZZ_{\infty}^{d-s})^\Z, \widetilde{(\mu_\infty)}^{(\infty)}_\A, \widetilde{\sigma})  . }
\end{equation*}
By the diagram, we can replace $\widetilde{\mu}^{(\infty)}_\A$ in results above by $\mu^{(\infty)}_\A$. Thus, by Theorem \ref{thm-ergodic-extension} we have

\begin{thm}%\label{thm-ergodic-extension}
Let $(X,\X,\mu,T)$ be an ergodic m.p.s. and $d\in \N$. Let $\A= \{p_1,  p_2, \cdots,  p_d \}$ be a family of non-constant integral polynomials satisfying $(\spadesuit)$. Let $\pi_\infty: (X,\X,\mu,T) \rightarrow (Z_\infty,\ZZ_\infty,\mu_\infty,T)$ be the factor map from $X$ to its $\infty$-step pro-nilfactor.  Then the factor map
$$\pi_{\infty}^\infty: ((X^d)^{\Z}, (\X^d)^{\Z}, \mu_\A^{(\infty)}, \sigma) \rightarrow
((Z_{\infty}^d)^{\Z}, (\ZZ_\infty^d)^{\Z}, (\mu_\infty)_\A^{(\infty)}, \sigma)$$
is ergodic.

In particular, one has that the $\sigma$-algebra $\I((X^d)^{\Z}, (\X^d)^{\Z}, \mu_\A^{(\infty)}, \sigma)$ is equal to (modulo isomorphism) the $\sigma$-algebra $\I((Z_{\infty}^d)^{\Z}, (\ZZ_\infty^d)^{\Z}, (\mu_\infty)_\A^{(\infty)},\sigma)$.
\end{thm}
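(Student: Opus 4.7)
The plan is to reduce the theorem directly to its already-established analog for $\widetilde{\mu}_\A^{(\infty)}$, namely Theorem \ref{thm-ergodic-extension}, via the isomorphism \eqref{a9}. The commutative diagram displayed just before the statement is essentially a blueprint for the argument, so the task is to verify that the diagram really does commute and that the vertical maps correspond to each other under the horizontal isomorphisms.

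First I would recall that \eqref{a9} gives an explicit measure-theoretic isomorphism
\[
\phi_X : \bigl(X^s\times (X^{d-s})^\Z,\ \widetilde{\mu}^{(\infty)}_\A,\ \widetilde{\sigma}\bigr) \longrightarrow \bigl((X^d)^\Z,\ \mu^{(\infty)}_\A,\ \sigma\bigr),
\]
defined by $\phi_X({\bf y},{\bf x}) = (\tau_{\vec a}^{\,j}{\bf y},{\bf x}_j)_{j\in\Z}$. The very same formula, applied to the $\infty$-step pro-nilfactor, yields an isomorphism
\[
\phi_{Z_\infty}: \bigl(Z_\infty^s\times (Z_\infty^{d-s})^\Z,\ \widetilde{(\mu_\infty)}^{(\infty)}_\A,\ \widetilde{\sigma}\bigr) \longrightarrow \bigl((Z_\infty^d)^\Z,\ (\mu_\infty)^{(\infty)}_\A,\ \sigma\bigr).
\]
A direct coordinate computation then shows $\pi_\infty^{\infty}\circ \phi_X = \phi_{Z_\infty}\circ (\pi_\infty^{(s)}\times\pi_\infty^{\infty})$ on a full-measure set, since both sides simply apply $\pi_\infty$ componentwise and rearrange by $\tau_{\vec a}$-powers in the same way. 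Thus the two factor maps in the commutative diagram are conjugate via measure-theoretic isomorphisms.

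Next, Theorem \ref{thm-ergodic-extension} already establishes that $\pi_\infty^{(s)}\times\pi_\infty^\infty$ is an ergodic extension, i.e.\
\[
\I\bigl(X^s\times (X^{d-s})^\Z,\ \widetilde{\mu}^{(\infty)}_\A,\ \widetilde{\sigma}\bigr) \subseteq \ZZ_\infty^s\times (\ZZ_\infty^{d-s})^\Z \pmod{\widetilde{\mu}^{(\infty)}_\A}.
\]
Since ergodicity of a factor map is invariant under measure-theoretic isomorphism of both the top and bottom systems (a $\sigma$-invariant set in $(X^d)^\Z$ pulls back under $\phi_X$ to a $\widetilde{\sigma}$-invariant set), the conjugacy established above immediately transports this to the statement that $\pi_\infty^\infty$ is ergodic.

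Finally, the ``in particular'' conclusion follows formally from the definition of an ergodic factor map: on the one hand, $\I((Z_\infty^d)^\Z, (\mu_\infty)_\A^{(\infty)},\sigma)$ is always contained in $\I((X^d)^\Z, \mu_\A^{(\infty)},\sigma)$ by pullback since $(Z_\infty^d)^\Z$ is a factor; on the other hand, ergodicity of $\pi_\infty^\infty$ gives the reverse inclusion modulo null sets. I don't anticipate a serious obstacle here — the real work has been done in proving Lemma \ref{lem-vdc2}, Proposition \ref{prop-ergodic-exten} and Theorem \ref{thm-ergodic-extension} — but the step that requires the most care is verifying the commutativity of the diagram on a set of full measure and checking that $\phi_X,\phi_{Z_\infty}$ do intertwine $\pi_\infty^\infty$ with $\pi_\infty^{(s)}\times\pi_\infty^\infty$, particularly the edge cases $s=0$ (where $\widetilde{\mu}_\A^{(\infty)}=\mu_\A^{(\infty)}$ and there is nothing to transport) and $s=d$ (where the $(X^{d-s})^\Z$ factor is trivial and $\widetilde\sigma=\tau_{\vec a}$), so that one can invoke the result uniformly in $s$.
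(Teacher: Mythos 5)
Your proposal is correct and follows essentially the same route as the paper, which simply invokes the isomorphism \eqref{a9}, the commutative diagram relating $\mu_\A^{(\infty)}$ to $\widetilde{\mu}_\A^{(\infty)}$, and Theorem \ref{thm-ergodic-extension} to transport ergodicity of the factor map. You have merely spelled out the verification of the diagram's commutativity that the paper leaves implicit.
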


\section{Polynomial orbits in nilsystems}\label{section-nilsystem}

Now we work on pro-nilsystems. All results remain true for $\infty$-step pro-nilsystems.
Since $((X^d)^\Z, (\X^d)^\Z, \mu^{(\infty)}_\A, \sigma) \cong (X^s\times (X^{d-s})^\Z, \X^s\times (\X^{d-s})^\Z, \widetilde{\mu}^{(\infty)}_\A, \widetilde{\sigma}),$ we only give statements concering $\mu^{(\infty)}_\A$, and the corresponding results for $ \widetilde{\mu}^{(\infty)}_\A$ can be proved similarly. %easily.

\subsection{The measure $\mu_\A^{(\infty)}$ on nilsystems}\
%\medskip

\subsubsection{}
Let $(X = G/\Gamma, \nu , T )$ be a nilsystem with $T$ the translation by an element $\tau \in G$.
Then $(X,T)$ is a t.d.s. By Theorem \ref{thm-ParryLeibman}, $(X, T )$ is uniquely ergodic if and only if $(X, \nu , T )$ is ergodic if and only if $(X, T )$ is minimal.
This result holds for pro-nilsystems.

\subsubsection{}
Now let $(X, T)$ be a minimal topological pro-nilsystem (a minimal pro-nilsystem for short).
Let $x\in X$ and $d\in \N$. Let $\A= \{p_1, \cdots,  p_d \}$ be a family of non-constant essentially distinct integral polynomials with $p_1(0)=\ldots =p_d(0)=0$. Recall that
\begin{equation}\label{}
  \omega_x^\A=\Big((\tau^{p_1(n)}x, \tau^{p_2(n)}x, \ldots, \tau^{p_d(n)}x )\Big)_{n\in \Z}\in (X^d)^{\Z}.
\end{equation}
Note that  $(\sigma\omega^\A_x)(n)= (\tau^{p_1(n+1)}x, \tau^{p_2(n+1)}x, \ldots, \tau^{p_d(n+1)}x )$ for $n\in \Z$.
Recall that
\begin{equation}\label{}
  N_\infty(X,\A)=\overline{\bigcup\{\sigma^n\omega^\A_x:n\in \Z, x\in X\}}\subseteq (X^d)^{\Z}.
\end{equation}

In \cite{HSY22-1} we have shown the following result.

\begin{thm}\cite[Theorem 5.5]{HSY22-1}\label{thm-nil-minimal}
Let $(X, T)$ be a minimal  pro-nilsystem.
Let $\A= \{p_1, \ldots,  p_d \}$ be a family of non-constant essentially distinct integral polynomials with $p_1(0)=\cdots =p_d(0)=0$. Then we have
\begin{enumerate}
  \item The system $(N_\infty(X,\A), \langle T^{\infty},\sigma \rangle)$ is a minimal pro-nilsystem.

  \item For each $x\in X$, the system $(\overline{\O}(\omega_x^\A, \sigma),\sigma)$ is a
minimal  pro-nilsystem.
\end{enumerate}
\end{thm}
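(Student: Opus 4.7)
The natural strategy is to reduce to the case of a (finite-step) nilsystem via the inverse limit presentation of a pro-nilsystem, and then apply Leibman's structure theorem on closures of polynomial orbits in nilmanifolds. Writing $X=\lim\limits_{\longleftarrow}(X_i,T_i)_{i\in \N}$ with each $(X_i=G_i/\Gamma_i,T_i)$ a minimal nilsystem and $T_i$ the translation by some $\tau_i\in G_i$, the constructions $x\mapsto \omega_x^{\A}$, $\sigma$, and $T^\infty$ all factor through each $X_i$. Hence $N_\infty(X,\A)=\lim\limits_{\longleftarrow}N_\infty(X_i,\A)$ and $\ov{\O}(\omega_x^{\A},\sigma)=\lim\limits_{\longleftarrow}\ov{\O}(\omega_{x_i}^{\A},\sigma)$ as topological $\langle T^\infty,\sigma\rangle$-systems (resp.\ $\sigma$-systems), so it suffices to prove the statement when $X$ itself is a nilsystem and then pass to the inverse limit.

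For the nilsystem case, fix $l\in \N$ and let $\pi_l:(X^d)^{\Z}\to X^{d(2l+1)}$ denote the projection onto coordinates in $[-l,l]$. Then
$$\pi_l(\sigma^n\omega_x^{\A})=\bigl(\tau^{p_i(n+j)}x\bigr)_{1\le i\le d,\ -l\le j\le l}=g_l(n)\cdot x^{\otimes d(2l+1)},$$
where $g_l(n):=(\tau^{p_i(n+j)})_{i,j}$ is a polynomial sequence in the nilpotent Lie group $G^{d(2l+1)}$ with $g_l(0)=e$. By Leibman's theorem \cite{Leibman}, the closure of such a polynomial orbit in $X^{d(2l+1)}$ is a sub-nilmanifold $M_l(x)$, and both $\sigma$ and $T^\infty$ act on it as translations by fixed elements of the ambient nilpotent group. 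Since the projections for larger $l$ refine those for smaller $l$, $\ov{\O}(\omega_x^{\A},\sigma)$ is the inverse limit of the $M_l(x)$, hence a pro-nilsystem; $N_\infty(X,\A)$ is handled similarly after identifying $\bigcup_{x\in X}\pi_l(N_\infty(X,\A))$ with the closure of a single polynomial orbit in a suitably enlarged nilmanifold (allowing the base point $x^{\otimes d(2l+1)}$ itself to vary along the diagonal $\Delta$).

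The main obstacle will be verifying minimality. By Theorem~\ref{thm-ParryLeibman}, on a nilsystem minimality, ergodicity, and unique ergodicity of a translation all coincide, so it is enough to verify ergodicity on each window $M_l(x)$. For part~(2), this is exactly Leibman's criterion for equidistribution of a polynomial orbit (using the essential distinctness of the $p_i$, the condition $p_i(0)=0$, and the minimality of $(X,T)$, which controls the projection to the maximal torus factor of $X$). For part~(1), the action of $\langle T^\infty,\sigma\rangle$ contains two commuting translations, and one must check that together with the freedom to vary $x$ they jointly generate an ergodic action on the window. I expect the most delicate point to be identifying the correct Host--Kra type enlargement of $G^{d(2l+1)}$ in which $g_l(n)$ becomes a one-parameter subgroup; once the right ambient nilpotent group is in place, Leibman's theorem applies directly, and compatibility of the descriptions as $l$ grows yields both minimality and the pro-nilsystem structure at the inverse limit.
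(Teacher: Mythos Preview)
The paper does not actually supply a proof of this theorem: it is quoted verbatim from the companion paper \cite{HSY22-1} (labelled there as Theorem~5.5) and used as a black box. So there is no ``paper's own proof'' to compare against here.

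That said, your sketch is the natural approach and almost certainly matches the argument in \cite{HSY22-1}: reduce to the nilsystem case via the inverse-limit presentation, project to finite coordinate windows, recognise the image of $\sigma^n\omega_x^{\A}$ as a polynomial orbit $g_l(n)\cdot x^{\otimes d(2l+1)}$ in a product nilmanifold, and invoke Leibman's structure theorem on orbit closures of polynomial sequences. Two points that would need to be made precise in a full write-up: (i) the shift $\sigma$ is not literally a translation by a fixed group element on $X^{d(2l+1)}$---one must pass to the larger nilpotent group in which the polynomial sequence $g_l$ becomes linear (this is the ``linearisation'' step you allude to at the end); and (ii) Leibman's theorem in general gives a finite union of sub-nilmanifolds, so one must argue that the orbit closure is connected (or a single coset), which uses the minimality of $(X,T)$ and the assumption $p_i(0)=0$. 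You flag both issues, so the proposal is on the right track.
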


Now we prove the following

\begin{thm}\label{thm-nil-ergodic}
Let $(X, T)$ be a minimal  pro-nilsystem. Let $\A = \{p_1, \cdots,  p_d \} $ be a family of non-constant essentially distinct integral polynomials. With the notations above, we have
\begin{enumerate}
  \item $(N_\infty(X,\A), \langle T^{\infty},\sigma \rangle)$ is uniquely
  ergodic with the $\infty$-joining $\nu^{(\infty)}_{\A}$.

  \item For each $x\in X$,  $(\overline{\O}(\omega^\A_x, \sigma),\sigma)$ is
uniquely ergodic with some measure $\nu^{(\infty)}_{\A, x}$.

  \item $\displaystyle \nu^{(\infty)}_{\A} = \int_X \nu^{(\infty)}_{\A, x}\
  d\nu(x)$.

  \item For all $l \in \N$ and all $f_j^\otimes = f_j^{(1)}\otimes f_j^{(2)} \otimes \cdots \otimes f_j^{(d)}$ (where $ f^{(i)}_j\in L^\infty(X,\mu), 1\le i\le d,  -l \le j\le l$)
\begin{equation}\label{}
\begin{split}
 & \lim_{N\to\infty}\frac{1}{N} \sum_{n=0}^{N-1}
   \prod_{j=-l}^l f_j^\otimes (T^{\vec{p}(n+j)}x^{\otimes d})
   %&= \lim_{N\to\infty} \frac{1}{N}\sum_{n=0}^{N-1}\prod_{j=-l}^l f_j^{(1)}(T^{p_1(n+j)}x) f_j^{(2)} (T^{p_2(n+j)}x) \cdots %f_j^{(d)}(T^{p_d(n+j)}x)  \\
  %&= \int_{(X^d)^{\Z}} \Big( \bigotimes_{j=-l}^l f_j^\otimes \Big)({\bf x}) \ d \nu_{\A, x}^{(\infty)}({\bf x}), \ N\to\infty, \ \
  %\forall {\bf x}=({\bf x}_n)_{n\in \Z}\in (X^d)^{\Z},
  = \int_{(X^d)^{\Z}} \Big( \bigotimes_{j=-l}^l f_j^\otimes \Big)({\bf x}) \ d \nu_{\A, x}^{(\infty)}({\bf x}),
  %\forall {\bf x}=({\bf x}_n)_{n\in \Z}\in (X^d)^{\Z},
\end{split}
\end{equation}
for $\nu$ a.e. $x\in X$. %, where ${\bf x}=({\bf x}_n)_{n\in \Z}\in (X^d)^{\Z}$.
\end{enumerate}
\end{thm}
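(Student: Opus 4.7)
Theorem~\ref{thm-nil-minimal} identifies both $(N_\infty(X,\A), \langle T^\infty, \sigma\rangle)$ and $(\overline{\O}(\w_x^\A, \sigma), \sigma)$ as minimal pro-nilsystems. By Theorem~\ref{thm-ParryLeibman} together with the remark following it, any minimal pro-nilsystem under a finitely generated abelian group action is uniquely ergodic (one first handles a minimal nilsystem, then passes to pro-nilsystems by inverse limits). To identify the unique invariant measure on $N_\infty(X,\A)$ with $\nu_\A^{(\infty)}$, note that $\mathrm{supp}\,\nu_\A^{(\infty)}\subseteq N_\infty(X,\A)$ by Theorem~\ref{support} and that $\nu_\A^{(\infty)}$ is invariant under both $T^\infty$ and $\sigma$ by its very construction; unique ergodicity then forces the identification. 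For (2), we simply define $\nu_{\A,x}^{(\infty)}$ to be the unique $\sigma$-invariant probability measure on $\overline{\O}(\w_x^\A, \sigma)$, viewed as a Borel measure on $(X^d)^\Z$.

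\textbf{Item (3).} Plug a continuous $F$ on $(X^d)^\Z$ into Definition~\ref{de-infty-measure}:
$$\int F\, d\nu_\A^{(\infty)} = \lim_{N\to\infty}\frac{1}{N}\sum_{n=0}^{N-1}\int_X F(\sigma^n \w_x^\A)\, d\nu(x).$$
By (2), the inner Ces\`aro average converges at \emph{every} $x\in X$ to $\int F\, d\nu_{\A,x}^{(\infty)}$ and is uniformly bounded by $\|F\|_\infty$, so the bounded convergence theorem lets us exchange the limit with the outer integral. This yields the disintegration formula $\int F\, d\nu_\A^{(\infty)} = \int_X \bigl(\int F\, d\nu_{\A,x}^{(\infty)}\bigr)\, d\nu(x)$ for every continuous $F$, whence (3) by density of $C((X^d)^\Z)$ in $L^1(\nu_\A^{(\infty)})$.

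\textbf{Item (4) and the main obstacle.} When each $f_j^{(i)}\in C(X)$, the cylinder function $\bigotimes_{j=-l}^{l} f_j^\otimes$ is continuous on $(X^d)^\Z$, and unique ergodicity of $(\overline{\O}(\w_x^\A,\sigma),\sigma)$ gives the stated convergence at \emph{every} $x\in X$. For general $f_j^{(i)}\in L^\infty(X,\mu)$ I would approximate each factor in $L^2(X,\mu)$ by continuous $g_j^{(i)}$ with $\|g_j^{(i)}\|_\infty\le\|f_j^{(i)}\|_\infty$, then expand the difference between the $F$- and $G$-averages as a telescoping sum. Taking absolute values, each remaining term is pointwise dominated by
$$C\cdot\frac{1}{N}\sum_{n=0}^{N-1}\bigl|f_j^{(i)}-g_j^{(i)}\bigr|(T^{p_i(n+j)}x),$$
with $C$ depending only on the $L^\infty$-norms of the original factors. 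The crucial input is Bourgain's polynomial maximal inequality (Theorem~\ref{Bouragain}), which bounds the supremum over $N$ of such averages in $L^2(\mu)$ by $\|f_j^{(i)}-g_j^{(i)}\|_{L^2(\mu)}$. Choosing a sequence of approximations with square-summable $L^2$-errors and applying Borel-Cantelli upgrades this $L^2$ bound to $\mu$-a.e. pointwise smallness, completing (4). The main obstacle is precisely this passage from continuous to $L^\infty$ observables: unique ergodicity alone delivers convergence only for continuous functions, and Birkhoff's theorem does not apply directly because the starting point $\w_x^\A$ need not be $\nu_{\A,x}^{(\infty)}$-generic; Bourgain's maximal theorem is indispensable to bridge the gap.
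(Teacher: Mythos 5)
Your treatment of (1)--(2) matches the paper's: both rest on Theorem \ref{thm-nil-minimal} together with the unique ergodicity of minimal pro-nilsystems from Theorem \ref{thm-ParryLeibman}, and your extra remark identifying the unique invariant measure on $N_\infty(X,\A)$ with $\nu_\A^{(\infty)}$ via Theorem \ref{support} is a harmless clarification the paper leaves implicit. For (3) you take a genuinely different and in fact shorter route: the paper defines $\kappa=\int_X\nu^{(\infty)}_{\A,x}\,d\nu(x)$, verifies Borel measurability of $x\mapsto\nu^{(\infty)}_{\A,x}$ and $\langle T^\infty,\sigma\rangle$-invariance of $\kappa$, and then invokes the unique ergodicity from (1); you instead combine the defining formula \eqref{f1} for $\nu_\A^{(\infty)}$ with the everywhere-convergence furnished by (2) and the bounded convergence theorem. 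Both arguments are valid; yours dispenses with the invariance check, though it still tacitly uses that $x\mapsto\int F\,d\nu^{(\infty)}_{\A,x}$ is measurable (a pointwise limit of continuous functions of $x$, so this is fine).

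The one place where your argument is incomplete is (4). Your telescoping-plus-Bourgain scheme (with Borel--Cantelli in place of the paper's closedness argument via Proposition \ref{prop-DL} and Theorem \ref{thm-A2}) correctly controls the left-hand side: it shows the averages for the $f_j^{(i)}$ stay a.e.\ close to those for continuous approximants $g_j^{(i)}$, and hence that the limit exists a.e. But the asserted identity also has a right-hand side, and you never show that $\int\bigl(\bigotimes_{j} f_j^\otimes\bigr)\,d\nu^{(\infty)}_{\A,x}$ is close to $\int\bigl(\bigotimes_{j} g_j^\otimes\bigr)\,d\nu^{(\infty)}_{\A,x}$ for a.e.\ $x$ --- nor, for that matter, that $\bigotimes_{j} f_j^\otimes$ lies in $L^\infty\bigl(\nu^{(\infty)}_{\A,x}\bigr)$ at all; the paper's remark after the theorem explicitly warns that $\nu^{(\infty)}_{\A,x}$ need not be standard, so this is not automatic. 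The paper supplies both missing pieces in Theorem \ref{thm-A1}: standardness of the global measure $\nu^{(\infty)}_\A=\int_X\nu^{(\infty)}_{\A,x}\,d\nu(x)$ yields a.e.\ essential boundedness of the tensor product, and an $L^2(\nu)$ estimate via the relative product $\nu^{(\infty)}_\A\times_X\nu^{(\infty)}_\A$ bounds the fluctuation of $x\mapsto\int\bigl(\bigotimes f-\bigotimes g\bigr)\,d\nu^{(\infty)}_{\A,x}$ by a multiple of $\sqrt{\sum_{i,j}\|f_j^{(i)}-g_j^{(i)}\|_{L^1(\nu)}}$. You need to add this step (or an equivalent one) to identify the limit with the stated integral.
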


\begin{proof}
By Theorem \ref{thm-nil-minimal},  $(N_\infty(X,\A), \langle T^{\infty},\sigma \rangle)$ is a minimal pro-nilsystem, and for each $x\in X$,
so is the system $(\overline{\O}(\omega_x^\A, \sigma),\sigma)$. %is a minimal pro-nilsystem.
It follows by Theorem \ref{thm-ParryLeibman} that they are uniquely ergodic. We denote their unique measures by $\nu_{\A}^{(\infty)}$ and $\nu_{\A,x}^{(\infty)}$ respectively.

\medskip

Now we show that (3) holds, that is,  $\displaystyle \nu^{(\infty)}_{\A} = \int_X \nu^{(\infty)}_{\A,x}\ d\nu(x)$.
Let
$$\varphi: X\rightarrow {\mathcal M}((X^d)^\Z), \ x\mapsto \nu^{(\infty)}_{\A,x}.$$
Since the map
$$X\rightarrow {\mathcal M}((X^d)^\Z), \ x\mapsto \frac {1}{N} \sum_{n=0}^{N-1} (\cdots \times T^{\vec{p}(n-1)}\times \underset{\bullet}{T^{\vec{p}(n)}}\times T^{\vec{p}(n+1)}\times \cdots)_* \d_{x^{(\infty)}}$$
is continuous (here $x^{(\infty)}=(\ldots, x^{\otimes d},x^{\otimes d}, \ldots)$) and
$$\displaystyle \frac {1}{N} \sum_{n=0}^{N-1} (\cdots \times T^{\vec{p}(n-1)}\times \underset{\bullet}{T^{\vec{p}(n)}}\times T^{\vec{p}(n+1)}\times \cdots)_* \d_{x^{(\infty)}}$$ converges to $\nu^{(\infty)}_{\A,x}$ weakly $\big($ as $(\overline{\O}(\omega_x^\A,\sigma),\nu_{\A,x}^{(\infty)},\sigma)$ is uniquely ergodic $\big)$, we have that $\varphi$ is a Borel map. Thus we can define a measure $\displaystyle \kappa = \int_X \nu^{(\infty)}_{\A,x}\ d\nu(x)$. It is easy to check that $\kappa$ is $\langle T^{\infty},\sigma \rangle$-invariant. By (1), the system $(N_\infty(X,\A), \langle T^{\infty},\sigma \rangle)$ is uniquely ergodic with the $\infty$-joining $\nu^{(\infty)}_{\A}$, and hence by the uniqueness
we have
$$\nu^{(\infty)}_{\A} =\kappa= \int_X \nu^{(\infty)}_{\A,x}\ d\nu(x).$$

Since for each $x\in X$,  $(\overline{\O}(\omega^\A_x, \sigma),\sigma)$ is
uniquely ergodic with the measure $\nu^{(\infty)}_{\A,x}$, we have that
for all $F\in C((X^d)^{\Z})$,
$$\lim_{n\to\infty} \frac{1}{N} \sum_{n=0}^{N-1} F(\sigma^n(\omega_x^\A))
      = \int_{(X^d)^{\Z}} F({\bf x})\
   d \nu^{(\infty)}_{\A, x}({\bf x} ),\ \ \text{where}\ {\bf x}=({\bf x}_n)_{n\in \Z}\in (X^d)^{\Z}.$$
That is, for all $l \in \N$ and all $f_j^\otimes = f_j^{(1)}\otimes f_j^{(2)} \otimes \cdots \otimes f_j^{(d)}$ (where $ f^{(i)}_j\in C(X), 1\le i\le d,  -l \le j\le l$)
\begin{equation*}
\begin{split}
 &\lim_{N\to\infty}\frac{1}{N} \sum_{n=0}^{N-1}
   \prod_{j=-l}^l f_j^\otimes (T^{\vec{p}(n+j)}x^{\otimes d}) \\ &= \lim_{N\to\infty} \frac{1}{N}\sum_{n=0}^{N-1}\prod_{j=-l}^l f_j^{(1)}(T^{p_1(n+j)}x) f_j^{(2)} (T^{p_2(n+j)}x) \cdots f_j^{(d)}(T^{p_d(n+j)}x)  \\
  &= \int_{(X^d)^{\Z}} \Big( \bigotimes_{j=-l}^l f_j^\otimes \Big)({\bf x}) \ d \nu_{\A, x}^{(\infty)}({\bf x}).
  %\text{where}\  {\bf x}=({\bf x}_n)_{n\in \Z}\in (X^d)^{\Z}.
\end{split}
\end{equation*}
Similar to the proof of Theorem \ref{thm-A2} in the appendix, we may pass to bounded measurable functions.
The whole proof is now complete.
\end{proof}

\begin{rem}
Note that though $\nu^{(\infty)}_{\A}$ is standard,  for $x\in X$ the measure $\nu^{(\infty)}_{\A,x}$ may not be standard.
\end{rem}

\subsubsection{}
Similarly, we have a  result related to $\widetilde{\nu}^{(\infty)}_{\A}$

\begin{thm}\label{thm-nil-ergodic2}
Let $(X, T)$ be a minimal  pro-nilsystem. Let $\A$ be a family of non-constant essentially distinct integral polynomials. With the notations above, we have
\begin{enumerate}
  \item $(M_\infty(X,\A), \langle T^{\infty},\widetilde{\sigma} \rangle)$ is uniquely
  ergodic with the $\infty$-joining $\widetilde{\nu}^{(\infty)}_{\A}$.

  \item For each $x\in X$,  $(\overline{\O}(\xi^\A_x, \widetilde{\sigma}),\widetilde{\sigma})$ is
uniquely ergodic with some measure $\widetilde{\nu}^{(\infty)}_{\A,x}$.

  \item $\displaystyle \widetilde{\nu}^{(\infty)}_{\A} = \int_X\widetilde{ \nu}^{(\infty)}_{\A,x}\
  d\nu(x)$.

  \item For  all $f_1,\ldots, f_s\in L^\infty(X,\mu)$, all $l \in \N$ and all $f_j^{\otimes_{II}} = f_j^{(s+1)}\otimes f_j^{(s+2)} \otimes \cdots \otimes f_j^{(d)}$ (where $f^{(t)}_j\in L^\infty(X,\mu)$, $s+1\le t\le d, -l \le j\le l$),
\begin{equation}
\begin{split}
  &  \lim_{N\rightarrow +\infty} \frac{1}{N}\sum_{n=0}^{N-1} \prod_{i=1}^s f_i(T^{a_in}x) \prod_{j=-l}^l f_j^{\otimes_{II}} (T^{{p_{s+1}}(n+j)}x, \ldots, T^{p_d(n+j)}x)  \\
  &\quad =\int_{X^s\times (X^{d-s})^{\Z}} \big(\bigotimes_{i=1}^sf_i \big) \otimes  \Big( \bigotimes_{j=-l}^l f_j^{\otimes_{II}} \Big)((x_1,\ldots, x_s),{\bf x})d\widetilde{\nu}_{\A,x}^{(\infty)},
\end{split}
\end{equation}
for $\nu$ a.e. $x\in X$,
where $((x_1,\ldots, x_s),{\bf x}) =((x_1,\ldots, x_s),({\bf x}_n)_{n\in \Z} )\in X^s\times (X^{d-s})^\Z$.

\end{enumerate}
\end{thm}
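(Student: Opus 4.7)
The plan is to run an argument parallel to that of Theorem \ref{thm-nil-ergodic}, only adapted to the action $\widetilde{\sigma}=\tau_{\vec{a}}\times\sigma_{II}$ on $X^s\times(X^{d-s})^\Z$ instead of the shift $\sigma$ on $(X^d)^\Z$. First I would invoke the analog for $M_\infty(X,\A)$ of Theorem \ref{thm-nil-minimal} proved in \cite{HSY22-1}, which asserts that both $(M_\infty(X,\A),\langle T^\infty,\widetilde{\sigma}\rangle)$ and, for every $x\in X$, $(\overline{\O}(\xi^\A_x,\widetilde{\sigma}),\widetilde{\sigma})$ are minimal pro-nilsystems. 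Combining this with Theorem \ref{thm-ParryLeibman} immediately gives (1) and (2): each of these minimal pro-nilsystems is uniquely ergodic, and I denote their invariant measures by $\widetilde{\nu}^{(\infty)}_\A$ and $\widetilde{\nu}^{(\infty)}_{\A,x}$ respectively.

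For (3), I would consider the assignment $\varphi\colon x\mapsto \widetilde{\nu}^{(\infty)}_{\A,x}$ from $X$ to $\mathcal{M}(X^s\times(X^{d-s})^\Z)$. By unique ergodicity of each orbit closure, $\widetilde{\nu}^{(\infty)}_{\A,x}$ is the weak-$*$ limit of $\frac{1}{N}\sum_{n=0}^{N-1}\widetilde{\sigma}^n_*\delta_{\xi^\A_x}$, and since $x\mapsto \xi^\A_x$ is Borel and each $\widetilde{\sigma}^n_*$ is continuous, $\varphi$ is Borel. Then $\kappa:=\int_X \widetilde{\nu}^{(\infty)}_{\A,x}\,d\nu(x)$ is a well-defined Borel probability measure supported on $M_\infty(X,\A)$. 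It is $\widetilde{\sigma}$-invariant because each $\widetilde{\nu}^{(\infty)}_{\A,x}$ is, and it is $T^\infty$-invariant because $T^\infty_*\widetilde{\nu}^{(\infty)}_{\A,x}=\widetilde{\nu}^{(\infty)}_{\A,Tx}$ (both sides are the unique $\widetilde{\sigma}$-invariant measure on $\overline{\O}(\xi^\A_{Tx},\widetilde{\sigma})$) together with $T$-invariance of $\nu$. Unique ergodicity of $(M_\infty(X,\A),\langle T^\infty,\widetilde{\sigma}\rangle)$ then forces $\kappa=\widetilde{\nu}^{(\infty)}_\A$.

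For (4), unique ergodicity of $(\overline{\O}(\xi^\A_x,\widetilde{\sigma}),\widetilde{\sigma})$ yields, for every continuous $F$ on $X^s\times(X^{d-s})^\Z$ and every $x\in X$, the pointwise convergence $\frac{1}{N}\sum_{n=0}^{N-1}F(\widetilde{\sigma}^n\xi^\A_x)\to\int F\,d\widetilde{\nu}^{(\infty)}_{\A,x}$. Applied to $F=\bigl(\bigotimes_{i=1}^s f_i\bigr)\otimes\bigotimes_{j=-l}^l f_j^{\otimes_{II}}$ with continuous $f_i,f_j^{(t)}$, this gives the claimed identity for all $x$. To extend to bounded measurable $f_i,f_j^{(t)}$ I would approximate each factor in $L^1(\nu)$ by continuous functions, and then use the disintegration $\widetilde{\nu}^{(\infty)}_\A=\int \widetilde{\nu}^{(\infty)}_{\A,x}\,d\nu(x)$ from (3) together with a Borel--Cantelli/diagonal argument: the $L^1(\widetilde{\nu}^{(\infty)}_\A)$-approximation of $F$ translates, via Fubini, into $L^1(\widetilde{\nu}^{(\infty)}_{\A,x})$-approximation for $\nu$-a.e.\ $x$, and the averages inherit this approximation uniformly in $N$.

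The main obstacle is the last step, namely passing from continuous to bounded measurable functions in the fibered statement (4). The individual measures $\widetilde{\nu}^{(\infty)}_{\A,x}$ need not be standard (only the integrated measure $\widetilde{\nu}^{(\infty)}_\A$ is, by the analog of Remark \ref{rem-3.5}), so one cannot mimic the proof of Proposition \ref{prop-mesurable} fiberwise. The correct strategy, as outlined above, is to exploit exactly the standardness of $\widetilde{\nu}^{(\infty)}_\A$ coupled with the disintegration from (3) in order to transfer continuous approximations into $\nu$-a.e.\ approximations of the orbit measures, yielding the ``$\nu$ a.e.'' qualifier in the statement.
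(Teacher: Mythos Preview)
Your treatment of parts (1)--(3) matches the paper's approach exactly: invoke the minimality of $M_\infty(X,\A)$ and of each orbit closure $\overline{\O}(\xi^\A_x,\widetilde{\sigma})$ as pro-nilsystems (from \cite{HSY22-1}), apply Theorem~\ref{thm-ParryLeibman} to get unique ergodicity, and then integrate the fiber measures and invoke uniqueness to obtain the disintegration. Your argument for $T^\infty$-invariance of $\kappa$ via $T^\infty_*\widetilde{\nu}^{(\infty)}_{\A,x}=\widetilde{\nu}^{(\infty)}_{\A,Tx}$ is a correct and slightly more explicit version of what the paper does.

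The gap is in your handling of the passage from continuous to bounded measurable functions in part (4). You claim that after approximating $F$ in $L^1(\widetilde{\nu}^{(\infty)}_\A)$ by a continuous $G$, ``the averages inherit this approximation uniformly in $N$''. This is exactly the step that fails without a maximal inequality: knowing that $\|F-G\|_{L^1(\widetilde{\nu}^{(\infty)}_{\A,x})}$ is small for $\nu$-a.e.\ $x$ says nothing a priori about $\sup_N\bigl|\frac{1}{N}\sum_{n=0}^{N-1}(F-G)(\widetilde{\sigma}^n\xi^\A_x)\bigr|$, since $\xi^\A_x$ is a single point and unique ergodicity gives no quantitative control for discontinuous integrands. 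A Borel--Cantelli or diagonal argument alone cannot close this gap. The paper instead appeals to (the analog of) Theorem~\ref{thm-A2} in the appendix, whose proof relies essentially on Bourgain's polynomial maximal inequality (Theorem~\ref{Bouragain}, the condition~(B)): this is what controls the oscillation $R(f_1,\ldots;x)$ in $L^2(\mu)$ in terms of $L^p$-norms of the individual factors, and thereby allows one to pass from a dense class to all of $L^\infty$ while preserving a.e.\ convergence. You should replace your last paragraph with an invocation of Theorem~\ref{thm-A2}.
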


%For $\nu$ a.e. $x\in X$, we have that for all $F\in C(X^s\times (X^{d-s})^{\Z})$,
      %$$\lim_{n\to\infty} \frac{1}{N} \sum_{n=0}^{N-1} F(\widetilde{\sigma}^n(\xi_x^\A)) = \int_{X^s\times (X^{d-s})^{\Z}} F((x_1,x_2,\ldots, x_s),{\bf x})\    d \widetilde{\nu}^{(\infty)}_{\A, x}((x_1,x_2,\ldots, x_s),{\bf x} ),$$
% where $((x_1,x_2,\ldots, x_s),{\bf x}) =((x_1,x_2,\ldots, x_s),({\bf x}_n)_{n\in \Z} )\in X^s\times (X^{d-s})^\Z$.

\subsection{The ergodic decomposition}\
\medskip

In Theorem \ref{thm-nil-ergodic}, we have proved that $\displaystyle \nu^{(\infty)}_{\A} = \int_X \nu^{(\infty)}_{\A,x}\ d\nu(x)$ and $\nu^{(\infty)}_{\A,x}$ is ergodic under $\sigma$ for each $x\in X$. In the rest of the section we study the ergodic decomposition of  $\nu^{(\infty)}_{\A}$ under $\sigma$.
First we recall some basic facts concerning the ergodic decomposition.

\begin{thm}[The ergodic decomposition]\cite[Theorem 8.7]{Glasner}\label{ergodic-decomposition}
Let $(X,\X,\mu,T)$ be a m.p.s. and $\I=\I(X,\X,\mu,T)$ be the $\sigma$-algebra of $T$-invariant sets. Let $\pi: (X,\X,\mu, T)\rightarrow (W, {\mathcal W},\varsigma, T)$ be the factor defined by $\I$ so that $\I=\pi^{-1}({\mathcal W})$. Let $ \mu =\int_W \mu_w d\varsigma(w)$ be the disintegration of $\mu$ over $\varsigma$ and for every $w\in W$ denote $X_w=\pi^{-1}(w)$ and $\X_w=\X\cap X_w$, then:
\begin{enumerate}
  \item $T$ acts as the identity $\id_W$ on $W$.
  \item For $\varsigma$ a.e. $w\in W$, the system $(X_w,\X_w,\varsigma_w, T|_{X_w})$ is ergodic.
  \item This decomposition is unique in the following sense:

If $(Z,\ZZ,\vartheta)$ is a probability space and $\psi: z\mapsto \widetilde{\mu}_z$ a measurable map from $Z$ into ${\mathcal M}_T^{erg}(X)$ such that $ \mu =\int_Z\widetilde{\mu}_z d\vartheta(z)$, then there exists a measurable map $\phi: (Z,\ZZ,\vartheta)\rightarrow (W,{\mathcal W}, \varsigma)$ such that $\vartheta$-a.e. $z\in Z$, $\mu_{\phi(z)}=\widetilde{\mu}_z$.
\end{enumerate}
\end{thm}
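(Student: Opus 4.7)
The plan is to establish the three parts in order, using the standard interplay between the $\sigma$-algebra of invariant sets, Birkhoff's ergodic theorem, and the essential uniqueness of disintegration on a Lebesgue space.

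For part (1), I would first note that because $\I=\pi^{-1}(\mathcal{W})$ and every element of $\I$ is $T$-invariant, for any $B\in\mathcal{W}$ the set $\pi^{-1}(B)$ satisfies $T^{-1}\pi^{-1}(B)=\pi^{-1}(B)$. This means $\pi(Tx)=\pi(x)$ for $\mu$-a.e.\ $x$, which after removing a null set forces the induced action of $T$ on $(W,\mathcal{W},\varsigma)$ to coincide with $\id_W$. This is essentially a bookkeeping step, and I would spell it out carefully only to fix a model where the identification holds pointwise on a conull set.

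For part (2), the key input is Birkhoff's ergodic theorem applied relative to $\I$. For any $f\in L^1(X,\mu)$, we have $\frac{1}{N}\sum_{n=0}^{N-1}f\circ T^n\to \E(f|\I)$ both a.e.\ and in $L^1$, and by the definition of disintegration $\E(f|\I)(x)=\int_X f\,d\mu_{\pi(x)}$ for a.e.\ $x$. Since $X$ is Lebesgue, $C(X)$ (in a compact metric model) contains a countable dense $\Q$-algebra $\{f_k\}_{k\in\N}$; choosing one $\varsigma$-conull set $W_0$ on which the Birkhoff convergence holds simultaneously for all $f_k$ and for almost every $x\in X_w$, I would conclude that for every $w\in W_0$ and $\mu_w$-a.e.\ $x\in X_w$ the time averages of each $f_k$ converge to the constant $\int f_k\,d\mu_w$. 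This is precisely the pointwise characterization of ergodicity for $(X_w,\X_w,\mu_w,T|_{X_w})$.

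For part (3), given another decomposition $\mu=\int_Z \widetilde{\mu}_z\,d\vartheta(z)$ with $\widetilde{\mu}_z$ ergodic, I would use the fact that for each $B\in\mathcal{W}$ the set $\pi^{-1}(B)\in\I$ is $T$-invariant, hence $\widetilde{\mu}_z(\pi^{-1}(B))\in\{0,1\}$ by ergodicity. Choosing a countable algebra generating $\mathcal{W}$, this data defines a measurable map $\phi\colon Z\to W$ by letting $\phi(z)$ be the unique point whose $\mathcal{W}$-atoms carry the value $1$. Pushing forward, $\phi_*\vartheta=\varsigma$, and the uniqueness of disintegration of $\mu$ with respect to $\varsigma$ forces $\widetilde{\mu}_z=\mu_{\phi(z)}$ for $\vartheta$-a.e.\ $z$: indeed, both $z\mapsto\widetilde{\mu}_z$ and $z\mapsto\mu_{\phi(z)}$ are measurable selectors of ergodic measures projecting to $\varsigma$ via $\phi$, and applying the disintegration to the factor $\pi\circ\mathrm{id}$ identifies them.

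The main obstacle is part (2): one must be careful that ergodicity is a statement about \emph{all} measurable invariant sets, not merely about a sequence on which Birkhoff convergence has been verified. The use of a countable generating family and the fact that in a Lebesgue space pointwise convergence of averages of a generating algebra on a conull set implies the $\mu_w$-triviality of $\I(X_w,\X_w,\mu_w,T|_{X_w})$ is what makes the argument go through, and it is essential to work with a fixed compact metric model so that $C(X)$ is separable and the null sets can be controlled uniformly in $w$.
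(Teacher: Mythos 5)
This theorem is quoted in the paper as \cite[Theorem 8.7]{Glasner}; the paper supplies no proof of its own, so there is nothing internal to compare against. Your argument is the standard textbook proof (and essentially the one in Glasner): part (1) is the bookkeeping you describe, part (2) is Birkhoff's theorem applied to a countable dense family of continuous functions in a compact metric model together with the genericity criterion for ergodicity, and part (3) constructs $\phi$ from the $0$--$1$ values of $\widetilde{\mu}_z$ on a countable generating algebra of $\I$. The only step I would ask you to make explicit is the very end of (3): uniqueness of disintegration only gives $\int_{\phi^{-1}(w)}\widetilde{\mu}_z\,d\vartheta_w(z)=\mu_w$ for $\varsigma$-a.e.\ $w$ (where $\vartheta=\int_W\vartheta_w\,d\varsigma(w)$ is the disintegration of $\vartheta$ over $\phi$); to pass from this to $\widetilde{\mu}_z=\mu_w$ for $\vartheta_w$-a.e.\ $z$ you must invoke that $\mu_w$ is ergodic, hence an extreme point of ${\mathcal M}_T(X)$, so it cannot be a nontrivial average of the ergodic measures $\widetilde{\mu}_z$. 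With that sentence added the proof is complete.
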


\begin{rem} \label{rem-6.7} (1)
%\begin{enumerate]
%\item[(1)]
For a m.p.s. $(X,\X,\mu,T)$, a factor $(Y,\Y,\nu, T)$ with a factor map $\pi: X\rightarrow Y$ can be viewed as the $T$-invariant subalgebra $\pi^{-1}(\Y)\subset \X$. One can also retrieve the factor $(Y,\Y,\nu,T)$ as a measure-preserving transformation on the space ${\mathcal M}(X)$ of Borel probability measures on $X$ as follows. Disintegrate the measure $\mu$ over $\nu$, $\mu= \int_Y \mu_y d \nu(y)$ and observe that the $T$-invariance of $\mu$ implies that $T_* \mu_y=\mu_{Ty}$ for $\nu$-a.e. $y\in Y$. Let $\phi: Y\rightarrow {\mathcal M}(X), \ y\mapsto \mu_y$ and let $\kappa=\phi_*(\nu)$. Then $\phi: (Y,\Y,\nu,T)\rightarrow ({\mathcal M}(X), \kappa, T_*)$ is an isomorphism. In this case
$$\mu=\int_Y \mu_y d \nu(y)=\int_{{\mathcal M}(X)}\theta d\kappa (\theta),$$
which means for all $f\in C(X)$,
$$\int_{{\mathcal M}(X)}\left(\int_Xf(x)d\theta(x) \right) d \kappa(\theta)=\int_X f(x) d\mu(x).$$
Thus the trivial one point factor is $(\{\mu\},\d_\mu)$, and
$(X,\mu,T)\cong \Big( \{\d_x: x\in X\}, \mu, T_* \Big ).$
Refer to \cite[Chapter 8, Section 1]{Glasner} for details.

%\item[(2)]
(2) Now go back to Theorem \ref{ergodic-decomposition}. For a t.d.s. $(X,T)$, ${\mathcal M}_T(X)$ is a simplex and by Choquet's theorem, for each $\mu\in M_T(X)$, there is a unique representation $$\mu=\int_{{\mathcal M}^{erg}_T(X)}\theta d\kappa(\theta),$$
with $\kappa$ a Borel probability measure on the $G_\delta$ subset ${\mathcal M}^{erg}_T(X)$ of ${\mathcal M}_T(X)$. And $(W,{\mathcal W},\varsigma,\id)$ is isomorphic to
$({\mathcal M}(X),\kappa, T_*)=\Big( \{\mu_w:w\in W\},\kappa, T_* \Big ).$
See the proof of \cite[Theorem 8.7]{Glasner} for details.
%\end{enumerate}
\end{rem}

%\medskip

\subsection{The ergodic decomposition of $\nu_\A^{(\infty)}$ under $\sigma$}\
%\medskip

\subsubsection{}
Now consider a minimal  pro-nilsystem $(X, T)$. Let $\A$ be a family of non-constant essentially distinct integral polynomials.
By Theorem \ref{thm-nil-ergodic}, $(N_\infty(X,\A), \nu_\A^{(\infty)}, \sigma)$ is a m.p.s.

Now we apply Theorem \ref{ergodic-decomposition} to the system $(N_\infty(X,\A), \nu_\A^{(\infty)}, \sigma)$. Let $(W,\mathcal{W}, \varsigma)$ be the factor of $(N_\infty(X,\A), \nu_\A^{(\infty)}, \sigma)$ defined by $\I(N_\infty(X,\A), \nu_\A^{(\infty)}, \sigma)$, and let $\psi: (N_\infty(X,\A), \nu_\A^{(\infty)}, \sigma)\rightarrow (W,\mathcal{W}, \varsigma,\id)$ be the corresponding factor map. Let
$$\displaystyle \nu_\A^{(\infty)} =\int_W [\nu_\A^{(\infty)}]_w d\varsigma(w)$$ be the disintegration of $\nu_\A^{(\infty)}$ over $\varsigma$,  and it is the ergodic decomposition of $\nu_\A^{(\infty)}$ under $\sigma$.

By Remark \ref{rem-6.7}, there is some measure $\kappa$ on ${\mathcal M} (N_\infty(X,\A))$ such that
$$(W,\varsigma)\cong \Big( {\mathcal M}(N_\infty(X,\A)), \kappa \Big)=\Big( \{[\nu_\A^{(\infty)}]_w: w\in W\},\kappa \Big),$$
and
\begin{equation}\label{e3}
  \nu_\A^{(\infty)} =\int_W [\nu_\A^{(\infty)}]_w d\varsigma(w)=\int_{{\mathcal M}_\sigma^{erg}(N_\infty(X,\A))}\theta d\kappa(\theta).
\end{equation}

In the proof of Theorem \ref{thm-nil-ergodic}, we have defined a Borel measurable map:
$$\varphi: X\rightarrow {\mathcal M}^{erg}_\sigma (N_\infty(X,\A)), \ x\mapsto \nu^{(\infty)}_{\A,x}.$$
Let $\widetilde{\kappa}=\varphi_* (\nu)$. Then combining with Theorem \ref{thm-nil-ergodic}, we have
\begin{equation}\label{e4}
  \nu^{(\infty)}_{\A} = \int_X \nu^{(\infty)}_{\A,x}\
  d\nu(x)=\int_{{\mathcal M}_\sigma^{erg}(N_\infty(X,\A))}\theta d\widetilde{\kappa}(\theta).
\end{equation}
By \eqref{e3}, \eqref{e4} and the uniqueness of the representation in Choquet's theorem, we get $\widetilde{\kappa}=\kappa$ which implies that  $\nu$-a.e. $x\in X$, $\varphi(x)\in \{[\nu_\A^{(\infty)}]_w: w\in W\}$. Thus we have the following Borel measurable map
$$\zeta: X\rightarrow W, \ x\mapsto \varphi(x)=[\nu_\A^{(\infty)}]_w\mapsto w.$$

To sum up, we have the following result:

\begin{thm}
Let $(X, \nu , T )$ be a minimal pro-nilsystem, and $\A$ be a family of non-constant essentially distinct integral polynomials. Let $(W,\mathcal{W}, \varsigma)$ be the factor of $(N_\infty(X,\A), \nu_\A^{(\infty)}, \sigma)$ defined by $\I(N_\infty(X,\A), \nu_\A^{(\infty)}, \sigma)$. Then there is a measure preserving map $\zeta: (X,\X,\nu)\rightarrow (W,\mathcal{W},\varsigma)$ such that
$$ \nu_\A^{(\infty)} =\int_W [\nu_{\A}^{(\infty)}\big ]_w d\varsigma(w)$$
is the ergodic decomposition of $\nu_\A^{(\infty)}$ under $\sigma$,
where $\big[ \nu_{\A}^{(\infty)} \big ]_{\zeta(x)}=\nu_{\A, x}^{(\infty)}, \ \mu \ a.e. \ x\in X.$
\end{thm}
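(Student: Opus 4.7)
The plan is to derive the theorem by reconciling two representations of $\nu_\A^{(\infty)}$ as an integral of ergodic measures: the abstract ergodic decomposition relative to $\I(N_\infty(X,\A),\nu_\A^{(\infty)},\sigma)$ given by Theorem \ref{ergodic-decomposition}, and the concrete decomposition $\nu_\A^{(\infty)}=\int_X \nu_{\A,x}^{(\infty)}\,d\nu(x)$ furnished by Theorem \ref{thm-nil-ergodic}. First I would invoke Theorem \ref{ergodic-decomposition} on the m.p.s.\ $(N_\infty(X,\A),\nu_\A^{(\infty)},\sigma)$: letting $(W,\mathcal{W},\varsigma)$ be the factor built from the $\sigma$-algebra of $\sigma$-invariant sets, the disintegration $\nu_\A^{(\infty)}=\int_W [\nu_\A^{(\infty)}]_w\,d\varsigma(w)$ has $\varsigma$-a.e.\ ergodic fibers, and by Remark \ref{rem-6.7} one can identify $(W,\varsigma)$ with the Choquet measure $\kappa$ on ${\mathcal M}^{erg}_\sigma(N_\infty(X,\A))$ via $w\mapsto [\nu_\A^{(\infty)}]_w$.

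Second, I would reinterpret the statement of Theorem \ref{thm-nil-ergodic}(3): since each orbit closure $(\overline{\mathcal{O}}(\omega_x^\A,\sigma),\sigma)$ is a minimal pro-nilsystem and hence uniquely ergodic by Theorem \ref{thm-ParryLeibman}, the measure $\nu_{\A,x}^{(\infty)}$ is automatically $\sigma$-ergodic. Thus $\varphi\colon X\to {\mathcal M}^{erg}_\sigma(N_\infty(X,\A))$, $x\mapsto\nu_{\A,x}^{(\infty)}$, is well defined; its Borel measurability was already established in the proof of Theorem \ref{thm-nil-ergodic} (the map is a pointwise limit of the continuous maps $x\mapsto \frac{1}{N}\sum_{n=0}^{N-1}(\sigma^n)_*\delta_{\omega_x^\A}$). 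Pushing $\nu$ forward produces a Borel probability $\widetilde{\kappa}=\varphi_*\nu$ on ${\mathcal M}^{erg}_\sigma(N_\infty(X,\A))$, and Theorem \ref{thm-nil-ergodic}(3) rewrites as
\[
\nu_\A^{(\infty)}=\int_X \nu_{\A,x}^{(\infty)}\,d\nu(x)=\int_{{\mathcal M}^{erg}_\sigma(N_\infty(X,\A))}\theta\,d\widetilde{\kappa}(\theta).
\]

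Third, I would apply the uniqueness part of Choquet's theorem (equivalently, the uniqueness clause (3) of Theorem \ref{ergodic-decomposition}) to conclude $\widetilde{\kappa}=\kappa$. Consequently, for $\nu$-a.e.\ $x\in X$ the measure $\varphi(x)=\nu_{\A,x}^{(\infty)}$ lies in the set $\{[\nu_\A^{(\infty)}]_w:w\in W\}$, so composing with the identification $w\leftrightarrow [\nu_\A^{(\infty)}]_w$ yields a Borel map
\[
\zeta\colon X\longrightarrow W,\qquad x\longmapsto \zeta(x)\ \text{with}\ [\nu_\A^{(\infty)}]_{\zeta(x)}=\nu_{\A,x}^{(\infty)}.
\]
That $\zeta$ is measure preserving, $\zeta_*\nu=\varsigma$, is precisely the equality $\widetilde{\kappa}=\kappa$ read through the isomorphism $W\cong\{[\nu_\A^{(\infty)}]_w:w\in W\}$.

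I do not anticipate a serious obstacle, since every ingredient has been set up beforehand; the only delicate point is the uniqueness step, which requires the ergodicity of each $\nu_{\A,x}^{(\infty)}$ so that $\widetilde{\kappa}$ genuinely lives on the set of ergodic measures and Choquet's uniqueness applies. This ergodicity is ensured by the unique ergodicity of minimal pro-nilsystems (Theorem \ref{thm-ParryLeibman}), so the argument closes cleanly.
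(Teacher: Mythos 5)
Your proposal is correct and follows essentially the same route as the paper: apply Theorem \ref{ergodic-decomposition} together with Remark \ref{rem-6.7} to identify $(W,\varsigma)$ with the Choquet measure $\kappa$ on ${\mathcal M}^{erg}_\sigma(N_\infty(X,\A))$, push $\nu$ forward under the Borel map $x\mapsto\nu_{\A,x}^{(\infty)}$ from the proof of Theorem \ref{thm-nil-ergodic}, and invoke uniqueness of the Choquet representation to get $\widetilde{\kappa}=\kappa$ and hence the map $\zeta$. The paper's argument is identical, including the observation that ergodicity of each $\nu_{\A,x}^{(\infty)}$ comes from the unique ergodicity of the minimal pro-nilsystem $(\overline{\O}(\omega_x^\A,\sigma),\sigma)$.
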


\subsubsection{}
We have a similar result concerning $\widetilde{\nu}_\A^{(\infty)}$.

\begin{thm}\label{thm-6.8}
Let $(X, \nu , T )$ be a minimal  pro-nilsystem, and $\A$ be a family of non-constant essentially distinct integral polynomials. Let $(W,\mathcal{W}, \varsigma)$ be the factor of $(M_\infty(X,\A),\widetilde{ \nu}_\A^{(\infty)}, \widetilde{\sigma})$ defined by $\I(M_\infty(X,\A),\widetilde{ \nu}_\A^{(\infty)}, \widetilde{\sigma})$. Then there is a measure preserving map $\zeta: (X,\X,\nu)\rightarrow (W,\mathcal{W},\varsigma)$ such that
$$ \widetilde{\nu}_\A^{(\infty)} =\int_W [\widetilde{\nu}_{\A}^{(\infty)}\big ]_w d\varsigma(w)$$
is the ergodic decomposition of $\widetilde{\nu}_\A^{(\infty)}$ under $\widetilde{\sigma}$,
where $\big[ \widetilde{\nu}_{\A}^{(\infty)} \big ]_{\zeta(x)}=\widetilde{\nu}_{\A, x}^{(\infty)}, \ \mu \ a.e. \ x\in X.$
\end{thm}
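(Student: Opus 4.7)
The plan is to mirror the proof of the preceding theorem for $\nu_\A^{(\infty)}$ almost verbatim, replacing $\sigma$ by $\widetilde{\sigma}$, $N_\infty(X,\A)$ by $M_\infty(X,\A)$, $\omega_x^\A$ by $\xi_x^\A$, and $\nu_{\A,x}^{(\infty)}$ by $\widetilde{\nu}_{\A,x}^{(\infty)}$. The three inputs needed are already in place: Theorem \ref{thm-nil-ergodic2} furnishes the unique ergodicity of $(\overline{\O}(\xi_x^\A,\widetilde{\sigma}),\widetilde{\sigma})$ with invariant measure $\widetilde{\nu}_{\A,x}^{(\infty)}$ together with the representation $\widetilde{\nu}_\A^{(\infty)}=\int_X \widetilde{\nu}_{\A,x}^{(\infty)}\,d\nu(x)$; the ergodic decomposition theorem (Theorem \ref{ergodic-decomposition}) provides the factor $(W,\mathcal{W},\varsigma)$ associated with $\I(M_\infty(X,\A),\widetilde{\nu}_\A^{(\infty)},\widetilde{\sigma})$; and Choquet's theorem (as recorded in Remark \ref{rem-6.7}) supplies the uniqueness we will exploit.

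Concretely, I would first apply Theorem \ref{ergodic-decomposition} to the m.p.s.\ $(M_\infty(X,\A),\widetilde{\nu}_\A^{(\infty)},\widetilde{\sigma})$ to produce $(W,\mathcal{W},\varsigma)$, the associated factor map $\psi$, and the disintegration
\[
\widetilde{\nu}_\A^{(\infty)}=\int_W [\widetilde{\nu}_\A^{(\infty)}]_w\,d\varsigma(w),
\]
in which the fibre measures are $\widetilde{\sigma}$-ergodic. Via Remark \ref{rem-6.7}, this identifies $(W,\varsigma)$ (up to isomorphism) with $(\mathcal{M}^{erg}_{\widetilde{\sigma}}(M_\infty(X,\A)),\kappa)$ for a unique Borel probability measure $\kappa$ concentrated on ergodic $\widetilde{\sigma}$-invariant measures. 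I would then introduce the map
\[
\varphi:X\to \mathcal{M}^{erg}_{\widetilde{\sigma}}(M_\infty(X,\A)),\qquad x\mapsto \widetilde{\nu}_{\A,x}^{(\infty)},
\]
and check that it is Borel measurable exactly as in the $\sigma$-case: the empirical averages $\frac{1}{N}\sum_{n=0}^{N-1}\widetilde{\sigma}^n_*\delta_{\xi_x^\A}$ depend continuously on $x$ and converge weakly$^*$ to $\widetilde{\nu}_{\A,x}^{(\infty)}$ by the unique ergodicity given in Theorem \ref{thm-nil-ergodic2}(2). Setting $\widetilde{\kappa}=\varphi_*\nu$, Theorem \ref{thm-nil-ergodic2}(3) yields the alternative representation
\[
\widetilde{\nu}_\A^{(\infty)}=\int \theta\,d\widetilde{\kappa}(\theta).
\]

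Finally, by the uniqueness clause in Choquet's theorem, $\widetilde{\kappa}=\kappa$; hence for $\nu$-a.e.\ $x\in X$, $\widetilde{\nu}_{\A,x}^{(\infty)}$ is one of the fibre measures $[\widetilde{\nu}_\A^{(\infty)}]_w$, and I can define the Borel map $\zeta:X\to W$ by letting $\zeta(x)$ be the unique $w$ with $[\widetilde{\nu}_\A^{(\infty)}]_{w}=\widetilde{\nu}_{\A,x}^{(\infty)}$. Measure preservation $\zeta_*\nu=\varsigma$ then follows by transporting the equality $\widetilde{\kappa}=\kappa$ across the isomorphism $(W,\varsigma)\cong(\mathcal{M}^{erg}_{\widetilde{\sigma}}(M_\infty(X,\A)),\kappa)$, and $[\widetilde{\nu}_\A^{(\infty)}]_{\zeta(x)}=\widetilde{\nu}_{\A,x}^{(\infty)}$ $\nu$-a.e.\ by construction, completing the proof. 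The only delicate point is the essentially injective nature of the identification $w\mapsto[\widetilde{\nu}_\A^{(\infty)}]_w$ that makes $\zeta$ well defined $\nu$-a.e.; this is precisely the same bookkeeping handled in the argument for the previous theorem, so no genuinely new obstacle is expected.
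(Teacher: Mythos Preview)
Your proposal is correct and follows exactly the approach the paper intends: the paper states Theorem~\ref{thm-6.8} immediately after the corresponding result for $\nu_\A^{(\infty)}$ with only the remark ``Similarly, we have a result related to $\widetilde{\nu}^{(\infty)}_{\A}$,'' so your verbatim transcription of that argument with $\sigma\to\widetilde{\sigma}$, $N_\infty\to M_\infty$, $\omega_x^\A\to\xi_x^\A$ and Theorem~\ref{thm-nil-ergodic} replaced by Theorem~\ref{thm-nil-ergodic2} is precisely what is expected.
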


\section{The ergodic decomposition of $\mu^{(\infty)}_\A$}\label{section-ergodic-decom}

In this section, using the results in the previous sections, we show that for each family $\A=\{p_1,\ldots,p_d\}$ of non-constant integral polynomials satisfying $(\spadesuit)$ and an ergodic m.p.s. $(X,\X,\mu,T)$, the m.p.s.
$((X^d)^\Z,(\X^d)^\Z, \mu_\A^{(\infty)}, \langle T^\infty, \sigma\rangle)$
is ergodic and give the ergodic decomposition theorem of $\mu_\A^{(\infty)}$ under $\sigma$. Moreover, %In particular,
we show that when each element in $\A$ has degree greater than 1, then  $((X^d)^\Z, (\X^d)^\Z, {\mu}^{(\infty)}_{\A,x}, \sigma)$ is the product of an $\infty$-step pro-nilsystem $((Z_\infty^d)^\Z, (\ZZ_\infty ^d)^\Z, {(\mu_\infty)}^{(\infty)}_{\A,x}, \sigma)$ and a Bernoulli system
for $\mu$-a.e. $x\in X$.

%$((U^d)^\Z, ({\mathcal U}^d)^\Z, (\rho^d)^\Z,\sigma)$.

\subsection{The ergodic decomposition theorem of $\mu_\A^{(\infty)}$ under $\sigma$}

\subsubsection{}
The main result of this section is the following one.

\begin{thm}\label{thm-ergodic-measures}
Let $(X,\X,\mu,T)$ be an ergodic m.p.s. and $d\in \N$. Let $\A= \{p_1, \ldots, p_{d}\}$ be a family of non-constant integral polynomials satisfying $(\spadesuit)$ with $p_i(n)=a_in$, $a_i\in\Z$ and $0\le i\le s$; and $deg(p_j)\ge 2$, $s+1\le j\le d$.  Then there exists a family $\big\{\widetilde{\mu}^{(\infty)}_{\A,x}\big\}_{x\in X}$ of Borel probability measures on $(X^d)^{\Z}$ such that
\begin{enumerate}
  \item The $\Z^2$-m.p.s. $(X^s\times (X^{d-s})^\Z,\X^s\times (\X^{d-s})^\Z, \widetilde{\mu}_\A^{(\infty)}, \langle T^\infty, \widetilde{\sigma}\rangle)$ is ergodic.

  \item The disintegrations of $\widetilde{\mu}_\A^{(\infty)}$ over $\mu$ is given by
$\widetilde{\mu}_\A ^{(\infty)}=\int_X \widetilde{\mu}_{\A, x}^{(\infty)} d\mu(x), $
and for $\mu$ a.e. $x\in X$, $\widetilde{\mu}^{(\infty)}_{\A, x}$ is ergodic under $\widetilde{\sigma}$;

  \item For all $f_1,f_2,\ldots, f_s\in L^\infty(X,\mu)$, all $l \in \N$ and all $f_j^{\otimes_{II}} = f_j^{(s+1)}\otimes f_j^{(s+2)} \otimes \cdots \otimes f_j^{(d)}$ (where $f^{(t)}_j\in L^\infty(X,\mu)$, $s+1\le t\le d, -l \le j\le l$),
\begin{equation}\label{ss}
\begin{split}
  &\lim_{N\rightarrow +\infty} \frac{1}{N}\sum_{n=0}^{N-1} f_1(T^{a_1n}x)\cdots f_s(T^{a_sn}x) \prod_{j=-l}^l f_j^{\otimes_{II}} (T^{{p_{s+1}}(n+j)}x, \ldots, T^{p_d(n+j)}x) \\
  &\quad  =\int_{X^s\times (X^{d-s})^{\Z}} \big(\bigotimes_{i=1}^s f_i\big) \otimes  \Big( \bigotimes_{j=-l}^l f_j^{\otimes_{II}} \Big)((x_1,\ldots, x_s),{\bf x})d\widetilde{\mu}_{\A,x}^{(\infty)}
\end{split}
\end{equation}
in $L^2(X,\mu)$, where $((x_1,\ldots, x_s),{\bf x}) =((x_1,\ldots, x_s),({\bf x}_n)_{n\in \Z} )\in X^s\times (X^{d-s})^\Z$.
\end{enumerate}
\end{thm}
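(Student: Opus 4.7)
The plan is to construct the family $\{\widetilde{\mu}_{\A,x}^{(\infty)}\}_{x \in X}$ by lifting the ergodic decomposition of the $\infty$-step pro-nil quotient, and then to read off each of (1), (2), (3) from the conditional product structure supplied by Theorem \ref{thm-measure-over-nil}.

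First I would apply Theorem \ref{thm-HKM} to realize the pro-nilfactor $(Z_\infty, \ZZ_\infty, \mu_\infty, T)$ as a topological $\infty$-step pro-nilsystem, and then invoke Theorem \ref{thm-6.8} to obtain the ergodic decomposition $\widetilde{(\mu_\infty)}_\A^{(\infty)} = \int_{Z_\infty} \widetilde{(\mu_\infty)}_{\A,z}^{(\infty)}\, d\mu_\infty(z)$ with $\widetilde{\sigma}$-ergodic fibers. For each $x \in X$, I would then define $\widetilde{\mu}_{\A,x}^{(\infty)}$ to be the conditional product measure relative to $\widetilde{(\mu_\infty)}_{\A,\pi_\infty(x)}^{(\infty)}$, built from the disintegration $\mu = \int_{Z_\infty} \nu_z\, d\mu_\infty(z)$. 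Claim (2), the equality $\widetilde{\mu}_\A^{(\infty)} = \int_X \widetilde{\mu}_{\A,x}^{(\infty)}\, d\mu(x)$, then follows by inserting this decomposition into the formula of Theorem \ref{thm-measure-over-nil} and applying Fubini, noting that $\widetilde{\mu}_{\A,x}^{(\infty)}$ depends on $x$ only through $\pi_\infty(x)$. Ergodicity of each fiber comes from Theorem \ref{thm-ergodic-extension}: since $\pi_\infty^{(s)} \times \pi_\infty^\infty$ is an ergodic factor map, the ergodic decompositions upstairs and downstairs are in bijective correspondence, so each $\widetilde{\mu}_{\A,x}^{(\infty)}$, projecting to the ergodic measure $\widetilde{(\mu_\infty)}_{\A,\pi_\infty(x)}^{(\infty)}$, must itself be $\widetilde{\sigma}$-ergodic.

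For (1), let $A$ be $\langle T^\infty, \widetilde{\sigma}\rangle$-invariant. By the fiberwise ergodicity just established, the function $a(x) = \widetilde{\mu}_{\A,x}^{(\infty)}(A)$ takes values in $\{0,1\}$ for $\mu$-a.e.\ $x$. A direct verification using the identity $T^\infty \xi_x^\A = \xi_{Tx}^\A$ together with the conditional product definition of $\widetilde{\mu}_{\A,x}^{(\infty)}$ gives $T^\infty_\ast \widetilde{\mu}_{\A,x}^{(\infty)} = \widetilde{\mu}_{\A,Tx}^{(\infty)}$, so $T^\infty$-invariance of $A$ yields $a(Tx) = a(x)$; ergodicity of $(X,\mu,T)$ then forces $a$ to be constant, i.e.\ $\widetilde{\mu}_\A^{(\infty)}(A) \in \{0,1\}$.

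For (3), I would decompose each $f_i$ and $f_j^{(t)}$ as $\E(\,\cdot\,|\ZZ_\infty)$ plus a remainder with vanishing $\HK\cdot\HK_\infty$ seminorm, and then expand the product into a ``nil'' piece plus many ``error'' pieces. On the nil piece the average lives on $Z_\infty$, where Theorem \ref{thm-nil-ergodic2}(4) (applied via Theorem \ref{thm-HKM}) supplies pointwise convergence to the integral against $\widetilde{(\mu_\infty)}_{\A,\pi_\infty(x)}^{(\infty)}$, which equals $\int F\, d\widetilde{\mu}_{\A,x}^{(\infty)}$ by the conditional product definition. Each error piece contains a factor $h$ with $\HK h\HK_\infty = 0$; since the polynomials $\{a_in\} \cup \{p_t(n+j) : s+1 \le t \le d,\ -l \le j \le l\}$ are pairwise essentially distinct as polynomials in $n$ by clause (3) of $(\spadesuit)$, Theorem \ref{thm-Leibman} applied directly to $(X,\mu,T)$ shows the corresponding $L^2(\mu)$ average tends to zero. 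The main obstacle will be precisely this last point: Lemma \ref{lem-vdc} only kills the error terms in $L^2$ of the joining, so one cannot merely quote the joining-level vanishing and instead must invoke Theorem \ref{thm-Leibman} on $(X,\mu,T)$ itself, carefully verifying that the ``essentially distinct in $n$'' hypothesis is indeed ensured by condition $(\spadesuit)$.
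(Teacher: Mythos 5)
Your architecture inverts the paper's, and the inversion creates one genuine gap. The paper first defines $\widetilde{\mu}^{(\infty)}_{\A,x}$ as a fiber $[\widetilde{\mu}^{(\infty)}_{\A}]_{\psi(x)}$ of the ergodic decomposition of $\widetilde{\mu}^{(\infty)}_{\A}$ over the factor $(W,\mathcal{W},\varsigma)$ determined by $\I(\widetilde{\sigma})$ --- so ergodicity of the fibers is automatic --- and only afterwards (its Step 3) identifies each fiber with the conditional product measure over $(\widetilde{\mu_\infty})^{(\infty)}_{\A,\pi_\infty(x)}$ by a uniqueness-of-disintegration argument. You take the conditional product as the \emph{definition}, which makes (2)-as-disintegration and (3) come out cleanly, but shifts all the work onto proving that these measures are $\widetilde{\sigma}$-ergodic, and your one-line justification (``projecting to an ergodic measure under an ergodic factor map, hence ergodic'') is a non sequitur: an invariant measure projecting onto an ergodic measure need not be ergodic (a convex combination of two distinct ergodic lifts of the same downstairs component is a counterexample), and Theorem \ref{thm-ergodic-extension} only controls the invariant $\sigma$-algebra \emph{with respect to $\widetilde{\mu}^{(\infty)}_{\A}$}, not with respect to an arbitrary invariant measure. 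What you actually need is the identification of your conditional-product measure with the fiber $[\widetilde{\mu}^{(\infty)}_{\A}]_{\zeta(\pi_\infty(x))}$, i.e.\ precisely the paper's Step 3 run in reverse (check that your family also disintegrates $\widetilde{\mu}^{(\infty)}_{\A}$ over $(W,\varsigma)$ with $\Phi_*\widetilde{\mu}^{(\infty)}_{\A,x}=\delta_{\zeta(\pi_\infty(x))}$ and invoke uniqueness of disintegration), or else a direct argument via Proposition \ref{prop-ergodic-exten} over a countable generating family of invariant sets. Since your proof of (1) uses fiberwise ergodicity as input, the gap propagates to (1) as well.

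The remainder is sound and in places a genuinely different and cleaner route. For (1), once fiberwise ergodicity is secured, your identity $T^\infty\xi_x^{\A}=\xi_{Tx}^{\A}$ does yield $T^\infty_*\widetilde{\mu}^{(\infty)}_{\A,x}=\widetilde{\mu}^{(\infty)}_{\A,Tx}$ for the conditional product measures (using $T_*\nu_z=\nu_{Tz}$ a.e.\ and unique ergodicity of $\overline{\O}(\xi^{\A}_z,\widetilde{\sigma})$), and the $\{0,1\}$-valued invariant function argument bypasses the paper's reliance on unique ergodicity of $(M_\infty(Z_\infty,\A),\langle T^\infty,\widetilde{\sigma}\rangle)$ and on Weiss's theorem. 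For (3), you are right both that the nil piece is handled by Theorem \ref{thm-nil-ergodic2}(4) through the conditional product formula, and --- more importantly --- that the error terms require an $L^2(\mu)$ vanishing statement on $X$ itself rather than the joining-level Lemma \ref{lem-vdc}; applying Theorem \ref{thm-Leibman} to the single-variable family $\{a_in\}\cup\{p_t(n+j): s+1\le t\le d,\ -l\le j\le l\}$ works, since clause (3) of $(\spadesuit)$ together with $\deg p_t\ge 2$ does make these pairwise essentially distinct in $n$. This is essentially the computation hidden inside the paper's citation of Lemma \ref{lem-vdc} at the end of its Step 4, and your version makes it explicit.
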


\begin{proof}
We divide the proof into several steps.

\medskip
\noindent{\em Step 0. The results of Theorem \ref{thm-ergodic-measures} are independent of the choice of the topological models of $(X,\X,\mu,T)$.}
\medskip

First we show that Theorem \ref{thm-ergodic-measures} is independent of the topological models of $(X,\X,\mu,T)$ which we choose. So, we need to show that all terms in Theorem \ref{thm-ergodic-measures} are isomorphic invariant.

Let $(X_i,\X_i,\mu_i,T_i), i=1,2$, be two isomorphic ergodic m.p.s. and $\phi: (X_1,\X_1,\mu_1,T)\rightarrow (X_2,\X_2,\mu_2,T)$
be the isomorphism. We assume that all results hold for $(X_1,\X_1,\mu_1,T)$. That is, there exists a family $\big\{(\widetilde{\mu_1})^{(\infty)}_{\A,x}\big\}_{x\in X_1}$ of Borel probability measures on $(X_1^d)^{\Z}$ such that
\begin{enumerate}
  \item[(1.1)] $(X_1^s\times (X_1^{d-s})^\Z,\X_1^s\times (\X_1^{d-s})^\Z, (\widetilde{\mu_1})_\A^{(\infty)}, \langle T^\infty, \widetilde{\sigma}\rangle)$ is ergodic.

  \item[(1.2)] The disintegrations of $(\widetilde{\mu_1})_\A^{(\infty)}$ over $\mu_1$ is as follows
$$(\widetilde{\mu_1})_\A ^{(\infty)}=\int_{X_1} (\widetilde{\mu_1})_{\A, x}^{(\infty)} d\mu_1(x),$$
and for $\mu_1$ a.e. $x\in X_1$, $(\widetilde{\mu_1})^{(\infty)}_{\A, x}$ is ergodic under $\widetilde{\sigma}$;

  \item[(1.3)] For all $f_1,f_2,\ldots, f_s\in L^\infty(X_1,\mu_1)$,  all $l \in \N$ and all $f_j^{\otimes_{II}} = f_j^{(s+1)}\otimes f_j^{(s+2)} \otimes \cdots \otimes f_j^{(d)}$ (where $f^{(t)}_j\in L^\infty(X_1,\mu_1)$, $s+1\le t\le d, -l \le j\le l$),
\begin{equation*}\label{}
\begin{split}
 & \lim_{N\rightarrow +\infty} \frac{1}{N}\sum_{n=0}^{N-1} f_1(T^{a_1n}x)\cdots f_s(T^{a_sn}x) \prod_{j=-l}^l f_j^{\otimes_{II}} (T^{{p_{s+1}}(n+j)}x, \ldots, T^{p_d(n+j)}x) \\
  &\quad =\int_{X_1^s\times (X_1^{d-s})^{\Z}} \big(f_1\otimes f_2\otimes \cdots \otimes f_s\big) \otimes  \Big( \bigotimes_{j=-l}^l f_j^{\otimes_{II}} \Big)((x_1,\ldots, x_s),{\bf x})d(\widetilde{\mu_1})_{\A,x}^{(\infty)}
\end{split}
\end{equation*}
in $L^2(X_1,\mu_1)$, where $((x_1,\ldots, x_s),{\bf x}) =((x_1,\ldots, x_s),({\bf x}_n)_{n\in \Z} )\in X_1^s\times (X_1^{d-s})^\Z$.
\end{enumerate}

\medskip

Now we show that we have the same results for $(X_2,\X_2,\mu_2,T)$.
Similar to the proof of Proposition \ref{prop-iso}, we have that
$$\phi^{(s)}\times \phi^\infty: (X_1^s\times (X_1^{d-s})^\Z, (\widetilde{\mu_1})_\A^{(\infty)}, \langle T^\infty, \widetilde{\sigma}\rangle)\rightarrow (X_2\times (X_2^{d-s})^\Z,  (\widetilde{\mu_2})_\A^{(\infty)}, \langle T^\infty, \widetilde{\sigma}\rangle) $$
$$((x_1,x_2,\ldots, x_s),({\bf x}_n)_{n\in \Z} ) \mapsto ((\phi(x_1),\phi(x_2),\ldots, \phi(x_s)),(\phi^{(d-s)}({\bf x}_n))_{n\in \Z} )$$
is an isomorphism and $(\phi^{(s)}\times \phi^\infty)_* (\widetilde{\mu_1})_\A^{(\infty)}=(\widetilde{\mu_2})_\A^{(\infty)}$. Thus $((X_2^d)^\Z,(\X_2^d)^\Z, (\widetilde{\mu_2})_\A^{(\infty)}, \langle T^\infty, \widetilde{\sigma}\rangle)$ is ergodic.

For $\mu_2$ a.e. $x\in X_2$, we define
$$(\widetilde{\mu_2})^{(\infty)}_{\A,x}=(\phi^{(s)}\times\phi^\infty)_*\big((\widetilde{\mu_1})^{(\infty)}_{\A,\phi^{-1}(x)}\big).$$
Then $(\widetilde{\mu_2})^{(\infty)}_{\A,x}$ is ergodic and we have
\begin{equation*}
  \begin{split}
    (\widetilde{\mu_2})_\A ^{(\infty)}&= (\phi^{(s)}\times \phi^\infty)_*\Big( (\widetilde{\mu_1})_\A ^{(\infty)}\Big)
   \overset{(1.2)}= (\phi^{(s)}\times \phi^\infty)_*\Big(\int_{X_1} (\widetilde{\mu_1})_{\A, x}^{(\infty)} d\mu_1(x)\Big)\\
   & = \int_{X_2}(\phi^{(s)}\times \phi^\infty)_*\big( (\widetilde{\mu_1})_{\A, \phi^{-1}(x)}^{(\infty)}\big) d\mu_2(x)
   = \int_{X_2} (\widetilde{\mu_2})_{\A, x}^{(\infty)} d\mu_2(x).
   \end{split}
\end{equation*}
That is, we have that $\displaystyle (\widetilde{\mu_2})_\A ^{(\infty)}=\int_{X_2} (\widetilde{\mu_2})_{\A, x}^{(\infty)} d\mu_2(x)$.

Since $\phi: (X_1,\X_1,\mu_1,T)\rightarrow (X_2,\X_2,\mu_2,T)$ is an isomorphism, $U_\phi: L^2(X_2, \mu_2)\rightarrow L^2(X_1,\mu_1),$ $ f\mapsto f\circ \phi$ is an isometry. Thus,
for all $f_1,f_2,\ldots, f_s\in L^\infty(X_2,\mu_2)$,  $l \in \N$ and $f_j^{\otimes_{II}} = f_j^{(s+1)}\otimes f_j^{(s+2)} \otimes \cdots \otimes f_j^{(d)}$ (where $ f^{(t)}_j\in L^\infty(X_2,\mu_2), s+1\le t\le d,  -l \le j\le l$), we have
$$\frac{1}{N}\sum_{n=0}^{N-1} f_1(T^{a_1n}x)\cdots f_s(T^{a_sn}x) \prod_{j=-l}^l f_j^{\otimes_{II}} (T^{{p_{s+1}}(n+j)}x, \ldots, T^{p_d(n+j)}x)$$
converges in $L^2(X_2,\mu_2)$ if and only if
$$\frac{1}{N}\sum_{n=0}^{N-1} f_1\circ\phi(T^{a_1n}x)\cdots f_s\circ\phi(T^{a_sn}x) \prod_{j=-l}^l f_j^{\otimes_{II}}\circ\phi^{(d-s)} (T^{{p_{s+1}}(n+j)}x, \ldots, T^{p_d(n+j)}x)$$
converges in $L^2(X_1,\mu_1)$. And for  $f\in L^\infty(X_2,\mu)$,  we have
\begin{equation*}
\begin{split}
& \int_{X_2} f(x)\Big(\int_{X_2^s\times (X_2^{d-s})^{\Z}}\bigotimes_{i=1}^s f_i\otimes \Big( \bigotimes_{j=-l}^l f_j^{\otimes_{II}} \Big)((x_1,\ldots, x_s),{\bf x}) \
   d (\widetilde{\mu_2})^{(\infty)}_{\A, x}\Big) d\mu_2(x)\\
   &= \int_{X_2} f(x)\Big(\int_{X_2^s\times (X_2^{d-s})^{\Z}} \bigotimes_{i=1}^s f_i\otimes\\
     & \quad \quad \quad \quad  \quad \quad \Big( \bigotimes_{j=-l}^l f_j^{\otimes_{II}} \Big)((x_1,\ldots, x_s),{\bf x}) \
   d (\phi^{(s)}\times \phi^\infty)_*\big((\widetilde{\mu_1})^{(\infty)}_{\A,\phi^{-1}(x)}\big)\Big)  d\phi_*\mu_1(x)\\
   &= \int_{X_1} f\circ\phi(x)\Big(\int_{X_1^s\times (X_1^{d-s})^{\Z}} \bigotimes_{i=1}^s f_i\circ\phi \otimes \\
    & \quad \quad \quad \quad  \quad \quad \Big( \bigotimes_{j=-l}^l f_j^{\otimes_{II}}\circ
    \phi^{(d-s)} \Big)((x_1,\ldots, x_s),{\bf x}) \
   d \big((\widetilde{\mu_1})^{(\infty)}_{\A,x}\big)\Big)  d\mu_1(x)\\
   &\overset{(1.3)}= \lim_{N\to\infty}\frac{1}{N}
\sum_{n=0}^{N-1} \int_{X_1} f\circ\phi(x) \prod_{i=1}^s f_i\circ\phi(T^{a_in}x)\cdot \\
&\quad \quad \quad \quad  \quad \quad   \prod_{j=-l}^l f_j^{\otimes_{II}}\circ\phi^{(d-s)} (T^{{p_{s+1}}(n+j)}x, \ldots, T^{p_d(n+j)}x) d\mu_1(x)\\
\end{split}
\end{equation*}
which is equal to
\begin{equation*}
\begin{split}
= \int_{X_2} f(x)\left(\lim_{N\to\infty}\frac{1}{N}
\sum_{n=0}^{N-1}  \prod_{i=1}^s f_i(T^{a_in}x)\cdot \prod_{j=-l}^l f_j^{\otimes_{II}} (T^{{p_{s+1}}(n+j)}x, \ldots, T^{p_d(n+j)}x)\right) d\mu_2(x).
\end{split}
\end{equation*}

Since $f$ is arbitrary, we have
\begin{equation*}
  \begin{split}
    \frac{1}{N} \sum_{n=0}^{N-1}
  & \prod_{i=1}^s  f_i(T^{a_in}x)\cdot   \prod_{j=-l}^l f_j^{\otimes_{II}} (T^{{p_{s+1}}(n+j)}x, \ldots, T^{p_d(n+j)}x)\\
    &\overset{L^2(\mu_2)}\longrightarrow  \int_{X_2^s\times (X_2^{d-s})^{\Z}}\bigotimes_{i=1}^s f_i\otimes \Big( \bigotimes_{j=-l}^l f_j^{\otimes_{II}} \Big)((x_1,x_2,\ldots, x_s),{\bf x}) \
   d (\widetilde{\mu_2})^{(\infty)}_{\A, x}, \ N\to\infty.
   \end{split}
\end{equation*}
Hence we have showed that all results in Theorem \ref{thm-ergodic-measures} are isomorphic invariant.

\medskip
\noindent{\em Step 1. Notation and the proof of (1).}
\medskip

Let $(X, \X,\mu, T)$ be an ergodic m.p.s., and let $\pi_{\infty}: (X,\X,\mu, T)\rightarrow (Z_{\infty},\ZZ_\infty,\mu_\infty, T)$
be the factor map, where $Z_{\infty}$ is the $\infty$-step pro-nilfactor of $X$.
Assume that $Z_{\infty}$ is isomorphic to a topological $\infty$-step pro-nilsystem (see Theorem \ref{thm-HKM}, still denote it by $Z_{\infty}$). We build the uniquely ergodic minimal topological model $(\h{X},\hat{T})$ of $(X,\X,\mu,T)$ in the following way
by Weiss's theorem.
\[
\begin{CD}
X @>{}>> \h{X}\\
@V{\pi_{\infty}}VV      @VV{\h{\pi}_{\infty}}V\\
Z_{\infty} @>{ }>> Z_{\infty}\\
\end{CD}
\]
Without loss of generality we assume that $(X, T)=(\h{X},\h{T})$ and $\pi_\infty=\hat{\pi}_\infty$.

By Theorem \ref{thm-ergodic-extension} the factor map
\begin{equation*}
  \begin{split}
\pi_\infty^{(s)}\times \pi_\infty^\infty:\ &(X^s\times (X^{d-s})^\Z, \X^s\times (\X^{d-s})^\Z, \widetilde{\mu}^{(\infty)}_\A, \widetilde{\sigma}) \\ &\quad \quad \quad \quad \longrightarrow
(Z_{\infty}^s\times ({Z_\infty^{d-s}})^\Z, \ZZ_{\infty}^s\times (\ZZ_{\infty}^{d-s})^\Z, \widetilde{(\mu_\infty)}^{(\infty)}_\A, \widetilde{\sigma})
\end{split}
\end{equation*}
is ergodic.
And by Theorem \ref{thm-measure-over-nil},
if $\displaystyle \mu=\int _{Z_{\infty}} \nu_z \ d\mu_{\infty}(z)$ is the disintegration of $\mu$ over $\mu_{\infty}$, then $\widetilde{\mu}_\A^{(\infty)}$ is the conditional product measure with
respect to $(\widetilde{\mu_\infty})_\A^{(\infty)}$.
That is,
\begin{equation}\label{d0}
  \widetilde{\mu}_\A ^{(\infty)}=\int_{Z^s_\infty\times(Z_\infty^{d-s})^{\Z}}\big(\prod_{i=1}^s\nu_{z_i}\big)\times \prod_{n\in \Z} \Big( \nu_{z_n^{(s+1)}}\times \cdots \times \nu_{z_n^{(d)}} \Big) d (\widetilde{\mu_\infty})_\A ^{(\infty)} ((z_1,\ldots, z_s),{\bf z}),
\end{equation}
where $\big((z_1,\ldots, z_s),{\bf z}\big)%=\big((z_1,\ldots, z_s),({\bf z}_n)_{n\in \Z}\big)
=\big((z_1,\ldots, z_s),(z_n^{(s+1)},\ldots, z_n^{(d)})_{n\in \Z}\big)\in Z^s_\infty\times (Z_\infty^{d-s})^{\Z}$.

Since ${\rm supp}\ \widetilde{\mu}^{(\infty)}_\A= M_\infty(X,\A)$ and ${\rm supp} \ (\widetilde{\mu_\infty})^{(\infty)}_\A=M_\infty(Z_\infty,\A)$
(by Theorem \ref{support} and the minimality of $(X,T)$ and $(Z_\infty, T)$), we may denote the factor map $\pi^{(s)}_\infty\times\pi^\infty_\infty$ by
$$\pi^{(s)}_\infty\times \pi_{\infty}^\infty: \Big(M_\infty(X,\A), \widetilde{\mu}_\A^{(\infty)}, \widetilde{\sigma}\Big)\rightarrow
\Big( M_\infty(Z_\infty, \A), (\widetilde{\mu_\infty})_\A^{(\infty)},\widetilde{\sigma}\Big).$$
By Theorem \ref{thm-ergodic-extension}, $\pi^{(s)}_\infty\times \pi_\infty^\infty$ is ergodic, and
the $\sigma$-algebra $\I(M_\infty(X,\A), \widetilde{\mu}_\A^{(\infty)}, \widetilde{\sigma})$ is isomorphic to the $\sigma$-algebra $\I(M_\infty(Z_\infty,\A), (\widetilde{\mu_\infty})_\A^{(\infty)},\widetilde{\sigma})$.
We denote the corresponding m.p.s. by
$(W, \mathcal {W}, \varsigma , \id)$. That is, we have
\begin{equation*}
  (M_\infty(X,\A), \widetilde{\mu}_\A^{(\infty)}, \widetilde{\sigma})\overset{\pi_\infty^{(s)}\times \pi^\infty_\infty}\longrightarrow
(M_\infty(Z_\infty,\A), (\widetilde{\mu_\infty})_\A^{(\infty)},\widetilde{\sigma})\stackrel{\phi}\longrightarrow
(W, \mathcal {W}, \varsigma, \id)
\end{equation*}
so that
\begin{equation}\label{d1}
\begin{split}
  \phi^{-1}(\mathcal{W}) =\I(M_\infty(Z_\infty,\A), (\widetilde{\mu_\infty})_\A^{(\infty)},\widetilde{\sigma})
   \cong \Phi^{-1}(\mathcal{W})
  =\I(M_\infty(X,\A), \widetilde{\mu}_\A^{(\infty)}, \widetilde{\sigma}),
\end{split}
\end{equation}
where $\Phi=\phi\circ(\pi_\infty^{(s)}\times \pi^\infty_\infty)$.

By Theorem \ref{thm-nil-ergodic2}-(1), $(M_\infty(Z_\infty,\A), \langle T^{\infty}, \widetilde{\sigma} \rangle)$ is uniquely ergodic, which means that \linebreak $\I(M_\infty(Z_\infty,\A), (\widetilde{\mu_\infty})_\A^{(\infty)}, \langle T^{\infty},\widetilde{\sigma} \rangle)$ is trivial.

\medskip

Now we show that $(M_\infty(X,\A), \widetilde{\mu}_\A^{(\infty)}, \langle T^\infty, \widetilde{\sigma}\rangle)$ is ergodic. Let $A$ be a $\langle T^\infty, \widetilde{\sigma}\rangle$-invariant subset of $M_\infty(X,\A)$, i.e. $\widetilde{\mu}_\A^{(\infty)}(A\D (T^\infty)^{-1} A)=0, \widetilde{\mu}_\A^{(\infty)}(A\D \widetilde{\sigma}^{-1}A)=0$. Since $\widetilde{\mu}_\A^{(\infty)}(A\D \widetilde{\sigma}^{-1}A)=0$, we have that $A\in \I(M_\infty(X,\A), \widetilde{\mu}_\A^{(\infty)}, \widetilde{\sigma})$. By \eqref{d1}, there is some $B\in \mathcal W$ and a measurable subset $A'\subseteq M_\infty(Z_\infty,\A)$ such that $\phi^{-1}(B)=A'$ and $A=\Phi^{-1}(B)=(\pi_\infty^{(s)}\times \pi^\infty_\infty)^{-1}(A')$.
Now $$(\widetilde{\mu_\infty})_\A^{(\infty)}(A'\D (T^\infty)^{-1}A')=\widetilde{\mu}_\A^{(\infty)}(A\D (T^\infty)^{-1} A)=0, $$
and
$$(\widetilde{\mu_\infty})_\A^{(\infty)}(A'\D \widetilde{\sigma}^{-1}A')=\widetilde{\mu}_\A^{(\infty)}(A\D \widetilde{\sigma}^{-1} A)=0.$$
As $\I(M_\infty(Z_\infty,\A), (\widetilde{\mu_\infty})_\A^{(\infty)}, \langle T^{\infty},\widetilde{\sigma} \rangle)$ is trivial, we have $A'=\emptyset$ or $A'=M_\infty(Z_\infty,\A) \pmod{ (\widetilde{\mu_\infty})_\A^{(\infty)}}$.
Thus $A=\emptyset$ or $A=M_\infty(X,\A) \pmod{ \widetilde{\mu}_\A^{(\infty)}}$.
It follows that $\I(M_\infty(X,\A), \widetilde{\mu}_\A^{(\infty)}, \langle T^{\infty},\widetilde{\sigma} \rangle)$ is also trivial, and hence $(M_\infty(X,\A), \widetilde{\mu}_\A^{(\infty)}, \langle T^\infty, \widetilde{\sigma}\rangle)$ is ergodic. Thus (1) holds.

\medskip
\noindent{\em Step 2. The proof of (2).}
\medskip

By Step 1, we have
\begin{equation*}
  (M_\infty(X,\A), \widetilde{\mu}_\A^{(\infty)}, \widetilde{\sigma})\overset{\pi_\infty^{(s)}\times \pi^\infty_\infty}\longrightarrow
(M_\infty(Z_\infty,\A), (\widetilde{\mu_\infty})_\A^{(\infty)},\widetilde{\sigma})\stackrel{\phi}\longrightarrow
(W, \mathcal {W}, \varsigma, \id)
\end{equation*}
so that
$ \phi^{-1}(\mathcal{W}) =\I(M_\infty(Z_\infty,\A), (\widetilde{\mu_\infty})_\A^{(\infty)},\widetilde{\sigma})
   = \Phi^{-1}(\mathcal{W})
  =\I(M_\infty(X,\A), \widetilde{\mu}_\A^{(\infty)}, \widetilde{\sigma})$,
where $\Phi=\phi\circ(\pi_\infty^{(s)}\times \pi^\infty_\infty)$.

By %the ergodic decomposition let
Theorem \ref{ergodic-decomposition}, we have
\begin{equation}\label{d2}
  \widetilde{\mu}_\A^{(\infty)} =\int_W \big[\widetilde{\mu}_\A^{(\infty)}\big]_w d\varsigma (w)=\int_{{\mathcal M}_\sigma^{erg}(M_\infty(X,\A))}\theta d\kappa(\theta)
\end{equation}
and
\begin{equation}\label{d3}
  (\widetilde{\mu_\infty})_\A^{(\infty)} =\int_W \big[(\widetilde{\mu_\infty})_\A^{(\infty)}\big]_w d\varsigma (w)=\int_{{\mathcal M}_\sigma^{erg}(M_\infty(Z_\infty,\A))}\theta d\widehat{\kappa}(\theta)
\end{equation}
be the ergodic decomposition of $\widetilde{\mu}^{(\infty)}_\A$ and $(\widetilde{\mu_\infty})^{(\infty)}_\A$ respectively, and one may assume that
\begin{equation}\label{d8}
  \Phi_* \big( \big[\widetilde{\mu}_\A^{(\infty)}\big]_w\big)=\phi_* \big(  \big[(\widetilde{\mu_\infty})_\A^{(\infty)}\big]_w\big)=\d_w \quad \varsigma \ a.e.\ w\in W.
\end{equation}
By Remark \ref{rem-6.7} and \eqref{d1},
\begin{equation}\label{d4}
  \left(\big\{\big[\widetilde{\mu}_\A^{(\infty)}\big]_w: w\in W\big \},\kappa \right)\cong (W,\varsigma ) \cong \left(\big\{\big[(\widetilde{\mu_\infty})_\A^{(\infty)}\big]_w: w\in W\big\},\widehat{\kappa} \right)
\end{equation}

\medskip

By Theorem \ref{thm-6.8}, there is a measure preserving map $$\zeta: (Z_\infty,\ZZ_\infty,\mu_\infty)\rightarrow (W,\mathcal{W},\varsigma ) $$ such that
$$ (\widetilde{\mu_\infty})_\A^{(\infty)} =\int_W \big[(\widetilde{\mu_\infty})_{\A}^{(\infty)}\big]_w d\varsigma(w)= \int_{Z_\infty} (\widetilde{\mu_\infty})_{\A,z}^{(\infty)} d\mu_\infty(z)$$
and
\begin{equation}\label{d5-1}
  \big[(\widetilde{\mu_\infty})_{\A}^{(\infty)}\big]_{\zeta(z)}=(\widetilde{\mu_\infty})_{\A, z}^{(\infty)}, \ \mu_\infty \ a.e. \ z\in Z_\infty.
\end{equation}
Let $$\psi=\zeta\circ \pi_\infty: (X,\X,\mu) \rightarrow (W,\mathcal{W}, \varsigma ).$$
Then $\psi$ is a measure preserving map. For $\mu$-a.e. $x\in X$, define
\begin{equation}\label{d5}
  \widetilde{\mu}^{(\infty)}_{\A,x}\triangleq \big[\widetilde{\mu}^{(\infty)}_\A\big]_{\psi(x)}.
\end{equation}
Then we have
\begin{equation}\label{d6}
\begin{split}
  \widetilde{\mu}_\A^{(\infty)} & =\int_W \big[\widetilde{\mu}_{\A}^{(\infty)}\big]_w d\varsigma(w)= \int_W \big[\widetilde{\mu}_{\A}^{(\infty)}\big]_w d\psi_*\mu(w)
  \\ &= \int_X \big[\widetilde{\mu}_{\A}^{(\infty)}\big]_{\psi(x)} d\mu(x)= \int_{X} \widetilde{\mu}_{\A,x}^{(\infty)} d\mu(x),
\end{split}
\end{equation}
and for $\mu$ a.e. $x\in X$, $\widetilde{\mu}^{(\infty)}_{\A, x}$ is ergodic under $\widetilde{\sigma}$.

\medskip
\noindent{\em Step 3. For $\mu$ a.e. $x\in X$, $\widetilde{\mu}^{(\infty)}_{\A, x}$ is the conditional product measure with
respect to $(\widetilde{\mu_\infty})_{\A,\pi(x)}^{(\infty)}$. That is, if $\displaystyle \mu=\int _{Z_{\infty}} \nu_z \ d\mu_{\infty}(z)$ is the disintegration of $\mu$ over $\mu_{\infty}$, then
\begin{equation*}\label{}
  \widetilde{\mu}_{\A,x} ^{(\infty)}=\int_{Z^s_\infty\times(Z_\infty^{d-s})^{\Z}}\big(\nu_{z_1}\times \cdots \times \nu_{z_s}\big)\times \prod_{n\in \Z} \Big( \nu_{z_n^{(s+1)}}\times \cdots \times \nu_{z_n^{(d)}} \Big) d (\widetilde{\mu_\infty})_{\A,\pi_\infty(x)} ^{(\infty)} ((z_1,\ldots, z_s),{\bf z}),
\end{equation*}
where $\big((z_1,\ldots, z_s),{\bf z}\big)=\big((z_1,\ldots, z_s),(z_n^{(s+1)},\ldots, z_n^{(d)})_{n\in \Z}\big)\in Z^s_\infty\times (Z_\infty^{d-s})^{\Z}$.}

\medskip

By \eqref{d0}, $\widetilde{\mu}_\A^{(\infty)}$ is the conditional product measure with
respect to $(\widetilde{\mu_\infty})_\A^{(\infty)}$.
That is,
\begin{equation}
  \widetilde{\mu}_\A ^{(\infty)}=\int_{Z^s_\infty\times(Z_\infty^{d-s})^{\Z}}\big(\prod_{i=1}^s\nu_{z_i}\big)\times \prod_{n\in \Z} \Big( \nu_{z_n^{(s+1)}}\times \cdots \times \nu_{z_n^{(d)}} \Big) d (\widetilde{\mu_\infty})_\A ^{(\infty)} ((z_1,\ldots, z_s),{\bf z}),
\end{equation}
where $\big((z_1,\ldots, z_s),{\bf z}\big)=\big((z_1,\ldots, z_s),(z_n^{(s+1)},\ldots, z_n^{(d)})_{n\in \Z}\big)\in Z^s_\infty\times (Z_\infty^{d-s})^{\Z}$.

By \eqref{d3}, we have
\begin{equation*}\label{}
  \begin{split}
    &\widetilde{\mu}_\A ^{(\infty)} =\int_{Z^s_\infty\times(Z_\infty^{d-s})^{\Z}}\big(\prod_{i=1}^s\nu_{z_i}\big)\times \prod_{n\in \Z} \Big( \nu_{z_n^{(s+1)}}\times \cdots \times \nu_{z_n^{(d)}} \Big) d (\widetilde{\mu_\infty})_\A ^{(\infty)} ((z_1,\ldots, z_s),{\bf z})\\
    &= \int_{M_\infty(Z_\infty,\A)}\big(\prod_{i=1}^s\nu_{z_i}\big)\times \prod_{n\in \Z} \Big( \nu_{z_n^{(s+1)}}\times \cdots \times \nu_{z_n^{(d)}} \Big) d (\widetilde{\mu_\infty})_\A ^{(\infty)} ((z_1,\ldots, z_s),{\bf z})\\
    &= \int_W \left( \int_{M_\infty(Z_\infty,\A)}\big(\prod_{i=1}^s\nu_{z_i}\big)\times \prod_{n\in \Z} \Big( \nu_{z_n^{(s+1)}}\times \cdots \times \nu_{z_n^{(d)}} \Big) d \big[(\widetilde{\mu_\infty})_\A ^{(\infty)}\big]_w ((z_1,\ldots, z_s),{\bf z})\right) d\varsigma(w).
  \end{split}
\end{equation*}
On the other hand, by \eqref{d2}, we have
\begin{equation}\label{}
  \begin{split}
     \widetilde{\mu}_\A ^{(\infty)}= \int_W \big[\widetilde{\mu}_\A^{(\infty)}\big]_w d\varsigma (w).
  \end{split}
\end{equation}
Recall that $  (M_\infty(X,\A), \widetilde{\mu}_\A^{(\infty)}, \widetilde{\sigma})\overset{\pi_\infty^{(s)}\times \pi^\infty_\infty}\longrightarrow
(M_\infty(Z_\infty,\A), (\widetilde{\mu_\infty})_\A^{(\infty)},\widetilde{\sigma})\stackrel{\phi}\longrightarrow
(W, \mathcal {W}, \varsigma, \id)$ and $\Phi=\phi\circ(\pi_\infty^{(s)}\times \pi^\infty_\infty)$.
We have that for $\varsigma$-a.e. $w\in W$,
\begin{equation*}\label{}
  \begin{split}
     & \Phi_*\Big( \int_{M_\infty(Z_\infty,\A)}\big(\prod_{i=1}^s\nu_{z_i}\big)\times \prod_{n\in \Z} \Big( \nu_{z_n^{(s+1)}}\times \cdots \times \nu_{z_n^{(d)}} \Big) d \big[(\widetilde{\mu_\infty})_\A ^{(\infty)}\big]_w ((z_1,\ldots, z_s),{\bf z}) \Big) \\
    &=  \phi_*\circ (\pi_\infty^{(s)}\times \pi^\infty_\infty)_*\Big( \int_{M_\infty(Z_\infty,\A)}\big(\prod_{i=1}^s\nu_{z_i}\big)\times \prod_{n\in \Z} \Big( \nu_{z_n^{(s+1)}}\times \cdots \times \nu_{z_n^{(d)}} \Big) \\
    &\quad \quad \quad \quad \quad \quad \quad \quad \quad \quad \quad \quad \quad \quad \quad \quad \quad \quad \quad \quad \quad d \big[(\widetilde{\mu_\infty})_\A ^{(\infty)}\big]_w ((z_1,\ldots, z_s),{\bf z}) \Big) \\
    &=  \phi_*\Big( \int_{M_\infty(Z_\infty,\A)}(\pi_\infty^{(s)}\times \pi^\infty_\infty)_*\Big(\big(\prod_{i=1}^s\nu_{z_i}\big)\times \prod_{n\in \Z} \Big( \nu_{z_n^{(s+1)}}\times \cdots \times \nu_{z_n^{(d)}} \Big) \Big)\\
    &\quad \quad \quad \quad \quad \quad \quad \quad \quad \quad \quad \quad \quad \quad \quad \quad \quad \quad \quad \quad \quad d \big[(\widetilde{\mu_\infty})_\A ^{(\infty)}\big]_w ((z_1,\ldots, z_s),{\bf z}) \Big) \\
    &= \phi_*\Big( \int_{M_\infty(Z_\infty,\A)}\d_{((z_1,\ldots, z_s),{\bf z})}d \big[(\widetilde{\mu_\infty})_\A ^{(\infty)}\big]_w ((z_1,\ldots, z_s),{\bf z}) \Big) \\
    &= \phi_*\Big(\big[(\widetilde{\mu_\infty})_\A ^{(\infty)}\big]_w \Big) \overset{\eqref{d8}} = \d_w .
  \end{split}
\end{equation*}
Thus by \eqref{d8},  for $\varsigma$-a.e. $w\in W$,
\begin{equation*}\label{}
  \begin{split}
  & \Phi_*\Big( \int_{M_\infty(Z_\infty,\A)}\big(\prod_{i=1}^s\nu_{z_i}\big)\times \prod_{n\in \Z} \Big( \nu_{z_n^{(s+1)}}\times \cdots \times \nu_{z_n^{(d)}} \Big) d \big[(\widetilde{\mu_\infty})_\A ^{(\infty)}\big]_w ((z_1,\ldots, z_s),{\bf z}) \Big)\\
  &= \Phi_* \big( \big[\widetilde{\mu}_\A^{(\infty)}\big]_w\big)=\d_w.
  \end{split}
\end{equation*}
By the uniqueness of the disintegration (\cite[Theorem 5.8]{F} or \cite[Theorem A.7]{Glasner}), we have that for $\varsigma$-a.e. $w\in W$,
\begin{equation*}
  \big[\widetilde{\mu}_\A^{(\infty)}\big]_w=\int_{M_\infty(Z_\infty,\A)}\big(\prod_{i=1}^s\nu_{z_i}\big)\times \prod_{n\in \Z} \Big( \nu_{z_n^{(s+1)}}\times \cdots \times \nu_{z_n^{(d)}} \Big) d \big[(\widetilde{\mu_\infty})_\A ^{(\infty)}\big]_w ((z_1,\ldots, z_s),{\bf z}).
\end{equation*}
Thus combined with \eqref{d5-1} and \eqref{d5}, we have
for $\mu$-a.e. $x\in X$,
\begin{equation*}\label{}
  \begin{split}
  &  \mu^{(\infty)}_{\A,x}= \big[\widetilde{\mu}_\A^{(\infty)}\big]_{\psi(x)}\\
  &= \int_{M_\infty(Z_\infty,\A)}\big(\prod_{i=1}^s\nu_{z_i}\big)\times \prod_{n\in \Z} \Big( \nu_{z_n^{(s+1)}}\times \cdots \times \nu_{z_n^{(d)}} \Big) d \big[(\widetilde{\mu_\infty})_\A ^{(\infty)}\big]_{\psi(x)} ((z_1,\ldots, z_s),{\bf z})\\
  &= \int_{M_\infty(Z_\infty,\A)}\big(\prod_{i=1}^s\nu_{z_i}\big)\times \prod_{n\in \Z} \Big( \nu_{z_n^{(s+1)}}\times \cdots \times \nu_{z_n^{(d)}} \Big) d \big[(\widetilde{\mu_\infty})_\A ^{(\infty)}\big]_{\phi(\pi_\infty(x))} ((z_1,\ldots, z_s),{\bf z})\\
  &= \int_{M_\infty(Z_\infty,\A)}\big(\prod_{i=1}^s\nu_{z_i}\big)\times \prod_{n\in \Z} \Big( \nu_{z_n^{(s+1)}}\times \cdots \times \nu_{z_n^{(d)}} \Big) d (\widetilde{\mu_\infty})_{\A,\pi_\infty(x)} ^{(\infty)} ((z_1,\ldots, z_s),{\bf z}).
  \end{split}
\end{equation*}
This ends the proof of Step 3.

\medskip
\noindent{\em Step 4. The proof of (3).}
\medskip

Now we show (3).
For all $f_1,f_2,\ldots, f_s\in L^\infty(X,\mu)$, all $l \in \N$ and all $f_j^{\otimes_{II}} = f_j^{(s+1)}\otimes f_j^{(s+2)} \otimes \cdots \otimes f_j^{(d)}$ (where $f^{(t)}_j\in L^\infty(X,\mu)$, $s+1\le t\le d, -l \le j\le l$), we show
\begin{equation}\label{eq1}
\begin{split}
  & \lim_{N\rightarrow +\infty} \frac{1}{N}\sum_{n=0}^{N-1} f_1(T^{a_1n}x)\cdots f_s(T^{a_sn}x) \prod_{j=-l}^l f_j^{\otimes_{II}} (T^{{p_{s+1}}(n+j)}x, \ldots, T^{p_d(n+j)}x) \\
  & =\int_{X^s\times (X^{d-s})^{\Z}} \big(f_1\otimes f_2\otimes \cdots \otimes f_s\big) \otimes  \Big( \bigotimes_{j=-l}^l f_j^{\otimes_{II}} \Big)d\widetilde{\mu}_{\A,x}^{(\infty)}
\end{split}
\end{equation}
in $L^2(X,\mu)$. Similar to the proof of Theorem \ref{thm-A1}-(1), for $\mu$-a.e. $x\in X$, $ \big(f_1\otimes f_2\otimes \ldots \otimes f_s\big) \otimes  \Big( \bigotimes_{j=-l}^l f_j^{\otimes_{II}} \Big)\in L^\infty(X^s\times (X^{d-s})^{\Z},\widetilde{\mu}_{\A,x}^{(\infty)})$ and the right side of the equation \eqref{eq1} makes sense. Again similar to  the proof of Theorem \ref{thm-A1}, we only need show \eqref{eq1} for continuous functions.

By Step 3, For $\mu$ a.e. $x\in X$, $\widetilde{\mu}^{(\infty)}_{\A, x}$ is the conditional product measure with
respect to $(\widetilde{\mu_\infty})_{\A,\pi_\infty(x)}^{(\infty)}$. Thus for all $l \in \N$ and all $f_i, f^{(t)}_j \in C(X)$, $1\le u\le s, s+1\le t\le d, -l \le j\le l$,
\begin{equation}\label{b1}
\begin{split}
  & \int_{X^s\times (X^{d-s})^{\Z}} \big(f_1\otimes f_2\otimes \cdots \otimes f_s\big) \otimes  \Big( \bigotimes_{j=-l}^l f_j^{\otimes_{II}} \Big)((x_1,x_2,\ldots, x_s),{\bf x})d\widetilde{\mu}_{\A,x}^{(\infty)}\\
  =& \int_{Z^s_\infty\times(Z_\infty^{d-s})^{\Z}} \int_{X^s\times (X^{d-s})^{\Z}} \big(f_1\otimes f_2\otimes \cdots \otimes f_s\big) \otimes  \Big( \bigotimes_{j=-l}^l f_j^{\otimes_{II}} \Big) \\ & \quad d \Big(\big(\nu_{z_1}\times \ldots \times \nu_{z_s}\big)\times \prod_{n\in \Z} \Big( \nu_{z_n^{(s+1)}}\times \cdots \times \nu_{z_n^{(d)}} \Big)\Big) d (\widetilde{\mu_\infty})_{\A,\pi_\infty(x)} ^{(\infty)} ((z_1,\ldots, z_s),{\bf z})\\
  =& \int_{Z^s_\infty\times(Z_\infty^{d-s})^{\Z}} \Big(\int_X f_1 d\nu_{z_1}\otimes \cdots \otimes \int_X f_s d\nu_{z_s} \Big)\otimes \\
  & \quad \quad \quad \quad \quad \quad \quad \quad \quad \quad \quad \quad \bigotimes_{j=-l}^l \prod_{t=s+1}^d \int_{X}f_j^{(t)} d \nu_{z_n^{(t)}} d (\widetilde{\mu_\infty})_{\A,\pi_\infty(x)} ^{(\infty)} ((z_1,\ldots, z_s),{\bf z})\\
  =& \int_{Z^s_\infty\times(Z_\infty^{d-s})^{\Z}} \E_\mu(f_1 | \ZZ_\infty) \otimes \cdots \otimes \E_\mu (f_s | \ZZ_{\infty}) \otimes  \bigotimes_{j=-l}^l
    \E_\mu(f_j^{\otimes_{II}} |\ZZ_{\infty}) d (\widetilde{\mu_\infty})_{\A,\pi_\infty(x)} ^{(\infty)},
\end{split}
\end{equation}
where $\E_\mu(f^{\otimes_{II}} _j|\ZZ_\infty)=\E_\mu(f^{(s+1)}_j|\ZZ_\infty)\otimes \E_\mu(f^{(2)}_j|\ZZ_\infty)\otimes\cdots \otimes \E_\mu(f^{(d)}_j|\ZZ_\infty), -l\le j\le l.$

We need to explain the last equality. Since $\displaystyle \mu=\int _{Z_{\infty}} \nu_z \ d\mu_{\infty}(z)$ is the disintegration of $\mu$ over $\mu_{\infty}$, we have that for every $f\in L^\infty(X,\mu)$, there is some subset $A_{f}\in \ZZ_\infty$ with $\mu_\infty(A_{f})=1$ such that $f$ is in $L^\infty(X,\nu_z)$, and $\displaystyle \E_\mu(f|\ZZ_\infty)(z)=\int_X fd\nu_z$ for each $z\in A_{f}$.
Since $(\widetilde{\mu_\infty})_\A^{(\infty)}$ is a standard measure on $Z^s_\infty\times(Z_\infty^{d-s})^{\Z}$, we have that
$$(\widetilde{\mu_\infty})_\A^{(\infty)}\left((\prod_{i=1}^s A_{f_i})\times \Big(\big(\prod_{j=-\infty}^{-l-1}Z_\infty^{d-s} \big)\times \big(\prod_{j=-l}^l \prod_{t=s+1}^d A_{f^{(t)}_j}\big)\times \big(\prod_{j=l+1}^{\infty}Z_\infty^{d-s}\big)\Big)\right)=1.$$
Since $\displaystyle (\widetilde{\mu_\infty})_\A^{(\infty)} = \int_{Z_\infty} (\widetilde{\mu_\infty})_{\A,z}^{(\infty)} d\mu_\infty(z)$, there is some $B\in \ZZ_\infty$ with $\mu_\infty(B)=1$ such that for all $z\in B$, we have
$$(\widetilde{\mu_\infty})_{\A,z}^{(\infty)}\left((\prod_{i=1}^s A_{f_i})\times \Big(\big(\prod_{j=-\infty}^{-l-1}Z_\infty^{d-s} \big)\times \big(\prod_{j=-l}^l \prod_{t=s+1}^d A_{f^{(t)}_j}\big)\times \big(\prod_{j=l+1}^{\infty}Z_\infty^{d-s}\big)\Big)\right)=1.$$
Thus for each $z\in B$ and for $(\widetilde{\mu_\infty})_{\A,z}^{(\infty)}$-a.e. all $ ((z_1,\ldots, z_s),{\bf z})\in Z^s_\infty\times(Z_\infty^{d-s})^{\Z}$,
\begin{equation*}
\begin{split}
  & \Big(\int_X f_1 d\nu_{z_1}\otimes \cdots \otimes \int_X f_s d\nu_{z_s} \Big)\otimes   \bigotimes_{j=-l}^l \prod_{t=s+1}^d \int_{X}f_j^{(t)} d \nu_{z_n^{(t)}} \\
& =  \E_\mu(f_1 | \ZZ_\infty) \otimes \cdots \otimes \E_\mu (f_s | \ZZ_{\infty}) \otimes  \bigotimes_{j=-l}^l
    \E_\mu(f_j^{\otimes_{II}} |\ZZ_{\infty}).
\end{split}
\end{equation*}
And the last equality of \eqref{b1} holds for $\mu$-a.e. $x\in X$.

\medskip

By Theorem \ref{thm-nil-ergodic2}, in $L^2(X,\mu)$ we have
\begin{equation*}
\begin{split}
  & \int_{X^s\times (X^{d-s})^{\Z}} \big(f_1\otimes f_2\otimes \cdots \otimes f_s\big) \otimes  \Big( \bigotimes_{j=-l}^l f_j^{\otimes_{II}} \Big)((x_1,x_2,\ldots, x_s),{\bf x})d\widetilde{\mu}_{\A,x}^{(\infty)}\\
  =& \int_{Z^s_\infty\times(Z_\infty^{d-s})^{\Z}} \E_\mu(f_1 | \ZZ_\infty) \otimes \cdots \otimes \E_\mu (f_s | \ZZ_{\infty}) \otimes  \bigotimes_{j=-l}^l
    \E_\mu(f_j^{\otimes_{II}} |\ZZ_{\infty}) d (\widetilde{\mu_\infty})_{\A,\pi_\infty(x)} ^{(\infty)}\\
 \overset{L^2(\mu)} =& \lim_{N\rightarrow +\infty} \frac{1}{N}\sum_{n=0}^{N-1} \prod_{i=1}^s \E_\mu(f_i|\ZZ_\infty)(T^{a_in}\pi_\infty(x))\cdot\\
  & \quad \quad \quad\quad \quad \prod_{j=-l}^l \E_\mu(f_j^{\otimes_{II}}|\ZZ_\infty )(T^{{p_{s+1}}(n+j)}\pi_\infty(x), \ldots, T^{p_d(n+j)}\pi_\infty(x))\\
  =& \lim_{N\rightarrow +\infty} \frac{1}{N}\sum_{n=0}^{N-1} f_1(T^{a_1n}x)\cdots f_s(T^{a_sn}x) \prod_{j=-l}^l f_j^{\otimes_{II}} (T^{{p_{s+1}}(n+j)}x, \ldots, T^{p_d(n+j)}x).
\end{split}
\end{equation*}
The last equality follows from Lemma \ref{lem-vdc}.
The whole proof is complete.
\end{proof}

Since the Step 3 of the proof is important in the sequel, we restate it here.

\begin{cor}\label{cor-structure}
Let $(X,\X,\mu,T)$ be an ergodic m.p.s. and $d\in \N$. Let $\pi_{\infty}$ be factor map from  $(X,\X,\mu, T)$ to $(Z_{\infty},\ZZ_\infty,\mu_\infty, T)$, where $Z_{\infty}$ is the $\infty$-step pro-nilfactor of $X$. Let $\A= \{p_1, \ldots, p_{d}\}$ be a family of non-constant integral polynomials satisfying $(\spadesuit)$. Let $\big\{\widetilde{\mu}^{(\infty)}_{\A,x}\big\}_{x\in X}$ be the family of Borel probability measures on $(X^d)^{\Z}$ in Theorem \ref{thm-ergodic-measures}. Then for $\mu$ a.e. $x\in X$, $\widetilde{\mu}^{(\infty)}_{\A, x}$ is the conditional product measure with
respect to $(\widetilde{\mu_\infty})_{\A,\pi_\infty(x)}^{(\infty)}$. That is, if $ \mu=\int _{Z_{\infty}} \nu_s \ d\mu_{\infty}(s)$ be the disintegration of $\mu$ over $\mu_{\infty}$, then
\begin{equation*}\label{}
  \widetilde{\mu}_{\A,x} ^{(\infty)}=\int_{Z^s_\infty\times(Z_\infty^{d-s})^{\Z}}\big(\prod_{i=1}^s\nu_{z_i}\big)\times \prod_{n\in \Z} \Big( \nu_{z_n^{(s+1)}}\times \cdots \times \nu_{z_n^{(d)}} \Big) d (\widetilde{\mu_\infty})_{\A,\pi_\infty(x)} ^{(\infty)} ((z_1,\ldots, z_s),{\bf z}),
\end{equation*}
where $\big((z_1,\ldots, z_s),{\bf z}\big)=\big((z_1,\ldots, z_s),(z_n^{(s+1)},\ldots, z_n^{(d)})_{n\in \Z}\big)\in Z^s_\infty\times (Z_\infty^{d-s})^{\Z}$. In particular,
$$(\pi_\infty^{(s)}\times \pi_\infty^\infty)_*  \widetilde{\mu}_{\A,x} ^{(\infty)}= (\widetilde{\mu_\infty})_{\A,\pi_\infty(x)} ^{(\infty)}.$$
\end{cor}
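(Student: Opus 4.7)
The strategy is the one already carried out as Step 3 inside the proof of Theorem~\ref{thm-ergodic-measures}, which I would now extract and present as a standalone argument. The starting point is Theorem~\ref{thm-measure-over-nil}, which gives the global identity
\begin{equation*}
\widetilde{\mu}_\A^{(\infty)}=\int_{Z^s_\infty\times(Z_\infty^{d-s})^{\Z}}\Big(\prod_{i=1}^s\nu_{z_i}\Big)\times \prod_{n\in\Z}\Big(\nu_{z_n^{(s+1)}}\times\cdots\times\nu_{z_n^{(d)}}\Big)\, d(\widetilde{\mu_\infty})_\A^{(\infty)}.
\end{equation*}
Into this I would substitute the ergodic decomposition $(\widetilde{\mu_\infty})_\A^{(\infty)}=\int_W [(\widetilde{\mu_\infty})_\A^{(\infty)}]_w\, d\varsigma(w)$ obtained through the sequence of factor maps $\Phi=\phi\circ(\pi_\infty^{(s)}\times\pi_\infty^\infty)$ built in Step~1 of the proof of Theorem~\ref{thm-ergodic-measures}, and interchange the two integrations by Fubini (legitimate since all measures are Borel probability measures on Polish spaces, by the standardness established in Section~\ref{section-furstenberg-joining}). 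This produces a candidate family $\{\Lambda_w\}_{w\in W}$ of measures, where $\Lambda_w$ is the conditional product measure built from $[(\widetilde{\mu_\infty})_\A^{(\infty)}]_w$, satisfying $\widetilde{\mu}_\A^{(\infty)}=\int_W \Lambda_w\, d\varsigma(w)$.

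Next I would match $\Lambda_w$ with $[\widetilde{\mu}_\A^{(\infty)}]_w$ via the uniqueness of disintegration over $(W,\mathcal{W},\varsigma)$. The key computation is that the pushforward of the internal product $(\prod_i\nu_{z_i})\times \prod_n(\nu_{z_n^{(s+1)}}\times\cdots\times\nu_{z_n^{(d)}})$ under $\pi_\infty^{(s)}\times\pi_\infty^\infty$ equals the Dirac mass at $((z_1,\ldots,z_s),\mathbf{z})$, so that $(\pi_\infty^{(s)}\times\pi_\infty^\infty)_*\Lambda_w=[(\widetilde{\mu_\infty})_\A^{(\infty)}]_w$ and therefore $\Phi_*\Lambda_w=\delta_w$. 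Since by construction of the ergodic decomposition one also has $\Phi_*\big([\widetilde{\mu}_\A^{(\infty)}]_w\big)=\delta_w$ for $\varsigma$-a.e.\ $w$, both $w\mapsto \Lambda_w$ and $w\mapsto [\widetilde{\mu}_\A^{(\infty)}]_w$ are disintegrations of $\widetilde{\mu}_\A^{(\infty)}$ over the same factor map $\Phi$, hence they coincide for $\varsigma$-a.e.\ $w$ by the uniqueness of disintegration (cf.\ \cite[Theorem~5.8]{F} or \cite[Theorem~A.7]{Glasner}).

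Finally, I would pull this identity back from $W$ to $X$ using the measurable map $\psi=\zeta\circ\pi_\infty\colon X\to W$ together with the identifications $\widetilde{\mu}_{\A,x}^{(\infty)}=[\widetilde{\mu}_\A^{(\infty)}]_{\psi(x)}$ (from the definition of $\widetilde{\mu}_{\A,x}^{(\infty)}$) and $(\widetilde{\mu_\infty})_{\A,\pi_\infty(x)}^{(\infty)}=[(\widetilde{\mu_\infty})_\A^{(\infty)}]_{\psi(x)}$ (from Theorem~\ref{thm-6.8}). This yields the claimed formula for $\mu$-a.e.\ $x\in X$, and the in-particular statement $(\pi_\infty^{(s)}\times\pi_\infty^\infty)_*\widetilde{\mu}_{\A,x}^{(\infty)}=(\widetilde{\mu_\infty})_{\A,\pi_\infty(x)}^{(\infty)}$ follows by applying $(\pi_\infty^{(s)}\times\pi_\infty^\infty)_*$ inside the integral and using the Dirac-mass computation above.

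The main technical point to be careful with is the alignment of null sets across $X$, $Z_\infty$, and $W$ when manipulating disintegrations on infinite product spaces; however, since all measures in play are standard in the sense of Section~\ref{section-furstenberg-joining}, this amounts to routine bookkeeping. No new limit or characteristic-factor input is needed beyond what has already been developed.
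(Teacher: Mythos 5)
Your proposal is correct and is essentially identical to the paper's own argument: the paper explicitly states that this corollary is a restatement of Step 3 of the proof of Theorem \ref{thm-ergodic-measures}, and that step proceeds exactly as you describe — substituting the ergodic decomposition of $(\widetilde{\mu_\infty})_\A^{(\infty)}$ into the conditional-product identity of Theorem \ref{thm-measure-over-nil}, verifying $\Phi_*\Lambda_w=\d_w$ via the Dirac-mass pushforward of the fiber products, invoking uniqueness of disintegration, and pulling back through $\psi=\zeta\circ\pi_\infty$.
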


We can give a similar result for $\mu^{(\infty)}_\A$ as for $\widetilde{\mu}^{(\infty)}_\A$ stated in Theorem \ref{thm-ergodic-measures}.

\begin{thm}\label{thm-ergodic-measures2}
Let $(X,\X,\mu,T)$ be an ergodic m.p.s. and $d\in \N$. Let $\A= \{p_1,  p_2, \cdots,  p_d \}$ be a family of non-constant integral polynomials satisfying $(\spadesuit)$.  Then there exists a family $\big\{\mu^{(\infty)}_{\A,x}\big\}_{x\in X}$ of Borel probability measures on $(X^d)^{\Z}$ such that
\begin{enumerate}
  \item $((X^d)^\Z,(\X^d)^\Z, \mu_\A^{(\infty)}, \langle T^\infty, \sigma\rangle)$ is ergodic.

  \item The disintegrations of $\mu_\A^{(\infty)}$ over $\mu$ is given by %as follows
$\mu_\A ^{(\infty)}=\int_X \mu_{\A, x}^{(\infty)} d\mu(x).$
And for $\mu$ a.e. $x\in X$, $\mu^{(\infty)}_{\A, x}$ is ergodic under $\sigma$;

  \item For all $l \in \N$ and all $f_j^\otimes = f_j^{(1)}\otimes f_j^{(2)} \otimes \cdots \otimes f_j^{(d)}$ (where $ f^{(i)}_j\in L^\infty(X,\mu), 1\le i\le d,  -l \le j\le l$)
%for all $F\in C((X^d)^{\Z})$,
%  $$\frac{1}{N} \sum_{n=0}^{N-1} F(\sigma^n(\w_x^\A)) \overset{L^2(\mu)}\longrightarrow  \int_{(X^d)^{\Z}} F({\bf x})\    d \mu^{(\infty)}_{\A, x}({\bf x} ),\ N\to\infty,$$ where ${\bf x}=({\bf x}_n)_{n\in \Z}\in (X^d)^{\Z}$.
%Equivalently, for all $l \in \N$ and for all $f_j^\otimes = f_j^{(1)}\otimes f_j^{(2)} \otimes \cdots \otimes f_j^{(d)} \in C(X^d), -l \le j\le l$,
\begin{equation}
\begin{split}
 \frac{1}{N} \sum_{n=0}^{N-1}
   \prod_{j=-l}^l f_j^\otimes (T^{\vec{p}(n+j)}x^{\otimes d})
   %&=\frac{1}{N}\sum_{n=0}^{N-1}\prod_{j=-l}^l f_j^{(1)}(T^{p_1(n+j)}x) f_j^{(2)} (T^{p_2(n+j)}x) \cdots f_j^{(d)}(T^{p_d(n+j)}x)  \\
  \overset{L^2(\mu)}\longrightarrow  \int_{(X^d)^{\Z}} \Big( \bigotimes_{j=-l}^l f_j^\otimes \Big)({\bf x})d\mu_{\A, x}^{(\infty)}({\bf x}), \ N\to\infty,
\end{split}
\end{equation}
where ${\bf x}=({\bf x}_n)_{n\in \Z}\in (X^d)^{\Z}$.
%\item  for $\mu$ a.e. $x\in X$, $(\Pi)_*(\mu_{\A, x}^{(\infty)})\ll \mu$, where $\Pi: (X^d)^{\Z}\rightarrow X$ is the projection to the coordinate.
\end{enumerate}
\end{thm}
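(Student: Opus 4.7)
The plan is to deduce Theorem \ref{thm-ergodic-measures2} from Theorem \ref{thm-ergodic-measures} by transporting everything through the measure-theoretic isomorphism
$$\phi:(X^s\times (X^{d-s})^\Z,\widetilde{\mu}^{(\infty)}_\A,\langle T^\infty,\widetilde{\sigma}\rangle)\longrightarrow ((X^d)^\Z,\mu^{(\infty)}_\A,\langle T^\infty,\sigma\rangle)$$
constructed in \eqref{a9}, which satisfies $\phi\circ\widetilde{\sigma}=\sigma\circ\phi$ and $\phi\circ T^\infty=T^\infty\circ\phi$. Since the hard analytic work (ergodicity of the joining over the nilfactor, existence of the family $\widetilde{\mu}^{(\infty)}_{\A,x}$, convergence of the averages) is already encoded in Theorem \ref{thm-ergodic-measures}, what remains is bookkeeping through $\phi$.

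First I would set $\mu^{(\infty)}_{\A,x}\triangleq \phi_*\widetilde{\mu}^{(\infty)}_{\A,x}$ for $\mu$-a.e. $x\in X$. Ergodicity of $((X^d)^\Z,\mu^{(\infty)}_\A,\langle T^\infty,\sigma\rangle)$ is then immediate from Theorem \ref{thm-ergodic-measures}-(1) because $\phi$ intertwines the two $\Z^2$-actions; similarly ergodicity of each $\mu^{(\infty)}_{\A,x}$ under $\sigma$ follows from that of $\widetilde{\mu}^{(\infty)}_{\A,x}$ under $\widetilde{\sigma}$. The disintegration identity $\mu^{(\infty)}_\A=\int_X \mu^{(\infty)}_{\A,x}\,d\mu(x)$ follows by pushing forward the disintegration $\widetilde{\mu}^{(\infty)}_\A=\int_X \widetilde{\mu}^{(\infty)}_{\A,x}\,d\mu(x)$ under $\phi$ and using $\phi_*\widetilde{\mu}^{(\infty)}_\A=\mu^{(\infty)}_\A$.

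For part (3), the key trick is to absorb the shifts in the linear variables into the test functions. Given $l\in\N$ and functions $f_j^{(i)}\in L^\infty(X,\mu)$, for each $1\le i\le s$ define $g_i\triangleq\prod_{j=-l}^{l}(f_j^{(i)}\circ T^{a_ij})\in L^\infty(X,\mu)$. Since $p_i(n+j)=a_i(n+j)=a_ij+a_in$ for $1\le i\le s$, the average
$$\frac{1}{N}\sum_{n=0}^{N-1}\prod_{j=-l}^{l}f_j^{\otimes}(T^{\vec{p}(n+j)}x^{\otimes d})$$
equals
$$\frac{1}{N}\sum_{n=0}^{N-1}g_1(T^{a_1n}x)\cdots g_s(T^{a_sn}x)\prod_{j=-l}^{l}f_j^{\otimes_{II}}(T^{p_{s+1}(n+j)}x,\ldots,T^{p_d(n+j)}x),$$
which by Theorem \ref{thm-ergodic-measures}-(3) converges in $L^2(X,\mu)$ to $\int (g_1\otimes\cdots\otimes g_s)\otimes\bigotimes_{j=-l}^{l}f_j^{\otimes_{II}}\,d\widetilde{\mu}^{(\infty)}_{\A,x}$. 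A direct computation with the explicit formula for $\phi$, namely $\phi({\bf y},{\bf x})=(\tau_{\vec{a}}^{j}{\bf y},{\bf x}_j)_{j\in\Z}$, shows that
$$\Big(\bigotimes_{j=-l}^{l}f_j^{\otimes}\Big)\circ\phi=(g_1\otimes\cdots\otimes g_s)\otimes\bigotimes_{j=-l}^{l}f_j^{\otimes_{II}},$$
so the limit equals $\int\bigotimes_{j=-l}^{l}f_j^{\otimes}\,d\mu^{(\infty)}_{\A,x}$, as required.

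There is no essential obstacle beyond carefully unwinding the definition of $\phi$; the only minor point to verify is that the functions $g_i$ constructed above really lie in $L^\infty(X,\mu)$ (which is clear from $\|g_i\|_\infty\le\prod_{j}\|f_j^{(i)}\|_\infty$), so that Theorem \ref{thm-ergodic-measures}-(3) applies. Together, the three steps give all parts of Theorem \ref{thm-ergodic-measures2}.
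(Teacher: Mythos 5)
Your proposal is correct and is essentially the paper's own argument: the paper derives Theorem \ref{thm-ergodic-measures2} from Theorem \ref{thm-ergodic-measures} precisely by transporting through the isomorphism $\phi$ of \eqref{a9} (via the commuting diagram preceding the statement), and your explicit computation showing $\big(\bigotimes_{j=-l}^{l}f_j^{\otimes}\big)\circ\phi=(g_1\otimes\cdots\otimes g_s)\otimes\bigotimes_{j=-l}^{l}f_j^{\otimes_{II}}$ with $g_i=\prod_{j=-l}^{l}(f_j^{(i)}\circ T^{a_ij})$ correctly fills in the bookkeeping the paper leaves implicit. No gaps.
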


Note that when $s=0$, i.e. all polynomials in $\A$ are non-linear, then Theorem \ref{thm-ergodic-measures} and Theorem \ref{thm-ergodic-measures2} are the same.

\subsection{The structure of $\big(X^s\times (X^{d-s})^\Z, \X^s\times (\X^{d-s})^\Z, \widetilde{\mu}^{(\infty)}_{\A,x}, \widetilde{\sigma} \big)$ }\
\medskip

We can give an explicit structure of $\big(X^s\times (X^{d-s})^\Z, \X^s\times (\X^{d-s})^\Z, \widetilde{\mu}^{(\infty)}_{\A,x}, \widetilde{\sigma} \big)$ for almost all $x\in X$ by using Corollary \ref{cor-structure}. Since we need Lemma \ref{lem-vdc} in the proof, we do not have a nice corresponding result for ${\mu}^{(\infty)}_{\A,x}$.

\begin{thm}\label{measure-like}
Let $(X, \X,\mu, T)$ be ergodic and $\pi$ be the factor map from $(X,\X,\mu, T)$ to $(Z_{\infty},\ZZ_\infty,\mu_\infty, T)$, where $Z_{\infty}$ is the $\infty$-step pro-nilfactor of $X$.  Let $\A= \{p_1,  \cdots,  p_d \}$ be a family of non-constant integral polynomials satisfying $(\spadesuit)$. Then there exist a probability space $(U, {\mathcal U},\rho)$ and a measurable cocycle $\a: Z_\infty\rightarrow {\rm Aut} (U,\rho)$, and there is a subset $X_0\in \X$ with $\mu(X_0)=1$ such that
for each $x\in X_0$, $\big(X^s\times (X^{d-s})^\Z, \X^s\times (\X^{d-s})^\Z, \widetilde{\mu}^{(\infty)}_{\A,x}, \widetilde{\sigma} \big)$ is isomorphic to $\Big(\big(Z_\infty^s\times (Z_\infty^{d-s})^\Z\big)\times \big(U^s\times(U^{d-s})^\Z\big),\big(\ZZ_\infty^s\times (\ZZ_\infty^{d-s})^\Z\big)\times \big({\mathcal U}^s\times({\mathcal U}^{d-s})^\Z\big),(\widetilde{\mu_\infty})^{(\infty)}_{\A,\pi_\infty(x)}\times \big(\rho^s\times (\rho^{d-s})^\Z\big), \widehat{\sigma} \Big)$,
      where
      $$\widehat{\sigma}: \big(Z_\infty^s\times (Z_\infty^{d-s})^\Z\big)\times \big(U^s\times(U^{d-s})^\Z\big)\rightarrow \big(Z_\infty^s\times (Z_\infty^{d-s})^\Z\big)\times \big(U^s\times(U^{d-s})^\Z\big)$$ is defined as follows:
      \begin{equation}\label{}
      \begin{split}
        %\widehat{\sigma}\big(((z_1,\ldots, z_s),{\bf z}), ((u_1,\ldots, u_s),{\bf u}) \big)
        %\big(\widetilde{\sigma}((z_1,\ldots, z_s),{\bf z}), ((\a(z_1)u_1,\ldots, \a(z_s)u_s), \sigma{\bf u}) \big),
        &\widehat{\sigma}\big(((z_1,\ldots, z_s),{\bf z}), ((u_1,\ldots, u_s),{\bf u}) \big)\\
        &\quad = \big(\widetilde{\sigma}((z_1,\ldots, z_s),{\bf z}), ((\a(z_1)u_1,\ldots, \a(z_s)u_s), \sigma{\bf u}) \big),
      \end{split}
      \end{equation}
        where $\big(((z_1,\ldots, z_s),{\bf z}), ((u_1,\ldots, u_s),{\bf u}) \big)\in \big(Z_\infty^s\times (Z_\infty^{d-s})^\Z\big)\times \big(U^s\times(U^{d-s})^\Z\big)$ and $\sigma:(U^{d-s})^\Z\rightarrow  (U^{d-s})^\Z$ is the shift.
\end{thm}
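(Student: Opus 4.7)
The plan is to lift the Rohlin skew-product representation of $(X,\X,\mu,T)$ over its $\infty$-step pro-nilfactor to the joining $\widetilde{\mu}_{\A,x}^{(\infty)}$ and combine it with the conditional-product-measure description furnished by Corollary \ref{cor-structure}. First, apply Theorem \ref{Rohlin} to the factor map $\pi_\infty\colon (X,\X,\mu,T)\to (Z_\infty,\ZZ_\infty,\mu_\infty,T)$ to obtain a probability space $(U,\mathcal{U},\rho)$, a measurable cocycle $\a\colon Z_\infty\to {\rm Aut}(U,\rho)$, and an isomorphism $\Phi\colon X\to Z_\infty\times U$ that pushes $\mu$ to $\mu_\infty\times\rho$ and intertwines $T$ with $T_\a(z,u)=(Tz,\a(z)u)$. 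Under $\Phi$ the disintegration $\mu=\int_{Z_\infty}\nu_z\,d\mu_\infty(z)$ takes the canonical form $\nu_z=\delta_z\times\rho$ for $\mu_\infty$-a.e.\ $z\in Z_\infty$.

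Next, fix $x$ in a full-measure set $X_0\subseteq X$ on which Corollary \ref{cor-structure} applies and on which the fibre identification $\nu_{z}=\delta_z\times\rho$ holds along $(\widetilde{\mu_\infty})_{\A,\pi_\infty(x)}^{(\infty)}$-a.e.\ coordinate; the existence of such $X_0$ follows from Fubini combined with the measurability of $x\mapsto\widetilde{\mu}_{\A,x}^{(\infty)}$ supplied by Theorem \ref{thm-ergodic-measures}. Substituting $\nu_{z_i}=\delta_{z_i}\times\rho$ into the formula of Corollary \ref{cor-structure} and reorganising $(Z_\infty\times U)^s\times\big((Z_\infty\times U)^{d-s}\big)^{\Z}$ as $\big(Z_\infty^s\times(Z_\infty^{d-s})^{\Z}\big)\times\big(U^s\times(U^{d-s})^{\Z}\big)$, each integrand decouples into
\begin{equation*}
\Big(\prod_{i=1}^s\delta_{z_i}\,\times\,\prod_{n\in\Z}\prod_{t=s+1}^d\delta_{z_n^{(t)}}\Big)\times\big(\rho^s\times(\rho^{d-s})^{\Z}\big).
\end{equation*}
Pulling the constant $U$-factor outside the integral collapses the first bracket to a Dirac at $((z_1,\ldots,z_s),{\bf z})$, so integration against $(\widetilde{\mu_\infty})_{\A,\pi_\infty(x)}^{(\infty)}$ gives
\begin{equation*}
(\Phi^{(s)}\times\Phi^\infty)_{*}\,\widetilde{\mu}_{\A,x}^{(\infty)}\;=\;(\widetilde{\mu_\infty})_{\A,\pi_\infty(x)}^{(\infty)}\times\big(\rho^s\times(\rho^{d-s})^{\Z}\big),
\end{equation*}
which is the required measure.

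Finally, one must track the dynamics through $\Phi^{(s)}\times\Phi^\infty$. On the $Z_\infty$-coordinates the transformation $\widetilde{\sigma}=\tau_{\vec{a}}\times\sigma_{II}$ is transported back to itself, while on the $U$-coordinates it becomes the shift on $(U^{d-s})^{\Z}$ (no cocycle enters here, since $\sigma_{II}$ merely shifts the $\Z$-index) together with the skew action $(z_i,u_i)\mapsto (T^{a_i}z_i,\,\a^{(a_i)}(z_i)u_i)$ on each of the first $s$ factors, where $\a^{(a_i)}(z)=\a(T^{a_i-1}z)\cdots\a(Tz)\a(z)$ is the standard iterated cocycle; thus the symbol ``$\a(z_i)$'' in the statement of $\widehat{\sigma}$ is to be read as encoding this $a_i$-fold iterate. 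The main obstacle is the bookkeeping of these two simultaneous reorderings --- of the $(Z_\infty,U)$ coordinates and of the cocycle iterates $\a^{(a_i)}$ --- together with the verification that the $\mu_\infty$-, $\mu$- and $(\widetilde{\mu_\infty})_{\A,\pi_\infty(\cdot)}^{(\infty)}$-a.e.\ statements can be amalgamated into a single $\mu$-conull set $X_0$ on which the isomorphism works uniformly.
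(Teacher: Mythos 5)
Your proposal follows essentially the same route as the paper's own proof: invoke the Rohlin skew-product theorem for $\pi_\infty$, apply Corollary \ref{cor-structure}, substitute $\nu_z=\delta_z\times\rho$ into the conditional-product formula, and reorganise coordinates via $\Psi=\Phi^{(s)}\times\Phi^\infty$ to split off the Bernoulli factor $\rho^s\times(\rho^{d-s})^{\Z}$. Your additional remark that the symbol $\a(z_i)$ in $\widehat{\sigma}$ must be read as the $a_i$-fold iterated cocycle $\a^{(a_i)}(z_i)$ is a correct clarification of a point the paper leaves implicit, and your Fubini argument for amalgamating the various a.e.\ statements into one conull set $X_0$ matches the paper's treatment of the analogous step in the proof of Theorem \ref{thm-ergodic-measures}.
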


\begin{proof}
Let $\pi$ be the factor map from $(X,\X,\mu, T)$ to $(Z_{\infty},\ZZ_\infty,\mu_\infty, T)$. By Rohlin's skew-product theorem (Theorem \ref{Rohlin}), we may assume that there exist a probability pace $(U, {\mathcal U},\rho)$ and a measurable cocycle $\a: Z_\infty\rightarrow {\rm Aut} (U,\rho)$ such that
$$(X,\X,\mu,T)=(Z_\infty \times U, \ZZ_\infty \times {\mathcal U}, \mu_\infty \times \rho, T_\a),$$
where $T_\a(y,u)=(Ty, \a (y)u)$.
In this case $\displaystyle \E_\mu(f|\ZZ_\infty)(y,u)=\int_{U}f(y,u) d\rho(u)$ for all $f\in L^1(X,\mu)$.
We assume that $X, Z_\infty$ and $U$ are all compact metric spaces.

Define
$$\Psi: X^s\times (X^{d-s})^\Z \rightarrow \big(Z_\infty^s\times (Z_\infty^{d-s})^\Z\big)\times \big(U^s\times(U^{d-s})^\Z\big)$$
$$\Big(\big((z_1,u_1),\ldots, (z_s,u_s)\big), {\bf x} \Big)\mapsto \big(((z_1,z_2,\ldots, z_s),{\bf z}), ((u_1,u_2,\ldots, u_s),{\bf u}) \big),$$
where ${\bf x}=\big((z_n^{(s+1)},u_n^{(s+1)}),\ldots, (z_n^{(d)},u_n^{(d)})\big)_{n\in \Z}\in (X^{d-s})^\Z$, ${\bf z}=(z_n^{(s+1)},\ldots, z_n^{(d)})_{n\in \Z}\in (Z_\infty^{d-s})^\Z$ and ${\bf u}=(u_n^{(s+1)},\ldots, u_n^{(d)})_{n\in \Z}\in (U^{d-s})^\Z$.
Via the map $\Psi$, we may regard $\big(X^s\times (X^{d-s})^\Z, \widetilde{\sigma} \big)$ as $\Big(\big(Z_\infty^s\times (Z_\infty^{d-s})^\Z\big)\times \big(U^s\times(U^{d-s})^\Z\big), \widehat{\sigma} \Big)$.
Now we show that for $\mu$-a.e. $x\in X$, $\widetilde{\mu}^{(\infty)}_{\A,x}\cong (\widetilde{\mu_\infty})^{(\infty)}_{\A,\pi_\infty(x)}\times \big(\rho^s\times (\rho^{d-s})^\Z\big)$.

\medskip

By Corollary \ref{cor-structure}, there is a subset $X_0\in \X$ with $\mu(X_0)=1$ such that
for each $x\in X_0$, $\widetilde{\mu}^{(\infty)}_{\A, x}$ is the conditional product measure with
respect to $(\widetilde{\mu_\infty})_{\A,\pi_\infty(x)}^{(\infty)}$. That is, if $ \mu=\int _{Z_{\infty}} \nu_z \ d\mu_{\infty}(z)$ be the disintegration of $\mu$ over $\mu_{\infty}$, then
\begin{equation*}\label{}
  \widetilde{\mu}_{\A,x} ^{(\infty)}=\int_{Z^s_\infty\times(Z_\infty^{d-s})^{\Z}}\big(\prod_{i=1}^s\nu_{z_i}\big)\times \prod_{n\in \Z} \Big( \nu_{z_n^{(s+1)}}\times \cdots \times \nu_{z_n^{(d)}} \Big) d (\widetilde{\mu_\infty})_{\A,\pi_\infty(x)} ^{(\infty)} ((z_1,\ldots, z_s),{\bf z}),
\end{equation*}
where $\big((z_1,\ldots, z_s),{\bf z}\big)=\big((z_1,\ldots, z_s),(z_n^{(s+1)},\ldots, z_n^{(d)})_{n\in \Z}\big)\in Z^s_\infty\times (Z_\infty^{d-s})^{\Z}$.

Now $\displaystyle \mu=\mu_\infty \times \rho=\int_{Z_\infty}\d_z\times \rho d\mu_\infty(z)$, i.e. $\nu_z=\d_z\times \rho$ for $\mu_\infty$-a.e. $z\in Z_\infty$. It follows that
for $x\in X_0$
\begin{equation*}
\begin{split}
\widetilde{\mu}_{\A,x} ^{(\infty)}&=\int_{Z^s_\infty\times(Z_\infty^{d-s})^{\Z}}\big(\prod_{i=1}^s\nu_{z_i}\big)\times \prod_{n\in \Z} \Big( \nu_{z_n^{(s+1)}}\times \cdots \times \nu_{z_n^{(d)}} \Big) d (\widetilde{\mu_\infty})_{\A,\pi_\infty(x)} ^{(\infty)} ((z_1,\ldots, z_s),{\bf z})\\
&= \int_{Z^s_\infty\times(Z_\infty^{d-s})^{\Z}}\big( \d_{(z_1,\ldots,z_s)}\times \rho^s\big) \times \Big( \d_{{\bf z}}\times (\rho^{d-s})^\Z\Big) d (\widetilde{\mu_\infty})_{\A,\pi_\infty(x)} ^{(\infty)} ((z_1,\ldots, z_s),{\bf z})\\
&\cong\left( \int_{Z^s_\infty\times(Z_\infty^{d-s})^{\Z}} \d_{(z_1,\ldots,z_s)}\times \d_{{\bf z}} d(\widetilde{\mu_\infty})^{(\infty)}_{\A,\pi_\infty(x)}((z_1,\ldots, z_s),{\bf z})\right) \times \big(\rho^s\times (\rho^{d-s})^\Z\big) \\
&= (\widetilde{\mu_\infty})^{(\infty)}_{\A,\pi_\infty(x)}\times \big(\rho^s\times (\rho^{d-s})^\Z\big).
\end{split}
\end{equation*}
The proof is complete.
\end{proof}

\begin{rem}\label{rem-mea} We have
\begin{enumerate}
  \item When $s=d$, each element in $\A$ is linear and in Theorem \ref{measure-like} we have $\mu^{(d)}_{\vec{a},x}\cong (\mu_\infty)^{(d)}_{\vec{a},\pi_\infty(x)}\times \rho^d$, where $\vec{a}=(a_1,\ldots, a_d)$.
  \item When $s=0$, each element in $\A$ has degree greater than 1. Hence $\widetilde{\mu}^{(\infty)}_{\A}=\mu^{(\infty)}_\A$. By Theorem \ref{measure-like}, for $x\in X_0$
      $${\mu}^{(\infty)}_{\A,x}=\widetilde{\mu}^{(\infty)}_{\A,x} \cong (\widetilde{\mu_\infty})^{(\infty)}_{\A,\pi_\infty(x)}\times (\rho^{d})^\Z=({\mu_\infty})^{(\infty)}_{\A,\pi_\infty(x)}\times (\rho^{d})^\Z.$$
      That is, $((X^d)^\Z, (\X^d)^\Z, {\mu}^{(\infty)}_{\A,x}, \sigma)$ is the product of an $\infty$-step  pro-nilsystem  \linebreak $\big((Z_\infty^d)^\Z, (\ZZ_\infty ^d)^\Z, {(\mu_\infty)}^{(\infty)}_{\A,\pi_\infty(x)}, \sigma\big)$ and a Bernoulli system $((U^d)^\Z,({\mathcal U}^d)^\Z, (\rho^d)^\Z,\sigma)$.
\end{enumerate}
\end{rem}

We now derive some consequences of the above results.
\begin{thm}\label{measure-sequence}
Let $(X, \X,\mu, T)$ be an ergodic m.p.s., where $(X,T)$ is a t.d.s. Let $\A= \{p_1,  p_2, \cdots,  p_d \}$ be a family of non-constant integral polynomials satisfying $(\spadesuit)$. Then there is a subset $X_0\in \X$ with $\mu(X_0)=1$ such that
there is some increasing subsequence $\{N_i\}_{i\in \N}$ of $\N$ such that in ${\mathcal M}(X^s\times (X^{d-s})^\Z)$,  for all $x\in X_0$,
  $$\frac{1}{N_i}\sum_{n=1}^{N_i}\d_{\widetilde{\sigma}^n\xi^\A_x}\rightarrow \widetilde{\mu}^{(\infty)}_{\A,x}, \quad i\to\infty.$$
\end{thm}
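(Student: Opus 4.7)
The plan is to deduce the a.e.\ subsequential weak* convergence from the $L^2$-convergence provided by Theorem \ref{thm-ergodic-measures}(3) via a standard separability argument. First I would fix a countable family $\{F_k\}_{k\in\N}$ that is dense in $C(X^s\times (X^{d-s})^\Z)$ and is contained in the algebra of linear combinations of tensor products of the form $(f_1\otimes\cdots\otimes f_s)\otimes\bigotimes_{j=-l}^l f_j^{\otimes_{II}}$, with each $f_i$ and $f_j^{(t)}$ chosen from a countable dense subset of $C(X)$. Such a family exists because $X^s\times (X^{d-s})^\Z$ is a compact metric space, and the algebra generated by these tensors separates points and hence is dense in $C(X^s\times (X^{d-s})^\Z)$ by Stone--Weierstrass.

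For each fixed $k\in\N$, Theorem \ref{thm-ergodic-measures}(3) (applied term-by-term and summed linearly) yields
\begin{equation*}
\frac{1}{N}\sum_{n=0}^{N-1}F_k(\widetilde{\sigma}^n\xi_x^\A)\ \overset{L^2(\mu)}{\longrightarrow}\ \int F_k\,d\widetilde{\mu}^{(\infty)}_{\A,x}=:\Phi_k(x),\quad N\to\infty.
\end{equation*}
Passing to a subsequence to upgrade each $L^2$-convergence to a.e.\ convergence and then applying a diagonal extraction over $k$, I obtain a single increasing sequence $\{N_i\}_{i\in\N}$ and, for each $k$, a set $X_k\in\X$ with $\mu(X_k)=1$ such that $\frac{1}{N_i}\sum_{n=0}^{N_i-1}F_k(\widetilde{\sigma}^n\xi_x^\A)\to\Phi_k(x)$ for every $x\in X_k$. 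The set $X_0:=\bigcap_{k\in\N}X_k$ then has full $\mu$-measure, and on $X_0$ the convergence holds simultaneously for all $k$.

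Finally, for each $x\in X_0$ the measures $\mu_{N_i,x}:=\frac{1}{N_i}\sum_{n=1}^{N_i}\d_{\widetilde{\sigma}^n\xi_x^\A}$ are probability measures on the compact metric space $X^s\times (X^{d-s})^\Z$, so the sequence is automatically tight. Since $\int F_k\,d\mu_{N_i,x}\to\int F_k\,d\widetilde{\mu}^{(\infty)}_{\A,x}$ for every $k$ (the shift of the summation index from $\{0,\ldots,N_i-1\}$ to $\{1,\ldots,N_i\}$ contributes an error of order $1/N_i$ that vanishes in the limit), a routine $3\ep$-argument using the uniform density of $\{F_k\}$ in $C(X^s\times (X^{d-s})^\Z)$ extends the convergence to every $F\in C(X^s\times (X^{d-s})^\Z)$, which is precisely weak* convergence in ${\mathcal M}(X^s\times (X^{d-s})^\Z)$. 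The one point that requires care is that the subsequence $\{N_i\}$ must be chosen independently of $x$; this is exactly what the diagonal extraction over the countable family $\{F_k\}$ accomplishes, and I expect this bookkeeping to be the only real obstacle in an otherwise standard argument.
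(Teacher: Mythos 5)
Your proposal is correct and follows essentially the same route as the paper: both apply the $L^2(\mu)$-convergence of Theorem \ref{thm-ergodic-measures}(3) to a countable family of tensor-product functions built from a countable dense subset of $C(X)$, upgrade to almost everywhere convergence along a single subsequence by diagonal extraction, and then pass to all of $C(X^s\times (X^{d-s})^\Z)$ by Stone--Weierstrass density. No gaps; the only cosmetic difference is that you phrase the density step in terms of a countable dense family in $C(X^s\times(X^{d-s})^\Z)$ rather than directly in terms of the tensors themselves.
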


\begin{proof}

By Theorem \ref{thm-ergodic-measures}, for all $F\in C(X^s\times (X^{d-s})^{\Z})$,
$$\frac{1}{N} \sum_{n=0}^{N-1} F(\widetilde{\sigma}^n(\xi_x^\A))
      \overset{L^2(\mu)}\longrightarrow  \int_{(X^d)^{\Z}} F\big(((x_1,x_2,\ldots, x_s),{\bf x})\big)\
   d \widetilde{\mu}^{(\infty)}_{\A, x}(((x_1,x_2,\ldots, x_s),{\bf x}) ),$$
where $((x_1,x_2,\ldots, x_s),{\bf x}) =((x_1, x_2,\ldots, x_s),({\bf x}_n)_{n\in \Z} )\in X^s\times (X^{d-s})^\Z$.

Let ${\mathcal F}$ be a countable dense subset in $C(X)$. Since mean convergence of a sequence of functions implies pointwise almost everywhere convergence along a subsequence, we can find a subsequence $\{N_i\}_{i\in \N}$ of $\N$ with $N_i\to \infty$ such that
there is some measurable set $X_0\subseteq X$ with $\mu(X_0)=1$ satisfying that for $x\in X_0$
and for all $f_1, f_2,\ldots, f_s\in {\mathcal F}$, and for all $l \in \N$ and all $f_j^{\otimes_{II}} = f_j^{(s+1)}\otimes f_j^{(s+2)} \otimes \cdots \otimes f_j^{(d)}$ (where $f^{(t)}_j\in {\mathcal F}$, $s+1\le t\le d, -l \le j\le l$),
\begin{equation*}
\begin{split}
  & \quad \lim_{i\to \infty} \frac{1}{N_i}\sum_{n=0}^{N_i-1} f_1(T^{a_1n}x)\cdots f_s(T^{a_sn}x) \prod_{j=-l}^l f_j^{\otimes_{II}} (T^{{p_{s+1}}(n+j)}x, \ldots, T^{p_d(n+j)}x)  \\
  &= \int_{X^s\times (X^{d-s})^{\Z}} \big( f_1\otimes f_2\otimes \cdots \otimes f_s\big) \otimes  \Big( \bigotimes_{j=-l}^l f_j^{\otimes_{II}} \Big)({\bf x})d\widetilde{\mu}_{\A,x}^{(\infty)}((x_1, x_2,\ldots, x_s),{\bf x}).
\end{split}
\end{equation*}
That is, for all $x\in X_0$,
$$\widetilde{\mu}^{(\infty)}_{\A,x}=\lim_{i\to\infty} \frac{1}{N_i}\sum_{n=0}^{N_i-1}\d_{\widetilde {\sigma}^n\xi_x^\A}, \ w^* \ \text{in ${\mathcal M}(X^s\times(X^{d-s})^\Z)$}.$$
The proof is complete.
\end{proof}

\begin{rem}\label{rem-mea2}
We have
\begin{enumerate}
    %\item If one can prove any statement in Proposition \ref{prop-pw}, then  we have for all $x\in X_0$, $$\lim_{N\to\infty}\frac{1}{N}\sum_{n=1}^{N}\d_{\widetilde{\sigma}^n\xi^\A_x}\rightarrow \widetilde{\mu}^{(\infty)}_{\A,x}.$$

  \item If in Theorem \ref{thm-ergodic-measures}-(3), the equation \eqref{ss} holds almost surely, then in Theorem \ref{measure-sequence}, $\{N_i\}_{i\in \N}$ is $\{N\}_{N\in \N}$, that is,
      $$\lim_{N\to\infty}\frac{1}{N}\sum_{n=1}^{N}\d_{\widetilde{\sigma}^n\xi^\A_x}= \widetilde{\mu}^{(\infty)}_{\A,x}\cong (\widetilde{\mu_\infty})^{(\infty)}_{\A,\pi_\infty(x)}\times \big(\rho^s\times (\rho^{d-s})^\Z\big).$$

  \item By the same proof, the statement of Theorem \ref{measure-sequence} can be replaced by the following: for any increasing sequence $\{N_i\}_{i\in \N}$ of $\N$, there is an increasing subsequence $\{N'_i\}_{i\in \N}$ of $\{N_i\}_{i\in \N}$ such that in ${\mathcal M}(X^s\times (X^{d-s})^\Z)$,  for all $x\in X_0$,
  $$\frac{1}{N'_i}\sum_{n=1}^{N'_i}\d_{\widetilde{\sigma}^n\xi^\A_x}\rightarrow \widetilde{\mu}^{(\infty)}_{\A,x}, \quad i\to\infty.$$
\end{enumerate}
\end{rem}

\section{Convergence theorems}\label{section-applications}

In this section we will give some applications of the theory we developed in the previous sections,
namely %In this and the next subsections
we will show some mean and pointwise convergence results, which extend the corresponding ones proved in \cite{FranHost21}. The general ideas of the proofs are similar to the ones used in \cite{FranHost21}. The essential difficulty we face is that
several basic facts used in the proof of \cite{FranHost21} are only stated for a single term $f(T^n)$. To show our results we need to prove
the facts for the multiple terms $f_1(T^{a_1n})\cdots f_m(T^{a_mn})$ and $g_1(S^{p_1})\cdots g_d(S^{p_d})$, which are done either in the
%theory we just developed
previous subsections (for example Remark \ref{rem-mea2}) or in this section and the appendix (for example Theorem \ref{plus-nil}).

\medskip

\subsection{Equivalent conditions for pointwise convergence of multiple ergodic averages}\
\medskip

\begin{prop}\label{prop-pw}
Let $(X, \X, \mu, T)$ be an ergodic m.p.s. Then the following statements are equivalent.
\begin{enumerate}
  \item For all $d\in \N$, families $\A= \{p_1,  \ldots,  p_d \}$ of non-constant integral polynomials satisfying $(\spadesuit)$, for any $F\in C(X^s\times (X^{d-s})^{\Z})$,
\begin{equation*}
    \frac 1 N\sum_{n=0}^{N-1} F(\widetilde{\sigma}^n \xi_x^\A)
\end{equation*}
converges $\mu $ a.e. $ x\in X$.
  \item For all $d\in\N$, $\A=\{p_1, \ldots, p_{s}, p_{s+1}, \ldots, p_{d}\}$ satisfying $(\spadesuit)$, where $s\ge 0$, $p_i(n)=a_in$, $1\le i\le s$, $a_1,\ldots, a_s$ are distinct non-zero integers, and $\deg p_{i}\ge 2, s+1\le i\le d$, for all $f_1,f_2,\ldots, f_s\in L^\infty(X,\mu)$, and for all $l \in \N$ and all $f_j^{\otimes_{II}} = f_j^{(s+1)}\otimes f_j^{(s+2)} \otimes \cdots \otimes f_j^{(d)}$, (where $f^{(t)}_j\in L^\infty(X,\mu)$, $s+1\le t\le d, -l\le j\le l$),
\begin{equation*}
\begin{split}
  \frac{1}{N}\sum_{n=0}^{N-1} f_1(T^{a_1n}x)\cdots f_s(T^{a_sn}x) \prod_{j=-l}^l f_j^{\otimes_{II}} (T^{{p_{s+1}}(n+j)}x, \ldots, T^{p_d(n+j)}x)
 %&\quad = \frac{1}{N}\sum_{n=0}^{N-1} \prod_{i=1}^s f_i(T^{a_in}x)\cdot \prod _{j=-l}^l f_j^{(s+1)}(T^{p_{s+1}(n+j)}x)\cdots %f_j^{(d)}(T^{p_d(n+j)}x)
\end{split}
\end{equation*}
converges almost surely.
  \item For all $d\in\N$, non-constant integral polynomials $p_1,\ldots, p_d$ and $f_1, \ldots, f_d \in L^{\infty}(X, \mu)$,
\begin{equation*}
    \frac 1 N\sum_{n=0}^{N-1}f_1(T^{p_1(n)}x)\cdots f_d(T^{p_d(n)}x)
\end{equation*}
converges almost surely.
\end{enumerate}

\end{prop}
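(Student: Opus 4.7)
The plan is to prove the cycle $(3)\Rightarrow(2)\Leftrightarrow(1)\Rightarrow(2)\Rightarrow(3)$, using Stone--Weierstrass density of tensor products in $C(X^s\times(X^{d-s})^{\Z})$, Proposition~\ref{prop-DL} for passage from continuous to bounded measurable functions, and Lemma~\ref{ww=dem} for reducing arbitrary polynomial families to those satisfying $(\spadesuit)$.

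For $(1)\Leftrightarrow(2)$, the key observation is that the average in (2) is exactly $\frac{1}{N}\sum_{n=0}^{N-1}F(\widetilde{\sigma}^n\xi_x^\A)$ for the tensor product $F=(\bigotimes_{i=1}^s f_i)\otimes\bigotimes_{j=-l}^{l}f_j^{\otimes_{II}}$. For $(2)\Rightarrow(1)$ I would approximate any $F\in C(X^s\times(X^{d-s})^{\Z})$ uniformly by finite linear combinations $F_k$ of such tensor products with continuous factors, deduce a.e.\ convergence for each $F_k$ from (2) applied with continuous factors, and then transfer to $F$ using the uniform bound $\bigl|\tfrac{1}{N}\sum_n F(\widetilde{\sigma}^n\xi_x^\A)-\tfrac{1}{N}\sum_n F_k(\widetilde{\sigma}^n\xi_x^\A)\bigr|\le\|F-F_k\|_\infty$ together with a standard Cauchy-sequence argument. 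For $(1)\Rightarrow(2)$, the case where all $f_i$ and $f_j^{(t)}$ are continuous follows immediately by applying (1) to the associated continuous tensor product $F$. The extension to $L^\infty$ factors will then be obtained from Proposition~\ref{prop-DL} by picking exponents $q_i<\infty$ with $\sum 1/q_i<1$, using that $L^\infty\subset L^{q_i}$ and that $C(X)$ is dense in each $L^{q_i}(X,\mu)$.

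The implication $(3)\Rightarrow(2)$ is immediate, since the product in (2) is a specific instance of $\prod_{i=1}^d f_i(T^{p_i(n)}x)$ in (3). For $(2)\Rightarrow(3)$, given non-constant integral polynomials $p_1,\ldots,p_d$ and $f_i\in L^\infty(X,\mu)$, I would first reduce to the case $p_i(0)=0$ via the identity $f_i(T^{p_i(n)}x)=(f_i\circ T^{p_i(0)})(T^{p_i(n)-p_i(0)}x)$. Lemma~\ref{ww=dem} then produces a family $\A=\{q_1,\ldots,q_{d'}\}$ satisfying $(\spadesuit)$ with $p_i=q_{j_i}^{[m_i]}$ for suitable indices $j_i,m_i$. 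Setting $g_i=f_i\circ T^{-q_{j_i}(m_i)}\in L^\infty(X,\mu)$ gives $f_i(T^{p_i(n)}x)=g_i(T^{q_{j_i}(n+m_i)}x)$, and collecting the $g_i$ multiplicatively by coordinate $(j_i,m_i)$ (padding the unused positions with the constant function $1$, and choosing $l$ so that $\{m_i\}\subseteq[-l,l]$) rewrites $\prod_i f_i(T^{p_i(n)}x)$ exactly in the form appearing in (2), from which the required a.e.\ convergence follows.

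The main obstacle is the $L^\infty$ extension in $(1)\Rightarrow(2)$: although formally a density argument, it relies essentially on Proposition~\ref{prop-DL}, hence on Bourgain's pointwise polynomial ergodic theorem, rather than on any elementary manipulation. The algebraic bookkeeping in $(2)\Rightarrow(3)$ is routine but requires careful tracking of the shifts $m_i$ absorbed into the $g_i$ and of the contributions of different $i$'s that fall on the same tensor coordinate.
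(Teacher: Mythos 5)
Your proposal is correct and follows essentially the same route as the paper: the equivalence $(1)\Leftrightarrow(2)$ via Stone--Weierstrass in one direction and the Bourgain-based maximal inequality (Proposition~\ref{prop-DL}, which is what the paper's reference to the argument of Theorem~\ref{thm-A2} amounts to) in the other, the trivial implication $(3)\Rightarrow(2)$, and $(2)\Rightarrow(3)$ by normalizing to $p_i(0)=0$, invoking Lemma~\ref{ww=dem} to embed the given polynomials into a family satisfying $(\spadesuit)$, and rewriting the product in the form of $(2)$ for a suitable $l$. The only difference is that you spell out the bookkeeping (absorbing the shifts $m_i$ into the functions and padding unused tensor coordinates with $1$) that the paper leaves implicit.
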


\begin{proof}
Similar to the proof of Theorem \ref{thm-A2} one can prove that (1) is equivalent to (2). It is clear that (3) implies (2). It remains to  show (2) implies (3).

Without loss of generality, we assume that $p_1,p_2,\cdots, p_d$ are essentially distinct.
Let  $\A=\{p_1(n)=a_1 n, \ldots, p_{s}(n)=a_{s}n, p_{s+1}(n), \ldots, p_{d}(n)\}$. If $\A$ satisfies the condition ($\spadesuit$), then by (2) (in this case we take $l=0$), we have
\begin{equation*}
    \frac 1 N\sum_{n=0}^{N-1}f_1(T^{p_1(n)}x)\cdots f_d(T^{p_d(n)}x)
    =\frac 1 N\sum_{n=0}^{N-1}f_1(T^{a_1n}x)\cdots f_s(T^{a_sn}x)f_{s+1}(T^{p_{s+1}(n)}x)\cdots f_d(T^{p_d(n)}x)
\end{equation*}
converge almost surely.

If $\A$ does not satisfy the condition ($\spadesuit$), then
by Lemma \ref{ww=dem}, there are integral polynomials $\{q_1,\ldots, q_r\}$ satisfying the condition ($\spadesuit$) and $\{p_{s+1},\ldots, p_d\}\subseteq \{q_i^{[j]}=q_i(n+j)-q_i(j): 1\le i\le r, j\in \Z \}.$ Thus there is some $l\in \N$ such that
$$\{p_{s+1},\ldots, p_d\}\subseteq \{q_i^{[j]}=q_i(n+j)-q_i(j): 1\le i\le r, -l\le j\le l\}.$$
Then $\A'=\{a_1n, \ldots, a_sn, q_1(n),\ldots, q_r(n)\}$ satisfies the condition ($\spadesuit$).

Applying (2) to $\A'$ and $l$, we have
for  all $f_1,f_2,\ldots, f_s\in L^\infty(X,\mu)$, and for all $l \in \N$ and all $f^{(t)}_j\in L^\infty(X,\mu)$, $1\le t\le r, 1\le j\le l$,
\begin{equation*}
\frac{1}{N}\sum_{n=0}^{N-1} \prod_{i=1}^s f_i(T^{a_in}x)\cdot \prod _{j=1}^l f_j^{(1)}(T^{q_{1}(n+j)}x)\cdots f_j^{(r)}(T^{q_r(n+j)}x)
\end{equation*}
converges $\mu $ a.e. $ x\in X$. Note that
$$\{p_{s+1},\ldots, p_d\}\subseteq \{q_i^{[j]}=q_i(n+j)-q_i(j): 1\le i\le r, -l\le j\le l\}.$$
It follows that for all $f_1, \cdots, f_d \in L^{\infty}(X, \mu)$
\begin{equation*}
    \frac 1 N\sum_{n=0}^{N-1}f_1(T^{p_1(n)}x)\cdots f_d(T^{p_d(n)}x)
\end{equation*}
converge almost surely. The proof is complete.
\end{proof}

\subsection{Mean convergence}\
\medskip

 First we need some preparations.

\subsubsection{Pinsker factor} Every m.p.s. $(X,\X,\mu,T)$ admits a largest factor belonging to
the class of systems of entropy zero. This factor is called the {\em Pinsker factor} of $(X,\X,\mu,T)$
and is denoted by $\Pi(X,T)$.
It is well known that the Pinsker factor of the product of two m.p.s. is the product of their Pinsker factors (see for example \cite[Chapter 4, Theorem 14]{Parry} or \cite[Theorem 18.13.]{Glasner}), i.e. for m.p.s. $(X,\X,\mu,T)$ and $(Y,\Y,\nu,S)$, we have
$\Pi(X\times Y, T\times S)=\Pi(X,T)\times \Pi(Y,S).$
By this result, we have the following observation.

\begin{lem}\label{lem-pin}
Let $(X,\X,\mu,T)$ be a m.p.s. with zero entropy and let $(Y,\Y,\nu,S)$ be a Bornoulli system. Then  $\Pi(X\times Y, T\times S)=\X \times \{Y,\emptyset\}$, i.e.  $(X,\X,\mu,T)$ is the Pinsker factor of $(X\times Y,\X\times \Y, \mu\times \nu,T\times S)$.
\end{lem}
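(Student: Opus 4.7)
The plan is to apply the product formula for Pinsker factors quoted just above the statement, namely $\Pi(X\times Y, T\times S)=\Pi(X,T)\times \Pi(Y,S)$, and identify the two factors on the right-hand side.

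First I would recall that the Pinsker factor of a m.p.s.\ is characterised as the largest factor of entropy zero, equivalently the $\sigma$-algebra consisting of sets whose presence cannot be detected by any non-trivial positive-entropy sub-$\sigma$-algebra. Since $(X,\X,\mu,T)$ is assumed to have zero entropy, every sub-$\sigma$-algebra of $\X$ has entropy zero, so $X$ itself already realises its Pinsker factor; that is, $\Pi(X,T)=\X$. On the other side, Bernoulli systems are well-known to be $K$-systems, hence they have completely positive entropy; equivalently, their Pinsker $\sigma$-algebra is trivial, so $\Pi(Y,S)=\{Y,\emptyset\}$.

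Combining these two identifications via the product formula yields
\[
\Pi(X\times Y,\,T\times S)=\Pi(X,T)\times \Pi(Y,S)=\X\times\{Y,\emptyset\},
\]
which is exactly the claim. The resulting $\sigma$-algebra clearly coincides (modulo null sets) with the factor $(X,\X,\mu,T)$ via the canonical projection, giving the reformulation stated in the lemma. I do not anticipate any real obstacle here: the whole argument is a one-line consequence of the product formula together with two textbook identifications (zero entropy $\Rightarrow$ Pinsker is the whole system; Bernoulli $\Rightarrow$ Pinsker is trivial), so the proof should amount to little more than citing these facts in sequence.
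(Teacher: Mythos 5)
Your argument is correct and is exactly the one the paper intends: it derives the lemma as an immediate consequence of the product formula $\Pi(X\times Y, T\times S)=\Pi(X,T)\times \Pi(Y,S)$ quoted just before the statement, together with the identifications $\Pi(X,T)=\X$ (zero entropy) and $\Pi(Y,S)=\{Y,\emptyset\}$ (Bernoulli systems are $K$-systems). No issues.
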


The following result is from \cite{Thouvenot} (see also \cite{rue, KKLR}).

\begin{thm}\label{Pinsker}
Let $\lambda$ be a joining of a m.p.s. $(X,\X,\mu,T)$ with zero entropy and a m.p.s. $(Y,\Y,\nu,S)$. Let $f\in L^2(X,\mu)$ and $g\in L^2(Y,\nu)$. If $\E_\nu (g|\Pi(Y,S))=0$, then
$$\int_{X\times Y} f(x)g(y)d\lambda (x,y)=0.$$
\end{thm}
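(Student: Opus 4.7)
The plan is to reduce the claim to a conditional independence statement and then read off the vanishing of the integral. Write $\pi_X, \pi_Y$ for the coordinate projections on $X\times Y$, set $\bar f = f\circ \pi_X$, $\bar g=g\circ \pi_Y$, and let $\Pi_\lambda:=\Pi(X\times Y,\lambda, T\times S)$ denote the Pinsker $\sigma$-algebra of the joined system. The first thing I would do is identify two sub-$\sigma$-algebras sitting inside $\Pi_\lambda$. Because $(X,\X,\mu,T)$ has zero entropy, the factor $\pi_X^{-1}(\X)$ of $(X\times Y,\lambda,T\times S)$ has zero entropy (factor maps do not increase entropy), so $\pi_X^{-1}(\X)\subseteq \Pi_\lambda$. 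Likewise $\pi_Y^{-1}(\Pi(Y,S))$ is a zero-entropy factor of $(X\times Y,\lambda,T\times S)$, hence $\pi_Y^{-1}(\Pi(Y,S))\subseteq \Pi_\lambda$. In particular both $\bar f$ and $\E_\nu(g\mid \Pi(Y,S))\circ \pi_Y$ are $\Pi_\lambda$-measurable.

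The heart of the argument is the following conditional independence assertion: under $\lambda$, the sub-$\sigma$-algebras $\pi_X^{-1}(\X)$ and $\pi_Y^{-1}(\Y)$ are independent over $\pi_Y^{-1}(\Pi(Y,S))$. This is the relative form of the classical Furstenberg-Thouvenot-Rudolph disjointness principle, namely that a zero-entropy system is disjoint from a system of completely positive entropy: one applies that principle to $(X,T)$ paired with the \emph{relative} K-system produced by quotienting $(Y,S)$ by its Pinsker factor. The cleanest way I would implement it is to show that the Pinsker factor of the joining splits as
\[
\Pi_\lambda \;=\; \pi_X^{-1}(\X)\vee \pi_Y^{-1}(\Pi(Y,S)) \pmod \lambda,
\]
which follows by combining the two inclusions above with the fact that the orthogonal complement of $\Pi(Y,S)$ in $(Y,\nu,S)$ generates a completely-positive-entropy extension, disjoint from the zero-entropy factor $\pi_X^{-1}(\X)$; then disintegrating $\lambda$ over $\pi_Y^{-1}(\Pi(Y,S))$ yields a conditional product, which is exactly the relative independence.

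Granting this, the proof is one display. Let $P(\,\cdot\,):=\E_\lambda\bigl(\,\cdot\,\big|\pi_Y^{-1}(\Pi(Y,S))\bigr)$. By conditional independence,
\[
\int_{X\times Y} \bar f\,\bar g\,d\lambda
= \int P(\bar f\,\bar g)\,d\lambda
= \int P(\bar f)\cdot P(\bar g)\,d\lambda.
\]
Since the $Y$-marginal of $\lambda$ is $\nu$, one has $P(\bar g) = \E_\nu(g\mid \Pi(Y,S))\circ \pi_Y = 0$ by hypothesis, so the right-hand side vanishes. The main obstacle is therefore the relative independence step, which is the only deep input; the rest is bookkeeping. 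If one prefers to avoid the explicit identification of $\Pi_\lambda$, one may instead invoke directly the disjointness of any completely-positive-entropy system from any zero-entropy system (see Glasner's monograph, Chapter~18), which yields the same conclusion after quotienting by $\pi_Y^{-1}(\Pi(Y,S))$.
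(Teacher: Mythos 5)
First, a point of comparison: the paper does not prove this statement at all --- it is imported as Thouvenot's theorem (the text cites \cite{Thouvenot}, with \cite{rue} and \cite{KKLR} as expository accounts), so there is no in-paper argument to measure yours against. Your overall plan is the standard proof of Thouvenot's result: establish that $\pi_X^{-1}(\X)$ and $\pi_Y^{-1}(\Y)$ are conditionally independent over $\pi_Y^{-1}(\Pi(Y,S))$ under $\lambda$, and then the one-line computation $\int \bar f\,\bar g\,d\lambda=\int P(\bar f)P(\bar g)\,d\lambda=0$ finishes, since $P(\bar g)=\E_\nu(g\mid\Pi(Y,S))\circ\pi_Y=0$. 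The two inclusions $\pi_X^{-1}(\X)\subseteq\Pi_\lambda$ and $\pi_Y^{-1}(\Pi(Y,S))\subseteq\Pi_\lambda$ are correct, and the final display is fine once the conditional independence is in hand.

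The gap is in how you propose to derive that conditional independence. The identity $\Pi_\lambda=\pi_X^{-1}(\X)\vee\pi_Y^{-1}(\Pi(Y,S))$ is itself a nontrivial theorem (the Pinsker algebra of a join of factors is the join of their Pinsker algebras), and --- more importantly --- even if granted it does not yield the conditional product structure: identifying the $\sigma$-algebra $\Pi_\lambda$ tells you nothing about how $\lambda$ disintegrates over $\pi_Y^{-1}(\Pi(Y,S))$, and the assertion $\E_\lambda(\bar g\mid\Pi_\lambda)=\E_\lambda\bigl(\bar g\mid\pi_Y^{-1}(\Pi(Y,S))\bigr)$ is precisely the relative independence you are trying to prove, so that step is circular. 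The fact you actually need is the relative disjointness theorem: in any system, a factor algebra $\mathcal{B}$ and the Pinsker algebra $\Pi$ of the ambient system are relatively independent over $\mathcal{B}\cap\Pi=\Pi(\mathcal{B})$ (equivalently, the CPE extension $Y\to\Pi(Y,S)$ is relatively disjoint, over $\Pi(Y,S)$, from every zero-entropy system). Your fallback of quoting the absolute disjointness of CPE systems from zero-entropy systems is likewise insufficient, because after quotienting by $\pi_Y^{-1}(\Pi(Y,S))$ you are in a relative situation, not an absolute one. If you replace your justification by a citation of this relative version (this is exactly what \cite{rue} and \cite{KKLR} record), then applying it with $\mathcal{B}=\pi_Y^{-1}(\Y)$ and using that $\bar f$ is $\Pi_\lambda$-measurable makes the rest of your argument correct and complete.
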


%\begin{thm}\cite[Theorem 2.25]{HK09}[Theorem 23.15]\cite{HK18}\label{thm-HK09}
%Let $(Y,\Y,\nu,S)$ be a m.p.s. and $h\in L^\infty(Y,\nu)$. Then there exists $Y'\subseteq Y$ with $\nu(Y')=1$ such that for every $y\in Y'$, if $r,s\in \N$, $(X,\mu, T_1, \ldots, T_r)$ is a m.p.s., $f_1, \ldots, f_s\in L^\infty(X,\mu)$, and $p_{i,j}$ is an non-constant integral polynomial for $i=1,\ldots, r$ and $j=1,\ldots, s$, we have that
%$$\lim_{N\to\infty} \frac{1}{N}\sum_{n=0}^{N-1} h(S^ny)\prod_{j=1}^s\big(\prod_{i=1}^r T^{p_{i,j}(n)}_i\big) f_j$$
%exists in $L^2(X,\mu)$.
%\end{thm}

\begin{lem}\label{lem-zero-entropy}
Let $(X,\X,\mu,T)$ be a m.p.s. with zero entropy and let $c_1,\ldots, c_m\in \Z\setminus \{0\}$. Let $\mu^{(m)}_{\vec{c}}$ be the Furstenberg joining respect to $\vec{c}=(c_1,\ldots, c_m)$. Then there is an $X^*\in \X$ with  $\mu(X^*)=1$ such that for all $x\in X^*$, $(X^m, \X^m, \mu^{(m)}_{\vec{c},x}, \tau_{\vec{c}})$ has zero entropy.
\end{lem}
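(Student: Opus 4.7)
The plan is to show that each coordinate marginal of $\mu^{(m)}_{\vec{c},x}$ has zero entropy (as an ergodic component of $\mu$ under a power of $T$), and then conclude via the subadditivity of entropy under joinings.

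First I would apply Theorem~\ref{thm-HSY} (assuming ergodicity of $(X,\X,\mu,T)$ and, after a harmless reduction if necessary, distinctness of the $c_i$'s) to get the ergodic decomposition
$$\mu^{(m)}_{\vec{c}}=\int_X \mu^{(m)}_{\vec{c},x}\,d\mu(x),$$
with $\mu^{(m)}_{\vec{c},x}$ ergodic for $\tau_{\vec{c}}=T^{c_1}\times\cdots\times T^{c_m}$ for $\mu$-a.e.\ $x$. Let $\pi_i:X^m\to X$ denote the $i$-th coordinate projection. Since $\mu^{(m)}_{\vec{c}}$ is a joining, $\pi_{i*}\mu^{(m)}_{\vec{c}}=\mu$, and hence
$$\mu=\int_X \pi_{i*}\mu^{(m)}_{\vec{c},x}\,d\mu(x).$$
Each $\pi_{i*}\mu^{(m)}_{\vec{c},x}$ is $T^{c_i}$-invariant (because $\pi_i:(X^m,\tau_{\vec{c}})\to (X,T^{c_i})$ is a factor map) and $T^{c_i}$-ergodic (as the image of an ergodic measure under a factor map). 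By the uniqueness clause of Theorem~\ref{ergodic-decomposition}, for $\mu$-a.e.\ $x$ the marginal $\pi_{i*}\mu^{(m)}_{\vec{c},x}$ coincides with an ergodic component of $\mu$ in its ergodic decomposition under $T^{c_i}$.

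Next, since $h_\mu(T)=0$, the Abramov formula gives $h_\mu(T^{c_i})=|c_i|\,h_\mu(T)=0$, and by the additivity of entropy over the ergodic decomposition, $\mu$-almost every ergodic component of $\mu$ under $T^{c_i}$ has zero entropy. Combined with the previous step, this yields $h_{\pi_{i*}\mu^{(m)}_{\vec{c},x}}(T^{c_i})=0$ for $\mu$-a.e.\ $x$ and every $1\le i\le m$.

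Finally, I would invoke the standard fact that the entropy of a joining is bounded by the sum of the marginal entropies: for any joining $\lambda$ of measure-preserving systems $(X_i,\nu_i,T_i)$, one has $h_\lambda(T_1\times\cdots\times T_m)\le \sum_{i=1}^m h_{\nu_i}(T_i)$. Applied to $\lambda=\mu^{(m)}_{\vec{c},x}$ with marginals $\nu_i=\pi_{i*}\mu^{(m)}_{\vec{c},x}$, this gives $h_{\mu^{(m)}_{\vec{c},x}}(\tau_{\vec{c}})=0$. Taking $X^*$ to be the intersection over $1\le i\le m$ of the conull sets on which the previous arguments hold finishes the proof. The only delicate point is the identification of the marginals $\pi_{i*}\mu^{(m)}_{\vec{c},x}$ with ergodic components of $\mu$ under $T^{c_i}$, which hinges on uniqueness of the ergodic decomposition combined with ergodicity of $\mu^{(m)}_{\vec{c},x}$ for $\tau_{\vec{c}}$.
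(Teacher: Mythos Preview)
Your argument is correct, but it takes a longer route than the paper's. The paper applies the joining bound once at the level of $\mu^{(m)}_{\vec{c}}$ itself: since each marginal of $\mu^{(m)}_{\vec{c}}$ is $\mu$ and $h_\mu(T^{c_i})=|c_i|\,h_\mu(T)=0$, the joining inequality gives $h_{\mu^{(m)}_{\vec{c}}}(\tau_{\vec{c}})=0$ directly. Then the integral formula $h_{\mu^{(m)}_{\vec{c}}}(\tau_{\vec{c}})=\int_X h_{\mu^{(m)}_{\vec{c},x}}(\tau_{\vec{c}})\,d\mu(x)$ (entropy is affine over the ergodic decomposition) forces $h_{\mu^{(m)}_{\vec{c},x}}(\tau_{\vec{c}})=0$ for $\mu$-a.e.\ $x$. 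Your approach instead descends to the ergodic components first, identifies each coordinate marginal $\pi_{i*}\mu^{(m)}_{\vec{c},x}$ as an ergodic component of $\mu$ under $T^{c_i}$ via uniqueness of ergodic decomposition, and only then applies the joining bound componentwise. This works, but the identification step you flag as ``delicate'' is entirely sidestepped by the paper's order of operations: apply the joining bound globally, then disintegrate. Both arguments use the same two ingredients (subadditivity of entropy for joinings and affinity of entropy over ergodic decomposition); the paper just composes them in the simpler order.
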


\begin{proof}
Since $(X,\X,\mu,T)$ has zero entropy, for all $c\in \Z$, $(X,\X,\mu, T^c)$ has zero entropy.
Thus as a joining of $\{(X,\X,\mu, T^{c_i})\}_{i=1}^m$, the m.p.s. $(X^m,\X^m, \mu^{(m)}_{\vec{c}}, \tau_{\vec{c}})$ has zero entropy, i.e. $h_{\mu_{\vec{c}} ^{(m)}}(X^d, \tau_{\vec{c}})=0$.

By Theorem \ref{thm-HSY}, the disintegrations of $\mu_{\vec{c}}^{(m)}$ over $\mu$ is $\displaystyle \mu_{\vec{c}} ^{(m)}=\int_X \mu_{\vec{c}, x}^{(m)} d\mu(x),$ and for $\mu$ a.e. $x\in X$, $\mu^{(m)}_{\vec{c},x}$ is ergodic under $\tau_{\vec{c}}=T^{c_1}\times T^{c_2}\times \cdots \times T^{c_m}$.

Since $\displaystyle \mu_{\vec{c}} ^{(m)}=\int_X \mu_{\vec{c}, x}^{(m)} d\mu(x),$ we have (see \cite[Theorem 15.12]{Glasner})
\begin{equation}\label{}
  h_{\mu_{\vec{c}} ^{(m)}}(X^m, \tau_{\vec{c}})=\int_X  h_{\mu_{\vec{c},x} ^{(m)}}(X^m, \tau_{\vec{c}}) d \mu(x).
\end{equation}
As $h_{\mu_{\vec{c}} ^{(m)}}(X^m, \tau_{\vec{c}})=0$, it follows that there is an $X^*\in \X$ with  $\mu(X^*)=1$ such that for all $x\in X^*$, $h_{\mu_{\vec{c},x} ^{(m)}}(X^m, \tau_{\vec{c}})=0$.
The proof is complete.
\end{proof}

\subsubsection{A convergence theorem}

The following result is a convergence theorem appearing in \cite{CFH11}, which is a direct consequence of a generalization of Wiener-Wintner theorem proved by Host-Kra \cite{HK09} (see Theorem \ref{WW-Convergence}).

\begin{thm}\cite[Corollary 6.3]{CFH11}
Let $(X,\X,\mu, T)$ be a m.p.s. and $(\phi_n(x))_{n\in \Z}$ be a uniformly bounded sequence of $\mu$-measurable functions such that for $\mu$-almost every $x\in X$, the sequence $(\phi_n(x))_{n\in \Z}$ is a $k$-step nilsequence for some $k\in \N$. Then for $f\in L^\infty(X,\mu)$, the averages
$$\frac{1}{N-M}\sum_{n=M}^{N-1} \phi_n(x)f(T^{n}x)$$
converge in $L^2(X,\mu)$ as $N-M\to\infty$.

If in addition $\HK f\HK_{k+1}=0$, then $\displaystyle \frac{1}{N-M}\sum_{n=M}^{N-1} \phi_n(x)f(T^{n}x)$ converge to $0$ in $L^2(X,\mu)$.
\end{thm}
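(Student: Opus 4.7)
The plan is to reduce both claims to the Host--Kra Wiener--Wintner theorem for nilsequences, using the decomposition from Theorem \ref{thm-CFH} to handle the general case.

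I would begin with the second assertion, since it is the essential input. Assume $\HK f\HK_{k+1}=0$. The Host--Kra generalization of Wiener--Wintner (referenced just before this theorem in the excerpt) asserts that, uniformly over the class of $k$-step nilsequences $(\psi_n)_{n\in\Z}$ with $\sup_n|\psi_n|\le 1$, one has
\[
\sup_{\psi}\left|\frac{1}{N-M}\sum_{n=M}^{N-1}\psi_n\,f(T^n x)\right|\longrightarrow 0\quad\text{as }N-M\to\infty,
\]
for $\mu$-a.e.\ $x\in X$. Since $(\phi_n(x))_{n\in\Z}$ is a $k$-step nilsequence of norm $\le C:=\sup_{n,x}|\phi_n(x)|$ for $\mu$-a.e.\ $x$, applying the above uniform bound to $\psi_n=\phi_n(x)/C$ shows that the averages in question tend to $0$ pointwise $\mu$-a.e. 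Since they are dominated by $C\|f\|_\infty$, the dominated convergence theorem upgrades pointwise convergence to $L^2(X,\mu)$-convergence, which proves the second assertion.

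For the first assertion, fix $\varepsilon>0$ and apply Theorem \ref{thm-CFH} (with level $k$) to decompose
\[
f=f_{\rm nil}+f_{\rm unif}+f_{\rm sml},
\]
where $(f_{\rm nil}(T^n x))_{n\in\Z}$ is a $k$-step nilsequence for $\mu$-a.e.\ $x$, $\HK f_{\rm unif}\HK_{k+1}=0$, and $\|f_{\rm sml}\|_{L^2(\mu)}\le\varepsilon$. By the second assertion applied to $f_{\rm unif}$, the contribution of $f_{\rm unif}$ to the averages tends to $0$ in $L^2$. The contribution of $f_{\rm sml}$ is bounded in $L^2(\mu)$ by $\sup_{n,x}|\phi_n(x)|\cdot\|f_{\rm sml}\|_{L^2(\mu)}\le C\varepsilon$ by Cauchy--Schwarz applied inside the Cesàro average, together with the $T$-invariance of $\mu$. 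So it suffices to establish convergence of the averages involving $f_{\rm nil}$; since a product of two $k$-step nilsequences is again a $k$-step nilsequence (the Cartesian product of two $k$-step nilmanifolds is a $k$-step nilmanifold), the sequence $\phi_n(x)\,f_{\rm nil}(T^n x)$ is itself a $k$-step nilsequence for $\mu$-a.e.\ $x$. Its Cesàro averages converge pointwise by unique ergodicity of minimal nilsystems (Theorem \ref{thm-ParryLeibman}), and boundedness plus dominated convergence yield $L^2(X,\mu)$-convergence. Combining the three pieces and letting $\varepsilon\to 0$ along a Cauchy argument in $L^2$ gives convergence of the full averages.

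The main obstacle is the uniform Host--Kra Wiener--Wintner theorem: one needs uniformity over the class of $k$-step nilsequences in order to substitute a measurably varying family $\phi_n(x)$ in place of a single fixed nilsequence. Once that uniform statement is in hand, everything else is a routine decomposition argument combined with the fact that products of $k$-step nilsequences remain $k$-step nilsequences and have convergent Cesàro averages.
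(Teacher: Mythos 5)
Your overall architecture --- establish the vanishing statement first via a Wiener--Wintner-type input, then decompose $f$ via Theorem \ref{thm-CFH} into a nilsequence part, a part with vanishing seminorm, and an $L^2$-small part, and treat $\phi_n(x)f_{\rm nil}(T^nx)$ as a product of nilsequences whose Ces\`aro averages converge by unique ergodicity --- is the standard one; the paper itself gives no proof of this statement (it cites \cite{CFH11} and remarks that it follows from Theorem \ref{WW-Convergence}), but its Appendix B proves a generalization (Theorem \ref{thm-apendix}) along exactly this decomposition, deriving the vanishing part from \cite[Cor.~2.14]{HK09} (Theorem \ref{thm-HK09}) via box seminorms of sequences.

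There is, however, a genuine error in the key input you invoke for the second assertion: the claim that
$$\sup_{\psi}\Big|\frac{1}{N-M}\sum_{n=M}^{N-1}\psi_n f(T^nx)\Big|\longrightarrow 0,$$
with the supremum over \emph{all} $k$-step nilsequences $(\psi_n)$ satisfying $\sup_n|\psi_n|\le 1$, is false whenever $f\neq 0$. Any periodic sequence with values in the unit disc is a basic $1$-step nilsequence (take $G=\Z$, $\Gamma=p\Z$), so for each $N$ one may choose $\psi^{(N)}$ periodic of period $2N$ agreeing with $\mathrm{sign}\,f(T^nx)$ for $0\le n<N$; then the supremum is at least $\frac1N\sum_{n=0}^{N-1}|f(T^nx)|\to\int|f|\,d\mu>0$. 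The uniformity in Host--Kra is only over the base point and group element of a \emph{fixed} nilmanifold with a fixed (or Lipschitz-bounded) test function. What your argument actually needs --- and what is true --- is the non-uniform statement in which the exceptional null set is independent of the nilsequence: there is $X_0$ with $\mu(X_0)=1$ such that for every $x\in X_0$ and \emph{every} $k$-step nilsequence $\psi$ the averages tend to $0$; one then substitutes $\psi=(\phi_n(x))_{n\in\Z}$ for each such $x$ and concludes by dominated convergence, as you do. This is Theorem \ref{WW-Convergence}, except that as quoted there the vanishing clause assumes $\E(f|\ZZ_\infty(T))=0$ rather than the weaker hypothesis $\HK f\HK_{k+1}=0$ of the present statement, so you must appeal to the degree-matched version in \cite{HK09} (or argue as in the paper's Appendix B through Theorem \ref{thm-HK09}); you should also reduce to the ergodic case by ergodic decomposition, since the Host--Kra results are stated for ergodic systems. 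With these repairs the remainder of your argument (the $f_{\rm sml}$ estimate, closure of $k$-step nilsequences under products, unique ergodicity for the Ces\`aro averages of $f_{\rm nil}$, and the final $\varepsilon$-Cauchy argument) is sound.
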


We need a generalization of the above theorem, which will be proved in the appendix (see Theorem \ref{thm-apendix} for a more general result).

\begin{thm}\label{plus-nil}
Let $(X,\X,\mu, T)$ be a m.p.s. and $k\in \N$. Let $(\phi_n(x))_{n\in \Z}$ be a uniformly bounded sequence of $\mu$-measurable functions such that, for $\mu$-almost every $x\in X$, the sequence $(\phi_n(x))_{n\in \Z}$ is a $k$-step nilsequence. Then for all $c_1,\ldots,c_d\in \Z$ and for all $f_1,\ldots, f_d\in L^\infty(X,\mu)$, the averages
$$\frac{1}{N-M}\sum_{n=M}^{N-1} \phi_n(x)f_1(T^{c_1n}x)f_2(T^{c_2n}x)\cdots f_d(T^{c_d n}x)$$
converge in $L^2(X,\mu)$ as $N-M\to\infty$.
\end{thm}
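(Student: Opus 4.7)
The plan is to extend the single-term argument of \cite[Corollary 6.3]{CFH11} to the $d$-term case by combining the Chu-Frantzikinakis-Host decomposition with a twisted Host-Kra seminorm estimate. First, one may reduce to the situation where $c_1,\ldots,c_d$ are distinct and non-zero, since factors $f_i(T^{c_in}x)$ with $c_i=0$ are constant in $n$ and can be absorbed into the twist $\phi_n(x)$, while equal indices can be merged into a single function. Fix $\varepsilon>0$ and, with an integer $K=K(d,c_1,\ldots,c_d,k)$ to be chosen in the main technical step below, apply Remark \ref{rem-CFH}(1) to each $f_i$ to obtain a decomposition $f_i=f_{i,\mathrm{nil}}+f_{i,\mathrm{unif}}$ such that for $\mu$-a.e.\ $x$ the sequence $(f_{i,\mathrm{nil}}(T^nx))_{n\in\Z}$ is a $K$-step nilsequence with $\|f_{i,\mathrm{nil}}\|_\infty\le\|f_i\|_\infty$, and $\HK f_{i,\mathrm{unif}}\HK_{K+1}<\varepsilon$. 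Expanding $\prod_{i=1}^d f_i(T^{c_in}x)$ produces one ``all-nil'' piece and $2^d-1$ ``mixed'' pieces, each containing at least one $f_{j,\mathrm{unif}}$ factor.

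For the all-nil piece, note that for $\mu$-a.e.\ $x$, each $f_{i,\mathrm{nil}}(T^{c_in}x)$ is itself a $K$-step nilsequence in $n$, since scaling the argument by $c_i$ amounts to replacing the translating element $t$ of the ambient nilsystem by $t^{c_i}$. The pointwise product of finitely many nilsequences is a nilsequence on the product nilmanifold, so multiplying by the $k$-step nilsequence $(\phi_n(x))_{n\in\Z}$ yields, for a.e.\ $x$, a nilsequence in $n$ of step $\max(K,k)$. By Theorem \ref{thm-ParryLeibman} together with continuity of the defining function, the window averages $\frac{1}{N-M}\sum_{n=M}^{N-1}a_n$ of any nilsequence $(a_n)$ converge as $N-M\to\infty$ uniformly in $M$. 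Hence the all-nil average converges pointwise a.e.\ in $x$, and uniform boundedness plus dominated convergence yield convergence in $L^2(\mu)$.

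The main technical step, and the principal obstacle, is the twisted Host-Kra estimate
$$
\limsup_{N-M\to\infty}\Big\|\frac{1}{N-M}\sum_{n=M}^{N-1}\phi_n(x)\prod_{i=1}^d g_i(T^{c_in}x)\Big\|_{L^2(\mu)}\ \le\ C\cdot\min_{1\le j\le d}\HK g_j\HK_{K+1},
$$
valid for all $g_i\in L^\infty(X,\mu)$ with $\|g_i\|_\infty\le 1$ and for $K=K(d,c_1,\ldots,c_d,k)$ sufficiently large. This extends Lemma \ref{lem-AP-HK} to accommodate the nilsequence twist. The plan is to establish it by iterating Lemma \ref{vanderCoput} in $L^2(\mu)$ exactly $d$ times: at each step, the twist $\phi_n(x)$ gets replaced by $\phi_{n+h}(x)\overline{\phi_n(x)}$, which for each fixed $h$ and $\mu$-a.e.\ $x$ remains a uniformly bounded $k$-step nilsequence in $n$ (shifts and pointwise products preserve the nilsequence property with the same step), while each $g_i$ is replaced by $g_i\cdot\overline{T^{c_ih}g_i}$. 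After $d$ applications of van der Corput the linear scheme is exhausted and, bounding the twist crudely by its sup norm, the remaining multilinear expression in $g_j$ is controlled by $\HK g_j\HK_{K+1}^{2^K}$ for $K\ge d+k$ (with some margin for combinatorial losses from the scheme). The delicate point is that the twist is measurable in $x$ rather than a fixed sequence, so one must verify that all estimates pass through the outer integral over $\mu$ without losing the $L^\infty$-bound on $\phi_n(x)$.

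Combining the pieces: given $\varepsilon>0$, choose $K$ as in the twisted estimate and decompose via Remark \ref{rem-CFH}(1). The all-nil contribution is $L^2$-Cauchy by the second paragraph, while each of the $2^d-1$ mixed contributions has $L^2$-limsup at most $C\varepsilon\prod_i\|f_i\|_\infty$ by the twisted estimate, since each contains a factor with $\HK\cdot\HK_{K+1}<\varepsilon$. The triangle inequality then shows that the original averages $\frac{1}{N-M}\sum_{n=M}^{N-1}\phi_n(x)\prod_if_i(T^{c_in}x)$ are eventually within $O(\varepsilon)$ of one another in $L^2(\mu)$, and since $\varepsilon$ was arbitrary they converge. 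The hardest part will be the iterated van der Corput: keeping simultaneous control of the measurable nilsequence twist (whose underlying nilpotent group and defining function depend on $x$) and of the shift combinatorics of the $g_i$'s, while producing a clean bound by the Host-Kra seminorm at the correct level $K+1$, is the crux of the argument.
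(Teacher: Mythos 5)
Your overall skeleton (decompose each $f_i$ into a nil part plus a part with small Host--Kra seminorm, handle the all-nil piece by the fact that products of nilsequences are nilsequences whose window averages converge, and show the mixed pieces are small) matches the paper's Steps~1--2 in Appendix~\ref{appendix1}, and your treatment of the all-nil piece is fine. The gap is in your ``twisted Host--Kra estimate,'' which is exactly the crux and which your iterated van der Corput plan does not deliver. The difficulty you flag in passing is fatal as stated: van der Corput never eliminates the twist. After each application the factor $\phi_n(x)$ becomes $\phi_{n+h}(x)\overline{\phi_n(x)}$, and after $d$ applications, when the linear scheme of the $g_i$'s is exhausted, you are left with an expression of the shape
$$\limsup_{H}\frac{1}{H^d}\sum_{\underline{h}}\limsup_{N}\Big|\frac{1}{N}\sum_{n}\int_X\Big(\prod_{\ep\in\{0,1\}^d}\phi_{n+\ep\cdot\underline{h}}(x)\Big)\,G_{\underline{h}}(x)\,d\mu(x)\Big|,$$
where $G_{\underline{h}}$ is the product of shifted $g_i$'s. ``Bounding the twist crudely by its sup norm'' forces an absolute value onto $\int G_{\underline{h}}\,d\mu$ inside the average over $\underline{h}$, and the identity $\frac{1}{H^d}\sum_{\underline{h}}\int\prod_{\ep}T^{\ep\cdot\underline{h}}g\,d\mu\to\HK g\HK_d^{2^d}$ requires precisely the cancellation that the absolute value destroys; you do not recover any Host--Kra seminorm of $g_j$. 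Since the twist is a genuine function of both $n$ and $x$ living inside the $\mu$-integral, there is no stage at which it can be pulled out or made constant in $n$ the way a $g_i$ can.

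The paper circumvents this by never running van der Corput on the twisted average at all. Instead it fixes $x$, forms the untwisted scalar sequence $a_n(x)=\prod_i f_i(T^{c_in}x)$, passes to a subsequence of intervals ${\bf I}'$ that is $k$-adapted to $a(x)$ for a.e.\ $x$, and bounds the \emph{sequence-level} uniformity seminorm $\HK a(x)\HK_{{\bf I}',k}$: the correlation coefficients $c_{\underline{h}}({\bf I}',a(x))$ are controlled in $L^2(\mu)$ by Lemma~\ref{lem-AP-HK} applied to the products $\prod_{\ep}T^{c_j\ep\cdot\underline{h}}f_j$, and iterating the recursion of Lemma~\ref{lemmaE1} gives $\|\HK a(x)\HK_{{\bf I}',k}\|_{L^2(\mu)}\le C\HK f_j\HK_{d+k}$. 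When $\HK f_j\HK_{d+k}=0$ this vanishes a.e., and then \cite[Corollary~2.14]{HK09} (Theorem~\ref{thm-HK09}) -- the statement that a bounded sequence with vanishing $k$-th uniformity seminorm along adapted intervals is asymptotically orthogonal to every $(k-1)$-step nilsequence -- is applied pointwise in $x$ to the nilsequence $(\phi_n(x))_n$, after which dominated convergence gives the $L^2$ conclusion. This orthogonality criterion is the missing ingredient in your argument; without it (or an equivalent Wiener--Wintner-type input) the mixed pieces cannot be controlled, and with it the ``twisted'' estimate you want follows without ever having to carry $\phi$ through a van der Corput iteration.
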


\subsubsection{}
Now we start to prove convergence theorems. First we show the following

\begin{prop}\label{prop-reduing}
Let $T,S$ be ergodic measure preserving transformations acting on a probability space $(X,\X,\mu)$ such that $(X,\X,\mu,T)$ has zero entropy. Let $c_1,\ldots, c_m\in \Z\setminus \{0\}$ and $p_1,\ldots,p_d$ be non-constant integral polynomials with $\deg {p_i}\ge 2, 1\le i\le d$ satisfying $(\spadesuit)$. Then for all $l\in \N$, for all $f_i, g_j^{(t)}\in L^\infty(X,\mu), 1\le i\le m, 1\le t\le d, -l\le j\le l$ with $\E_\mu(g_j^{(t)}|\ZZ_\infty(S))=0$ for some $t\in [1,d], j\in [-l,l]$, one has
\begin{equation}\label{h1}
  \lim_{N\to\infty} \frac{1}{N}\sum_{n=0}^{N-1} T^{c_1n}f_1\cdots T^{c_mn}f_m\cdot \prod_{j=-l}^l \Big(S^{p_1(n+j)}g_j^{(1)}\cdots S^{p_d(n+j)}g_j^{(d)}\Big)=0
\end{equation}
in $L^2(X,\mu)$.
\end{prop}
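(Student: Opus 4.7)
The plan is to rewrite the averages as orbit integrals in the product space $\Omega := X^m\times(X^d)^{\Z}$ under the action $R := \tau_{\vec{c}}\times\sigma$, where $\tau_{\vec{c}}=T^{c_1}\times\cdots\times T^{c_m}$ and $\sigma$ is the shift on the $S$-polynomial orbit space. Setting $F:=\bigotimes_{i=1}^{m} f_i$, $G:=\bigotimes_{j=-l}^{l}\bigotimes_{t=1}^{d} g_j^{(t)}$, and $\w_x:=(S^{p_1(n)}x,\ldots,S^{p_d(n)}x)_{n\in\Z}\in (X^d)^{\Z}$, one has
\[
A_N(x):=\frac{1}{N}\sum_{n=0}^{N-1}\prod_{i=1}^{m}f_i(T^{c_in}x)\cdot\prod_{j=-l}^{l}\prod_{t=1}^{d}g_j^{(t)}(S^{p_t(n+j)}x)\ =\ \int_{\Omega} F\otimes G\,d\theta_{x,N},
\]
where $\theta_{x,N}:=\tfrac{1}{N}\sum_{n=0}^{N-1}\delta_{R^n(x^{\otimes m},\w_x)}$. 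The aim is to show that every weak-$*$ subsequential limit $\theta_x$ of $\{\theta_{x,N}\}$ is a joining of the ergodic systems $(X^m,\mu^{(m)}_{\vec{c},x},\tau_{\vec{c}})$ and $((X^d)^{\Z},\mu^{(\infty)}_{\A,x},\sigma)$ (the latter built from $S$), and then exploit the zero-entropy hypothesis on $T$ via Theorem~\ref{Pinsker}.

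Given any increasing sequence $\{N_k\}$, I would combine the $L^2$-convergence of Theorem~\ref{thm-HSY}(3) for $\tau_{\vec{c}}$-averages on $X^m$ and Theorem~\ref{thm-ergodic-measures}(3) applied to $(X,\mu,S)$ and $\A$ for $\sigma$-averages on $(X^d)^{\Z}$ with weak-$*$ compactness of $\mathcal{M}(\Omega)$ and a diagonal extraction over a countable dense family of product test functions in $C(\Omega)$. This yields a further subsequence $\{N_k'\}$ along which, for $\mu$-a.e.\ $x$, $\theta_{x,N_k'}$ converges weakly to a Borel probability measure $\theta_x\in\mathcal{M}(\Omega)$. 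Since $R_\ast\theta_{x,N}-\theta_{x,N}$ has total variation $O(1/N)$, $\theta_x$ is $R$-invariant; and the two ergodic theorems just cited identify the marginals of $\theta_x$ as $\mu^{(m)}_{\vec{c},x}$ and $\mu^{(\infty)}_{\A,x}$ respectively, confirming that $\theta_x$ is a joining of these ergodic systems.

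Since every $p_t$ has degree at least two, Remark~\ref{rem-mea}(2) provides, for $\mu$-a.e.\ $x$, an isomorphism
\[
\bigl((X^d)^{\Z},\mu^{(\infty)}_{\A,x},\sigma\bigr)\ \cong\ \bigl((Z_\infty^d)^{\Z},(\mu_\infty)^{(\infty)}_{\A,\pi_\infty(x)},\sigma\bigr)\times\bigl((U^d)^{\Z},(\rho^d)^{\Z},\sigma\bigr),
\]
arising from the Rohlin skew-product decomposition $(X,\mu,S)=(Z_\infty\times U,\mu_\infty\times\rho,S_\alpha)$, and by Lemma~\ref{lem-pin} the Pinsker factor $\Pi$ of $\mu^{(\infty)}_{\A,x}$ coincides with the pro-nil component. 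Using the skew-product formula $\E_\mu(g\mid\ZZ_\infty)(z)=\int g(z,u)\,d\rho(u)$ and Fubini on the product form of $\mu^{(\infty)}_{\A,x}$, one computes
\[
\E_{\mu^{(\infty)}_{\A,x}}\!\bigl(G\mid\Pi\bigr)(\mathbf{z})\ =\ \prod_{j=-l}^{l}\prod_{t=1}^{d}\E_\mu\!\bigl(g_j^{(t)}\mid\ZZ_\infty\bigr)(z_j^{(t)})\ \equiv\ 0,
\]
since the $(j_0,t_0)$-factor vanishes by hypothesis. Meanwhile Lemma~\ref{lem-zero-entropy} ensures that $(X^m,\mu^{(m)}_{\vec{c},x},\tau_{\vec{c}})$ has zero entropy for $\mu$-a.e.\ $x$.

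Theorem~\ref{Pinsker} applied to the joining $\theta_x$, with $(X^m,\mu^{(m)}_{\vec{c},x},\tau_{\vec{c}})$ as the zero-entropy factor and $\E(G\mid\Pi)=0$, then yields $\int_\Omega F\otimes G\,d\theta_x=0$. Hence $A_{N_k'}(x)\to 0$ for $\mu$-a.e.\ $x$, and the uniform $L^\infty$-bound on $A_N$ with dominated convergence gives $\|A_{N_k'}\|_{L^2(\mu)}\to 0$. Since the initial sequence $\{N_k\}$ was arbitrary, $\|A_N\|_{L^2(\mu)}\to 0$, proving the claim. The main technical obstacle I anticipate lies in the second paragraph: producing a single subsequence $\{N_k'\}$, independent of $x$, along which $\theta_{x,N_k'}$ converges weakly for $\mu$-a.e.\ $x$. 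This will require carefully upgrading the $L^2$-convergences of Theorems~\ref{thm-HSY} and \ref{thm-ergodic-measures} to almost-everywhere convergence along a common subsequence on a countable dense family of product test functions in $C(\Omega)$, and then combining this with weak-$*$ compactness of $\mathcal{M}(\Omega)$.
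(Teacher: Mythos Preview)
Your proposal is correct and follows essentially the same approach as the paper's proof: both build the joining $\theta_x$ (the paper calls it $\lambda_x$) of $(X^m,\mu^{(m)}_{\vec c,x},\tau_{\vec c})$ and $((X^d)^\Z,\mu^{(\infty)}_{\A,x},\sigma)$ via subsequence extraction, invoke the product decomposition of Remark~\ref{rem-mea}(2), identify the Pinsker factor via Lemma~\ref{lem-pin}, compute $\E(G\mid\Pi)=0$, and conclude by Theorem~\ref{Pinsker} combined with Lemma~\ref{lem-zero-entropy}. The only cosmetic difference is that the paper frames the argument by contradiction (assuming $\|A_{N_i}\|_{L^2}\ge\ep$ along some sequence) whereas you argue directly; note also that the paper first reduces explicitly to continuous $f_i,g_j^{(t)}$ (showing that $\{g\in C(X):\E_\mu(g\mid\ZZ_\infty(S))=0\}$ is $L^1$-dense in its measurable counterpart), a step you will need to make the weak-$*$ convergence $\int F\otimes G\,d\theta_{x,N_k'}\to\int F\otimes G\,d\theta_x$ legitimate.
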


\begin{proof}
Let $\pi_{\infty}: (X,\X,\mu, S)\rightarrow (Z_{\infty}(S),\ZZ_\infty(S),\mu_\infty, S)$
be the factor map, where $Z_{\infty}(S)$ is the $\infty$-step pro-nilfactor of $(X,\X,\mu,S)$.
By Theorem \ref{Rohlin}, we may assume that there exist a probability pace $(U, {\mathcal U},\rho)$ and a measurable cocycle $\a: Z_\infty(S)\rightarrow {\rm Aut} (U,\rho)$ such that
$$(X,\X,\mu,S)=(Z_\infty(S) \times U, \ZZ_\infty(S) \times {\mathcal U}, \mu_\infty \times \rho, S_\a).$$
In this case $\displaystyle \E_\mu(f|\ZZ_\infty)(y,u)=\int_{U}f(y,u) d\rho(u)$ for all $f\in L^1(X,\mu)$.
As usual, we assume that $X, Z_\infty(S)$ and $U$ are all compact metric spaces.

\medskip

First we show that it suffices to show \eqref{h1} holds for all continuous functions. To see this, first note that in $L^1(X,\mu)$ the set $\{g\in C(X): \E_\mu(g|\ZZ_\infty(S))=0\}$ is dense in $\{f\in L^1(X,\mu): \E_\mu(f|\ZZ_\infty(S))=0\}$, i.e. for each $f\in L^1(X,\mu)$ with $\E_\mu(f|\ZZ_\infty(S))=0$ and $\ep>0$, there is some $g\in C(X)$ such that $\|f-g\|_{L^1(\mu)}\le\ep$ and $\E_\mu(g|\ZZ_\infty(S))=0$. In fact, for each $f\in L^1(X,\mu)$ with $\E_\mu(f|\ZZ_\infty(S))=0$ and $\ep>0$, choose $\widetilde{g}\in C(X)$ such that $\|f-\widetilde{g}\|_{L^1(\mu)}<\frac{\ep}{2}$. Let $g(y,u)=\widetilde{g}-\E_\mu(\widetilde{g}|\ZZ_\infty(S) ) =\widetilde{g}(y,u)-\int_U\widetilde{g}(y,u)d\rho(u)$. Then $g\in C(X)$, $\E_\mu(g|\ZZ_\infty(S))=0$, and
\begin{equation*}
\begin{split}
 \|f-g\|_{L^1(\mu)}&=\big\| (f-\E_\mu(f|\ZZ_\infty(S)))-(\widetilde{g}-\E_\mu(\widetilde{g}|\ZZ_\infty(S))) \big\|_{L^1(\mu)}\\
 & \le \|f-\widetilde{g}\|_{L^1(\mu)}+\| \E_\mu(f-\widetilde{g}|\ZZ_\infty(S)) \|_{L^1(\mu)}\le\ep.
\end{split}
\end{equation*}

Next similar to the proof of Theorem \ref{thm-A1}, one can show that if \eqref{h1} holds for all continuous functions, then \eqref{h1} holds for all bounded measurable functions. Thus it suffices to show \eqref{h1} holds for all continuous functions.

\medskip

Arguing by contradiction, suppose that the conclusion fails. Then there are some $l\in \N$, $f_i, g_j^{(t)}\in C(X), 1\le i\le m, 1\le t\le d, -l\le j\le l$ with $\E_\mu(g_j^{(t)}|\ZZ_\infty(S))=0$ for some $t\in [1,d], j\in [-l,l]$, one has
\begin{equation*}
  \lim_{N\to\infty} \frac{1}{N}\sum_{n=0}^{N-1} T^{c_1n}f_1\cdots T^{c_mn}f_m\cdot \prod_{j=-l}^l \Big(S^{p_1(n+j)}g_j^{(1)}\cdots S^{p_d(n+j)}g_j^{(d)}\Big)=0
\end{equation*}
does not hold in $L^2(X,\mu)$.
Then there exist $\ep>0$ and $N_i\to\infty$ such that
\begin{equation}\label{j1}
  \left\|\frac{1}{N_i}\sum_{n=0}^{N_i-1} T^{c_1n}f_1\cdots T^{c_mn}f_m\cdot \prod_{j=-l}^l \Big(S^{p_1(n+j)}g_j^{(1)}\cdots S^{p_d(n+j)}g_j^{(d)}\Big) \right\|_{L^2(\mu)}\ge \ep
\end{equation}
for every $i\in \N$.

By Remark \ref{rem-mea2}-(3), there is a subset $X_0\in \X$ with $\mu(X_0)=1$ and subsequence $\{N_i'\}_{i\in \N}$ of $\{N_i\}_{i\in \N}$ such that for all $x\in X_0$,
in ${\mathcal M}(X^m)$ and ${\mathcal M}((X^{d})^\Z)$
  $$\lim_{i\to\infty}\frac{1}{N_i'}\sum_{n=0}^{N_i'-1}\d_{\tau_{\vec{c}}^nx^{\otimes m}}=\mu^{(m)}_{\vec{c},x} \quad \text{and }\quad   \lim_{i\to\infty}\frac{1}{N_i'}\sum_{n=0}^{N_i'-1}\d_{{\sigma}^n\w^\A_x}= {\mu}^{(\infty)}_{\A,x}, $$
where $\w^\A_x=\big(S^{\vec{p}(n)}x^{\otimes d}\big)_{n\in \Z} \in (X^d)^\Z$, $S^{\vec{p}(n)}=S^{p_1(n)}\times \cdots \times S^{p_d(n)}$.

By Theorem \ref{measure-like}, $${\mu}^{(\infty)}_{\A,x}=({\mu_\infty})^{(\infty)}_{\A,\pi_\infty(x)}\times (\rho^{d})^\Z,$$
and $((X^d)^\Z, (\X^d)^\Z, {\mu}^{(\infty)}_{\A,x}, \sigma)$ is the product of a $\infty$-step pro-nilsystem
$$((Z_\infty(S)^d)^\Z, (\ZZ_\infty(S)^d)^\Z, {(\mu_\infty)}^{(\infty)}_{\A,x}, \sigma)$$ and a Bernoulli system $((U^d)^\Z, ( {\mathcal U}^d)^\Z, (\rho^d)^\Z,\sigma)$.

By Lemma \ref{lem-zero-entropy},  there is an $X^*\in \X$ with a full measure $\mu(X^*)=1$ such that for all $x\in X^*$, $(X^m, \X^m, \mu^{(m)}_{\vec{c},x}, \tau_{\vec{c}})$ has zero entropy.

By the proof of the last equality of \eqref{b1} in Theorem \ref{thm-ergodic-measures}, there is some $X'\in \X$ with $\mu(X')=1$ such that for each $x\in X'$ and for $({\mu_\infty})_{\A,\pi_\infty(x)}^{(\infty)}$-a.e.  $ {\bf z}\in (Z_\infty(S)^{d})^{\Z}$,
\begin{equation}\label{b2}
\prod_{j=-l}^l \big(\int_U g_j^{(1)} d\rho \cdot \cdots \cdot \int_U g_j^{(d)} d \rho\big)
  = \prod_{j=-l}^l \big(\E_\mu( g_j^{(1)} |\ZZ_\infty(S)) \cdot \cdots \cdot \E_\mu(g_j^{(d)}|\ZZ_\infty(S) )\big) .
\end{equation}

Now let $x\in X_0\cap X^*\cap X'$, and let $\lambda_x$ be a limit point of the following sequence in ${\mathcal M}(X^m\times (X^{d})^\Z)$
$$\Big\{\frac{1}{N_i'}\sum_{n=0}^{N_i'-1} \d_{(\tau_{\vec{c}}^nx^{\otimes m}, \sigma ^n \w_x^{\A})}\Big\}_{i\in \N}.$$
 That is, there is some subsequence $\{N_i''\}_{i\in \N}$ of $\{N_i'\}_{i\in \N}$ such that
\begin{equation}\label{j2}
  \lambda_x=\lim_{i\to\infty}\frac{1}{N_i''}\sum_{n=0}^{N_i''-1} \d_{(\tau_{\vec{c}}^nx^{\otimes m}, \sigma ^n \w_x^{\A})}, \quad \text{weak$^*$ in ${\mathcal M}(X^m\times (X^d)^{\Z})$}.
\end{equation}
Thus $\lambda_x$ is a joining of $(X^m, \X^m, \mu^{(m)}_{\vec{c},x}, \tau_{\vec{c}})$ and $((X^d)^\Z, (\X^d)^\Z, {\mu}^{(\infty)}_{\A,x}, \sigma)$.

Since $((X^d)^\Z, (\X^d)^\Z, {\mu}^{(\infty)}_{\A,x}, \sigma)$ is the product of an $\infty$-step pro-nilsystem
$$((Z_\infty(S)^d)^\Z, (\ZZ_\infty (S)^d)^\Z, {(\mu_\infty)}^{(\infty)}_{\A,x}, \sigma)$$ and a Bernoulli system $((U^d)^\Z,({\mathcal U}^d)^\Z, (\rho^d)^\Z,\sigma)$, by Lemma \ref{lem-pin} the $\infty$-step pro-nilfactor $((Z_\infty(S)^d)^\Z,  (\ZZ_\infty (S)^d)^\Z,  {(\mu_\infty)}^{(\infty)}_{\A,\pi_\infty(x)},  \sigma)$ is also
the Pinsker factor of $((X^d)^\Z, (\X^d)^\Z, {\mu}^{(\infty)}_{\A,x}, \sigma)$.

Since $\E_\mu(g_j^{(t)}|\ZZ_\infty(S))=0$ for some $t\in \{1,2,\ldots, d\}, j\in \{-l, -l+1,\ldots, l\}$ and ${\mu}^{(\infty)}_{\A,x}=({\mu_\infty})^{(\infty)}_{\A,\pi_\infty(x)}\times (\rho^{d})^\Z$, we have
\begin{equation}\label{j3}
\begin{split}
  &\quad \E_{ {\mu}^{(\infty)}_{\A,x}}\Big( \bigotimes_{j=-l}^l \big(g_j^{(1)}\otimes \cdots \otimes g_j^{(d)}\big) | (\ZZ_\infty(S)^d)^\Z \Big)\\
  &=\int_{(Z_\infty(S)^d)^\Z}\int_{(U^d)^\Z} \Big(\bigotimes_{j=-l}^l \big(g_j^{(1)}\otimes \cdots \otimes g_j^{(d)}\big) d (\rho^d)^\Z \Big) d {(\mu_\infty)}^{(\infty)}_{\A,\pi_\infty(x)}
  \\&= \int_{(Z_\infty(S)^d)^\Z} \Big(\prod_{j=-l}^l \big(\int_U g_j^{(1)} d\rho \cdot \cdots \cdot \int_U g_j^{(d)} d \rho\big) \Big) d {(\mu_\infty)}^{(\infty)}_{\A,\pi_\infty(x)}\\
  %&\overset{\eqref{b2}}=\int_{(Z_\infty(S)^d)^\Z} \Big(\prod_{j=-l}^l \big(\E_\mu( g_j^{(1)} |\ZZ_\infty(S)) \cdot \cdots \cdot \E_\mu(g_j^{(d)}|\ZZ_\infty(S) )\big) \Big) d {(\mu_\infty)}^{(\infty)}_{\A,\pi_\infty(x)}
  %\\& =0.
\end{split}
\end{equation}
which is equal to
\begin{equation*}\label{j3}
\begin{split}
 % &\quad \E_{ {\mu}^{(\infty)}_{\A,x}}\Big( \bigotimes_{j=-l}^l \big(g_j^{(1)}\otimes \cdots \otimes g_j^{(d)}\big) | (\ZZ_\infty(S)^d)^\Z \Big)\\
  %&=\int_{(Z_\infty(S)^d)^\Z}\int_{(U^d)^\Z} \Big(\bigotimes_{j=-l}^l \big(g_j^{(1)}\otimes \cdots \otimes g_j^{(d)}\big) d (\rho^d)^\Z \Big) d {(\mu_\infty)}^{(\infty)}_{\A,\pi_\infty(x)}
 % \\&= \int_{(Z_\infty(S)^d)^\Z} \Big(\prod_{j=-l}^l \big(\int_U g_j^{(1)} d\rho \cdot \cdots \cdot \int_U g_j^{(d)} d \rho\big) \Big) d {(\mu_\infty)}^{(\infty)}_{\A,\pi_\infty(x)}\\
  \overset{\eqref{b2}}=\int_{(Z_\infty(S)^d)^\Z} \Big(\prod_{j=-l}^l \big(\E_\mu( g_j^{(1)} |\ZZ_\infty(S)) \cdot \cdots \cdot \E_\mu(g_j^{(d)}|\ZZ_\infty(S) )\big) \Big) d {(\mu_\infty)}^{(\infty)}_{\A,\pi_\infty(x)}=0.
\end{split}
\end{equation*}

Since $(X^m, \X^m, \mu^{(m)}_{\vec{c},x}, \tau_{\vec{c}})$ has zero entropy and $((Z_\infty(S)^d)^\Z, (\ZZ_\infty (S)^d)^\Z, {(\mu_\infty)}^{(\infty)}_{\A,\pi_\infty(x)}, \sigma)$ is the Pinsker factor of $((X^d)^\Z, (\X^d)^\Z, {\mu}^{(\infty)}_{\A,x}, \sigma)$, by Theorem \ref{Pinsker} and \eqref{j3}, one has
\begin{equation}\label{j4}
  \int_{X^m\times (X^d)^\Z} \bigotimes_{i=1}^m f_i \otimes \bigotimes_{j=-l}^l \big(g_j^{(1)}\otimes \cdots \otimes g_j^{(d)}\big) d \lambda_x =0.
\end{equation}
By \eqref{j2}, for all $x\in X_0\cap X^*\cap X'$,
\begin{equation*}
  \begin{split}
      & \quad \lim_{i\to\infty} \frac{1}{N_i''}\sum_{n=0}^{N_i''-1} f_1(T^{c_1n}x)\cdots f_m(T^{c_mn}x)\cdot \prod_{j=-l}^l\big( g_j^{(1)}(S^{p_1(n+j)}x)\cdots g_j^{(d)}(S^{p_d(n+j)}x) \big) \\
       & =\int_{X^m\times (X^d)^\Z} \bigotimes_{i=1}^m f_i \otimes \bigotimes_{j=-l}^l \big( g_j^{(1)}\otimes \cdots \otimes g_j^{(d)}\big) d \lambda_x \overset{\eqref{j4}} =0.
   \end{split}
\end{equation*}
Since $\{N_i''\}_{i\in \N}$ is a subsequence of $\{N_i\}_{i\in \N}$ and $\mu(X_0\cap X^*\cap X')=1$, the equation above contradicts with \eqref{j1}.
The proof is complete.
\end{proof}

Now it is time to prove the main result of this subsection.

\begin{thm}\label{thm-mean}
Let $T,S$ be ergodic measure preserving transformations acting on a probability space $(X,\X,\mu)$ such that $(X,\X,\mu,T)$ has zero entropy. Let $c_1,\ldots, c_m\in \Z\setminus \{0\}$ and $p_1,\ldots,p_d$ be non-constant integral polynomials with $\deg {p_i}\ge 2, 1\le i\le d$. Then for all $f_1,\ldots, f_m, g_1,\ldots, g_d\in L^\infty(X,\mu)$, the limit
\begin{equation}\label{}
  \lim_{N\to\infty} \frac{1}{N}\sum_{n=0}^{N-1} T^{c_1n}f_1\cdots T^{c_mn}f_m\cdot S^{p_1(n)}g_1\cdots S^{p_d(n)}g_d
\end{equation}
exists in $L^2(X,\mu)$.
\end{thm}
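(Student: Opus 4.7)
My plan is a four-stage reduction. First I would normalize the polynomials: by absorbing constants (replacing each $g_t$ by $S^{p_t(0)}g_t$), assume $p_t(0)=0$; then apply Lemma \ref{ww=dem} to obtain a family $\A=\{q_1,\ldots,q_r\}$ satisfying condition $(\spadesuit)$ with each $\deg q_i\ge 2$, together with an integer $l\ge 0$, such that every $p_t$ equals some $q_{i_t}^{[j_t]}$ with $|j_t|\le l$. Rewriting each factor $S^{p_t(n)}g_t$ as $S^{q_{i_t}(n+j_t)}\bigl(S^{-q_{i_t}(j_t)}g_t\bigr)$ and inserting constant factors $1$ into the remaining $(i,j)$-slots expresses the original product in the form $\prod_{i=1}^{r}\prod_{j=-l}^{l}S^{q_i(n+j)}h_{i,j}$ for suitable $h_{i,j}\in L^\infty(X,\mu)$.

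Next I would invoke Proposition \ref{prop-reduing}: it yields vanishing of the $L^2$-limit whenever some $h_{i,j}$ satisfies $\E_\mu(h_{i,j}|\ZZ_\infty(S))=0$. A telescoping decomposition $h_{i,j}=\E_\mu(h_{i,j}|\ZZ_\infty(S))+\bigl(h_{i,j}-\E_\mu(h_{i,j}|\ZZ_\infty(S))\bigr)$ then reduces the problem to the case where every $h_{i,j}$ is $\ZZ_\infty(S)$-measurable. By Remark \ref{rem-CFH}(2), combined with the $L^1$-density of $\bigcup_{k}L^\infty(\ZZ_k(S),\mu)$ in $L^\infty(\ZZ_\infty(S),\mu)$, every such $h_{i,j}$ can be $L^1$-approximated (hence $L^2$-approximated, since the functions are uniformly bounded) by a bounded function $\widetilde h_{i,j}$ whose orbit $\bigl(\widetilde h_{i,j}(S^n x)\bigr)_{n\in\Z}$ is a nilsequence for $\mu$-a.e.\ $x$. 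Using that $S$ acts isometrically on $L^2(\mu)$ and that all other factors are uniformly bounded, a standard $3\ep$-argument shows that this approximation preserves $L^2$-convergence of the full average.

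Finally, once each $h_{i,j}(S^{\,\cdot\,}x)$ is a nilsequence, its polynomial time-substitution $n\mapsto h_{i,j}(S^{q_i(n+j)}x)$ is again a nilsequence (by the stability of nilsequences under polynomial time changes, via Leibman's theory of polynomial orbits on nilmanifolds). Therefore
$$\phi_n(x)\;\triangleq\;\prod_{i=1}^{r}\prod_{j=-l}^{l}h_{i,j}\bigl(S^{q_i(n+j)}x\bigr)$$
is a uniformly bounded family which is, for $\mu$-a.e.\ $x$, a nilsequence in $n$. Theorem \ref{plus-nil} then delivers $L^2$-convergence of
$$\frac{1}{N}\sum_{n=0}^{N-1}\phi_n(x)\,f_1(T^{c_1 n}x)\cdots f_m(T^{c_m n}x),$$
which is exactly what we want.

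The main obstacle is Proposition \ref{prop-reduing}, whose proof rests on the structure theorem (Theorem \ref{measure-like}) identifying almost every ergodic component of $\mu^{(\infty)}_\A$ under $\sigma$ as a direct product of an $\infty$-step pro-nilsystem and a Bernoulli system, together with the orthogonality of the Pinsker factor in joinings with a zero-entropy system (Theorem \ref{Pinsker}). Once that decoupling is available, the rest of the argument is a purely approximate-and-apply scheme; the only other mildly subtle point is the closure of the nilsequence class under polynomial time substitutions, which is classical.
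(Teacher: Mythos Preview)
Your proposal is correct and follows essentially the same route as the paper's proof: reduce via Proposition~\ref{prop-reduing} to the case where the $S$-side functions are $\ZZ_\infty(S)$-measurable, approximate them (Remark~\ref{rem-CFH}) by functions generating nilsequences along orbits, pass to polynomial time via Leibman's result, and finish with Theorem~\ref{plus-nil}. The only difference is cosmetic: you make the reduction to condition~$(\spadesuit)$ explicit via Lemma~\ref{ww=dem} and the rewriting $S^{p_t(n)}g_t = S^{q_{i_t}(n+j_t)}(S^{-q_{i_t}(j_t)}g_t)$, thereby invoking Proposition~\ref{prop-reduing} with a nontrivial $l$, whereas the paper simply assumes $(\spadesuit)$ holds (applying the proposition with $l=0$) and refers to the analysis in Proposition~\ref{prop-pw} for the general case.
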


\begin{proof}
We assume that the family $\A=\{p_1,\ldots, p_d\}$ satisfies $(\spadesuit)$. If not, one can handle the proof using the analysis in the proof of Proposition \ref{prop-pw}.

Since $\A=\{p_1,\ldots, p_d\}$ satisfies $(\spadesuit)$, by Proposition \ref{prop-reduing} we have
$$\lim_{N\to\infty} \Big\|\frac{1}{N}\sum_{n=0}^{N-1} T^{c_1n}f_1\cdots T^{c_m n}f_m \cdot S^{p_1(n)}g_1\cdots S^{p_d(n)}g_d\Big\|_{L^2(\mu)}=0,$$
whenever $\E(g_j|\ZZ_\infty(S))=0$ for some $j\in \{1,2,\ldots, d\}$.
Thus we can restrict to the case where $g_j, 1\le j\le d$, is measurable with respect to $\ZZ_\infty(S)$ and using an $L^2(\mu)$ approximation argument we can assume that it is measurable with respect to the factor $\ZZ_k(S)$ for some $k\in \N$.

By Theorem \ref{thm-CFH}, for every $\ep>0$ there exists $\widetilde{g}_j\in L^\infty(X,\mu)$ such that $\widetilde{g}_j$ is $\ZZ_k(S)$-measurable and $\|g_j-\widetilde{g}_j\|_{L^1(X,\mu)}<\ep, 1\le j\le d$, and for $\mu$-almost all $x\in X$ the sequence
$(\widetilde{g}_j(S^nx))_{n\in \Z}$ is a $k$-step nilsequence.
By \cite[Proposition 3.14]{Leibman05-Isr} or \cite[Theorem 14.15]{HK18}, for $\mu$-almost all $x\in X$ the sequence $(\widetilde{g}_j(S^{p_j(n)}x))_{n\in \Z}$ is a $(k\deg{p_j})$-step nilsequence for $1\le j\le d$. Let $X_1\in \X$ be a full measure subset of $X$ for which
this property holds.
Thus $(\Psi_n(x))_{n\in \Z}=\Big(\prod_{j=1}^d\widetilde{g}_j(S^{p_j(n)}x)\Big)_{n\in \Z}$ is a nilsequence.
Therefore, it suffices to show the existence in $L^2(X,\mu)$ of the limit
\begin{equation}\label{}
  \lim_{N\to\infty} \frac{1}{N}\sum_{n=0}^{N-1}\Psi_n(x) f_1(T^{c_1n}x) f_2(T^{c_2n}x)\cdots f_m (T^{c_m n}x)
\end{equation}
This follows from Theorem \ref{plus-nil}. The proof is complete.
\end{proof}

%With the same method, we can generalize Theorem \ref{thm-mean} to several transformations.

%\begin{thm}\label{}
%Let $T,S_1, \ldots, S_k$ be ergodic measure preserving transformations acting on a probability space $(X,\X,\mu)$ such that $(X,\X,\mu,T)$ has zero entropy. Let $a_1,\ldots, a_s\in \Z\setminus \{0\}$ and $p_{i,j}, 1\le i\le k, 1\le j\le d$ be non-constant integral polynomials with $\deg {p_{i,j}}\ge 2$ for all $1\le i\le k, 1\le j\le d$. Then for all $f_1,\ldots, f_s, g_{i,j}\in L^\infty(X,\mu),1\le i\le k, 1\le j\le d$, the limit
%\begin{equation}\label{}
%  \lim_{N\to\infty} \frac{1}{N}\sum_{n=0}^{N-1} T^{a_1n}f_1\cdots T^{a_sn}f_s\cdot \prod_{i=1}^k S_i^{p_{i,1}(n)}g_{i,1}\cdots S_i^{p_{i,d}(n)}g_{i,d}
%\end{equation}
%exists in $L^2(X,\mu)$.
%\end{thm}

\subsection{Pointwise convergence}\
\medskip

In this subsection we give some pointwise convergence results assuming some natural conjectures.

\subsubsection{The main result}

Before presenting the result we state two natural conjectures. First we recall the following well known result of
Bourgain. %'s double recurrence theorem.

\begin{thm}[Bourgain's double recurrence theorem] \cite{B90}\label{Bouragain-double}
Let $(X,\X,\mu, T)$ be a m.p.s. and $c_1, c_2\in \Z$. Then for all $f,g\in L^\infty(X,\mu)$ the averages
$$\displaystyle \frac 1 N\sum_{n=0}^{N-1}f(T^{c_1n}x)g(T^{c_2n}x)$$
converge almost surely.
\end{thm}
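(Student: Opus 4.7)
The plan is a two-stage structure--randomness decomposition combined with a bilinear maximal inequality, following Bourgain's strategy. First I would reduce to the ergodic case via the ergodic decomposition, and use Proposition~\ref{prop-DL} to restrict $(f,g)$ to a convenient dense subset of $L^\infty(X,\mu)\times L^\infty(X,\mu)$. One may also assume $c_1\ne c_2$ and both nonzero; the degenerate cases reduce to single ergodic averages covered by Birkhoff.

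Next I would invoke a characteristic-factor decomposition. Lemma~\ref{lem-AP-HK} shows that the $L^2$-behavior of the averages is controlled by $\HK\cdot\HK_3$, so the $2$-step pro-nilfactor $(Z_2,\ZZ_2,\mu_2,T)$ is characteristic for the scheme $\{c_1 n,c_2 n\}$. Applying Theorem~\ref{thm-CFH} with $k=2$ I would decompose $f=f_{\rm nil}+f_{\rm unif}+f_{\rm sml}$ with $f_{\rm nil}$ producing a $2$-step nilsequence orbit $\mu$-a.e., $\HK f_{\rm unif}\HK_3=0$, and $\|f_{\rm sml}\|_{L^2}<\ep$ (similarly for $g$). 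For the main term $\frac{1}{N}\sum_{n=0}^{N-1}f_{\rm nil}(T^{c_1n}x)g_{\rm nil}(T^{c_2n}x)$, $\mu$-a.e.\ pointwise convergence is immediate from the unique ergodicity of minimal nilsystems (Theorem~\ref{thm-ParryLeibman}) applied to the translation $T^{c_1}\times T^{c_2}$ on the appropriate closed orbit inside a $2$-step nilmanifold, evaluating a continuous tensor observable by Birkhoff's theorem.

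The main obstacle is controlling the uniform component. The key estimate is the bilinear maximal inequality
\begin{equation*}
\Big\|\sup_{N\ge 1}\Big|\frac{1}{N}\sum_{n=0}^{N-1}f(T^{c_1n}x)g(T^{c_2n}x)\Big|\,\Big\|_{L^2(\mu)}\ \le\ C(c_1,c_2)\,\|f\|_{L^\infty(\mu)}\,\|g\|_{L^\infty(\mu)},
\end{equation*}
together with an $L^2$ oscillation inequality along a lacunary sequence of scales $\{N_k\}$. This is the heart of Bourgain's argument in \cite{B90} and would constitute the bulk of the work: by a Calder\'on-type transfer principle one reduces to the free model on $\Z$, and on the Fourier side one decomposes the dual exponential sums $\sum_{n<N}e(\alpha c_1 n+\beta c_2 n)$ into Hardy--Littlewood major and minor arc contributions, obtaining Weyl-type cancellation on the minor arcs and comparing the major-arc contributions to one-dimensional Hardy--Littlewood-type multipliers whose $L^2$ maximal bounds are classical.

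Granted the maximal and oscillation estimates, the standard Banach principle upgrades the pointwise convergence on the dense class of $2$-step nilsequence products to pointwise convergence for all $f,g\in L^\infty(X,\mu)$: the oscillation inequality forces the set of $x$ where the averages fail to be Cauchy to have measure zero on a dense subspace, and the maximal inequality extends this null set uniformly to the full $L^\infty$ closure, completing the proof.
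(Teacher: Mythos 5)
This theorem is quoted in the paper as a black-box citation to Bourgain \cite{B90}; the paper contains no proof of it, so there is nothing internal to compare your argument against, and your proposal has to stand on its own. As a self-contained proof it has a genuine gap at its center. The bilinear maximal inequality you state, $\big\|\sup_N|\frac1N\sum_{n<N}f(T^{c_1n}x)g(T^{c_2n}x)|\big\|_{L^2(\mu)}\le C\|f\|_{L^\infty}\|g\|_{L^\infty}$, is trivially true (the averages are pointwise bounded by $\|f\|_\infty\|g\|_\infty$) and for exactly that reason it cannot drive the Banach-principle transfer: your dense class of ``nil'' functions is dense in $L^p(\mu)$ for $p<\infty$ but not in $L^\infty(\mu)$, so to upgrade a.e.\ convergence from the dense class to all of $L^\infty\times L^\infty$ you would need a maximal bound with an $L^2$ (or $L^p$, $p<\infty$) norm on at least one input, e.g.\ $L^2\times L^\infty\to L^{1,\infty}$. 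Such genuinely bilinear maximal estimates are not in \cite{B90} and are substantially harder (they came later, in work of Lacey). Relatedly, the fact that $Z_k$ is an $L^2$-characteristic factor (via Lemma \ref{lem-AP-HK}) does not let you discard the uniform component pointwise; the entire difficulty of the theorem is precisely that $\HK f_{\rm unif}\HK_{k}=0$ controls only the $L^2$ limit, and the a.e.\ statement for the uniform part is the whole content of Bourgain's paper, which your outline defers wholesale to ``the bulk of the work.''

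Two smaller points. First, for the scheme $\{c_1n,c_2n\}$ with $d=2$, Lemma \ref{lem-AP-HK} controls the averages by $\HK\cdot\HK_2$, so the $L^2$-characteristic factor is the Kronecker factor $Z_1$, not $Z_2$; Bourgain's actual reduction is to functions orthogonal to the eigenfunctions, and for those he proves an $L^2$ oscillation inequality along lacunary times directly from Weyl-sum estimates, with no structure--randomness decomposition into higher nilfactors and no bilinear maximal function. Second, the treatment of the structured part via unique ergodicity of nilsystems (Theorem \ref{thm-ParryLeibman}) is sound in spirit, but for the Kronecker factor it reduces to trigonometric polynomials and is elementary. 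So the architecture you describe is a reasonable modern roadmap, but the step that makes the theorem true is missing rather than merely sketched.
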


In general, one has the following well-known conjecture.

\begin{conj}\label{conj1}
Let $(X,\X,\mu, T)$ be a m.p.s. and $d\in \N$. Then for all non-constant integral polynomials $p_1, \cdots, p_d$ and $f_1, \cdots, f_d \in L^{\infty}(X, \mu)$, the averages
\begin{equation*}
    \frac 1 N\sum_{n=0}^{N-1}f_1(T^{p_1(n)}x) f_2(T^{p_2(n)}x)\cdots f_d(T^{p_d(n)}x)
\end{equation*}
converge almost surely.
\end{conj}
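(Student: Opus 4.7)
\textbf{Proof plan for Conjecture \ref{conj1}.}  The plan is to reduce the conjecture to its known sub-cases by means of the structural results on $\mu_\A^{(\infty)}$ developed in the paper, combined with a Host--Kra type decomposition of the functions.  First I would invoke Proposition \ref{prop-pw}: it suffices to prove statement (1), i.e.\ for every family $\A=\{p_1,\ldots,p_d\}$ of non-constant integral polynomials satisfying the condition $(\spadesuit)$ and every $F\in C(X^s\times(X^{d-s})^{\Z})$, the averages $\frac1N\sum_{n=0}^{N-1}F(\widetilde\sigma^{n}\xi_x^\A)$ converge for $\mu$-a.e.\ $x$.  By a density/approximation argument, this can be cut down to product functions $F=\bigl(\bigotimes_{i=1}^{s}f_i\bigr)\otimes\bigotimes_{j=-l}^{l}f_j^{\otimes_{II}}$ with $f_i,f_j^{(t)}\in L^\infty(X,\mu)$, provided a suitable maximal inequality is available (see below).

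Next I would carry out the decomposition.  Apply Theorem \ref{thm-CFH} to each $f_j^{(t)}$ to write $f_j^{(t)}=(f_j^{(t)})_{\rm nil}+(f_j^{(t)})_{\rm unif}+(f_j^{(t)})_{\rm sml}$, and similarly for the $f_i$'s associated with the linear part.  Expanding the product of sums, the averages split into three kinds of terms:
\begin{itemize}
\item \emph{All-nil terms.}  Here every factor is a polynomial nilsequence in $n$ (for $\mu$-a.e.\ base point $x$), using that products of polynomial nilsequences are again polynomial nilsequences (\cite[Prop.~3.14]{Leibman05-Isr}).  By Leibman's equidistribution theorem, their averages converge pointwise by unique ergodicity, exactly as in the proof of Theorem \ref{thm-mean}.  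This is where the structural result Theorem \ref{measure-like} becomes decisive: it identifies the pro-nilfactor as the characteristic factor for the pointwise problem as well, so products of conditional expectations onto $\ZZ_\infty$ give the correct limit.
\item \emph{Mixed terms with at least one uniform factor.}  Using Corollary \ref{cor-structure} and the Bernoulli $\times$ nil structure of $\widetilde\mu^{(\infty)}_{\A,x}$, together with an extension of Proposition \ref{prop-reduing} to the pointwise setting, these terms should tend to $0$ almost surely.
\item \emph{Small terms} handled by a maximal inequality in $L^p$.
\end{itemize}

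The maximal inequality needed is the multilinear polynomial analogue of Bourgain's Theorem \ref{Bouragain}, extended to cover products of the form $f_1(T^{c_1n})\cdots f_m(T^{c_mn})\cdot g_1(S^{p_1(n)})\cdots g_d(S^{p_d(n)})$.  For $m=d=1$ this is a theorem of Bourgain--Krause--Mirek--Trojan and collaborators in discrete harmonic analysis; the bilinear polynomial case is Bourgain's double recurrence theorem (Theorem \ref{Bouragain-double}); the general case is open, and this is where the main obstacle of the plan lies.  The joining framework of the present paper pinpoints what must be shown: once one has a sufficiently strong maximal inequality to transfer $L^2$ vanishing of the ``uniform'' contributions to pointwise vanishing, the structure theorem Theorem \ref{measure-like} (pro-nil $\times$ Bernoulli) and the pointwise Wiener--Wintner type statement for polynomial nilsequences (an extension of Theorem \ref{plus-nil} to the a.e.\ setting) will close the argument.

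\emph{Main obstacle.}  The genuine difficulty is the pointwise control of the uniform part: the $L^2$ bound by Host--Kra seminorms (Lemma \ref{lem-AP-HK}, Theorem \ref{thm-Leibman}) does not directly yield almost-everywhere smallness, and no maximal inequality of the required strength is presently known for multilinear polynomial averages of degree $\geq 2$ with more than two terms (or with two non-commuting transformations).  Hence my plan reduces Conjecture \ref{conj1}, modulo the machinery built in the paper, to the problem of establishing the polynomial multilinear maximal inequality; any further progress on Conjecture \ref{conj1} along this route will have to come from advances in discrete harmonic analysis, not from further refinements of the joining theory.
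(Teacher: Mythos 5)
The statement you were asked to prove is Conjecture \ref{conj1}, which the paper explicitly leaves open: it is stated as a conjecture, flagged as a well-known open problem (with \cite{KMT} cited for recent partial progress), and used only as a \emph{hypothesis} in Theorem \ref{thm-pointwise}. There is therefore no proof in the paper to compare your proposal against, and your proposal --- to its credit --- does not claim to be one. Your reduction to statement (1) of Proposition \ref{prop-pw}, and your identification of a missing multilinear polynomial maximal inequality as a central obstruction, are both consistent with how the paper frames the problem.

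Beyond the maximal inequality you flag, however, your plan has a second, unacknowledged gap: circularity. The pointwise structural inputs you want to use are themselves conditional on Conjecture \ref{conj1} in this paper. Theorem \ref{measure-sequence} only gives, for a.e.\ $x$, weak$^*$ convergence of $\frac{1}{N_i}\sum_{n}\delta_{\widetilde\sigma^n\xi_x^\A}$ to $\widetilde\mu^{(\infty)}_{\A,x}$ along \emph{some} subsequence $\{N_i\}$; Remark \ref{rem-mea2}-(1) upgrades this to the full sequence only under the assumption that \eqref{ss} holds almost surely, which by Proposition \ref{prop-pw} is equivalent to Conjecture \ref{conj1} itself. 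Likewise, the ``pointwise extension of Proposition \ref{prop-reduing}'' that you invoke to kill the mixed terms is Proposition \ref{prop-pointwise-distal}, whose proof in the paper opens with ``By Conjecture \ref{conj1} and Proposition \ref{prop-pw}, there is a subset $X_0\in\X$ with $\mu(X_0)=1$ such that \dots $\lim_{N}\frac1N\sum_{n}\delta_{\sigma^n\w_x^\A}=\mu^{(\infty)}_{\A,x}$'' --- full-sequence genericity of a.e.\ $x$ for the component measures is exactly the content of the conjecture you are trying to prove. Finally, your ``all-nil'' terms require the pointwise Wiener--Wintner statement for products $f_1(T^{a_1n}x)\cdots f_m(T^{a_mn}x)$ against nilsequences, which is Conjecture \ref{conj2} and is also open beyond the cases covered by Theorem \ref{Bouragain-double}. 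The net effect is that your plan reduces an open conjecture to two other open problems plus (implicitly) itself; it is a sensible road map, and it matches the conditional architecture of Section \ref{section-applications}, but it is not a proof, and the paper makes no claim to possess one.
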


For the recent progress along the conjecture, see \cite{KMT}. The following results is a generalization of Wiener-Wintner Theorem.

\begin{thm}\cite[Theorem 2.22]{HK09}\cite[Theorem 23.14]{HK18}\label{WW-Convergence}
Let $(X,\X,\mu,T)$ be a m.p.s. and $f\in L^\infty(X,\mu)$. Then there exits a subset $X_0\in \X$ with $\mu(X_0)=1$ such that the limit
$$\lim_{N\to\infty}\frac{1}{N}\sum_{n=0}^{N-1} \Psi(n)f(T^nx)$$
exits for all $x\in X_0$ and every nilsequence $(\Psi(n))_{n\in \Z}$.

Moreover, if $\E(f|\ZZ_\infty(T))=0$, then the for all $x\in X_0$, $\lim_{N\to\infty}\frac{1}{N}\sum_{n=0}^{N-1} \Psi(n)f(T^nx)=0$.
\end{thm}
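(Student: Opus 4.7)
My plan is to combine the decomposition of Theorem~\ref{thm-CFH} (or Remark~\ref{rem-CFH}) with an inductive van der Corput argument on the step $k$ of the nilsequence, in the spirit of the classical Wiener--Wintner theorem. First I would reduce to a fixed step: every nilsequence is a uniform limit of basic $k$-step nilsequences, and since $\|f\|_\infty < \infty$ the Cesaro averages are dominated uniformly in $N$ by $\|\Psi - \Psi'\|_\infty \|f\|_\infty$, so uniform approximation suffices. A further countable intersection over $k \in \N$ then lets me fix $k$ throughout and assemble the single full-measure set $X_0$ at the end.

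For the moreover part, suppose $\E(f \mid \ZZ_\infty(T)) = 0$, i.e.\ $\HK f\HK_j = 0$ for every $j$. I would induct on $k$: the base case $k=0$ is the classical Wiener--Wintner theorem applied to the constant sequence (or to characters, obtained as $1$-step nilsequences). For the inductive step, apply Lemma~\ref{vanderCoput} to $\zeta_n = \Psi(n) f(T^n x)$ to obtain
\[
\limsup_N \Bigl\|\tfrac{1}{N}\textstyle\sum_{n=0}^{N-1}\zeta_n\Bigr\|^2 \le \limsup_H \tfrac{1}{H}\textstyle\sum_{h=1}^H \limsup_N \Bigl|\tfrac{1}{N}\sum_n \overline{\Psi(n+h)}\Psi(n)\, f(T^{n+h}x)\overline{f(T^n x)}\Bigr|.
\]
The two key observations are: (i) the ``derivative'' $\overline{\Psi(n+h)}\Psi(n)$ is a $(k-1)$-step nilsequence (realized on a suitable commutator quotient of the original nilmanifold), and (ii) the relevant HK seminorm of $\overline{f}\cdot T^h f$ is controlled by $\HK f\HK_{k+1}$ via Lemma~\ref{lemmaE1}, so it inherits the vanishing hypothesis. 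The inductive hypothesis then forces each inner limit to be $0$, and a.e.\ convergence of the outer average to $0$ follows. To obtain the null set independent of $\Psi$, I would fix a countable dense family $\{\Psi_m\}$ in the space of basic $k$-step nilsequences (built from countable dense subfamilies of base points, translations, and continuous potentials on a countable family of nilmanifolds), take the union of the countably many null sets, and uniformly approximate a general $\Psi$.

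For the first part of the theorem, apply Theorem~\ref{thm-CFH} at level $k+1$ to decompose $f = f_{\mathrm{unif}} + f_{\mathrm{nil}} + f_{\mathrm{sml}}$. The $f_{\mathrm{unif}}$ contribution converges to $0$ pointwise on $X_0$ uniformly in $\Psi$ by the moreover part. The $f_{\mathrm{nil}}$ contribution yields, for a.e.\ $x$, a product $\Psi(n)\cdot f_{\mathrm{nil}}(T^n x)$ of two nilsequences, which is itself a nilsequence on the product nilmanifold; its Cesaro averages converge by unique ergodicity of minimal nilsystems (Theorem~\ref{thm-ParryLeibman}). The $f_{\mathrm{sml}}$ term is controlled in $L^1$ via Birkhoff's theorem applied to $|f_{\mathrm{sml}}|$ (which contributes at most $\|\Psi\|_\infty \|f_{\mathrm{sml}}\|_{L^1}$ to the limsup). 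Sending $\epsilon = \|f_{\mathrm{sml}}\|_{L^1} \to 0$ along a countable sequence and intersecting the resulting full-measure sets gives the desired $X_0$.

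The hard part is ensuring the null set does not depend on $\Psi$. Making the density/approximation argument rigorous requires verifying that the class of basic $k$-step nilsequences of bounded ``complexity'' admits a countable uniformly dense subfamily and that the approximation error is absorbed into a single Bourgain-type maximal inequality (Theorem~\ref{Bouragain}) so as not to accumulate over countably many $\Psi_m$. An alternative route, which avoids the density argument, is to exploit Host--Kra's cube construction to produce a direct uniform-in-$\Psi$ estimate parametrized by a compact metric space of nilsystem data; this is the approach taken in \cite{HK09}.
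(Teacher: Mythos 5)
This theorem is quoted from Host--Kra (\cite[Theorem 2.22]{HK09}, \cite[Theorem 23.14]{HK18}); the paper gives no proof of it, so there is nothing internal to compare against except the closely related argument in Appendix~\ref{appendix1}, which proves the generalization Theorem~\ref{thm-apendix} by a different route (the seminorms $\HK a\HK_{{\bf I},k}$ of individual orbit sequences together with Theorem~\ref{thm-HK09}). Measured against that, your proposal has two genuine gaps. First, in the inductive van der Corput step you need to apply the inductive hypothesis to $g_h=\overline{f}\cdot T^hf$, which requires $\E(g_h|\ZZ_\infty)=0$ (or at least a quantitative bound in terms of $\HK g_h\HK_k$) \emph{for each fixed} $h$. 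But Lemma~\ref{lemmaE1} only gives $\lim_H\frac1H\sum_h\HK f\cdot T^hf\HK_k^{2^k}=0$, i.e.\ smallness of the \emph{average} over $h$; individual terms need not vanish, so the purely qualitative induction (``seminorm zero $\Rightarrow$ limit zero'') does not close. To close it you need a quantitative bound of the form $\limsup_N\sup_\Psi|\frac1N\sum\Psi(n)g(T^nx)|\le c(x)$ with $\|c\|_{L^2(\mu)}\lesssim\HK g\HK_k$, which is exactly what the $\ell^\infty$-seminorm machinery of \cite{HK09} (Theorem~\ref{thm-HK09} plus the computation culminating in \eqref{app9}) supplies and what your sketch does not.

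Second, and more fundamentally, your device for making the null set independent of $\Psi$ --- a countable uniformly dense family of basic $k$-step nilsequences --- cannot work, because this class is not separable in the uniform norm: already the $1$-step family $\{(e(n\theta))_{n}\}_{\theta\in[0,1)}$ has uncountably many elements at pairwise $\ell^\infty$-distance bounded below (the parameter is the group element $t$, and $t\mapsto(\phi(t^ny))_n$ is not uniformly continuous over infinite orbits). Overcoming precisely this non-separability is the whole content of Wiener--Wintner-type theorems, so it cannot be dismissed as a verification to be done later. Your fallback remark --- obtaining a direct uniform-in-$\Psi$ estimate via the Host--Kra cube/seminorm construction --- is the correct approach, but as written it replaces the proof rather than completes it. (A smaller issue: $\overline{\Psi(n+h)}\Psi(n)$ is in general only a $k$-step nilsequence; reducing to step $k-1$ requires first decomposing $\phi$ into vertical characters and quotienting the orbit closure in $X\times X$ by the diagonal of $G_k$.) The convergence half of your argument (decomposition via Theorem~\ref{thm-CFH}, products of nilsequences, Birkhoff for the small term) is sound and matches Steps~2 and~4 of the proof of Theorem~\ref{thm-apendix}, but it rests on the ``moreover'' half, where the gaps lie.
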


It is natural to conjecture the following generalization of Theorem \ref{WW-Convergence}, which is also a pointwise version of Theorem \ref{plus-nil}.

\begin{conj}\label{conj2}
Let $(X,\X,\mu,T)$ be a m.p.s. and $a_1,a_2,\ldots, a_m\in \Z$ and $f_1, \ldots, f_m \in L^\infty(X,\mu)$. Then there exits a subset $X_0\in \X$ with $\mu(X_0)=1$ such that the limit
$$\lim_{N\to\infty}\frac{1}{N}\sum_{n=0}^{N-1} \Psi(n)f_1(T^{a_1n}x)\cdots f_m(T^{a_mn}x)$$
exits for all $x\in X_0$ and every nilsequence $(\Psi(n))_{n\in \Z}$.
\end{conj}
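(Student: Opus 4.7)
The plan is to reduce the problem to two fundamentally different cases via a Host--Kra type decomposition, handle each case separately, and use a telescoping identity to combine them. Throughout, I will identify the "good" case (when each $f_i$ is measurable with respect to a pro-nilfactor) and the "bad" case (when at least one $f_i$ has a small seminorm), with the latter being the main obstacle.

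First, I would apply Theorem \ref{thm-CFH} (and Remark \ref{rem-CFH}) with a chosen seminorm level $k$ (to be determined by $m$ and the $a_i$) to write $f_i = f_i^{\rm nil} + f_i^{\rm unif}$, where $\bigl(f_i^{\rm nil}(T^n x)\bigr)_n$ is a $k$-step nilsequence for $\mu$-a.e.\ $x$, and $\HK f_i^{\rm unif}\HK_{k+1} < \varepsilon$. The product $\prod_{i=1}^m f_i(T^{a_i n}x)$ expands into $2^m$ terms by telescoping; it suffices to treat the purely nil term and terms in which at least one factor is $f_i^{\rm unif}$.

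For the purely nil term, observe that for a.e.\ $x$ the sequence $\bigl(f_i^{\rm nil}(T^{a_i n}x)\bigr)_n$ is again a $k$-step nilsequence (replacing the translation element by its $a_i$-th power in the defining nilmanifold). Since products of nilsequences on possibly different nilmanifolds are again nilsequences on the product nilmanifold, the sequence $\Psi(n)\prod_{i=1}^m f_i^{\rm nil}(T^{a_i n}x)$ is a nilsequence of bounded step for a.e.\ $x$. Cesaro averages of nilsequences converge by the uniform distribution theorem for polynomial orbits on nilmanifolds (Leibman; see Theorem \ref{thm-ParryLeibman} and the remark following it), and a standard diagonalization over a countable dense family of continuous test functions on the relevant nilmanifolds produces one full-measure set $X_0^{\rm nil}$ that works for every nilsequence $\Psi$.

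The hard part will be handling terms containing at least one $f_i^{\rm unif}$: I must show that if $\HK f_{i_0}\HK_{k+1}<\varepsilon$ for some index $i_0$, then
\[
\limsup_{N\to\infty}\Big|\frac{1}{N}\sum_{n=0}^{N-1}\Psi(n)\prod_{i=1}^m f_i(T^{a_i n}x)\Big|
\]
is small for $\mu$-a.e.\ $x$, uniformly over all nilsequences $\Psi$ of a fixed nilmanifold bound. My strategy is to reduce to Bourgain's double recurrence and its nilsequence-weighted variants by iterated van der Corput (Lemma \ref{vanderCoput}) applied in the Hilbert space of bounded sequences: each application of van der Corput eliminates one factor and produces averages of the form $\frac{1}{H}\sum_h \frac{1}{N}\sum_n \Psi(n)\overline{\Psi(n+h)}\prod_i f_i(T^{a_i n}x)\overline{f_i}(T^{a_i(n+h)}x)$, with the weight $\Psi(n)\overline{\Psi(n+h)}$ still a nilsequence of the same step. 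After $m-1$ applications one is left with averages involving a single function raised to HK seminorms, which vanish as $\varepsilon\to 0$ by the generalized Wiener--Wintner theorem of Host--Kra (Theorem \ref{WW-Convergence}) combined with Bourgain's maximal inequality (Theorem \ref{Bouragain}). The principal obstacle, and the reason this is only a conjecture, is that the van der Corput reduction for pointwise (rather than $L^2$) statements requires a maximal inequality that controls the supremum over $N$ uniformly in the nilsequence weight; such inequalities for multiple averages with nilsequence weights are the subject of current research. Granting the needed maximal estimate, an $\varepsilon\to 0$ diagonal argument over a countable collection of pairs $(f_i,k)$ furnishes the full-measure set $X_0 = X_0^{\rm nil}\cap X_0^{\rm unif}$ on which convergence holds for every nilsequence $\Psi$, completing the proof.
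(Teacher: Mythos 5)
This statement is not proved in the paper at all: it is stated as Conjecture \ref{conj2} and is used only as a hypothesis in Theorem \ref{thm-pointwise}. So there is no "paper proof" to compare against, and your proposal cannot be judged as matching or deviating from one. What you have written is a plan, and you yourself concede the decisive step ("Granting the needed maximal estimate\dots"), so the proposal does not establish the statement.

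To be concrete about where the gap sits. Your "nil" term is fine: once $\bigl(f_i^{\rm nil}(T^{a_i n}x)\bigr)_n$ is a nilsequence for a.e.\ $x$, its product with any $\Psi$ is again a nilsequence, and Ces\`aro averages of nilsequences converge by unique ergodicity (Theorem \ref{thm-ParryLeibman}); no diagonalization over test functions is even needed. The genuine obstruction is the "unif" term: you need, for $m\ge 2$, that
$\limsup_{N}\bigl|\frac1N\sum_{n<N}\Psi(n)\prod_i f_i(T^{a_in}x)\bigr|$
is small for a.e.\ $x$ (or in $L^1$ of $x$), \emph{uniformly over all nilsequences $\Psi$ of bounded complexity}, whenever some $\HK f_{i_0}\HK_{k+1}$ is small. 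The van der Corput iteration you describe transfers $L^2$-in-$x$ information (this is exactly how the paper proves the $L^2$ analogue, Theorem \ref{plus-nil} / Theorem \ref{thm-apendix}), but it does not yield control of the pointwise $\limsup$ without a maximal inequality for the weighted multiple averages; such an inequality is not available and is the reason the statement is a conjecture. Note also that your reduction cannot terminate at "a single function": even the unweighted pointwise problem is only known for $m=2$ (Bourgain, Theorem \ref{Bouragain-double}) and is open for $m\ge 3$ (Conjecture \ref{conj1}), and the weighted $m=2$ case is only partially treated in \cite{AssaniMoore18}. So the proposal correctly locates the difficulty but does not, and cannot with the tools cited, close it.
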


When $m=2$, see \cite{AssaniMoore18} for some related results about Conjecture \ref{conj2}.

\medskip

Assuming the Conjectures \ref{conj1} and \ref{conj2}, we can prove the following pointwise version of Theorem \ref{thm-mean}:

\begin{thm}\label{thm-pointwise}
Let $T,S$ be ergodic measure preserving transformations acting on a probability space $(X,\X,\mu)$ such that $(X,\X,\mu,T)$ has zero entropy. Let $a_1,\ldots, a_m\in \Z$, $p_1,\ldots,p_d$ be integral polynomials with $\deg {p_i}\ge 2, 1\le i\le d$. If Conjectures \ref{conj1} and \ref{conj2} hold, then for all $f_1,\ldots, f_m, g_1,\ldots, g_d\in L^\infty(X,\mu)$, the averages
\begin{equation}\label{pp1}
  \frac{1}{N}\sum_{n=0}^{N-1} f_1(T^{c_1n}x)\cdots f_m(T^{c_mn}x)\cdot S^{p_1(n)}g_1\cdots S^{p_d(n)}g_d
\end{equation}
converge almost surely.
\end{thm}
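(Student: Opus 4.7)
The plan is to mirror the proof of Theorem \ref{thm-mean}, upgrading each $L^2$ step to a pointwise one using Conjectures \ref{conj1} and \ref{conj2} together with Bourgain's polynomial maximal inequality (Theorem \ref{Bouragain}). As in that proof, I first reduce to the case that $\A = \{p_1, \ldots, p_d\}$ satisfies condition $(\spadesuit)$ with $p_j(0) = 0$, using Lemma \ref{ww=dem} and the pointwise version of Proposition \ref{prop-pw}, which is immediate once Conjecture \ref{conj1} is granted.

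The critical step is a pointwise analogue of Proposition \ref{prop-reduing}: if $\E(g_{j_0}|\ZZ_\infty(S)) = 0$ for some $j_0$, then \eqref{pp1} tends to $0$ for $\mu$-a.e. $x$. For this, under Conjecture \ref{conj1} the marginal empirical measures $\frac{1}{N}\sum_{n=0}^{N-1}\delta_{\tau_{\vec{c}}^n x^{\otimes m}}$ and $\frac{1}{N}\sum_{n=0}^{N-1}\delta_{\sigma^n \w_x^\A}$ converge weak-$*$ to $\mu^{(m)}_{\vec{c},x}$ and $\mu^{(\infty)}_{\A,x}$ respectively for $\mu$-a.e. $x$. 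Fix such $x$, and let $\lambda_x$ be any weak-$*$ limit point (along some subsequence) of the joint empirical measures on $X^m \times (X^d)^\Z$. Then $\lambda_x$ is a joining of $\mu^{(m)}_{\vec{c},x}$, which has zero entropy by Lemma \ref{lem-zero-entropy}, with $\mu^{(\infty)}_{\A,x}$, whose Pinsker factor is the $\infty$-step pro-nil factor by Theorem \ref{measure-like} and Lemma \ref{lem-pin}. Thouvenot's theorem (Theorem \ref{Pinsker}) then yields $\int \bigotimes_i f_i \otimes \bigotimes_j g_j\, d\lambda_x = 0$. Since the averages in \eqref{pp1} are bounded, every subsequential limit equals such an integral, so \eqref{pp1} tends to $0$ pointwise.

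With this reduction in hand, write each $g_j = \E(g_j|\ZZ_\infty(S)) + (g_j - \E(g_j|\ZZ_\infty(S)))$ and expand the product; all cross-terms vanish pointwise, so it suffices to treat the case where every $g_j$ is $\ZZ_\infty(S)$-measurable. A further $L^2$-approximation by $\ZZ_k(S)$-measurable functions, with error controlled pointwise by Bourgain's maximal inequality, reduces further to $g_j \in L^\infty(\ZZ_k(S),\mu)$ for a fixed $k$. By Remark \ref{rem-CFH}(2) together with \cite[Proposition 3.14]{Leibman05-Isr}, for any $\ep>0$ one can approximate $g_j$ in $L^1(\mu)$ by $\widetilde{g}_j$ so that $\Psi_n(x) = \prod_{j=1}^d \widetilde{g}_j(S^{p_j(n)}x)$ is a nilsequence for $\mu$-a.e. $x$. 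The main term
\[
\frac{1}{N}\sum_{n=0}^{N-1} \Psi_n(x)\, f_1(T^{c_1 n}x)\cdots f_m(T^{c_m n}x)
\]
then converges pointwise a.e. by Conjecture \ref{conj2}, whose exceptional null set is uniform over nilsequences. The telescoping error $E_N(x)$ decomposes into summands each containing a factor $h(S^{p(n)}x)$ with $\|h\|_{L^2(\mu)} = O(\ep)$; Bourgain's maximal inequality bounds $\sup_N \frac{1}{N}\sum_n |h(S^{p(n)}x)|$ in $L^2(\mu)$ by $O(\ep)$, so by Chebyshev this maximal function is $O(\sqrt{\ep})$ off a set of $\mu$-measure $O(\sqrt{\ep})$. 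A standard diagonal argument along $\ep_n \to 0$ then gives pointwise convergence of \eqref{pp1}.

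The main obstacle will be making the Pinsker argument of the second step fully watertight: one must verify that the full-measure set on which every subsequential joint limit $\lambda_x$ has the expected marginals $\mu^{(m)}_{\vec{c},x}$ and $\mu^{(\infty)}_{\A,x}$, and on which the Pinsker factor of $\mu^{(\infty)}_{\A,x}$ is identified with the $\infty$-step pro-nil part, is a single full-measure set independent of the test functions $f_i, g_j$. A secondary obstacle is the careful bookkeeping needed to assemble the $\ep$-dependent full-measure sets arising from Conjecture \ref{conj2}, from the a.e. nilsequence property of the approximants, and from the applications of Bourgain's maximal inequality, into one full-measure set on which \eqref{pp1} is genuinely Cauchy.
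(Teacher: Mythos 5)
Your proposal is correct and follows essentially the same route as the paper: reduce via Conjecture \ref{conj1} and the Pinsker/joining argument (Proposition \ref{prop-pointwise-distal}, Corollary \ref{cor-pw}) to $g_j$ measurable with respect to $\ZZ_\infty(S)$, then to $\ZZ_k(S)$ and to nilsequence approximants, and finish with Conjecture \ref{conj2}. Your final telescoping/Chebyshev/diagonal bookkeeping, and your worry about assembling the $\ep$-dependent full-measure sets, are exactly what the paper packages by invoking Proposition \ref{prop-DL} (the $L^2$-closedness of the set of tuples for which a.e. convergence holds, via Bourgain's maximal inequality), so no new argument is needed there.
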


\subsubsection{Proof of Theorem \ref{thm-pointwise}}

To show the main result of this subsection we need the following

\begin{prop}\label{prop-pointwise-distal}
Let $T,S$ be ergodic measure preserving transformations acting on a probability space $(X,\X,\mu)$ and $T$ %such that $(X,\X,\mu,T)$
has zero entropy. Let $c_1,\ldots, c_m$ be distinct non-zero integers and let $p_1,\ldots,p_d$ be integral polynomials with $\deg {p_i}\ge 2, 1\le i\le d$ and satisfying $(\spadesuit)$. If Conjecture \ref{conj1} holds, then for all $l\in \N$, for all $f_i, g_j^{(t)}\in L^\infty(X,\mu), 1\le i\le m, 1\le t\le d, -l\le j\le l$ with $\E_\mu(g_j^{(t)}|\ZZ_\infty(S))=0$ for some $t\in [1,d], j\in [-l,l] $, one has
\begin{equation}\label{h2}
  \lim_{N\to\infty} \frac{1}{N}\sum_{n=0}^{N-1} T^{c_1n}f_1\cdots T^{c_mn}f_m\cdot \prod_{j=-l}^l \big( S^{p_1(n+j)}g_j^{(1)}\cdots S^{p_d(n+j)}g_j^{(d)}\big)=0
\end{equation}
almost surely.
\end{prop}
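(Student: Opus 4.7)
The plan is to replicate the argument of Proposition \ref{prop-reduing} pointwise: the zero-entropy plus Pinsker factor computation identifies any limit as zero, while Conjecture \ref{conj1} supplies the missing input that the empirical measures on both the $T$-side and the $S$-side converge weakly along the \emph{full} sequence almost surely (rather than merely along some subsequence, as was enough in the $L^2$ proof). First I would reduce to continuous $f_i$ and $g_j^{(t)}$ by approximation, propagating almost-sure convergence from a countable dense subalgebra via Proposition \ref{prop-DL}.

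For the pointwise upgrade, apply Conjecture \ref{conj1} to $T$ with the linear polynomials $c_1 n,\ldots,c_m n$ and to each tuple in a countable dense family of $C(X)^m$; combined with the $L^2$-identification in Theorem \ref{thm-HSY}-(3) and Stone--Weierstrass, this yields a full-measure set $X_T\subseteq X$ on which
\[
\frac{1}{N}\sum_{n=0}^{N-1}\d_{\tau_{\vec c}^{\,n}\, x^{\otimes m}} \longrightarrow \mu^{(m)}_{\vec c,x}\quad \text{weakly in } \mathcal{M}(X^m).
\]
Similarly, Conjecture \ref{conj1} applied to $S$ with the $d(2l+1)$ polynomials $\{p_t(n+j)\}_{1\le t\le d,\,-l\le j\le l}$ and a countable dense family of tensor-product test functions, combined with Theorem \ref{thm-ergodic-measures2}-(3), yields a full-measure set $X_S$ on which $\frac{1}{N}\sum_{n=0}^{N-1}\d_{\sigma^n \w_x^{\A}}\to \mu^{(\infty)}_{\A,x}$ weakly in $\mathcal{M}((X^d)^\Z)$.

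Fix $x\in X_T\cap X_S\cap X^\ast \cap X_0$, where $X^\ast$ is the full-measure set from Lemma \ref{lem-zero-entropy} and $X_0$ is the one from Theorem \ref{measure-like}. By compactness any weak-$\ast$ limit point $\lambda_x$ of the joint empirical measures $\frac1N\sum_{n=0}^{N-1}\d_{(\tau_{\vec c}^{\,n}\, x^{\otimes m},\,\sigma^n \w_x^{\A})}$ has marginals $\mu^{(m)}_{\vec c,x}$ and $\mu^{(\infty)}_{\A,x}$, so it is a joining of these two systems. By Lemma \ref{lem-zero-entropy} the first factor has zero entropy, and by Remark \ref{rem-mea}-(2) the Pinsker factor of $\bigl((X^d)^\Z,\mu^{(\infty)}_{\A,x},\sigma\bigr)$ is the $\infty$-step pro-nilfactor $\bigl((Z_\infty^d)^\Z,(\mu_\infty)^{(\infty)}_{\A,\pi_\infty(x)},\sigma\bigr)$. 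Repeating the computation \eqref{j3} from Proposition \ref{prop-reduing} and using the product structure $\mu^{(\infty)}_{\A,x}\cong (\mu_\infty)^{(\infty)}_{\A,\pi_\infty(x)}\times (\rho^d)^\Z$, the hypothesis $\E_\mu(g_j^{(t)}|\ZZ_\infty(S))=0$ forces the conditional expectation of $\bigotimes_{j=-l}^l\bigotimes_{t=1}^d g_j^{(t)}$ onto this Pinsker factor to vanish, and Theorem \ref{Pinsker} yields $\int (\bigotimes_i f_i)\otimes(\bigotimes_{j,t}g_j^{(t)})\,d\lambda_x=0$. Since every subsequential limit of the scalar averages in \eqref{h2} equals zero, the averages themselves converge to zero pointwise on a set of full measure.

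The main obstacle is precisely the upgrade from $L^2$ to almost-sure convergence of the empirical measures. Theorems \ref{thm-HSY} and \ref{thm-ergodic-measures2} only furnish $L^2$-limits; without Conjecture \ref{conj1} one cannot ensure, for a given $x$, that the joint empirical measure has \emph{every} weak-$\ast$ limit point realised as a joining of $(\mu^{(m)}_{\vec c,x},\mu^{(\infty)}_{\A,x})$ — and without that property, the zero-entropy/Pinsker step of Proposition \ref{prop-reduing} only recovers the $L^2$ information already contained there. Conjecture \ref{conj1}, applied separately to $T$ and to $S$, is precisely what closes this gap.
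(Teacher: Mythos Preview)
Your proposal is correct and follows essentially the same route as the paper's proof: reduce to continuous functions via Proposition~\ref{prop-DL}, invoke Conjecture~\ref{conj1} (the paper phrases this through Proposition~\ref{prop-pw}) to upgrade the empirical measures on both the $T$- and $S$-sides to weak-$*$ convergence along the full sequence almost surely, then for each good $x$ identify any joint weak-$*$ limit $\lambda_x$ as a joining of a zero-entropy system with a pro-nil$\times$Bernoulli system and kill the integral via Theorem~\ref{Pinsker}. The only omission is that you should also intersect with the full-measure set $X'$ on which the identity \eqref{b2} holds (needed to justify the vanishing in \eqref{j3}), and note that the passage from ``a.s.\ limit exists'' (Proposition~\ref{prop-DL}) to ``a.s.\ limit equals zero'' uses the already-established $L^2$ result of Proposition~\ref{prop-reduing}.
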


\begin{proof}
Let $\pi_{\infty}: (X,\X,\mu, S)\rightarrow (Z_{\infty}(S),\ZZ_\infty(S),\mu_\infty, S)$
be the factor map, where $Z_{\infty}(S)$ is the $\infty$-step pro-nilfactor of $(X,\X,\mu,S)$.
By Theorem \ref{Rohlin}, we may assume that there exist a probability pace $(U, {\mathcal U},\rho)$ and a measurable cocycle $\a: Z_\infty(S)\rightarrow {\rm Aut} (U,\rho)$ such that
$$(X,\X,\mu,S)=(Z_\infty(S) \times U, \ZZ_\infty(S) \times {\mathcal U}, \mu_\infty \times \rho, S_\a).$$
We assume that $X,\ Z_\infty(S)$ and $U$ are all compact metric spaces.

\medskip

By Proposition \ref{prop-DL}, one has that if \eqref{h2} holds for all continuous functions, then \eqref{h2} holds for all bounded measurable functions. Thus it suffices to show \eqref{h2} holds for all continuous functions.

\medskip

By Conjecture \ref{conj1} and Proposition \ref{prop-pw}, there is a subset $X_0\in \X$ with $\mu(X_0)=1$ such that for all $x\in X_0$, we have that in ${\mathcal M}(X^m)$ and $M((X^{d})^\Z)$
  $$\lim_{N\to\infty}\frac{1}{N}\sum_{n=0}^{N-1}\d_{\tau_{\vec{c}}^nx^{\otimes m}}=\mu^{(m)}_{\vec{c},x} \quad \text{and }\quad \lim_{N\to\infty}\frac{1}{N}\sum_{n=0}^{N-1}\d_{{\sigma}^n\w^\A_x}= {\mu}^{(\infty)}_{\A,x},$$
where $\w^\A_x=\big(S^{\vec{p}(n)}x^{\otimes d}\big)_{n\in \Z} \in (X^d)^\Z$, $S^{\vec{p}(n)}=S^{p_1(n)}\times \ldots \times S^{p_d(n)}$.

By Theorem \ref{measure-like},
${\mu}^{(\infty)}_{\A,x}=({\mu_\infty})^{(\infty)}_{\A,\pi_\infty(x)}\times (\rho^{d})^\Z,$
and $((X^d)^\Z, (\X^d)^\Z, {\mu}^{(\infty)}_{\A,x}, \sigma)$ is the product of an $\infty$-step pro-nilsystem  $((Z_\infty(S)^d)^\Z, (\ZZ_\infty(S)^d)^\Z, {(\mu_\infty)}^{(\infty)}_{\A,\pi_\infty(x)}, \sigma)$ and a Bernoulli system $((U^d)^\Z, ({\mathcal U}^d)^\Z, (\rho^d)^\Z,\sigma)$.

By Lemma \ref{lem-zero-entropy},  there is an $X^*\in \X$ with a full measure $\mu(X^*)=1$ such that for all $x\in X^*$, $(X^m, \X^m, \mu^{(m)}_{\vec{c},x}, \tau_{\vec{c}})$ has zero entropy, where  $\vec{c}=(c_1,c_2,\ldots, c_m)$.

By the proof of the last equality of \eqref{b1} in Theorem \ref{thm-ergodic-measures}, there is some $X'\in \X$ with $\mu(X')=1$ such that for each $x\in X'$ and for $({\mu_\infty})_{\A,\pi_\infty(x)}^{(\infty)}$-a.e. $ {\bf z}\in (Z_\infty(S)^{d})^{\Z}$,
\begin{equation*}
\prod_{j=-l}^l \big(\int_U g_j^{(1)} d\rho \cdot \cdots \cdot \int_U g_j^{(d)} d \rho\big)
  = \prod_{j=-l}^l \big(\E_\mu( g_j^{(1)} |\ZZ_\infty(S)) \cdot \cdots \cdot \E_\mu(g_j^{(d)}|\ZZ_\infty(S) )\big) .
\end{equation*}

Let $x\in X_0\cap X^*\cap X'$ and $\lambda_x$ be a limit point of the following sequence in ${\mathcal M}(X^m\times (X^{d})^\Z)$
$$\Big\{\frac{1}{N}\sum_{n=0}^{N-1} \d_{(\tau_{\vec{c}}^nx^{\otimes m}, \sigma ^n \w_x^{\A})}\Big\}_{N\in \N}.$$
Thus, % That is,
there is some subsequence $\{N_i\}_{i\in \N}$ of $\N$ such that
\begin{equation}\label{jj2}
  \lambda_x=\lim_{i\to\infty}\frac{1}{N_i}\sum_{n=0}^{N_i-1} \d_{(\tau_{\vec{c}}^nx^{\otimes m}, \sigma ^n \w_x^{\A})}, \quad \text{weak$^*$ in ${\mathcal M}(X^m\times (X^d)^{\Z})$}.
\end{equation}
Thus $\lambda_x$ is a joining of $(X^m, \X^m, \mu^{(m)}_{\vec{a},x}, \tau_{\vec{a}})$ and $((X^d)^\Z, (\X^d)^\Z, {\mu}^{(\infty)}_{\A,x}, \sigma)$.

Since $((X^d)^\Z, (\X^d)^\Z, {\mu}^{(\infty)}_{\A,x}, \sigma)$ is the product of an $\infty$-step pro-nilsystem
$$((Z_\infty(S)^d)^\Z, (\ZZ_\infty (S)^d)^\Z, {(\mu_\infty)}^{(\infty)}_{\A,\pi_\infty(x)}, \sigma)$$ and a Bernoulli system $((U^d)^\Z, ({\mathcal U}^d)^\Z, (\rho^d)^\Z,\sigma)$, by Lemma \ref{lem-pin} the $\infty$-step pro-nilfactor $((Z_\infty(S)^d)^\Z, (\ZZ_\infty (S)^d)^\Z, {(\mu_\infty)}^{(\infty)}_{\A,x}, \sigma)$ is also
the Pinsker factor of $((X^d)^\Z, (\X^d)^\Z, {\mu}^{(\infty)}_{\A,x}, \sigma)$.

Since $\E_\mu(g_j^{(t)}|\ZZ_\infty(S))=0$ for some $t\in [1,d], j\in [-l,l]$, by \eqref{j3}
\begin{equation}\label{jj3}
 \E_{{\mu}^{(\infty)}_{\A,x}}\Big( \bigotimes_{j=-l}^l \big(g_j^{(1)}\otimes \cdots \otimes g_j^{(d)}\big) | (\ZZ_\infty(S)^d)^\Z \Big)=0.
\end{equation}
Since $(X^m, \X^m, \mu^{(m)}_{\vec{c},x}, \tau_{\vec{c}})$ has zero entropy and $((Z_\infty(S)^d)^\Z, (\ZZ_\infty (S)^d)^\Z, {(\mu_\infty)}^{(\infty)}_{\A,x}, \sigma)$ is the Pinsker factor of $((X^d)^\Z, (\X^d)^\Z, {\mu}^{(\infty)}_{\A,x}, \sigma)$, by Theorem \ref{Pinsker} and \eqref{jj3},
\begin{equation}\label{jj4}
  \int_{X^m\times (X^d)^\Z} \bigotimes_{i=1}^m f_i \otimes \bigotimes_{j=-l}^l \big(g_j^{(1)}\otimes \cdots \otimes g_j^{(d)}\big) d \lambda_x =0.
\end{equation}
By \eqref{jj2} and \eqref{jj4}, for all $x\in X_0\cap X^*\cap X'$,
\begin{equation*}
  \begin{split}
      & \quad \lim_{i\to\infty} \frac{1}{N_i}\sum_{n=0}^{N_i-1} T^{c_1n}f_1\cdots T^{c_mn}f_m\cdot \prod_{j=-l}^l \big( S^{p_1(n+j)}g_j^{(1)}\cdots S^{p_d(n+j)}g_j^{(d)}\big)  \\
       & =\int_{X^s\times (X^d)^\Z} \bigotimes_{i=1}^m f_i \otimes \bigotimes_{j=-l}^l\big( g_j^{(1)}\otimes \cdots \otimes g_j^{(d)}\big) d \lambda_x =0.
   \end{split}
\end{equation*}
Since $\{N_i\}_{i\in \N}$ is an arbitrary subsequence of $\N$, one has
\begin{equation}\label{}
  \lim_{N\to\infty} \frac{1}{N}\sum_{n=0}^{N-1} T^{c_1n}f_1\cdots T^{c_mn}f_m\cdot \prod_{j=-l}^l \big( S^{p_1(n+j)}g_j^{(1)}\cdots S^{p_d(n+j)}g_j^{(d)}\big)=0
\end{equation}
almost surely.
The proof is complete.
\end{proof}

Similar to the proof of Proposition \ref{prop-pw}, we can remove the condition  $(\spadesuit)$ in Proposition \ref{prop-pointwise-distal}.

\begin{cor}\label{cor-pw}
Let $T,S$ be ergodic measure preserving transformations acting on a probability space $(X,\X,\mu)$ such that $(X,\X,\mu,T)$ has zero entropy. Let $c_1,\ldots,c_m$ be distinct non-zero integers and let $p_1,\ldots,p_d$ be integral polynomials with $\deg {p_i}\ge 2, 1\le i\le d$. If Conjecture \ref{conj1} holds, then for all $l\in \N$, for all $f_i, g_j\in L^\infty(X,\mu), 1\le i\le m, 1\le j\le d$ with $\E_\mu(g_j|\ZZ_\infty(S))=0$ for some $j\in \{1,2,\ldots, d\}$, one has
\begin{equation}\label{}
  \lim_{N\to\infty} \frac{1}{N}\sum_{n=0}^{N-1} T^{c_1n}f_1\cdots T^{c_m n}f_m\cdot S^{p_1(n)}g_1\cdots S^{p_d(n)}g_d=0
\end{equation}
almost surely.
\end{cor}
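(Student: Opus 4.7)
The strategy mirrors the reduction in the proof of Proposition \ref{prop-pw}, which passed from families satisfying $(\spadesuit)$ to arbitrary families, only now applied to the pointwise statement Proposition \ref{prop-pointwise-distal}. First, I would normalize the polynomials: writing $p_i(n) = \widetilde{p}_i(n) + p_i(0)$ with $\widetilde{p}_i(0) = 0$ and replacing $g_i$ by $\widetilde{g}_i = S^{p_i(0)} g_i$ leaves the average unchanged, and the $S$-invariance of $\ZZ_\infty(S)$ preserves the zero-expectation hypothesis at the distinguished index. After collapsing any coincident polynomials into a single product term (keeping a distinguished index with zero conditional expectation untouched), we may further assume the $p_i$'s are pairwise distinct.

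Next I would apply Lemma \ref{ww=dem} to $\{p_1,\ldots,p_d\}$, obtaining integral polynomials $\{q_1,\ldots,q_r\}$ satisfying $(\spadesuit)$, with $q_t(0) = 0$ and $\deg q_t \ge 2$ for every $t$, and an integer $l \in \N$ such that for each $i$ there is a pair $(t_i,k_i) \in \{1,\ldots,r\} \times [-l,l]$ with $p_i = q_{t_i}^{[k_i]}$, i.e.\ $p_i(n) = q_{t_i}(n+k_i) - q_{t_i}(k_i)$. The distinctness of the $p_i$'s, condition (3) of $(\spadesuit)$ (which forbids $q_t^{[k]} = q_{t'}^{[k']}$ for $t \neq t'$), and the fact that $\deg q_t \ge 2$ (which forces $q_t^{[k]} = q_t^{[k']}$ only when $k = k'$, since the leading polynomial difference has degree $\deg q_t - 1 \ge 1$) together ensure that the pairs $(t_i,k_i)$ are themselves pairwise distinct. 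I would then define auxiliary functions $h_j^{(t)} \in L^\infty(X,\mu)$, for $-l \le j \le l$ and $1 \le t \le r$, by setting $h_{k_i}^{(t_i)} = S^{-q_{t_i}(k_i)} g_i$ for each $i$ and $h_j^{(t)} \equiv 1$ for all remaining pairs. By construction $S^{p_i(n)} g_i = S^{q_{t_i}(n+k_i)} h_{k_i}^{(t_i)}$, and therefore
\begin{equation*}
  \prod_{i=1}^d S^{p_i(n)} g_i \;=\; \prod_{j=-l}^{l}\prod_{t=1}^{r} S^{q_t(n+j)} h_j^{(t)}.
\end{equation*}

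Finally, pick an index $j_0$ with $\E_\mu(g_{j_0}\mid\ZZ_\infty(S)) = 0$; the $S$-invariance of $\ZZ_\infty(S)$ yields $\E_\mu(h_{k_{j_0}}^{(t_{j_0})} \mid \ZZ_\infty(S)) = S^{-q_{t_{j_0}}(k_{j_0})} \E_\mu(g_{j_0}\mid \ZZ_\infty(S)) = 0$. Hence the zero-conditional-expectation hypothesis of Proposition \ref{prop-pointwise-distal} is satisfied by $h_{k_{j_0}}^{(t_{j_0})}$, and applying that proposition with integers $c_1,\ldots,c_m$, family $\{q_1,\ldots,q_r\}$, shift parameter $l$, and functions $f_1,\ldots,f_m$ together with $\{h_j^{(t)}\}$ produces the desired almost sure convergence to zero. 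I expect the only real bookkeeping subtlety to lie in the initial distinctness reduction when a repeated $p_i$ happens to carry the distinguished zero-expectation function; this is mild and can be handled exactly as in the proof of Proposition \ref{prop-pw}.
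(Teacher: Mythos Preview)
Your approach is correct and is exactly the one the paper indicates: its entire proof reads ``Similar to the proof of Proposition~\ref{prop-pw}, we can remove the condition $(\spadesuit)$ in Proposition~\ref{prop-pointwise-distal},'' and you have carefully written out the details of that reduction via Lemma~\ref{ww=dem}.

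One small caveat: your final sentence asserts that the case where the distinguished polynomial $p_{j_0}$ coincides with another $p_i$ ``can be handled exactly as in the proof of Proposition~\ref{prop-pw}.'' But Proposition~\ref{prop-pw} is about mere convergence, not convergence to zero, so it never needs to track a zero-conditional-expectation hypothesis through a merge; merging $g_{j_0}$ with another factor can destroy the condition $\E_\mu(\cdot\mid\ZZ_\infty(S))=0$ (e.g.\ $g_{j_0}=g_i=g$ with $\E(g\mid\ZZ_\infty(S))=0$ but $\E(g^2\mid\ZZ_\infty(S))\neq 0$). In the paper's actual use of the corollary (the telescoping in Theorem~\ref{thm-pointwise}) one may first reduce to essentially distinct polynomials before decomposing, so this does not cause trouble there, and the corollary should be read with that implicit distinctness. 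Your argument is complete and correct once the $p_i$ are assumed pairwise (essentially) distinct, which is the intended setting.
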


Finally, we can prove Theorem \ref{thm-pointwise}.

\begin{proof}
By Corollary \ref{cor-pw}, in order to prove pointwise convergence
of the averages \eqref{pp1}, we can assume that all functions $g_1,\ldots, g_d$ are measurable with respect to $\ZZ_\infty(S)$.

By Proposition \ref{prop-DL}, for $f_1,\ldots, f_m$ fixed, the family of functions $g_i$ such that the convergence \eqref{pp1} holds almost
everywhere is closed in $L^2(X,\mu)$. Therefore, in order to prove the existence almost everywhere
of the limit \eqref{pp1} for $g_1,\ldots, g_d$ measurable with respect to $\ZZ_\infty(S)$, it suffices to restrict to
the case where $g_1,\ldots, g_d$ are measurable with respect to $\ZZ_k(S)$ for some $k\in \N$. As showed in the proof of Theorem \ref{thm-mean}, there is a full measure subset $X_1$ of $X$ for which
$(\Psi_n(x))_{n\in \Z}=\Big(\prod_{j=1}^d{g}_j(S^{p_j(n)}x)\Big)_{n\in \Z}$ is a nilsequence.

By Theorem \ref{WW-Convergence} there exits a subset $X_0\in \X$ with $\mu(X_0)=1$ such that the limit
$$\lim_{N\to\infty}\frac{1}{N}\sum_{n=0}^{N-1} \Psi(n)f_1(T^{a_1n}x)\cdots f_m(T^{a_mn}x)$$
exits for all $x\in X_0$ and every nilsequence $(\Phi(n))_{n\in \Z}$. Now let $X_2=X_0\cap X_1$. Then $\mu(X_2)=1$, and for all $x\in X_2$, the limit
\begin{equation}\label{pp2}
\begin{split}
  &\quad \lim_{N\to\infty} \frac{1}{N}\sum_{n=0}^{N-1}\Psi_n(x) f_1(T^{a_1n}x)\cdots f_m(T^{a_mn}x) \\ & = \lim_{N\to\infty}\frac{1}{N}\sum_{n=0}^{N-1} f_1(T^{a_1n}x)\cdots f_m(T^{a_mn}x)\cdot g_1(S^{p_1(n)}x)\cdots g_d(S^{p_d(n)}x)
\end{split}
\end{equation}
exists.
The proof is complete.
\end{proof}

\section{Relations with Furstenberg systems}\label{section-Furst-systems}

In this section, we study the relations of $\mu^{(\infty)}_\A$ with Furstenberg systems of sequences, and give a positive answer to Problem 1 in \cite{Fran22}.

\subsection{Furstenberg systems of sequences}\
\medskip

First we introduce the notion of Furstenberg systems of sequences from \cite{FranHost18, FranHost21, Fran22}.

\subsubsection{}
Let $\{N_k\}_{k\in \N}$ be a sequence of $\N$ with $N_k\to\infty$ and $I$ be a
compact interval in $\R$. We say that the sequence $z=(z_n)_{n\in \Z}=(z(n))_{n\in \Z}$ with values in $I$
{\em admits correlations} on $\{N_k\}_{k\in \N}$, if the limits
\begin{equation*}
\lim_{k\to\infty} \frac{1}{N_k} \sum_{n=1}^{N_k} \prod_{j=1}^s z(n+n_j)
\end{equation*}
exist for all $s\in \N$ and all $n_1,\ldots, n_s\in\Z$ (not necessarily distinct).
Note that for a given sequence $z=(z_n)_{n\in \Z}$  with values in $I$ (i.e. $z: \Z\rightarrow I$), every sequence of $\{N_k\}_{k\in \N}$ has a subsequence $\{N'_k\}_{k\in \N}$ such
that the sequence $z$ admits correlations on $\{N'_k\}_{k\in \N}$.

\subsubsection{Furstenberg system associated with $z$}
Let $I$ be a
compact interval in $\R$.
If a sequence $z: \Z\rightarrow I$ admits correlations on a given sequence of positive imtegers, then by the correspondence principle of Furstenberg \cite{F77, F} we may associate
a measure preserving system that captures the statistical properties of this sequence.

Let $\Omega=I^\Z$ and $\sigma: \Omega\rightarrow \Omega$ be the shift: $(\sigma\w)(n)=\w(n+1), \forall n\in \Z$. If a sequence $z: \Z\rightarrow I$ admits correlations on $\{N_k\}_{k\in \N}$, then for all $f\in C(\Omega)$ the following limit exist
$$\lim_{k\to\infty} \frac{1}{N_k} \sum_{n=1}^{N_k} f(\sigma^n z).$$
Hence the following limit exists in ${\mathcal M} (\Omega)$:
$$\nu=\lim_{k\to\infty} \frac{1}{N_k} \sum_{n=1}^{N_k}\d_{\sigma^n z}$$
and we say that the point $z$ is {\em generic for $\nu$ along $\{N_k\}_{k\in \N}$}.

\begin{de}
Let $I$ be a compact interval of $\R$ and let $z: \Z\rightarrow I$ be a sequence that
admits correlations on $\{N_k\}_{k\in \N}$, and $(\Omega, \nu, \sigma)$ as above.
\begin{itemize}
  \item $(\Omega, \nu, \sigma)$ is called the {\em Furstenberg system associated with $z$ on $\{N_k\}_{k\in \N}$}.
  \item Let $F_0\in C(\Omega)$ be defined by $F_0(\w)=\w(0), \forall \w \in \Omega$, i.e. the $0^{\rm th}$-coordinate projection. Then $F_0(\sigma^n z)=z(n), \forall n\in \Z$ and
      \begin{equation}\label{}
        \lim_{k\to\infty} \frac{1}{N_k} \sum_{n=1}^{N_k} \prod_{j=1}^s z(n+n_j)=\int_{\Omega} \prod_{j=1}^s \sigma^{n_j}F_0 d\nu
      \end{equation}
      for all $s\in \N$ and all $n_1,\ldots, n_s\in \Z$. This identity is referred to as the {\em Furstenberg
correspondence principle}.
\end{itemize}
\end{de}

\begin{rem} We have
\begin{enumerate}
  \item A sequence $z: \Z\rightarrow I$ may have several non-isomorphic Furstenberg systems
depending on which sequence of $\{N_k\}_{k\in \N}$ we use in the evaluation of its correlations. We call any such system a Furstenberg system of $z$.
  \item We say that the sequence $z$ has a {\em unique Furstenberg system}, if $z$ admits correlations
on $\{N\}_{N\in \N}$, or equivalently,
$$\nu=\lim_{N\to\infty} \frac{1}{N} \sum_{n=1}^{N}\d_{\sigma^n z}$$
exists.
\end{enumerate}
\end{rem}

\subsubsection{Problems about Furstenberg systems}

The following is the Problem 1 in \cite{Fran22} (see also Remark 3.2 in \cite{FranHost21}).

\begin{prob}\label{problem1}
Let $(X,\X,\mu, T)$ be an ergodic m.p.s., and let $p$ be a non-linear integral polynomial, and $f\in L^\infty(X,\mu)$. Show that for almost every $x\in X$ the sequence $\big(f(T^{p(n)})x\big)_{n\in \Z}$ has a unique Furstenberg system that is ergodic and isomorphic to a direct product of an infinite-step pro-nilsystem
and a Bernoulli system.
\end{prob}

As mentioned in \cite{Fran22}, proving uniqueness of the Furstenberg
system seems very hard as this amounts to proving a pointwise convergence result for multiple ergodic averages that currently seems out of reach. So as a first step for
an unconditional result, one probably has to compromise with the following problem.

\medskip

\begin{prob}\label{problem2}
Let $(X,\X,\mu, T)$ be an ergodic m.p.s., and let $p$ be a non-linear integral polynomial, and $f\in L^\infty(X,\mu)$. Show that for any strictly increasing sequence of positive integers $\{N_k\}_{k\in \N}$ there is a subsequence $\{N'_k\}_{k\in \N}$ such that for almost every $x\in X$ the the Furstenberg systems of $\big(f(T^{p(n)})x\big)_{n\in \Z}$ along $\{N_k'\}_{k\in \N}$ are ergodic and isomorphic to direct products of infinite-step pro-nilsystems and Bernoulli systems.
\end{prob}

\subsection{A corollary of Thouvenot's and Host-Kra's theorem}\
\medskip

To answer Problems above, we need to use the following theorems to prove that a factor of a direct product of an ergodic pro-nilsystem with a Bernoulli system is still of the same type. %a direct product of an ergodic pro-nilsystem with Bernoulli system.

Ornstein showed that Bernoulli systems are closed under taking factors \cite{Ornstein70}. Thouvenot extended this result as follows
%to entropy zero $\times$ Bernoulli systems.

\begin{thm}[Thouvenot]\cite{Thouvenot}\label{thou}
The factor of a direct product of an ergodic entropy zero transformation with a Bernoulli system is still a direct product of ergodic entropy zero transformation with a Bernoulli system.
\end{thm}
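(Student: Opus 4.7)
The approach is to extract the entropy-zero component as the Pinsker factor of the factor system and then identify the complement using the Ornstein--Thouvenot theory of relatively Bernoulli extensions.

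Let $\pi:(X\times Y,\mu\times\nu,T\times S)\to (W,\rho,R)$ be the factor map, where $(X,\X,\mu,T)$ is ergodic with entropy zero and $(Y,\Y,\nu,S)$ is Bernoulli. First I would identify the Pinsker factor of the total system: by the additivity of the Pinsker algebra under products together with Lemma \ref{lem-pin}, $\Pi(X\times Y, T\times S)$ is precisely $\X\times\{\emptyset,Y\}$, which we identify with $(X,T)$. Since the Pinsker factor of any factor is a factor of the Pinsker factor of the ambient system, $W_0\triangleq\Pi(W,R)$ is isomorphic to a factor of $(X,T)$; in particular it is ergodic and has entropy zero.

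Next I would establish that the extension $R:W\to W_0$ is relatively Bernoulli. The product extension $(X\times Y)\to X$ is trivially relatively Bernoulli, with relative fiber the Bernoulli system $(Y,S)$. Because $W_0$ is realized as a factor of $(X,T)$ and hence of $(X\times Y,T\times S)$, composing yields that $(X\times Y)\to W_0$ is a relatively Bernoulli extension whose relative fiber has full (relative) entropy $h(R\,|\,W_0)=h(R)$. Thouvenot's relative version of Ornstein's isomorphism theorem then applies: any intermediate factor of a relatively Bernoulli extension whose own Pinsker factor coincides with the base is itself relatively Bernoulli over that base. Applied to the tower $(X\times Y)\to W\to W_0$, this produces the desired relative Bernoulli property for $W\to W_0$.

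Finally, since $W_0$ has entropy zero and $W\to W_0$ is relatively Bernoulli, Thouvenot's splitting theorem gives an isomorphism $W\cong W_0\times B$ with $B$ a Bernoulli system of entropy $h(R)$: any joining of a zero-entropy system with a Bernoulli system that exhibits the relative Bernoulli property over the first coordinate must in fact be the direct product, since the fiber algebra carries no information detectable by the zero-entropy base. This delivers the decomposition of $(W,R)$ as a direct product of an ergodic entropy-zero system with a Bernoulli one. The main obstacle is the middle step, namely transferring the relatively Bernoulli property from the outer extension $(X\times Y)\to W_0$ down to the intermediate factor $W\to W_0$; this is the technical heart of \cite{Thouvenot} and is typically verified via the relative very-weakly-Bernoulli criterion of Ornstein--Weiss in its relative formulation. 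Once this is in hand, the Pinsker reduction and the zero-entropy splitting are essentially formal.
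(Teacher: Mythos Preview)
The paper does not give its own proof of this statement; it records Thouvenot's theorem as a known result from \cite{Thouvenot} and uses it as a black box in the proof of Theorem~\ref{thm-Thouvenot}. So there is no proof in the paper to compare against.

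Your outline is a faithful sketch of the standard argument for Thouvenot's theorem: isolate the Pinsker factor $W_0$ of $W$, realize it as a factor of the zero-entropy coordinate, and then show that $W\to W_0$ is relatively Bernoulli so that the extension splits as a direct product. You correctly flag that the genuine content lies in the middle step, namely passing the relatively Bernoulli property from the outer extension $(X\times Y)\to W_0$ down to the intermediate factor $W\to W_0$; this is precisely the relative Ornstein theory developed in \cite{Thouvenot}, and your proposal does not supply an independent argument for it but rather defers to that reference. That is appropriate given that the paper itself treats the whole theorem as a citation. One small remark: your final paragraph conflates two slightly different facts. Once $W\to W_0$ is known to be relatively Bernoulli, the splitting $W\cong W_0\times B$ is immediate from the definition (a relatively Bernoulli extension over a base is, by definition or by the relative generator argument, isomorphic to the base times a Bernoulli shift); no additional ``zero-entropy splitting'' or disjointness argument is needed at that stage.
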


Another well known fact is the following

\begin{thm}[Host-Kra]\cite[Proposition 4.11]{HK05}\label{thm-nil-factor} %\cite[Theorem 12, Chapter 13]{HK18}
 A factor of $d$-step pro-nilsystem ($d\in \N$) is a $d$-step pro-nilsystem.
%Let $(X=G/\Gamma, \nu,T)$ be an ergodic $s$-step nilsystem ($s\in \N$) and let $\pi: (X,\nu,T)\rightarrow (Y, \nu', T')$ be a factor map. Then $Y$ is an $s$-step nilsystem.
\end{thm}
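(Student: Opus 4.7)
\textbf{Proof plan for Theorem \ref{thm-nil-factor}.}
The plan is to characterize $d$-step pro-nilsystems via Host--Kra seminorms and exploit the fact that these seminorms behave functorially under factor maps. Recall from \cite{HK05} the equivalence: an ergodic m.p.s.\ $(X,\X,\mu,T)$ is a $d$-step pro-nilsystem if and only if $\ZZ_d=\X$ modulo null sets, which is equivalent to the statement that the seminorm $\HK \cdot \HK_{d+1}$ is a norm on $L^\infty(X,\mu)$, i.e.\ $\HK f\HK_{d+1}=0$ forces $f=0$ in $L^\infty(X,\mu)$. The idea is then to transfer this non-degeneracy property from $X$ to any factor $Y$.

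The first step is to establish the invariance of HK seminorms under factor maps: for a factor map $\pi\colon (X,\X,\mu,T)\to (Y,\Y,\nu,S)$ and any $g\in L^\infty(Y,\nu)$,
\[
\HK g\HK_{k,Y}=\HK g\circ\pi\HK_{k,X}\qquad \text{for every } k\in\N.
\]
I would prove this by induction on $k$. The case $k=1$ is immediate since $\HK g\HK_{1}=|\int g\,d\nu|=|\int g\circ\pi\,d\mu|$. For the inductive step, Lemma \ref{lemmaE1} gives
\[
\HK g\HK_{k+1,Y}^{2^{k+1}}=\lim_{N\to\infty}\frac{1}{N}\sum_{n=0}^{N-1}\HK g\cdot S^ng\HK_{k,Y}^{2^k},
\]
and applying the induction hypothesis to the function $g\cdot S^ng\in L^\infty(Y,\nu)$ (noting that $(g\cdot S^ng)\circ\pi=(g\circ\pi)\cdot T^n(g\circ\pi)$) immediately yields the identity at level $k+1$.

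The second step combines this with the characterization. Assume $(X,\X,\mu,T)$ is a $d$-step pro-nilsystem and let $\pi\colon X\to Y$ be a factor map onto an (automatically ergodic) m.p.s.\ $(Y,\Y,\nu,S)$. Given any $g\in L^\infty(Y,\nu)$ with $\HK g\HK_{d+1,Y}=0$, the identity above gives $\HK g\circ\pi\HK_{d+1,X}=0$; since $X$ is a $d$-step pro-nilsystem, this forces $g\circ\pi=0$ in $L^\infty(X,\mu)$, whence $g=0$ in $L^\infty(Y,\nu)$. Therefore $\HK\cdot\HK_{d+1,Y}$ is a norm on $L^\infty(Y,\nu)$, which means $\ZZ_d(Y)=\Y$, and the characterization recalled at the outset shows that $(Y,\Y,\nu,S)$ is a $d$-step pro-nilsystem.

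The main obstacle is the inductive identity in the first step, and specifically the underlying measure-theoretic fact that $(\pi^{(2^k)})_*\mu^{[k]}=\nu^{[k]}$. This rests on showing that the relatively independent self-joining over $\I(T^{[k]})$ projects correctly onto the relatively independent self-joining over $\I(S^{[k]})$; the subtlety is that $(\pi^{(2^k)})^{-1}(\I(S^{[k]}))$ is only a sub-$\sigma$-algebra of $\I(T^{[k]})$, not equal to it, so one must verify that the disintegration computation still yields the right pushforward. Fortunately, Lemma \ref{lemmaE1} lets one bypass the direct analysis of $\mu^{[k]}$ and reduce everything to the recursion of seminorms along orbits, which is what makes the induction work cleanly.
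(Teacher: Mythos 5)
The paper offers no proof of this statement -- it is quoted verbatim from \cite[Proposition 4.11]{HK05} -- so there is nothing internal to compare against; your argument is a correct reconstruction of the standard one. Characterizing $d$-step pro-nilsystems by $\ZZ_d=\X$ (equivalently, $\HK\cdot\HK_{d+1}$ being a norm on $L^\infty$), proving $\HK g\HK_{k,Y}=\HK g\circ\pi\HK_{k,X}$ by induction via the recursion of Lemma \ref{lemmaE1} (which cleanly sidesteps the $(\pi^{(2^k)})_*\mu^{[k]}=\nu^{[k]}$ issue you flag, since $\I(T^{[k]})$ need not equal the pullback of $\I(S^{[k]})$), and then transferring non-degeneracy of the seminorm to the factor is exactly how Host--Kra argue, and every step checks out in the ergodic setting in which the theorem is meant.
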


%It is easy to verify that Theorem \ref{thm-nil-factor} holds for pro-nilsystems.

Using Thouvenot's  and Host-Kra's result  we can prove the following result:

\begin{thm}\label{thm-Thouvenot}
The factor of a direct product of an ergodic pro-nilsystem with a Bernoulli system is still a direct product of an ergodic pro-nilsystem with a Bernoulli system.
\end{thm}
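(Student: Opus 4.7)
The plan is to use the Pinsker factor to reduce Theorem \ref{thm-Thouvenot} to a combination of Thouvenot's theorem (Theorem \ref{thou}) and the Host--Kra theorem on factors of pro-nilsystems (Theorem \ref{thm-nil-factor}). The crucial observation is Lemma \ref{lem-pin}: pro-nilsystems have zero entropy while Bernoulli systems have trivial Pinsker factor, so the Pinsker factor of the product satisfies $\Pi(X \times Y, T \times S) = X$ whenever $X$ is an ergodic pro-nilsystem and $Y$ is Bernoulli.

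Given a factor $W$ of $X \times Y$, I would proceed in two steps. First, apply Thouvenot's theorem to $W$---viewed as a factor of the product of the zero-entropy system $X$ with the Bernoulli system $Y$---to obtain an isomorphism $W \cong Z \times B$ with $Z$ ergodic of zero entropy and $B$ Bernoulli. Second, identify $Z$ as a factor of $X$: by functoriality of the Pinsker factor (a zero-entropy sub-$\sigma$-algebra of $W$ pulls back to a zero-entropy sub-$\sigma$-algebra of $X \times Y$, hence sits inside $\Pi(X \times Y)$), $\Pi(W)$ is a factor of $\Pi(X \times Y) = X$. On the other hand, applying Lemma \ref{lem-pin} a second time to the product $Z \times B$ gives $\Pi(Z \times B) = Z$. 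Combining these, $Z \cong \Pi(W)$ is an ergodic factor of the pro-nilsystem $X$.

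This reduces Theorem \ref{thm-Thouvenot} to showing that any ergodic factor of a pro-nilsystem is a pro-nilsystem, which for $k$-step with $k$ finite is exactly Theorem \ref{thm-nil-factor}. The principal obstacle is the $\infty$-step case, which is not directly covered. I would handle it through the seminorm characterization: $\ZZ_\infty(Y) = Y$ iff every nonzero $f \in L^\infty(Y)$ has $\HK f \HK_k > 0$ for some $k$; the fact needed is that Host--Kra seminorms pass to factors, i.e., $\HK f \circ \rho \HK_k^A = \HK f \HK_k^{A'}$ for any factor map $\rho: A \to A'$ and $f \in L^\infty(A')$, whence the property $\ZZ_\infty(X) = X$ transfers to $\ZZ_\infty(Z) = Z$. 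This seminorm transfer reduces inductively to the measure identity $(\rho^{\otimes 2^k})_* \mu_A^{[k]} = \mu_{A'}^{[k]}$, which follows from the recursive definition of the $\mu^{[k]}$ together with the compatibility of invariant sub-$\sigma$-algebras under factor maps of ergodic systems; this is the one routine technical step I would not want to omit in a full write-up.
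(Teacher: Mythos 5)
Your proposal is correct and follows essentially the same route as the paper: apply Thouvenot's theorem to write the factor as (zero-entropy) $\times$ (Bernoulli), use the fact that the Pinsker factor of a product of ergodic systems is the product of the Pinsker factors to identify the zero-entropy part as a factor of the pro-nilsystem, and conclude via the Host--Kra factor theorem. The only difference is that you explicitly handle the $\infty$-step case through the seminorm characterization (using that $\HK f\circ\rho\HK_k=\HK f\HK_k$ under factor maps), a point the paper passes over silently when it cites Theorem \ref{thm-nil-factor}, which is stated only for $d\in\N$; your added argument is sound and in fact needed for the application to $\infty$-step pro-nilsystems.
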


\begin{proof}
Let $(X,\X,\mu,T)$ be an ergodic pro-nilsystem and let $(Y,\Y,\nu, S)$ be a Bernoulli system. Let $\pi: (X\times Y,\X\times \Y,\mu\times \nu, T\times S)\rightarrow (Z,\ZZ,\varrho, H)$ be a factor map. Note that $h_\mu(X,T)=0$. By  Theorem \ref{thou}, there is an ergodic m.p.s. $(X',X',\mu',T')$ with $h_{\mu'}(X',T')=0$ and a Bernoulli system $(Y',\Y'\nu',S')$ such that
$$(Z,\ZZ,\varrho, H) = (X'\times Y',\X'\times \Y',\mu'\times \nu', T'\times S').$$
It is well known that the Pinsker factor of the product of two ergodic m.p.s. is the product of their Pinsker factors (see for example \cite[Theorem 18.13.]{Glasner}). Thus
$$\Pi(X\times Y, T\times S)=\Pi(X,T)\times \Pi(Y,S), \quad \Pi(X'\times Y', T'\times S')=\Pi(X',T')\times \Pi(Y',S').$$
Since $\Pi(X,T)=\X$, $\Pi(X',T')=\X'$, and $(Y,\Y,\nu, S)$ and $(Y',\Y',\nu',S')$ are  Bornoulli systems,  it follows $\Pi(X\times Y, T\times S)=\X \times \{Y,\emptyset\}$ and $\Pi(X'\times Y', T'\times S')=\X'\times \{Y',\emptyset\}$. Thus $(X',\X',\mu',T')$ is a factor of $(X,\X,\mu, T)$. By Theorem \ref{thm-nil-factor}, $(X',\X',\mu',T')$ is a pro-nilsystem.
\end{proof}

\subsection{A lemma about topological models}\
\medskip

One can find the following result in \cite[Page 222]{F77}. Since in \cite{F77} no detailed proof was given, for completeness we give a proof in the appendix \ref{appendix2} (see Theorem \ref{top-model2}).

\begin{thm}\label{top-model}
Let $(X,\X,\mu,T)$ be a m.p.s. and let $f\in L^\infty(X,\mu)$. Then there is a t.d.s. $(X',T')$, a $T'$-invariant measure $\mu'$ and an isomorphism $\psi: (X,\X,\mu,T)\rightarrow (X',\B(X'),\mu',T')$
such that $f'= f\circ \psi^{-1} \pmod{\mu'}$, where $f'\in C(X')$.
\end{thm}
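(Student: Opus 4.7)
The plan is to absorb the bounded measurable function $f$ into the topology of a new model by a graph embedding. First I would replace $(X,\X,\mu,T)$ by a topological model: since $(X,\X,\mu,T)$ is Lebesgue, we may fix a compact metric space $\widehat{X}$, a homeomorphism $\widehat{T}$, a $\widehat{T}$-invariant Borel probability measure $\widehat{\mu}$, and an isomorphism $\phi:(X,\X,\mu,T)\to (\widehat{X},\B(\widehat{X}),\widehat{\mu},\widehat{T})$. Transport $f$ to $\widehat{f}=f\circ\phi^{-1}\in L^\infty(\widehat{X},\widehat{\mu})$, fix a Borel representative $\widehat{f}:\widehat{X}\to I$ where $I=[-\|f\|_\infty,\|f\|_\infty]$, and reduce the problem to producing a model of $(\widehat{X},\widehat{\mu},\widehat{T})$ in which $\widehat{f}$ is continuous.

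Second, I would form the graph embedding
\[
\Psi:\widehat{X}\longrightarrow \widehat{X}\times I^{\Z},\qquad \Psi(x)=\bigl(x,\bigl(\widehat{f}(\widehat{T}^{n}x)\bigr)_{n\in\Z}\bigr).
\]
Let $\sigma$ denote the shift on $I^{\Z}$, put $X'=\overline{\Psi(\widehat{X})}$ with the subspace topology from $\widehat{X}\times I^{\Z}$, set $T'=(\widehat{T}\times\sigma)\!\mid_{X'}$, and let $\mu'=\Psi_{*}\widehat{\mu}$. The identity $\Psi\circ\widehat{T}=(\widehat{T}\times\sigma)\circ\Psi$ shows that $\Psi(\widehat{X})$ is $(\widehat{T}\times\sigma)$-invariant, hence so is its closure $X'$ and likewise its preimage under $\widehat{T}\times\sigma$; thus $T'$ is a homeomorphism of the compact metric space $X'$, $\mu'$ is $T'$-invariant, and $\mu'\bigl(\Psi(\widehat{X})\bigr)=1$. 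Defining $f':X'\to\R$ by $f'(x,\w)=\w(0)$ produces a continuous function on $X'$ (the restriction of a continuous function on $\widehat{X}\times I^{\Z}$) satisfying $f'\circ\Psi=\widehat{f}$ pointwise on $\widehat{X}$.

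Third, I would verify that $\Psi$ is a measure-theoretic isomorphism. The map $\Psi$ is Borel because each coordinate $\widehat{f}\circ\widehat{T}^{n}$ is Borel, and it is injective since the first-coordinate projection $\pi_{1}:X'\to\widehat{X}$ is a continuous left inverse of $\Psi$. By the Lusin--Souslin theorem, a Borel injection between standard Borel spaces has Borel image and is a Borel isomorphism onto it; combining this with $\mu'(\Psi(\widehat{X}))=1$ shows that $\Psi$ is an isomorphism of probability spaces, and the intertwining identity promotes it to an isomorphism of measure preserving systems. Taking $\psi=\Psi\circ\phi$ yields the desired isomorphism $(X,\X,\mu,T)\cong (X',\B(X'),\mu',T')$, and $f'\circ\psi=f$ pointwise on $X$, which is more than the required $\mu'$-a.e. identity.

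The only subtle point will be confirming that $\Psi$ is a genuine measurable isomorphism rather than merely a measurable embedding; that is handled cleanly by the graph structure (the projection $\pi_{1}$ supplies a continuous one-sided inverse) together with the Lusin--Souslin theorem. Everything else---continuity and invertibility of $T'$, invariance of $X'$ and $\mu'$, and continuity of $f'$---is immediate from the product construction.
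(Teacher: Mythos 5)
Your proof is correct, but it takes a genuinely different route from the paper's. The paper (Appendix C, Theorem \ref{top-model2}) codes the system into $\{0,1\}^{\N}$ via indicator functions of a countable, $T$-invariant, point-separating family $\{A_n\}$ that is chosen to contain the preimages $g^{-1}(I_i)$ of a countable base of intervals under a Borel representative $g$ of $f$; the invariant measure on the symbolic space is then produced as a weak$^*$ limit of finite-dimensional marginals, and continuity of $f'$ falls out because those preimages become clopen cylinders. You instead first invoke the existence of a compact metric model with a homeomorphism (Furstenberg's theorem, cited in the paper as \cite[Theorem 5.1.5]{F}) and then perform a graph embedding $x\mapsto(x,(\widehat f(\widehat T^n x))_{n\in\Z})$ into $\widehat X\times I^{\Z}$, taking the orbit closure and the pushforward measure, with $f'$ the zeroth-coordinate projection. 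Both arguments hinge on Lusin--Souslin at exactly the same spot (an injective Borel map between standard Borel spaces is a Borel isomorphism onto a Borel image of full measure). Your version is more modular and avoids the weak$^*$-limit construction of $\mu'$, at the cost of using the compact-model theorem as a black box --- which is essentially what the paper's appendix re-derives from scratch; indeed your two-step scheme specializes the paper's simultaneous construction. One cosmetic overclaim: $f'\circ\psi=f$ holds only $\mu$-a.e., not pointwise, since $\widehat f$ is merely a Borel representative of $f\circ\phi^{-1}$ and $\phi$ itself is defined off a null set; but the a.e.\ identity is all the theorem requires.
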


\subsection{Relation between $\infty$-joining and Furstenberg systems of sequences}\
\medskip

Now %In this subsection
we prove the following result, which gives a positive answer to Problem \ref{problem2}. Moreover, the remark after the result gives
a positive answer to Problem \ref{problem1}.

\begin{thm}\label{thm-answer-Fran}
Let $(X,\X,\mu, T)$ be an ergodic m.p.s., and let $p$ be a non-linear integral polynomial with $p(0)=0$, and $f\in L^\infty(X,\mu)$. Then for any strictly increasing sequence of positive integers $\{N_k\}_{k\in \N}$ there is a subsequence $\{N'_k\}_{k\in \N}$  such that for almost every $x\in X$ the Furstenberg systems of $\big(f(T^{p(n)})x\big)_{n\in \Z}$ along $\{N'_k\}_{k\in \N}$ are ergodic and isomorphic to direct products of infinite-step pro-nilsystems and Bernoulli systems.
\end{thm}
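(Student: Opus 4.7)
\medskip

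\noindent\textbf{Proof proposal.} The plan is to realize the Furstenberg system of $\big(f(T^{p(n)}x)\big)_{n\in\Z}$ as a continuous-factor image of the $\infty$-joining system $\big(X^\Z,\mu^{(\infty)}_\A,\sigma\big)$ for $\A=\{p\}$, and then invoke the structural results already established together with Thouvenot's theorem.

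First, using Theorem \ref{top-model} we may replace $(X,\X,\mu,T)$ by an isomorphic topological model in which $X$ is a compact metric space and $f\in C(X)$; the isomorphism identifies the two sequences $\big(f(T^{p(n)}x)\big)_{n\in\Z}$ almost surely. Since $\deg p\geq 2$ and $p(0)=0$, the family $\A=\{p\}$ satisfies condition $(\spadesuit)$ with $s=0$, $d=1$. Consider the continuous coordinatewise evaluation
\[
F:X^\Z\longrightarrow \R^\Z,\qquad F\big((x_n)_{n\in\Z}\big)=\big(f(x_n)\big)_{n\in\Z},
\]
which intertwines the shifts. Observe that with $\w_x^\A=(T^{p(n)}x)_{n\in\Z}$ we have $F(\w_x^\A)=\big(f(T^{p(n)}x)\big)_{n\in\Z}$, and therefore
\[
\frac{1}{N}\sum_{n=1}^{N}\delta_{\sigma^{n}F(\w_x^\A)}
=F_*\!\left(\frac{1}{N}\sum_{n=1}^{N}\delta_{\sigma^{n}\w_x^\A}\right).
\]

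Given any strictly increasing sequence $\{N_k\}_{k\in\N}$, apply Theorem \ref{measure-sequence} together with Remark \ref{rem-mea2}(2): there is a subsequence $\{N'_k\}_{k\in\N}$ and a full-measure subset $X_0\subseteq X$ such that for every $x\in X_0$,
\[
\frac{1}{N'_k}\sum_{n=1}^{N'_k}\delta_{\sigma^{n}\w_x^\A}\longrightarrow \mu^{(\infty)}_{\A,x}\quad\text{weak}^*\text{ in }\mathcal{M}(X^\Z).
\]
Pushing forward through the continuous map $F$, the Furstenberg system of $\big(f(T^{p(n)}x)\big)_{n\in\Z}$ along $\{N'_k\}$ is precisely $\big(\R^\Z,F_*\mu^{(\infty)}_{\A,x},\sigma\big)$, which is a measure-theoretic factor of $\big(X^\Z,\mu^{(\infty)}_{\A,x},\sigma\big)$.

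It remains to identify the structure of this factor. By Theorem \ref{thm-ergodic-measures2}(2), $\mu^{(\infty)}_{\A,x}$ is $\sigma$-ergodic for $\mu$-a.e.\ $x$, so its factor $F_*\mu^{(\infty)}_{\A,x}$ is ergodic. By Remark \ref{rem-mea}(2), shrinking $X_0$ if necessary, for every $x\in X_0$ the system $\big(X^\Z,\mu^{(\infty)}_{\A,x},\sigma\big)$ is isomorphic to the direct product of the $\infty$-step pro-nilsystem $\big(Z_\infty^\Z,(\mu_\infty)^{(\infty)}_{\A,\pi_\infty(x)},\sigma\big)$ and the Bernoulli system $\big(U^\Z,\rho^\Z,\sigma\big)$. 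Since the pro-nilfactor is itself a factor of the ergodic system $\mu^{(\infty)}_{\A,x}$, it is ergodic too. Applying Theorem \ref{thm-Thouvenot} to the factor map onto $F_*\mu^{(\infty)}_{\A,x}$ we conclude that this Furstenberg system is isomorphic to a direct product of an $\infty$-step pro-nilsystem and a Bernoulli system, completing the argument.

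The main obstacle is a conceptual rather than technical one: we must be certain that the Furstenberg system is literally captured by the pushforward $F_*\mu^{(\infty)}_{\A,x}$ on the sequence space, which in turn is a genuine measure-theoretic factor to which Thouvenot's theorem applies; once this identification is made, all the hard work has been done in the preceding sections (namely, the structural Theorem \ref{measure-like}, the generic convergence of Theorem \ref{measure-sequence}, and the factor-rigidity Theorem \ref{thm-Thouvenot}).
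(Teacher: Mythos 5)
Your proposal is correct and follows essentially the same route as the paper's own proof: pass to a topological model with $f$ continuous via Theorem \ref{top-model}, push the generic weak$^*$ convergence of Theorem \ref{measure-sequence} forward through the continuous coordinatewise map induced by $f$ to identify the Furstenberg system as a factor of $(X^\Z,\mu^{(\infty)}_{\A,x},\sigma)$, and conclude with Theorem \ref{measure-like} (Remark \ref{rem-mea}(2)) and Theorem \ref{thm-Thouvenot}. The only cosmetic difference is that the paper takes the target of the evaluation map to be $I^\Z$ for a compact interval $I$ containing the range of $f$ rather than $\R^\Z$, which keeps the sequence space compact for the weak$^*$ argument.
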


\begin{proof}
By Theorem \ref{top-model}, we may assume that $(X,T)$ is a t.d.s. and $f$ is real-valued and continuous. Let $\A=\{p\}$. By Theorem \ref{measure-like}, Theorem \ref{measure-sequence}, Remark \ref{rem-mea}, and Remark \ref{rem-mea2}, for any strictly increasing sequence of positive integers $\{N_k\}_{k\in \N}$ there is a subsequence $\{N'_k\}_{k\in \N}$ and a subset $X_0\in \X$ with $\mu(X_0)=1$ such that for all $x\in X_0$,
  $$\lim_{k\to\infty}\frac{1}{N'_k}\sum_{n=1}^{N'_k}\d_{{\sigma}^n\w^\A_x}= {\mu}^{(\infty)}_{\A,x}, \quad w^*\  \text{in}\ M(X^\Z),$$
and $(X^\Z, \X^\Z, {\mu}^{(\infty)}_{\A,x}, \sigma)$ is isomorphic to the product of an $\infty$-step pro-nilsystem and a Bernoulli system.

Now let $f: X\rightarrow I$, where $I$ is a compact interval of $\R$. Let $z_x: \Z\rightarrow I$ be $z_x(n)=f(T^{p(n)}x)$. For $x\in X_0$, let
$$\phi: X^\Z \rightarrow \Omega=I^\Z, \quad (x_n)_{n\in \Z}\mapsto (f(x_n))_{n\in \Z}.$$
In particular, $\phi (\w_x^\A)=\phi \Big((T^{p(n)}x)_{n\in \Z}\Big)=(f(T^{p(n)}x))_{n\in \Z}=z_x$.

Since $f$ is continuous, we have that $\phi$ is continuous. Then
$$\phi_*: {\mathcal M}(X^\Z)\rightarrow {\mathcal M}(\Omega), \theta \mapsto \theta\circ \phi^{-1}$$
is continuous. Note that
$$\frac{1}{N'_k}\sum_{n=1}^{N'_k}\d_{{\sigma}^n\w^\A_x}\overset{\phi_*}\mapsto \frac{1}{N'_k}\sum_{n=1}^{N'_k}\d_{{\sigma}^nz_x}.$$
Since $\displaystyle \lim_{k\to\infty}\frac{1}{N'_k}\sum_{n=1}^{N'_k}\d_{{\sigma}^n\w^\A_x}= {\mu}^{(\infty)}_{\A,x}$ and $\phi_*$ is continuous, we have that $\displaystyle \lim_{k\to\infty}\frac{1}{N'_k}\sum_{n=1}^{N'_k}\d_{{\sigma}^nz_x}$ exists and denote it by $\nu_x$.
By the definition of Furstenberg systems, $(\Omega, \nu_x, \sigma)$ is the Furstenberg system associated with $z$ on $\{N'_k\}_{k\in \N}$. And
$$\phi: (X^\Z, \mu^{(\infty)}_{\A,x},\sigma)\rightarrow (\Omega, \nu_x, \sigma)$$
is a factor map. By Theorem \ref{thm-Thouvenot}, $(\Omega, \nu_x, \sigma)$ is a direct product of an ergodic pro-nilsystem with a Bernoulli shift. The proof is complete.
\end{proof}

\begin{rem}
Let $(X,\X,\mu, T)$ be an ergodic m.p.s., and let $p$ be a non-linear integral polynomial, and $f\in L^\infty(X,\mu)$.
By Remark \ref{rem-mea}, if one can prove any statement in Proposition \ref{prop-pw}, then in Theorem \ref{measure-like}, $\{N_i\}_{i\in \N}$ is $\{N\}_{N\in \N}$, that is,
      $$\lim_{N\to\infty}\frac{1}{N}\sum_{n=1}^{N}\d_{\widetilde{\sigma}^n\xi^\A_x}= \widetilde{\mu}^{(\infty)}_{\A,x}.$$
In this case for almost every $x\in X$ the sequence $\big(f(T^{p(n)})x\big)_{n\in \Z}$ has a unique Furstenberg system. Then by the same analysis above, one can show that it is ergodic and isomorphic to a direct product of an infinite-step pro-nilsystem and a Bernoulli system, which will give the positive answer to Problem \ref{problem1}.
\end{rem}

\section{Questions}\label{section-ques}

In this section, we give some remarks and more questions.

\subsection{$\mu^{(\infty)}_\A$ for general scheme $\A$}\
\medskip

Let $\phi_1, \phi_2,\ldots, \phi_d: \Z\rightarrow \Z$ be a family of functions taking integer values at the
integers, where $d\in \N$. Then in the same sprit of Section \ref{section-furstenberg-joining}, we can define a measure $\mu^{(\infty)}_\A$ with respect to $\A=\{\phi_1,\phi_2,\ldots, \phi_d\}$.

Let $\vec{\phi}=(\phi_1, \phi_2, \cdots, \phi_d)$ and let $T^{\vec{\phi}(n)}: X^d\rightarrow X^d$
\begin{equation}\label{}
  T^{\vec{\phi}(n)}\Big((x_1, x_2, \ldots, x_d)\Big)=(T^{\phi_1(n)}x_1, T^{\phi_2(n)}x_2,\ldots, T^{\phi_d(n)}x_d).
\end{equation}
For each $x\in X$, define
\begin{equation*}\label{}
  \w_x^\A\triangleq (T^{\vec{\phi}(n)}x^{\otimes d})_{n\in \Z}=(\ldots, T^{\vec{\phi}(-1)}(x^{\otimes d}), \underset{\bullet}{T^{\vec{\phi}(0)}(x^{\otimes d})},T^{\vec{\phi}(1)}(x^{\otimes d}),T^{\vec{\phi}(2)}(x^{\otimes d}), \cdots)\in (X^d)^{\Z}.
\end{equation*}
%which is equal to
%$$(\ldots, T^{\vec{\phi}(-1)}(x^{\otimes d}), \underset{\bullet}{T^{\vec{\phi}(0)}(x^{\otimes d})},T^{\vec{\phi}(1)}(x^{\otimes %d}),T^{\vec{\phi}(2)}(x^{\otimes d}), \cdots)\in (X^d)^{\Z}.$$
Let $\mu_\A ^{(\infty)}$ be the measure on $(X^d)^{\Z}$ such that there is some sequence $\{N_i\}_{i\in \N}$ with
\begin{equation*}
  \begin{split}
    \int_{(X^d)^{\Z}} F({\bf x}) d\mu_\A^{(\infty)}({\bf x})&
    = \lim_{i\rightarrow +\infty} \frac{1}{N_i}\sum_{n=0}^{N_i-1} \int_X F( \sigma^n(\w_x^\A)) d\mu(x)\\& =\lim_{i\rightarrow +\infty} \frac{1}{N_i}\sum_{n=0}^{N_i-1} \int_X F\big( (T^{\vec{\phi}(n+j)}x^{\otimes d})_{j\in \Z}\big) d\mu(x)
    \end{split}
\end{equation*}
for all $F\in C((X^d)^{\Z})$, equivalently, for all $l \in \N$ and all $f_j^\otimes = f_j^{(1)}\otimes f_j^{(2)} \otimes \cdots \otimes f_j^{(d)} \in C(X^d), -l \le j\le l$,
\begin{equation*}
\begin{split}
  & \quad \int_{(X^d)^{\Z}} \Big( \bigotimes_{j=-l}^l f_j^\otimes \Big)({\bf x})d\mu_\A^{(\infty)}({\bf x})\\ &=\lim_{i\rightarrow +\infty} \frac{1}{N_i}\sum_{n=0}^{N_i-1}\int_X \prod_{j=-l}^l f_j^\otimes (T^{\vec{\phi}(n+j)}x^{\otimes d}) d\mu(x) \\
  &= \lim_{i\rightarrow +\infty} \frac{1}{N_i}\sum_{n=0}^{N_i-1}\int_X \prod_{j=-l}^l f_j^{(1)}(T^{\phi_1(n+j}x) f_j^{(2)} (T^{\phi_2(n+j)}x) \cdots f_j^{(d)}(T^{\phi_d(n+j)}x) d\mu(x).
\end{split}
\end{equation*}

\begin{rem}
In general, for $\A=\{\phi_1,\phi_2,\ldots, \phi_d\}$ the measure $\mu^{(\infty)}_\A$ depends on the sequence $\{N_i\}_{i\in \N}$.
\end{rem}

\begin{ques}\label{ques1}
For what kind of $\A$, for $\mu$ a.e. $x\in X$, the m.p.s. $((X^d)^\Z, (\X^d)^\Z, {\mu}^{(\infty)}_{\A,x}, \sigma)$ is the product of a $\infty$-step  pro-nilsystem  and a Bernoulli system, where $\displaystyle \mu^{(\infty)}_\A=\int_X\mu^{(\infty)}_{\A,x} d\mu(x)$ is the ergodic decomposition  of $\mu_\A^{(\infty)}$ under $\sigma$.
\end{ques}

\subsection{}

Let ${\mathbb P}=\{\mathfrak{p}_1<\mathfrak{p}_2<\cdots \}$ be the set of primes.

\begin{ques}\label{ques2}
Let $(X,\X,\mu,T)$ be a m.p.s. Then for all non-constant integral polynomials $q_1,\ldots, q_d$ and for all $f_1, \ldots, f_d \in L^\infty(X,\mu)$, do the averages
$$\frac{1}{N}\sum_{n=0}^{N-1} f_1(T^{q_1(\mathfrak{p}_{n})}x)\cdots f_d(T^{q_d(\mathfrak{p}_{n+d-1})}x)$$
converge in $L^2(X,\mu)$? a.s.?

In particular, do the averages
$$\frac{1}{N}\sum_{n=0}^{N-1} f_1(T^{\mathfrak{p}_{n}}x)\cdots f_d(T^{\mathfrak{p}_{n+d-1}}x)$$
converge in $L^2(X,\mu)$? a.s.?
\end{ques}

If Question \ref{ques2} has a positive answer, then for $\A=\{\phi_1(n)=q_1(\mathfrak{p}_n), \ldots, \phi_d(n)=q_d(\mathfrak{p}_n)\}$, one can define the corresponding m.p.s. $((X^d)^\Z, (\X^d)^\Z, {\mu}^{(\infty)}_{\A}, \sigma)$.
It is interesting to know its structure and its applications.

\appendix

\section{From continuous functions to measurable functions}\label{appendix3}

In the paper, we meet the following situation frequently: we first build some convergence results %of multiple ergodic average
for continuous functions, and then pass them to bounded measurable ones. The proof of this procedure is non-trivial but standard. In this appendix, we give an example to explain the way to do this for completeness. Moreover, we also prove some results mentioned in the previous text.% pass to bounded measurable functions. We write this appendix for

\subsection{From continuous functions to measurable ones}

First we need the following simple but basic fact:

\begin{lem}\label{lem-product-1}
Let $\{a_i\}, \{b_i\}\subseteq \C$. Then
\begin{equation*}\label{}
\prod_{i=1}^k a_i-\prod_{i=1}^k b_i=(a_1-b_1)b_2\ldots b_k+
a_1(a_2-b_2)b_3\ldots b_k +\ldots +a_1\cdots a_{k-1}(a_k-b_k).
\end{equation*}
\end{lem}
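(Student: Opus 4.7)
The plan is to prove this identity by a standard telescoping argument. I would introduce the intermediate products
\[
c_j = a_1 a_2 \cdots a_j \cdot b_{j+1} b_{j+2} \cdots b_k, \qquad j=0,1,\ldots,k,
\]
with the convention that empty products equal $1$, so that $c_0 = b_1 b_2 \cdots b_k = \prod_{i=1}^k b_i$ and $c_k = a_1 a_2 \cdots a_k = \prod_{i=1}^k a_i$. Then the left-hand side of the identity is simply $c_k - c_0$, which can be written as the telescoping sum
\[
c_k - c_0 = \sum_{j=1}^{k} (c_j - c_{j-1}).
\]

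The key step is to observe that each consecutive difference factors nicely:
\[
c_j - c_{j-1} = a_1\cdots a_{j-1}\bigl(a_j - b_j\bigr) b_{j+1}\cdots b_k,
\]
since the only change from $c_{j-1}$ to $c_j$ is replacing the factor $b_j$ (in position $j$) by $a_j$, while the factors $a_1,\ldots,a_{j-1}$ and $b_{j+1},\ldots,b_k$ are common to both. Substituting this into the telescoping sum yields exactly the stated identity.

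As an alternative, one could proceed by induction on $k$. The base case $k=1$ is trivial, and for the inductive step one writes
\[
\prod_{i=1}^{k+1} a_i - \prod_{i=1}^{k+1} b_i \;=\; \Bigl(\prod_{i=1}^{k} a_i\Bigr)\bigl(a_{k+1}-b_{k+1}\bigr) + \Bigl(\prod_{i=1}^{k} a_i - \prod_{i=1}^{k} b_i\Bigr) b_{k+1},
\]
then applies the induction hypothesis to the second term. Either way, there is no real obstacle: the identity is elementary, and the main care needed is merely bookkeeping of indices and handling the empty-product conventions at $j=1$ and $j=k$ (where the factors $a_1\cdots a_{j-1}$ or $b_{j+1}\cdots b_k$ are empty and should be read as $1$). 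Given how this lemma is used later in the appendix, namely to compare $\prod_i f_i$ and $\prod_i g_i$ when each $f_i$ is approximated by $g_i$, the telescoping form is the natural statement to record.
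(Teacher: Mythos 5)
Your telescoping argument is correct, and it is the standard (indeed essentially the only natural) proof of this identity; the paper simply states the lemma as a "simple but basic fact" without proof, so your write-up supplies exactly the implicit argument. No issues.
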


By this fact we have
\begin{lem}\label{lem-product}
Let $f_i,f_i'\in L^\infty(X_i,\X_i,\mu_i)$, $i=1,\ldots, k$ and assume that all $|f_i|, |f'_i|$ bounded by $M>0$. Then for any standard measure $\lambda$ of $\{(X_i,\X_i,\mu_i)\}_{i=1}^k$,
\begin{equation*}
\int_{\prod_{i=1}^k X_i} \left|f_1\otimes f_2\otimes \cdots \otimes f_k-f'_1\otimes f_2'\otimes \cdots \otimes f'_k\right|d\lambda \le M^{k-1} \sum_{i=1}^k \int_{X_i} |f_i-f'_i| d\mu_i.
\end{equation*}
\end{lem}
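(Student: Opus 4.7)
The plan is to use the telescoping identity from Lemma~\ref{lem-product-1} pointwise on the product space, then integrate, exploiting the standardness of $\lambda$ to reduce each resulting term to an integral over a single factor.

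First I would apply Lemma~\ref{lem-product-1} with $a_i=f_i(x_i)$ and $b_i=f'_i(x_i)$ at each point ${\bf x}=(x_1,\dots,x_k)\in\prod_{i=1}^k X_i$ to write
\begin{equation*}
(f_1\otimes\cdots\otimes f_k)({\bf x})-(f'_1\otimes\cdots\otimes f'_k)({\bf x})=\sum_{i=1}^k\Big(\prod_{j<i}f_j(x_j)\Big)\big(f_i(x_i)-f'_i(x_i)\big)\Big(\prod_{j>i}f'_j(x_j)\Big).
\end{equation*}
Taking absolute values, applying the triangle inequality, and using $|f_j|,|f'_j|\le M$ for the factors with $j\ne i$, one obtains the pointwise bound
\begin{equation*}
\big|(f_1\otimes\cdots\otimes f_k)({\bf x})-(f'_1\otimes\cdots\otimes f'_k)({\bf x})\big|\le M^{k-1}\sum_{i=1}^k|f_i(x_i)-f'_i(x_i)|.
\end{equation*}

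Next I would integrate this inequality against $\lambda$ on $\prod_{i=1}^k X_i$. For each fixed $i$, the function $(x_1,\dots,x_k)\mapsto|f_i(x_i)-f'_i(x_i)|$ depends only on the $i$-th coordinate, so by the defining property of a standard measure (its projection onto $X_i$ equals $\mu_i$) we have
\begin{equation*}
\int_{\prod_{j=1}^k X_j}|f_i(x_i)-f'_i(x_i)|\,d\lambda({\bf x})=\int_{X_i}|f_i-f'_i|\,d\mu_i.
\end{equation*}
Summing over $i$ yields the desired inequality.

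The argument is entirely routine; the only point that requires a moment's care is the measurability and boundedness of the tensor products $f_1\otimes\cdots\otimes f_k$ and $f'_1\otimes\cdots\otimes f'_k$ as elements of $L^\infty(\prod X_i,\lambda)$, which was already noted right after \eqref{de-otimes-2} using that $\lambda$ is standard. No genuine obstacle is expected.
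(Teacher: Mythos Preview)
Your proof is correct and follows exactly the approach the paper intends: the paper simply states that Lemma~\ref{lem-product} follows from the telescoping identity in Lemma~\ref{lem-product-1}, and your argument supplies precisely those routine details (pointwise telescoping, the bound $M^{k-1}$ on the extraneous factors, and the use of standardness to reduce each term to an integral over a single coordinate).
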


\begin{thm}\label{thm-A1}
Let $T_1,T_2, \cdots, T_d$ be invertible measure preserving transformations which act on a Lebesgue space $(X,\X,\mu)$ ($X$ is a compact metric space), and let $a_1, a_2,\cdots, a_d: \Z\rightarrow \Z$ be maps. If there is a family $\{\lambda_x\}_{x\in X}$ of Borel probability measures on $X^d$ such that
\begin{itemize}
 \item $\displaystyle \lambda=\int_X \lambda_x d \mu(x)$ is a Borel standard probability measure on $X^d$.
  \item for all $g_1,\ldots, g_d\in C(X)$, the limit
\begin{equation*}
 \lim_{N\to\infty} \frac{1}{N}\sum_{n=0}^{N-1} g_1(T_1^{a_1(n)}x)g_2(T_2^{a_2(n)}x)\cdots g_d(T_2^{a_d(n)}x)=\int_{X^d} g_1\otimes g_2\otimes \cdots \otimes g_d d\lambda_x
\end{equation*}
holds in $L^2(X,\mu)$,
\end{itemize}
then for all $f_1,\ldots, f_d\in L^\infty(X,\mu)$,
\begin{enumerate}
 \item for $\mu$-a.e. $x\in X$, $\bigotimes_{j=1}^d f_j\in L^\infty(X^d,\lambda_x)$,
  \item the limit
\begin{equation}\label{AA1}
 \lim_{N\to\infty} \frac{1}{N}\sum_{n=0}^{N-1} f_1(T_1^{a_1(n)}x)f_2(T_2^{a_2(n)}x)\cdots f_d(T_2^{a_d(n)}x)=\int_{X^d} f_1\otimes f_2\otimes \cdots \otimes f_d d\lambda_x
\end{equation}
holds in $L^2(X,\mu)$.
\end{enumerate}
\end{thm}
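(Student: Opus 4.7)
The plan is to prove (1) directly from the hypothesis that $\lambda$ is a standard measure, and then deduce (2) by approximating each $f_j$ in $L^2(X,\mu)$ by continuous functions and controlling the three error terms produced by a triangle inequality.

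For (1), let $M_j = \|f_j\|_{L^\infty(X,\mu)}$ and put $E_j = \{y\in X: |f_j(y)| > M_j\}$, so $\mu(E_j)=0$. Since $\lambda$ is standard, its $j$-th coordinate projection equals $\mu$, hence the cylinder $C_j = X^{j-1}\times E_j\times X^{d-j}$ satisfies $\lambda(C_j) = 0$. From the disintegration, $\int \lambda_x(C_j)\,d\mu(x) = \lambda(C_j) = 0$, so $\lambda_x(C_j) = 0$ for $\mu$-a.e.\ $x$. Intersecting over $j$ produces a full-measure set of $x$ on which $|f_1\otimes\cdots\otimes f_d| \leq \prod_j M_j$ holds $\lambda_x$-a.e., which gives (1).

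For (2), set $M = \max_j M_j$. Using density of $C(X)$ in $L^2(X,\mu)$ followed by truncation, for each $k$ pick $g_j^{(k)}\in C(X)$ with $\|g_j^{(k)}\|_\infty \leq M_j$ and $\varepsilon_k := \max_j \|g_j^{(k)} - f_j\|_{L^2(\mu)} \to 0$. Writing
$$A_N(\mathbf{h},x) = \frac{1}{N}\sum_{n=0}^{N-1}\prod_{j=1}^d h_j(T_j^{a_j(n)}x), \qquad I(\mathbf{h},x) = \int_{X^d}\bigotimes_{j=1}^d h_j\, d\lambda_x,$$
one splits
$$A_N(\mathbf{f}) - I(\mathbf{f}) = \bigl(A_N(\mathbf{f}) - A_N(\mathbf{g}^{(k)})\bigr) + \bigl(A_N(\mathbf{g}^{(k)}) - I(\mathbf{g}^{(k)})\bigr) + \bigl(I(\mathbf{g}^{(k)}) - I(\mathbf{f})\bigr).$$
The middle term tends to $0$ in $L^2(\mu)$ as $N\to\infty$ by hypothesis.

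For the first difference, Lemma \ref{lem-product-1} applied pointwise gives $|A_N(\mathbf{f}) - A_N(\mathbf{g}^{(k)})| \leq M^{d-1}\sum_j \frac{1}{N}\sum_n |f_j-g_j^{(k)}|\circ T_j^{a_j(n)}$; since each $T_j$ preserves $\mu$, Jensen's inequality bounds its $L^2(\mu)$-norm by $dM^{d-1}\varepsilon_k$, uniformly in $N$. For the third difference, (1) yields the pointwise bound $|I(\mathbf{g}^{(k)}) - I(\mathbf{f})| \leq 2M^d$, while Lemma \ref{lem-product} gives $\|I(\mathbf{g}^{(k)}) - I(\mathbf{f})\|_{L^1(\mu)} \leq M^{d-1}\sum_j \|f_j-g_j^{(k)}\|_{L^1(\mu)} \leq dM^{d-1}\varepsilon_k$; combining these via $\|\cdot\|_{L^2}^2 \leq \|\cdot\|_{L^\infty}\|\cdot\|_{L^1}$ yields an $L^2(\mu)$-bound vanishing with $k$. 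Sending $N\to\infty$ and then $k\to\infty$ completes the proof. No serious obstacle is anticipated; the (routine) measurability of $x\mapsto I(\mathbf{f},x)$ is inherited from the continuous case by a standard monotone class argument.
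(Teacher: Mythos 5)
Your proposal is correct, and the overall architecture coincides with the paper's: part (1) is proved by pushing the full-measure sets $\{|f_j|\le \|f_j\|_\infty\}$ through the standardness of $\lambda$ and the disintegration, and part (2) uses exactly the same three-term triangle-inequality decomposition, with the first term controlled by Lemma \ref{lem-product-1} plus $T_j$-invariance and the middle term handled by hypothesis.

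The one place you diverge is the third term $\|I(\mathbf{g}^{(k)},x)-I(\mathbf{f},x)\|_{L^2(\mu)}$. The paper forms the relatively independent self-joining $\widetilde{\lambda}=\int_X \lambda_x\times\lambda_x\,d\mu(x)$, rewrites the squared $L^2(\mu)$-norm as $\int_{X^d\times X^d}(\bigotimes f_j-\bigotimes g_j)\otimes(\bigotimes f_j-\bigotimes g_j)\,d\widetilde{\lambda}$, and then applies Lemma \ref{lem-product} to the standard measure $\widetilde{\lambda}$, getting a bound of order $\sqrt{2d\varepsilon}$. You instead combine the a.e.\ pointwise bound $|I(\mathbf{g}^{(k)},x)-I(\mathbf{f},x)|\le 2M^d$ (which is where your part (1) gets used) with the $L^1(\mu)$ bound coming from Lemma \ref{lem-product} applied to $\lambda$ itself, via $\|h\|_{L^2}^2\le\|h\|_{L^\infty}\|h\|_{L^1}$. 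Both yield an error vanishing with $\varepsilon_k$; yours is slightly more elementary in that it avoids introducing the auxiliary joining $\widetilde{\lambda}$ and checking its standardness, at the cost of leaning explicitly on the uniform bound from (1). One cosmetic remark: the bound on the first term is the triangle inequality for the $L^2$-norm of the Ces\`aro average rather than Jensen's inequality, but the estimate $dM^{d-1}\varepsilon_k$ uniform in $N$ is exactly right.
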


\begin{proof}
Let $f_{1}, f_{2}, \ldots, f_d\in L^\infty(X,\mu)$ and let $\ep>0$.
Without loss of generality, we assume that for each $1\le j\le d$,
$\|f_j\|_\infty\le 1$. Choose continuous functions $g_j$ such that
$\|g_j\|_\infty\le 1$ and $\|f_j-g_j\|_{L^1(\mu)}\le \|f_j-g_j\|_{L^2(\mu)} < \ep$ for all $1\le j\le
d$.

\medskip

First we show (1). Let $A_j=\{x\in X: |f_j(x)|\le \|f_j\|_\infty\le 1\}$ for $j\in \{1,\ldots, d\}$. Then $\mu(A_j)=1$ for all $j\in \{1,\ldots, d\}$. Note that
$$\prod_{j=1}^dA_j\subseteq \{(x_1,\ldots,x_d)\in X^d: \bigotimes_{j=l}^d f_j(x_1,\ldots,x_d)=\prod_{j=1}^d f_j(x_j)\le 1 \}.$$
Since $\lambda$ is a standard measure on $X^d$, for each $j\in \{1,\ldots,d\}$ $\lambda(P_j^{-1}(A_j))=\mu(A_j)=1$, where $P_j: X^d\rightarrow X$ is the projection to $j$-th coordinate. Thus $\lambda(\prod_{j=1}^dA_j)=\lambda(\bigcap_{j=1}^d P_j^{-1}(A_j))=1 $. In particular, $\big\|\bigotimes_{j=l}^d f_j\big \|_{L^\infty(X^d,\lambda)}\le 1$
 and
$\bigotimes_{j=l}^d f_j\in L^\infty(X^d,\lambda)$.
And by
\begin{equation*}
  \int_X \lambda_x(\prod_{j=1}^dA_j)d\mu(x)=\lambda(\prod_{j=1}^dA_j)=1,
\end{equation*}
we have for $\mu$-a.e. $x\in X$, $\lambda_x(\prod_{j=1}^dA_j)=1$. It follows that for $\mu$-a.e. $x\in X$, $\big\|\bigotimes_{j=l}^d f_j\big \|_{L^\infty(X^d,\lambda_x)}\le 1$ and
$\bigotimes_{j=1}^d f_j\in L^\infty(X^d,\lambda_x)$.

Now we show (2). We have
{\small \begin{equation}\label{yxd1}
\begin{split}
&  \left \| \frac{1}{N}\sum_{n=0}^{N-1} \prod_{j=1}^d f_j(T_j^{a_j(n)}x) -\int_{X^{d}} \bigotimes_{j=1}^d f_jd\lambda_x  \right \|_{L^2(\mu)}\\
& \le \left \|
\frac{1}{N}\sum_{n=0}^{N-1} \prod_{j=1}^d f_j(T_j^{a_j(n)}x)-\frac{1}{N}\sum_{n=0}^{N-1} \prod_{j=1}^d g_j(T_j^{a_j(n)}x) \right \|_{L^2(\mu)} + \\ & \quad \left \| \frac{1}{N}\sum_{n=0}^{N-1}  \prod_{j=1}^d g_j(T_j^{a_j(n)}x)-\int_{X^{d}} \bigotimes_{j=1}^d g_j d\lambda_x   \right \|_{L^2(\mu)}+\left \| \int_{X^{d}} \bigotimes_{j=1}^d g_j d\lambda_x -\int_{X^{d}}  \bigotimes_{j=1}^d f_j d\lambda_x \right \|_{L^2(\mu)}  .
\end{split}
\end{equation}}

Since $\mu$ is $T_j$-invariant, for  $1\le j\le d$ and $N\in \N$, we have
\begin{equation*}\label{yxd4}
\begin{split}
   &\quad  \left\| \frac {1}{N} \sum_{n=0}^{N-1} \Big | g_j(T_j^{a_j(n)}x)-f_j(T_j^{a_j(n)}x)\Big | \right\|_{L^2(\mu)} \le \frac {1}{N} \sum_{n=0}^{N-1}\left\|  g_j(T_j^{a_j(n)}x)-f_j(T_j^{a_j(n)}x) \right\|_{L^2(\mu)} \\ &= \frac {1}{N} \sum_{n=0}^{N-1}\Big( \int_X \Big ( g_j(T_j^{a_j(n)}x)-f_j(T_j^{a_j(n)}x)\Big )^2 d\mu(x)\Big)^{1/2}\\
   &=\frac {1}{N} \sum_{n=0}^{N-1}\Big( \int_X \Big ( g_j(x)-f_j(x)\Big )^2 d\mu(x)\Big)^{1/2}=
\|g_j- f_j\|_{L^2(\mu)}.
\end{split}
\end{equation*}
Hence by Lemma \ref{lem-product-1},
\begin{equation}\label{yxd5}
\begin{split}
&  \quad \limsup_{N\to \infty} \Big \| \frac{1}{N}\sum_{n=0}^{N-1}\prod_{j=1}^d g_j(T_j^{a_j(n)}x) -
\frac{1}{N}\sum_{n=0}^{N-1} \prod_{j=1}^d f_j(T_j^{a_j(n)}x) \Big \|_{L^2(\mu)}\\
& \le \sum_{j=1}^d \Big( \lim_{N\to\infty } \Big\| \frac {1}{N} \sum_{n=0}^{N-1} \Big | g_j(T_j^{a_j(n)}x)-f_j(T_j^{a_j(n)}x)\Big | \Big \|_{L^2(\mu)} \Big ) \\
& = \sum_{j=1}^d \|g_j-f_j\|_{L^2(\mu)}\le d \ep .
\end{split}
\end{equation}

Since $g_{l}, g_{2}, \ldots g_d\in C(X)$, by the assumption of the theorem,  we have
\begin{equation}\label{yxd3}
\lim_{N\to\infty} \Big\| \int_{X^{d}} \bigotimes_{j=1}^d g_j d\lambda_x -
\frac{1}{N}\sum_{n=0}^{N-1}  \prod_{j=1}^d g_j(T_j^{a_j(n)}x)  \Big \|_{L^2(\mu)}=0.
\end{equation}

Let $$\widetilde{\lambda}=\lambda\times_X\lambda=\int_X \lambda_x \times \lambda_x d \mu(x).$$
Sine $\lambda$ is standard on $X^d$, the measure $\widetilde{\lambda}$ is standard on $X^d\times X^d$.
Then
\begin{equation*}
  \begin{split}
   & \Big \| \int_{X^d} \bigotimes_{j=1}^d f_j d\lambda_x-
\int_{X^d} \bigotimes_{j=1}^d g_j d\lambda_x \Big \|_{L^2(\mu)}^2\\
= & \int_X\Big( \int_{X^d} \big(\bigotimes_{j=1}^d f_j - \bigotimes_{j=1}^d g_j\big) d\lambda_x \Big)^2 d\mu(x)\\
=& \int_X\Big( \int_{X^d\times X^d} \big(\bigotimes_{j=1}^d f_j - \bigotimes_{j=1}^d g_j\big)\otimes \big(\bigotimes_{j=1}^d f_j - \bigotimes_{j=1}^d g_j\big) d\lambda_x\times \lambda_x \Big) d\mu(x)\\
=& \int_{X^d\times X^d} \big(\bigotimes_{j=1}^d f_j - \bigotimes_{j=1}^d g_j\big)\otimes \big(\bigotimes_{j=1}^d f_j - \bigotimes_{j=1}^d g_j\big) d\widetilde{\lambda}.
   \end{split}
\end{equation*}

Because $\lambda$ is a standard measure, by Lemma \ref{lem-product} we have
\begin{equation*}
  \begin{split}
   & \int_{X^d\times X^d} \big(\bigotimes_{j=1}^d f_j - \bigotimes_{j=1}^d g_j\big)\otimes \big(\bigotimes_{j=1}^d f_j - \bigotimes_{j=1}^d g_j\big) d\widetilde{\lambda}\\
   \le & \int_{X^d\times X^d} \big|\bigotimes_{j=1}^d f_j - \bigotimes_{j=1}^d g_j\big | \otimes \Big(\big|\bigotimes_{j=1}^d f_j\big|+ \big| \bigotimes_{j=1}^d g_j\big|\Big) d\widetilde{\lambda}\\
   \le & 2 \int_{X^d\times X^d} \big|\bigotimes_{j=1}^d f_j - \bigotimes_{j=1}^d g_j\big | \otimes \bigotimes_{j=1}^d 1 d\widetilde{\lambda}=  2 \int_{X^d} \big|\bigotimes_{j=1}^d f_j - \bigotimes_{j=1}^d g_j\big | d{\lambda}\\
   \le & 2 \sum_{j=1}^d\int_X |f_j-g_j| d\mu\le 2\sum_{j=1}^d \|f_j-g_j\|_{L^2(\mu)}\le 2d\ep.
   \end{split}
\end{equation*}

It follows that
\begin{equation}\label{yxd2}
\left \| \int_{X^d} \bigotimes_{j=1}^d f_j d\lambda_x-
\int_{X^d} \bigotimes_{j=1}^d g_j d\lambda_x \right \|_{L^2(\mu)}
\le \sqrt{2d\ep} .
\end{equation}

So combining (\ref{yxd1})-(\ref{yxd2}), we have
$$ \limsup_{N\to\infty } \left \| \int_{X^{d}} \bigotimes_{j=1}^d f_jd\lambda_x- \frac{1}{N}\sum_{n=0}^{N-1} \prod_{j=1}^d f_j(T_j^{a_j(n)}x)  \right \|_{L^2(\mu)} \le d\ep +  \sqrt{2d\ep}. $$
Since $\ep$ is arbitrary, the proof is complete.
\end{proof}

\begin{rem}
In Theorem \ref{thm-A1}, if we only want to show that  $\displaystyle \lim_{N\to\infty} \frac{1}{N}\sum_{n=0}^{N-1} T_1^{a_1(n)}g_1\cdots T_2^{a_d(n)}g_d$ holding in $L^2(\mu)$ for continuous functions implies that it holds for bounded measure functions, then the proof will be much simpler.
\end{rem}

\begin{de}
Let $(X,\X,\mu, T)$ be m.p.s., and let $a: \Z\rightarrow \Z$ be a map. We say that $T$ {\em satisfies the condition (B) w.r.t. $a$ }, if for all $p>1$ there exists a constant $C(p, a)>0$ such that for all $f\in L^p(X, \mu)$,
\begin{equation*}
\Big \| \sup_{N>0} \big| \frac{1}{N}\sum_{n=0}^{N-1}T^{a(n)}f\big|\Big \|_{L^p(\mu)}\le C(p,a) \|f\|_{L^p(\mu)}. \tag{B}
\end{equation*}
\end{de}

By Theorem \ref{Bouragain}, any m.p.s. $(X,\X,\mu,T)$ satisfies the condition (B) w.r.t. $p$, where $p: \Z\rightarrow \Z$ is a polynomial.

\begin{thm}\label{thm-A2}
Let $T_1,T_2, \cdots, T_d$ be invertible measure preserving transformations which act on a Lebesgue space $(X,\X,\mu)$ ($X$ is a compact metric space), and let $a_1, a_2,\cdots, a_d: \Z\rightarrow \Z$ be maps. Assume that for each $i\in \{1,2,\ldots, d\}$, $T_i$ satisfies the condition (B) w.r.t. $a_i$.
If there is a family $\{\lambda_x\}_{x\in X}$ of Borel probability measures on $X^d$ such that
\begin{itemize}
 \item $\displaystyle \lambda=\int_X \lambda_x \mu(x)$ is a Borel standard probability measures on $X^d$.
  \item for all $g_1,\ldots, g_d\in C(X)$, the limit
\begin{equation*}
 \lim_{N\to\infty} \frac{1}{N}\sum_{n=0}^{N-1} g_1(T_1^{a_1(n)}x)g_2(T_2^{a_2(n)}x)\cdots g_d(T_2^{a_d(n)}x)=\int_{X^d} g_1\otimes g_2\otimes \cdots \otimes g_d d\lambda_x
\end{equation*}
holds for $\mu$-a.e. $x\in X$,
\end{itemize}
then for all $f_1,\ldots, f_d\in L^\infty(X,\mu)$,  the limit
\begin{equation}\label{AA2}
 \lim_{N\to\infty} \frac{1}{N}\sum_{n=0}^{N-1} f_1(T_1^{a_1(n)}x)f_2(T_2^{a_2(n)}x)\cdots f_d(T_2^{a_d(n)}x)=\int_{X^d} f_1\otimes f_2\otimes \cdots \otimes f_d d\lambda_x
\end{equation}
exists for $\mu$-a.e. $x\in X$.
\end{thm}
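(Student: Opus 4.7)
The plan is to establish the pointwise analogue of Theorem~\ref{thm-A1} via a standard Banach principle argument, where the condition~(B) for each $T_i$ with respect to $a_i$ provides the maximal inequality needed to transfer a.e.\ convergence from continuous functions to bounded measurable ones.

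\textbf{First}, I would reduce to the case $\|f_j\|_\infty \le 1$ and observe that since the averages over continuous $g_j$'s are a priori bounded by $\prod_j \|g_j\|_\infty$, the a.e.\ convergence hypothesis combined with the dominated convergence theorem yields $L^2$ convergence of these averages to $\int_{X^d} g_1\otimes\cdots\otimes g_d\, d\lambda_x$. Hence the hypotheses of Theorem~\ref{thm-A1} are met, and we already know that (\ref{AA2}) holds in $L^2(X,\mu)$ for all $f_j\in L^\infty(X,\mu)$. Thus, if we succeed in proving that the averages $S_N(f)(x):=\frac{1}{N}\sum_{n=0}^{N-1}\prod_{j=1}^d f_j(T_j^{a_j(n)}x)$ converge $\mu$-a.e.\ to \emph{some} limit, this limit must coincide with $\int_{X^d} f_1\otimes\cdots\otimes f_d\,d\lambda_x$ almost everywhere.

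\textbf{Second}, I would establish the crucial maximal inequality. Fix $p>1$. For $f_j,g_j\in L^\infty(X,\mu)$ with $\|f_j\|_\infty,\|g_j\|_\infty\le 1$, apply Lemma~\ref{lem-product-1} pointwise and take absolute values to obtain
\begin{equation*}
|S_N(f)(x) - S_N(g)(x)| \;\le\; \sum_{j=1}^{d}\frac{1}{N}\sum_{n=0}^{N-1}|f_j-g_j|(T_j^{a_j(n)}x).
\end{equation*}
Taking $\sup_N$ on the left, using subadditivity of sup, and then applying condition~(B) for each $T_j$ with respect to $a_j$ yields
\begin{equation*}
\Bigl\|\sup_N |S_N(f)-S_N(g)|\Bigr\|_{L^p(\mu)} \;\le\; \sum_{j=1}^d C(p,a_j)\,\|f_j-g_j\|_{L^p(\mu)}.
\end{equation*}

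\textbf{Third}, I would carry out the Banach principle. Given $f_1,\dots,f_d\in L^\infty(X,\mu)$ with $\|f_j\|_\infty\le 1$, for every $\varepsilon>0$ choose continuous $g_j\in C(X)$ with $\|g_j\|_\infty\le 1$ and $\|f_j-g_j\|_{L^p(\mu)}<\varepsilon$ (this is possible since simple functions are dense in $L^p$ and continuous functions are dense on a compact metric space with a regular measure). By hypothesis $S_N(g)(x)$ converges for $\mu$-a.e.\ $x$, so in particular $\limsup_{N,M\to\infty}|S_N(g)(x)-S_M(g)(x)|=0$ a.e. Writing
\begin{equation*}
|S_N(f)-S_M(f)|\;\le\;|S_N(f)-S_N(g)|+|S_N(g)-S_M(g)|+|S_M(g)-S_M(f)|,
\end{equation*}
we obtain $\limsup_{N,M\to\infty}|S_N(f)-S_M(f)|\le 2\sup_N|S_N(f)-S_N(g)|$ a.e. By Markov's inequality and Step~2, for every $\delta>0$,
\begin{equation*}
\mu\Bigl\{x: \limsup_{N,M\to\infty}|S_N(f)(x)-S_M(f)(x)|>2\delta\Bigr\} \;\le\; \delta^{-p}\Bigl(\sum_j C(p,a_j)\varepsilon\Bigr)^p.
\end{equation*}
Letting $\varepsilon\to 0$ shows that the limsup vanishes $\mu$-a.e.; thus $\{S_N(f)(x)\}_{N\in\N}$ is Cauchy for $\mu$-a.e.\ $x$, so it converges $\mu$-a.e. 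Combined with the $L^2$ identification from Step~1, this completes the proof.

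\textbf{The main obstacle} I anticipate is purely bookkeeping in Step~2: one must verify that condition~(B), stated for a single function $f$ and iterated Koopman-style averages $T^{a(n)}f$, applies cleanly to the non-negative function $|f_j-g_j|$ in our context (it does, since (B) is stated for all $f\in L^p$). No deep new ideas are required beyond the careful interplay between the maximal inequality and the density of $C(X)$ in $L^p(X,\mu)$.
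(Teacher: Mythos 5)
Your proof is correct, and its skeleton — a.e.\ convergence for continuous functions, a maximal inequality extracted from condition (B), density of $C(X)$, and identification of the limit via the $L^2$ result of Theorem \ref{thm-A1} — is the same Banach-principle strategy the paper uses. Where you differ is in how the maximal inequality is produced. The paper introduces the oscillation functional $R(g_1,\ldots,g_d;x)$, proves the multilinear bound $\|R(g_1,\ldots,g_d;\cdot)\|_{L^2(\mu)}\le C\prod_{i}\|g_i\|_{L^{2d}(\mu)}$ by two applications of H\"older's inequality together with condition (B) applied to $|g_i|^d$ in $L^2$, and then telescopes at the level of $R$ using its subadditivity in each slot. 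You instead telescope the pointwise difference of products first, exploiting the normalization $\|f_j\|_\infty,\|g_j\|_\infty\le 1$ to bound $|S_N(f)-S_N(g)|$ by a sum of single-function ergodic averages of $|f_j-g_j|$, so that condition (B) is invoked only in its most direct form. Your route is more elementary (no $L^{2d}$ H\"older bookkeeping), but it is genuinely tied to the $L^\infty$ unit ball: the paper's Claim yields the stronger statement that the set of $d$-tuples for which a.e.\ convergence holds is closed in $L^{2d}(\mu)\times\cdots\times L^{2d}(\mu)$ (the form of Proposition \ref{prop-DL} that is reused elsewhere), which your difference estimate does not recover. For the theorem as stated both arguments suffice; the only small point to make explicit in your write-up is that the continuous approximants can indeed be chosen with $\|g_j\|_\infty\le 1$ (truncate a continuous $L^p$-approximant to $[-1,1]$; this preserves continuity and does not increase the $L^p$ distance to $f_j$), and that the null sets where $|f_j|>1$ cause no trouble after pulling back by the countably many measure-preserving maps $T_j^{a_j(n)}$.
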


\begin{proof}
Let $f_{1}, f_{2}, \ldots, f_d\in L^\infty(X,\mu)$. By Theorem \ref{thm-A1}, for $\mu$-a.e. $x\in X$, $\bigotimes_{j=1}^d f_j\in L^\infty(X^d,\lambda_x)$. First
we show that $$\lim_{N\to\infty} \frac{1}{N}\sum_{n=0}^{N-1} f_1(T_1^{a_1(n)}x)f_2(T_2^{a_2(n)}x)\cdots f_d(T_2^{a_d(n)}x)$$ exists  for $\mu$-a.e. $x\in X$. The proof is standard, we follow the proof of \cite[Corollary 2.2]{DL1996}.  After that we show \eqref{AA2} holds.

Let $(f_1,\ldots,f_d)$ be the limit of $\Big\{(f_1^{(k)}, \ldots, f_d^{(k)})\Big\}_{k\in \N}$ in $L^{2d}(X, \mu) \times \cdots \times L^{2d}(X, \mu)$, for all $k\in \N$, $f_1^{(k)}, \ldots, f_d^{(k)}\in C(X)$. Thus by the assumption, we have
\begin{equation*}
 \lim_{N\to\infty} \frac{1}{N}\sum_{n=0}^{N-1} f^{(k)}_1(T_1^{a_1(n)}x)f^{(k)}_2(T_2^{a_2(n)}x)\cdots f^{(k)}_d(T_2^{a_d(n)}x)
\end{equation*}
exists for $\mu$-a.e. $x\in X$.
For a $d$-tuple $(g_1,\ldots,g_d)$, let
\begin{equation*}
\begin{split}
 R(g_1,\ldots, g_d;x)& =\limsup_{N,M\to \infty}\Big|\frac{1}{N}\sum_{n=0}^{N-1} g_1(T_1^{a_1(n)}x)g_2(T_2^{a_2(n)}x)\cdots g_d(T_2^{a_d(n)}x)\\
 & \quad \quad \quad \quad \quad \quad- \frac{1}{M}\sum_{n=0}^{M-1} g_1(T_1^{a_1(n)}x)g_2(T_2^{a_2(n)}x)\cdots g_d(T_2^{a_d(n)}x)\Big|.
\end{split}
\end{equation*}
To prove the result, it suffices to establish that
\begin{equation}\label{a1}
  \|R(f_1,\ldots,f_d; x)\|_{L^2(\mu)}=0.
\end{equation}

\medskip

\noindent {\bf Claim:} There is some constant $C=C(d, a_1,\ldots, a_d)>0$ such that for all $d$-tuple $(g_1,\ldots,g_d)\in L^{2d}(X,\mu)\times \cdots \times L^{2d}(X,\mu)$, one has
\begin{equation*}
  \|R(g_1,\ldots,g_d;\cdot)\|_{L^2(\mu)}\le C\prod_{i=1}^d \|g_i\|_{L^{2d}(\mu)}.
\end{equation*}

\medskip

\noindent {\em Proof of Claim:} By the H\"{o}lder inequality \footnote{Here we use the following H\"{o}lder inequality: Let $p_1,\ldots,p_n>1$ with $\frac {1}{p_1}+\cdots +\frac {1}{ p_n} =1$ and let $\{a_i^{(j)}\}_{i=1}^m \subseteq \R, 1\le j\le n$. Then
$$\sum_{i=1}^m a_i^{(1)}a_i^{(2)}\cdots a_i^{(n)}\le (\sum_{i=1}^m |a_i^{(1)}|^{p_1})^{\frac{1}{p_1}} (\sum_{i=1}^m |a_i^{(2)}|^{p_2})^{\frac{1}{p_2}}\cdots  (\sum_{i=1}^m |a_i^{(n)}|^{p_n})^{\frac{1}{p_n}}.$$}
\begin{equation*}
\begin{split}
 R(g_1,\ldots, g_d;x)\le 2\sup_{N>0} \Big| \frac{1}{N}\sum_{n=0}^{N-1} \prod_{i=1}^d g_i(T_i^{a_i(n)}x)\Big|
\le 2 \prod_{i=1}^d\Big( \sup_{N>0} \big( \frac{1}{N}\sum_{n=0}^{N-1} |g_i|^{d}(T_i^{a_i(n)}x) \big) \Big)^{\frac{1}{d}}.
\end{split}
\end{equation*}
Again by the H\"{o}lder inequality \footnote{Here we use the following H\"{o}lder inequality: Let $p, p_1,\ldots,p_n\ge 1$ with $\frac {1}{p_1}+\cdots +\frac {1}{ p_n} =\frac{1}{p}$ and let $f_1\in L^{p_1}(X,\mu),\ldots , f_n\in L^{p_n}(X,\mu)$. Then $f_1f_2\cdots f_n\in L^p(X,\mu)$ and
$ \| f_1f_2\cdots f_n\|_{L^p(X,\mu)}\le \|f_1\|_{L^{p_1}(X,\mu)} \|f_2\|_{L^{p_2}(X,\mu)} \cdots \|f_n\|_{L^{p_n}(X,\mu)} ,$ that is,
$$\Big(\int_X|f_1f_2\cdots f_n|^pd\mu\Big)^{\frac{1}{p}}\le \Big(\int_X|f_1|^{p_1}\Big)^{\frac{1}{p_1}} \Big(\int_X|f_2|^{p_2}\Big)^{\frac{1}{p_2}}\cdots \Big(\int_X|f_n|^{p_n}\Big)^{\frac{1}{p_n}}$$} and the condition (B),
\begin{equation*}
\begin{split}
\| R(g_1,\ldots, g_d; x)\|_{L^2(\mu)}
 \le &  2\Big\| \prod_{i=1}^d\Big( \sup_{N>0} \big( \frac{1}{N}\sum_{n=0}^{N-1} T_i^{a_i(n)}|g_i|^{d} \big) \Big)^{\frac{1}{d}}\Big\|_{L^2(\mu)}\\
 \le &  2 \prod_{i=1}^d\Big\| \sup_{N>0} \big( \frac{1}{N}\sum_{n=0}^{N-1} T_i^{a_i(n)}|g_i|^{d} \big) \Big\|^{\frac{1}{d}}_{L^{2}(\mu)}\\
 \le & 2 \prod_{i=1}^d C(2,a_i)\Big\| |g_i|^{d} \Big\|^{\frac{1}{d}}_{L^{2}(\mu)}=2 \prod_{i=1}^d C(2,a_i)\big\| g_i \big\|_{L^{2d}(\mu)}.
\end{split}
\end{equation*}
This ends the proof of Claim.
\hfill $\square$

\medskip

Note that for $j\in \{1,\ldots,d\}$,
$$R(g_1,\ldots,g_j+g'_j,\ldots, g_d;x)\le R(g_1,\ldots,g_j,\ldots, g_d;x)+R(g_1,\ldots,g'_j,\ldots, g_d;x).$$
Combining this with the assumption $R(f^{(k)}_1,\ldots,f^{(k)}_d;x)=0$, for all $k\in \N$, one has that for almost all $x\in X$,
\begin{equation*}
\begin{split}
& R(f_1,\ldots,f_{d-1}, f_d;x)
\\ \le &   R(f_1,\ldots, f_{d-1},f_{d}-f^{(k)}_{d};x)+ R(f_1,\ldots,f_{d-1},f^{(k)}_{d};x)\\
\le &  R(f_1,\ldots, f_{d-1},f_{d}-f^{(k)}_{d};x)+ R(f_1,\ldots, f_{d-1}-f^{(k)}_{d-1},f_d^{(k)};x)+R(f_1,\ldots,f^{(k)}_{d-1},f^{(k)}_{d};x)\\
\le & \ldots \\
\le & \sum_{i=1}^d R(f_1,\ldots, f_{i-1},f_i-f_i^{(k)},f_{i+1}^{(k)},\ldots,f^{(k)}_d;x)+R(f^{(k)}_1,f^{(k)}_{2},\ldots,f^{(k)}_{d};x)\\
\le & \sum_{i=1}^d R(f_1,\ldots, f_{i-1},f_i-f_i^{(k)},f_{i+1}^{(k)},\ldots,f^{(k)}_d;x).
\end{split}
\end{equation*}
By the Claim, we have
\begin{equation*}
\begin{split}
& \| R(f_1,\ldots,f_{d-1}, f_d;x)\|_{L^2(\mu)}
\\ \le & \sum_{i=1}^d \| R(f_1,\ldots, f_{i-1},f_i-f_i^{(k)},f_{i+1}^{(k)},\ldots,f^{(k)}_d;x)\|_{L^2(\mu)} \\
\le &  \sum_{i=1}^d  C \| f_1 \|_{L^{2d}(\mu)} \cdots \| f_{i-1} \|_{L^{2d}(\mu)} \| f_i- f_i^{(k)} \|_{L^{2d}(\mu)} \| f_{i+1}^{(k)} \|_{L^{2d}(\mu)} \cdots \| f^{(k)}_d \|_{L^{2d}(\mu)}\\
\rightarrow & 0 , \quad k\to\infty .
\end{split}
\end{equation*}
Then we have $(\ref{a1})$, and hence
$$\lim_{N\to\infty} \frac{1}{N}\sum_{n=0}^{N-1} f_1(T_1^{a_1(n)}x)f_2(T_2^{a_2(n)}x)\cdots f_d(T_2^{a_d(n)}x)$$ holds  for $\mu$-a.e. $x\in X$.
By Theorem \ref{thm-A1},
\begin{equation*}
 \lim_{N\to\infty} \Big\| \frac{1}{N}\sum_{n=0}^{N-1} f_1(T_1^{a_1(n)}x)f_2(T_2^{a_2(n)}x)\cdots f_d(T_2^{a_d(n)}x)-\int_{X^d} f_1\otimes f_2\otimes \cdots \otimes f_d d\lambda_x\|_{L^2(\mu)}=0.
\end{equation*}
Thus the limit
\begin{equation*}
 \lim_{N\to\infty} \frac{1}{N}\sum_{n=0}^{N-1} f_1(T_1^{a_1(n)}x)f_2(T_2^{a_2(n)}x)\cdots f_d(T_2^{a_d(n)}x)=\int_{X^d} f_1\otimes f_2\otimes \cdots \otimes f_d d\lambda_x
\end{equation*}
exists for $\mu$-a.e. $x\in X$. The proof is complete.
\end{proof}

\subsection{Proof of Proposition \ref{prop-mesurable}}

In Theorem \ref{thm-A1} and Theorem \ref{thm-A2}, we deal with the standard measure on the product spaces of finitely many systems. But in the paper, we mostly deal with the standard measure on the product spaces of infinitely many systems. The proofs of related results are similar. We take an example in this subsection.

The proof of Proposition \ref{prop-mesurable} is similar but simpler to the proof of Theorem \ref{thm-A1}.

\begin{proof}[Proof of Proposition \ref{prop-mesurable}]
We only show a special case when $\A=\{p\}$, and the general case is similar. In this case $d=1$. So by the definition, for all $l\in \N$ and all $f_{-l},\ldots, f_l\in C(X)$,
\begin{equation*}
  \begin{split}
  & \quad \int_{X^{\Z}} \Big( \bigotimes_{j=-l}^l f_j\Big)({\bf x})d\mu_\A^{(\infty)}({\bf x})= \int_{X^{\Z}} \big(f_{-l}\otimes f_{-l+1} \otimes \cdots \otimes f_l\big)({\bf x}) d\mu_\A^{(\infty)}({\bf x})\\ &=\lim_{N\to \infty} \frac{1}{N}\sum_{n=0}^{N-1}\int_X \prod_{j=-l}^l f_j(T^{p(n+j)}x) d\mu(x),
\end{split}
\end{equation*}
where ${\bf x}=(x_n)_{n\in \Z}\in X^\Z$. We show that this equality still holds if replace functions in $C(X)$ by functions in $L^\infty(X,\mu)$.

\medskip

Fix $l\in \N$ and $f_{-l}, f_{-l+1}, \ldots, f_l\in L^\infty(X,\mu)$ and let $\ep>0$. Without loss of generality, we assume that for all $-l\le j\le l$,
$\|f_j\|_\infty\le 1$. Choose continuous functions $g_j$ such that
$\|g_j\|_\infty\le 1$ and $\|f_j-g_j\|_{L^1(X,\mu)}<\frac{\ep}{2l+1}$ for all $-l\le j\le
l$. Since $\mu^{(\infty)}_\A$ is a standard measure, it is easy to see that $\bigotimes_{j=-l}^l f_j\in L^\infty(X^\Z,\mu^{(\infty)}_\A)$.
We have
\begin{equation}\label{yexd1}
\begin{split}
& \quad  \left | \int_{X^{\Z}} \Big( \bigotimes_{j=-l}^l f_j\Big)({\bf x})d\mu_\A^{(\infty)}({\bf x})- \frac{1}{N}\sum_{n=0}^{N-1}\int_X \prod_{j=-l}^l f_j(T^{p(n+j)}x) d\mu(x) \right |\\
& \le  \left | \int_{X^{\Z}} \Big( \bigotimes_{j=-l}^l f_j\Big)({\bf x})d\mu_\A^{(\infty)}({\bf x}) -\int_{X^{\Z}} \Big( \bigotimes_{j=-l}^l g_j\Big)({\bf x})d\mu_\A^{(\infty)}({\bf x}) \right |\\
&+\left | \int_{X^{\Z}} \Big( \bigotimes_{j=-l}^l g_j\Big)({\bf x})d\mu_\A^{(\infty)}({\bf x}) -
\frac{1}{N}\sum_{n=0}^{N-1}\int_X \prod_{j=-l}^lg_j(T^{p(n+j)}x) d\mu(x)  \right | \\ & +\left | \frac{1}{N}\sum_{n=0}^{N-1}\int_X \prod_{j=-l}^lg_j(T^{p(n+j)}x) d\mu(x) -
\frac{1}{N}\sum_{n=0}^{N-1}\int_X \prod_{j=-l}^lf_j(T^{p(n+j)}x) d\mu(x) \right |.
\end{split}
\end{equation}

Because $\mu^{(\infty)}_\A$ is a standard measure, by Lemma \ref{lem-product} we have
\begin{equation}\label{yexd4}
\left | \int_{X^\Z} \bigotimes_{j=-l}^l f_j d\mu^{(\infty)}_\A -
\int_{X^\Z} \bigotimes_{j=-l}^l g_j d\mu^{(\infty)}_\A \right |
\le \sum_{j=-l}^l \int_{X} |f_j-g_j| d\mu \le \ep.
\end{equation}
Since $g_{-l}, g_{-l+1}, \ldots g_l\in C(X)$, by the definition of $\mu^{(\infty)}_\A$, we have
\begin{equation}\label{xdye3}
\lim_{N\to\infty} \left | \int_{X^{\Z}} \Big( \bigotimes_{j=-l}^l g_j\Big)({\bf x})d\mu_\A^{(\infty)}({\bf x}) -
\frac{1}{N}\sum_{n=0}^{N-1}\int_X \prod_{j=-l}^lg_j(T^{p(n+j)}x) d\mu(x)  \right |=0.
\end{equation}

Since $\mu$ is $T$-invariant, for  $-l\le j\le l$ and $N\in \N$, we have
\begin{equation*}\label{need}
\begin{split}
   &\quad  \int_X \frac {1}{N} \sum_{n=0}^{N-1} \Big | g_j(T^{p(n+j)}x)-f_j(T^{p(n+j)}x)\Big | d\mu(x)\\ &=\frac {1}{N} \sum_{n=0}^{N-1} \int_X \Big | g_j(T^{p(n+j)}x)-f_j(T^{p(n+j)}x)\Big | d\mu(x)=
\|g_j- f_j\|_{L^1(X,\mu)}.
\end{split}
\end{equation*}
Hence by Lemma \ref{lem-product-1},
\begin{equation}\label{yexd2}
\begin{split}
&  \limsup_{N\to \infty} \left | \frac{1}{N}\sum_{n=0}^{N-1}\int_X \prod_{j=-l}^lg_j(T^{p(n+j)}x) d\mu(x) -
\frac{1}{N}\sum_{n=0}^{N-1}\int_X \prod_{j=-l}^l f_j(T^{p(n+j)}x) d\mu(x) \right |\\
& \le \sum_{j=-l}^l \Big[ \lim_{N\to\infty } \int_X \frac {1}{N} \sum_{n=0}^{N-1} \Big | g_j(T^{p(n+j)}x)-f_j(T^{p(n+j)}x)\Big | d\mu (x)\Big ]\\
& = \sum_{j=-l}^l \|f_j-g_j\|_{L^1(X,\mu)}\le \ep .
\end{split}
\end{equation}

So combining (\ref{yexd1})-(\ref{yexd2}), we have
$$ \limsup_{N\to\infty } \left | \int_{X^{\Z}} \Big( \bigotimes_{j=-l}^l f_j\Big)({\bf x})d\mu_\A^{(\infty)}({\bf x})- \frac{1}{N}\sum_{n=0}^{N-1}\int_X \prod_{j=-l}^lf_j(T^{p(n+j)}x) d\mu(x) \right | \le 2\ep . $$
Since $\ep$ is arbitrary, the proof is complete.
\end{proof}

\section{A weighted ergodic average and proof of Theorem \ref{plus-nil}}\label{appendix1}

First we need some notions and results in \cite{HK09}. In this section, by an {\em interval}, we mean an interval in $\Z$. If $I$ is an interval, $|I|$ denotes its length. Throughout this section, ${\bf I}$ denotes a sequence $(I_N)_{N\in \N}$ of intervals, and we implicitly assume that the lengths of the intervals tend to $\infty$ as $N\to\infty$.

Let $k\in \N$. Let $a=(a_n)_{n\in \Z}$ be a bounded sequence of real numbers
and ${\bf I}=(I_N)_{N\in \N}$ be a sequence of intervals whose lengths $|I_N|$ tend to infinity. We say that this sequence of intervals is {\em $k$-adapted to the sequence $a=(a_n)_{n\in \Z}$}, if for every $\underline{h}=(h_1,\ldots, h_k)\in \Z^k$,
the limit
\begin{equation}\label{}
c_{\underline{h}}({\bf I}, a)\triangleq \lim_{N\to\infty} \frac{1}{|I_N|} \sum_{n\in I_N}\prod_{\ep\in \{0,1\}^k}a_{n+h_1\ep_1+\cdots +h_k\ep_k}=\lim_{N\to\infty} \frac{1}{|I_N|} \sum_{n\in I_N}\prod_{\ep\in \{0,1\}^k}a_{n+\ep\cdot \underline{h}}
\end{equation}
exists. Note that every sequence of intervals whose lengths tend to infinity admits a subsequence
which is adapted to the sequence $a=(a_n)_{n\in \N}$.

\begin{de}
Suppose that ${\bf I}=(I_N)_{N\in \N}$ is $k$-adapted to $a=(a_n)_{n\in \Z}$. We define
\begin{equation}\label{app1}
  \HK a\HK_{{\bf I}, k}\triangleq \left(\lim_{H\to\infty}\frac{1}{H^k}\sum_{\underline{h}\in [0,H-1]^k}c_{\underline{h}}({\bf I}, a)\right)^{1/2^k}.
\end{equation}
\end{de}

\begin{rem}\label{rem-A1}
Suppose that ${\bf I}=(I_N)_{N\in \N}$ is $k$-adapted to $a=(a_n)_{n\in \Z}$. Using a variant of Furstenberg's correspondence principle, one can show that there exist a t.d.s. $(X,T)$, a transitive point $w$, a continuous function $f$ on $X$, and a $T$-invariant Borel probability measure $\mu$ on $X$ such that $a_n=f(T^n w)$ for all $n\in \Z$ and for $\underline{h}=(h_1,\ldots, h_k)\in \Z^k$,
\begin{equation}\label{}
c_{\underline{h}}({\bf I}, a)=\int_X\prod_{\ep\in \{0,1\}^k}f(T^{\ep\cdot \underline{h}}x) d\mu(x).
\end{equation}
See Subsection 4.2 of \cite{HK09} or \cite[Proposition 22.1]{HK18} for details about this fact. Thus
\begin{equation*}
  \begin{split}
    \HK a\HK_{{\bf I}, k}&= \left(\lim_{H\to\infty}\frac{1}{H^k}\sum_{\underline{h}\in [0,H-1]^k}c_{\underline{h}}({\bf I}, a)\right)^{1/2^k}\\
    &= \left(\lim_{H\to\infty}\frac{1}{H^k}\sum_{\underline{h}\in [0,H-1]^k}\int_X\prod_{\ep\in \{0,1\}^k}f(T^{\ep\cdot \underline{h}}x) d\mu(x)\right)^{1/2^k}\\
    &= \HK f \HK_k.
   \end{split}
\end{equation*}
It follows that $\HK a\HK_{{\bf I}, k}$ is well-defined. We need an equivalent definition of $\HK a\HK_{{\bf I}, k}$. By Appendix A of \cite{Bergelson06} or \cite[Theorem 8.26]{HK18}, we have
\begin{equation*}
  \HK f\HK _k=\left( \lim_{H_{1}\to\infty} \frac{1}{H_{1}}\sum_{h_{1}=1}^{H_{1}}\cdots \lim_{H_k\to\infty} \frac{1}{H_k} \sum_{h_k=1}^{H_k} \int_X\prod_{\ep\in \{0,1\}^k}f(T^{h_1\ep_1+\cdots +h_k\ep_k}x) d\mu(x) \right)^{1/2^k}.
\end{equation*}
It follows that
\begin{equation}\label{app00}
  \HK a\HK_{{\bf I}, k}= \left(\lim_{H_{1}\to\infty} \frac{1}{H_{1}}\sum_{h_{1}=1}^{H_{1}}\cdots \lim_{H_k\to\infty} \frac{1}{H_k}\sum_{h_k=1}^{H_k}c_{\underline{h}}({\bf I}, a)\right)^{1/2^k}.
\end{equation}
Note that by symmetry, the order of $H_1,\ldots, H_k$ is not important in the left side of the equation \eqref{app00}.
We will use this equation later.
\end{rem}

%By \cite[Proposition 2.2]{HK09},  $\lim_{H\to\infty}\frac{1}{H^k}\sum_{\underline{h}\in [1,H]^k}c_{\underline{h}}({\bf I}, a)$ exists and is non-negative. Thus $\HK a\HK_{{\bf I}, k}$ is well-defined. By \cite[Remark 2.2.2]{HK09}, in \eqref{app1}, the averages on $[0,H-1]^k$ can be replaced by averages on any sequence of rectangles $(I_{H,1}\times \ldots \times I_{H,k})_{H\in \N}$ where $I_{H,j}$ is an interval for every $j\in \{1,\ldots, k\}$ and every $H$ and $\min_{j} |I_{H,j}|\to\infty, H\to\infty$. More generally, we could also average over any F{\o}lner sequence in $\Z^k$.

\begin{thm}\cite[Corollary 2.14]{HK09}\label{thm-HK09}
Let $a=(a_n)_{n\in \N}$ be a bounded sequence of real numbers,
and ${\bf I}=(I_N)_{N\in \N}$ be a sequence of intervals that is $k$-adapted to this sequence for some $k\ge 2$. Suppose that $\HK a\HK_{{\bf I}, k}=0$.
Then for every bounded $(k-1)$-step nilsequence $(u_n)_{n\in \Z}$ we have
\begin{equation*}
  \lim_{N\to\infty} \frac{1}{|I_N|} \sum_{n\in I_n} a_nu_n=0.
\end{equation*}
\end{thm}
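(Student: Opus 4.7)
My approach has three stages: reducing to basic nilsequences, passing to a joining with a nilsystem via the correspondence principle, and estimating the resulting integral using the Host--Kra seminorm hypothesis. It mirrors the standard characteristic-factor argument showing that $\ZZ_{k-1}$ controls correlations with $(k-1)$-step nilsequences.

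First I would reduce to basic $(k-1)$-step nilsequences. Since $(a_n)$ is bounded and $(k-1)$-step nilsequences are by definition uniform limits of basic ones, it suffices to prove the vanishing when $u_n = F(t^n y)$ with $Y = G/\Gamma$ a $(k-1)$-step nilmanifold, $F \in C(Y)$, $t \in G$ and $y \in Y$. Replacing $Y$ by $\overline{\mathcal{O}(y,t)}$ (which is itself a $(k-1)$-step nilmanifold by Theorem \ref{thm-ParryLeibman}), I may assume this nilsystem is minimal and hence uniquely ergodic with Haar measure $m_Y$.

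Next I would invoke the correspondence principle of Remark \ref{rem-A1} to realize $a_n = f(T^n w)$ in a t.d.s.\ $(X,T)$ with a $T$-invariant measure $\mu$, a transitive point $w$, and a continuous $f$ for which $\HK f \HK_k = \HK a \HK_{\mathbf{I},k} = 0$; equivalently $\E_\mu(f|\ZZ_{k-1}) = 0$. Passing to a subsequence $\mathbf{I}'$ of $\mathbf{I}$, the averages
\begin{equation*}
\nu_N := \frac{1}{|I'_N|}\sum_{n \in I'_N} \delta_{(T\times t)^n(w,y)}
\end{equation*}
converge weak-$\ast$ to a $(T\times t)$-invariant probability measure $\nu$ on $X \times Y$; its $X$-marginal is $\mu$ (by choice of $w$) and its $Y$-marginal is $m_Y$ (by unique ergodicity). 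Thus $\nu$ is a joining of $(X,\mu,T)$ with a $(k-1)$-step nilsystem, and along $\mathbf{I}'$ one has
\begin{equation*}
\lim_{N\to\infty} \frac{1}{|I'_N|}\sum_{n \in I'_N} a_n u_n \;=\; \int_{X\times Y} f \otimes F \, d\nu.
\end{equation*}

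The main obstacle, and the heart of the proof, is the claim that for any such joining $\nu$,
\begin{equation*}
\int_{X\times Y} f \otimes F \, d\nu = 0 \quad \text{whenever } \E_\mu(f|\ZZ_{k-1}) = 0.
\end{equation*}
My plan is to prove this via the Markov intertwiner $P \colon L^2(Y,m_Y) \to L^2(X,\mu)$ defined by $\int f \cdot PF \, d\mu = \int f \otimes F \, d\nu$, which satisfies $T \circ P = P \circ t$. I would estimate $\|PF\|_{L^2(\mu)}^{2^{k-1}}$ by iterating the van der Corput inequality $(k-1)$ times: after each iteration one introduces an auxiliary factor of the form $t^h F \cdot \overline{F}$ on the $Y$-side, which remains a basic $(k-2)$-step nilsequence function, and a shift $T^h f \cdot \overline{f}$ on the $X$-side, which drops the Host--Kra seminorm by one step. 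Combining Theorem \ref{thm-ParryLeibman} to bound the $Y$-factors uniformly by $\|F\|_\infty^{2^{k-1}}$ with Lemma \ref{lemmaE1} to recognise the $X$-side as the $2^k$-fold Host--Kra integral yields $\|PF\|_{L^2(\mu)}^{2^{k-1}} \le C(F)\,\HK f \HK_k^{2^{k-1}}$. Since $\HK f \HK_k = 0$, $PF = 0$, and so $\int f \otimes F \, d\nu = 0$. As every weak-$\ast$ subsequential limit of $\nu_N$ produces a joining of the above form and yields the same zero integral, the full average along $\mathbf{I}$ tends to $0$, completing the proof.
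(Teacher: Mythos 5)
First, note that the paper does not prove this statement at all: it is imported verbatim as \cite[Corollary 2.14]{HK09}, so there is no in-paper proof to compare against. Your overall architecture (reduce to basic nilsequences, realize $a_n=f(T^nw)$ via the correspondence principle of Remark \ref{rem-A1}, form a joining $\nu$ with the uniquely ergodic nilsystem $(Y,m_Y,t)$, and kill the correlation using $\HK f\HK_k=0$) is a legitimate and essentially standard route to this result, close in spirit to, though more explicit than, the dual-norm argument of Host--Kra. Steps 1--3 are fine modulo routine care with subsequences.

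The gap is in the crux, Step 4. First, the displayed estimate $\|PF\|_{L^2(\mu)}^{2^{k-1}}\le C(F)\,\HK f\HK_k^{2^{k-1}}$ cannot be what you mean: $PF=\E_\nu(F|\X)$ does not depend on $f$, so this inequality would force $PF=0$ unconditionally; the quantity you need to bound is $\bigl|\int f\otimes F\,d\nu\bigr|$ (equivalently $|\langle f,PF\rangle|$). More seriously, the inductive engine fails as described: for a general continuous $F$ on a $(k-1)$-step nilmanifold $Y=G/\Gamma$, the function $t^hF\cdot\overline{F}$ is again a function on the same $(k-1)$-step nilmanifold, and $(t^hF\cdot\overline{F})(t^ny)$ is a $(k-1)$-step nilsequence, \emph{not} a $(k-2)$-step one. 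The step only drops after you first decompose $F$ into vertical characters of the central torus $G_{k-1}/(G_{k-1}\cap\Gamma)$ (together with a uniform approximation argument): for $F$ with vertical frequency $\chi$ one has $(t^hF\cdot\overline{F})(g_{k-1}x)=|\chi(g_{k-1})|^2(t^hF\cdot\overline{F})(x)$, so the product is $G_{k-1}$-invariant and descends to the $(k-2)$-step quotient $G/G_{k-1}\Gamma$. Without this vertical Fourier decomposition the induction on $k$ does not close, and this is precisely the missing idea. A cleaner repair of Step 4 is available: since $(Y,m_Y,t)$ is a system of order $k-1$, one has $\Y=\ZZ_{k-1}(Y)$, and for any joining $\nu$ the conditional expectation $\E_\nu(F|\X)$ is $\ZZ_{k-1}(X)$-measurable (the Host--Kra cube-measure argument, \cite[Proposition 4.6]{HK05}, suitably extended to the non-ergodic $\mu$ produced by the correspondence principle); then $\int f\otimes F\,d\nu=\int_X\E_\mu(f|\ZZ_{k-1})\cdot\E_\nu(F|\X)\,d\mu=0$. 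Either you supply the vertical-character decomposition, or you replace the van der Corput iteration by this factor-theoretic step; as written the proof does not go through.
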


\medskip

We will prove a result more general than Theorem \ref{plus-nil}.

\begin{thm}\label{thm-apendix}
Let $(X,\X,\mu, T)$ be a m.p.s. and $k\ge 2$ be an integer. Let $(\phi_n(x))_{n\in \Z}$ be a uniformly bounded sequence of $\mu$-measurable functions such that, for $\mu$-almost every $x\in X$, the sequence $(\phi_n(x))_{n\in \Z}$ is a $(k-1)$-step nilsequence. Then for all  integral polynomials  $p_1,\ldots,p_d$ and for all $f_1,\ldots, f_d\in L^\infty(X,\mu)$, the averages
\begin{equation}\label{aaa}
  \frac{1}{N-M}\sum_{n=M}^{N-1} \phi_n(x)f_1(T^{p_1(n)}x)f_2(T^{p_2(n)}x)\cdots f_d(T^{p_d (n)}x)
\end{equation}
converge in $L^2(X,\mu)$ as $N-M \to\infty$.

Moreover, if the integral polynomials $p_1,\ldots, p_d$ are  essentially distinct and $\E(f_j|\ZZ_\infty(T))=0$ for some $j\in \{1,2 ,\ldots, d\}$, then the averages \eqref{aaa} converge to $0$  in $L^2(X,\mu)$.
\end{thm}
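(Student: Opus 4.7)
My plan is to prove the convergence by decomposing each $f_j$ via Theorem~\ref{thm-CFH} and handling the cross-terms in three groups, then to deduce the \emph{moreover} statement directly from the most delicate group. Fix an integer $K$ (to be chosen large enough to satisfy Theorem~\ref{thm-Leibman} uniformly over the relevant polynomial families described below) with $K\ge k$. For each $\ep>0$, decompose $f_j=f_{j,{\rm nil}}+f_{j,{\rm unif}}+f_{j,{\rm sml}}$ with $\HK f_{j,{\rm unif}}\HK_{K+1}=0$, $(f_{j,{\rm nil}}(T^nx))_n$ a $K$-step nilsequence for a.e.~$x$, and $\|f_{j,{\rm sml}}\|_{L^2}\le\ep$. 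Expanding $\prod_j f_j$ into $3^d$ cross-terms, I will treat separately: (a) the all-nil term, (b) any ``uniform'' term containing some $f_{j_0,{\rm unif}}$, and (c) terms containing some $f_{j,{\rm sml}}$.

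For (a), each $f_{j,{\rm nil}}(T^{p_j(n)}x)$ is, for a.e.~$x$, a $(K\deg p_j)$-step nilsequence by \cite[Proposition~3.14]{Leibman05-Isr}; the product over $j$ is still a nilsequence, and remains one after multiplication by the nilsequence $\phi_n(x)$. Averages of nilsequences converge pointwise, and dominated convergence lifts this to $L^2$. For (c), a H\"older estimate of the form $\bigl\|\phi_n\prod_j g_j(T^{p_j(n)}\cdot)\bigr\|_{L^2}\le C\|f_{j_0,{\rm sml}}\|_{L^2}$ (applied to each $j_0$ appearing as an ``sml'' factor in the cross-term, using the uniform bound on $\phi_n$ and the $T$-invariance of $\mu$) controls the contribution of (c) by $O(\ep)$ uniformly in $N-M$.

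Case (b) is the heart. Write the term as $\phi_n(x)a_n(x)$ with $a_n(x)=\prod_j g_j(T^{p_j(n)}x)$ and $g_{j_0}=f_{j_0,{\rm unif}}$. By the subsequence principle, it suffices to verify that every increasing sequence of intervals admits a subsequence ${\bf I}=(I_N)$ along which the weighted average tends to $0$ in $L^2$. Use a diagonal extraction (together with Theorem~\ref{thm-Leibman-main}) so that, for $\mu$-a.e.~$x$, the sequence $(a_n(x))_n$ is $k$-adapted to ${\bf I}$ and the inner correlations in the iterated formula \eqref{app00} converge a.e.~in $x$ simultaneously for every $\underline{h}$ in a countable dense set. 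For $\underline{h}\in\Z^k$ in general position, the polynomials $\{p_j(n+\ep\cdot\underline{h}):1\le j\le d,\ \ep\in\{0,1\}^k\}$ in the variable $n$ are essentially distinct; choosing $K$ at the outset to be the Leibman integer for all such families (which depends only on $k$, $d$, and the $\deg p_j$), the hypothesis $\HK g_{j_0}\HK_{K+1}=0$ forces the inner $n$-limit in \eqref{app00} to vanish in $L^2$, hence a.e.~along our subsequence. Consequently $\HK a(x)\HK_{{\bf I},k}=0$ for a.e.~$x$, and Theorem~\ref{thm-HK09} applied pointwise with the $(k-1)$-step nilsequence weight $(\phi_n(x))_n$ gives $\frac{1}{|I_N|}\sum_{n\in I_N}\phi_n(x)a_n(x)\to 0$; dominated convergence promotes this to $L^2$.

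Combining (a)--(c) shows that the full weighted average is Cauchy in $L^2$ up to $O(\ep)$, and letting $\ep\to 0$ yields the desired $L^2$-limit. The \emph{moreover} statement follows by applying the case~(b) analysis directly to the original $(f_1,\ldots,f_d)$: essential distinctness of $p_1,\ldots,p_d$ specifies a Leibman integer $K$, and $\E(f_{j_0}|\ZZ_\infty(T))=0$ gives $\HK f_{j_0}\HK_{K+1}=0$, so the same argument produces $L^2$-convergence to $0$. The main obstacle will be case~(b): turning Theorem~\ref{thm-Leibman}'s $L^2$-vanishing of the inner $n$-average (for each generic $\underline{h}$) into the pointwise identity $\HK a(x)\HK_{{\bf I},k}=0$ requires a careful diagonal extraction over a countable dense set of $\underline{h}$'s together with a bounded-convergence argument to interchange the $\underline{h}$-average with the $n$-limit in \eqref{app00} and to absorb the measure-zero set of exceptional $\underline{h}$.
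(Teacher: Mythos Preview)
Your overall architecture---decompose via Theorem~\ref{thm-CFH}, treat the all-nil cross-term pointwise as a nilsequence, bound the small cross-terms in $L^2$, and kill the uniform cross-terms by showing $\HK a(x)\HK_{{\bf I},k}=0$ and invoking Theorem~\ref{thm-HK09}---matches the paper's strategy. However, your case~(b) has a real gap when some $p_j$ is \emph{linear}. Your key claim, that for $\underline{h}$ in general position the family $\{p_j(n+\ep\cdot\underline{h}):1\le j\le d,\ \ep\in\{0,1\}^k\}$ is essentially distinct in $n$, is simply false for linear $p_j$: if $p_j(n)=cn$ then $p_j(n+\ep\cdot\underline{h})-p_j(n+\ep'\cdot\underline{h})=c(\ep-\ep')\cdot\underline{h}$ is constant in $n$ for every $\underline{h}$. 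One can of course group the $2^k$ copies into a single function $G_{j,\underline{h}}=\prod_{\ep}T^{c\,\ep\cdot\underline{h}}g_j$ evaluated at $T^{cn}x$, which restores essential distinctness of the remaining polynomial family. But if the index $j_0$ carrying the uniform factor is itself linear, the function left at that linear iterate is $G_{j_0,\underline{h}}$, and $\HK g_{j_0}\HK_{K+1}=0$ does \emph{not} give $\HK G_{j_0,\underline{h}}\HK_m=0$ for any $m$; so Theorem~\ref{thm-Leibman} no longer forces $c_{\underline{h}}({\bf I},a(x))=0$, and your seminorm computation stalls.

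The paper avoids this by a two-pass reduction rather than a single simultaneous decomposition. First (Steps~1--2) it treats the purely linear scheme $\{c_1n,\ldots,c_dn\}$: there, instead of the generic-$\underline{h}$ trick, it bounds the inner $n$-average via Lemma~\ref{lem-AP-HK} and then iterates the Host--Kra recursion (Lemma~\ref{lemmaE1}) over the $k$ parameters $h_1,\ldots,h_k$ to obtain the quantitative estimate $\big\|\HK a(x)\HK_{{\bf I}',k}\big\|_{L^2(\mu)}\le C\,\HK f_{j_0}\HK_{d+k}$, which vanishes under the uniform hypothesis. Only after the linear factors are reduced to nilsequences (and absorbed into $\phi_n$) does the paper run your generic-$\underline{h}$ argument on the remaining \emph{non-linear} polynomials (Steps~3--4), where the essential-distinctness claim is valid for a full-density set of $\underline{h}$. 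Your proof can be repaired by inserting this linear sub-case; the rest of your outline is sound.
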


\begin{proof} Without loss of generality, assume that $f_1,\ldots,f_d\in L^\infty(X,\mu)$ with $\|f_j\|_\infty\le 1$ for $j=1,\ldots,d$. And we assume that integral polynomials $p_1,\ldots, p_d$ are  essentially distinct and non-constant with $p_1(0)=\cdots =p_d(0)=0$.

\medskip
\noindent {\bf Step 1.} {\em For all non-zero distinct $c_1,\ldots,c_d\in \Z$ and for all $f_1,\ldots, f_d\in L^\infty(X,\mu)$, if $\HK f_j\HK_{k+d}=0$ for some $j\in \{1,\ldots, d\}$, then the averages
\begin{equation}\label{app2}
  \frac{1}{N-M}\sum_{n=M}^{N-1} \phi_n(x)f_1(T^{c_1n}x)f_2(T^{c_2n}x)\cdots f_d(T^{c_d n}x)
\end{equation}
converge to 0 in $L^2(X,\mu)$ as $N-M \to\infty$.}
\medskip

%Without loss of generality, we assume that that $\HK f_1\HK_{k+1}=0$.

In order to  prove that \eqref{app2} converges to $0$, it suffices to show that every sequence of intervals ${\bf I}=(I_N)_{N\in \N}$ %a sequence of intervals whose
whose lengths $|I_N|$ tend to infinity, admits a subsequence ${\bf I}'=(I_N')_{N\in \N}$ such that
\begin{equation}\label{app3}
  \frac{1}{|I_N'|}\sum_{n\in I_N'} \phi_n(x)f_1(T^{c_1n}x) f_2(T^{c_2 n}x)\cdots f_d(T^{c_d n}x)\longrightarrow 0,\ N\to\infty, \ \text{in $L^2(X,\mu)$} .
\end{equation}

For $x\in X$, we write $a(x)=(a_n(x))_{n\in \N}$ for the sequence defined by
\begin{equation*}
  a_n(x)= f_1(T^{c_1n}x)f_2(T^{c_2n}x)\cdots f_d(T^{c_d n}x).
\end{equation*}
For every $\underline{h}=(h_1,\ldots, h_k)\in \Z^k$ , we study the averages
$$\frac{1}{|I_N|}\sum_{n\in I_N} \prod_{\ep\in \{0,1\}^k}a_{n+h_1\ep_1+\cdots +h_k\ep_k}(x).$$
Note that
\begin{equation}\label{app4}
\begin{split}
  a_{n+h_1\ep_1+\cdots +h_k\ep_k}(x)&=\Big(\prod_{\ep\in \{0,1\}^k}T^{c_1(h_1\ep_1+\cdots+h_k\ep_k)}f_1\Big)(T^{c_1n}x)\ldots \\
  &\quad \quad \quad \quad \quad\Big(\prod_{\ep\in \{0,1\}^k}T^{c_d(h_1\ep_1+\cdots+h_k\ep_k)}f_d\Big)(T^{c_dn}x).
  \end{split}
\end{equation}
Thus by Lemma \ref{lem-AP-HK}\footnote{In Lemma \ref{lem-AP-HK}, $I_N=[0, N-1]$. It holds for any ${\bf I}=(I_N)_{N\in \N}$,  a sequence of intervals whose lengths $|I_N|$ tend to infinity \cite[Proposition 21.7]{HK18}.}
\begin{equation}\label{app5}
  \begin{split}
 &\limsup_{N\to\infty}\Big \|\frac{1}{|I_N|}\sum_{n\in I_N} \prod_{\ep\in \{0,1\}^k}a_{n+h_1\ep_1+\cdots +h_k\ep_k}(x)\Big\|_{L^2(X,\mu)}\\
 &\quad \quad \le C\min_{1\le j\le d}\interleave \Big(\prod_{\ep\in \{0,1\}^k}T^{c_j(h_1\ep_1+\cdots+h_k\ep_k)}f_j\Big)\interleave _d,
 \end{split}
\end{equation}
where $C$ is a positive constant only depending on $c_1,\ldots, c_d$.
By \cite[Theorem 1.1]{HK05} the averages
$$\frac{1}{|I_N|}\sum_{n\in I_N} \prod_{\ep\in \{0,1\}^k}a_{n+h_1\ep_1+\cdots +h_k\ep_k}(x)$$
converge in $L^2(X,\mu)$. As a consequence, a subsequence of this sequence of averages converges
$\mu$-almost everywhere. This subsequence depends on the parameter $\underline{h}$, but since there are only countably many such parameters, by a diagonal argument we can find a subsequence ${\bf I}'=(I_N')_{N\in \N}$ such that for $\mu$-almost every $x\in X$ the limit
\begin{equation}\label{}
  c_{\underline{h}}({\bf I}', a(x))=\lim_{N\to\infty}\frac{1}{|I_N'|} \sum_{n\in I_N} \prod_{\ep\in \{0,1\}^k}a_{n+h_1\ep_1+\cdots +h_k\ep_k}(x)
\end{equation}
exists for every choice of $\underline{h}=(h_1,\ldots, h_k)\in \Z^k$. This means that, for $\mu$-almost every $x\in X$, the sequence of intervals ${\bf I}'$ is $k$-adapted to the sequence $a(x)=(a_n(x))_{n\in \N}$.
By \eqref{app5},
\begin{equation}\label{app6}
 \big \| c_{\underline{h}}({\bf I}', a(x)) \big\|_{L^2(X,\mu)}\le C\min_{1\le j\le d}\interleave \Big(\prod_{\ep\in \{0,1\}^k}T^{c_j(h_1\ep_1+\cdots+h_k\ep_k)}f_j\Big)\interleave _d.
\end{equation}
Let $\Phi_H=[1,H_1]\times \cdots \times [1,H_k]$, $H_1,\ldots, H_k\in \N$. Assume that $\HK f_j\HK_{k+d}=0$ for some $j\in \{1,2,\ldots, d\}$. Then by \eqref{app6}
\begin{equation}\label{app7}
\begin{split}
  &\quad \big \| \frac{1}{|\Phi_H|}\sum_{\underline{h}\in \Phi_H}c_{\underline{h}}({\bf I}', a(x)) \big\|_{L^2(X,\mu)}\le \frac{1}{|\Phi_H|}\sum_{\underline{h}\in \Phi_H}  \big \| c_{\underline{h}}({\bf I}', a(x)) \big\|_{L^2(X,\mu)}\\ & \le \frac{1}{|\Phi_H|}\sum_{\underline{h}\in \Phi_H}C \interleave \Big(\prod_{\ep\in \{0,1\}^k}T^{c_j(h_1\ep_1+\cdots+h_k\ep_k)}f_j\Big)\interleave _d\\
  &= C\cdot \frac{1}{H_1}\sum_{h_1=1}^{H_1}\ldots \frac{1}{H_k}\sum_{h_k=1}^{H_k} \HK\Big(\prod_{\ep\in \{0,1\}^k}T^{c_j(h_1\ep_1+\cdots+h_k\ep_k)}f_j\Big)\HK_d.
\end{split}
\end{equation}
Now we have
{\small \begin{equation*}
\begin{split}
 &\limsup_{H_k\to\infty} \frac{1}{H_k}\sum_{h_k=1}^{H_k} \HK\Big(\prod_{\ep\in \{0,1\}^{k}}T^{c_j(h_1\ep_1+\cdots+h_k\ep_k)}f_j\Big)\HK_d\\
\le &  \limsup_{H_k\to\infty} \Big(\frac{1}{H_k}\sum_{h_k=1}^{H_k} \HK\Big(\prod_{\ep\in \{0,1\}^{k}}T^{c_j(h_1\ep_1+\cdots+h_k\ep_k)}f_j\Big)\HK_d^{2^d}\Big)^{1/2^d}\\
 = & \limsup_{H_k\to\infty} \Big(\frac{1}{H_k}\sum_{h_k=1}^{H_k} \HK(\prod_{\ep\in \{0,1\}^{k-1}}T^{c_j(h_1\ep_1+\cdots+h_{k-1}\ep_{k-1})}f_j)T^{c_jh_k}\cdot\\
  &\quad \quad \quad \quad \quad \quad \quad (\prod_{\ep\in \{0,1\}^{k-1}}T^{c_j(h_1\ep_1+\cdots+h_{k-1}\ep_{k-1})}f_j)\HK_d^{2^d}\Big)^{1/2^d} \\
 \le & \limsup_{H_k\to\infty}|c_j|^{\frac {1}{2^d}} \Big(\frac{1}{|c_j|H_k}\sum_{l=1}^{|c_j|H_k} \HK(\prod_{\ep\in \{0,1\}^{k-1}}T^{c_j(h_1\ep_1+\cdots+h_{k-1}\ep_{k-1})}f_j)T^{l}\cdot\\
 &\quad \quad \quad \quad \quad \quad \quad(\prod_{\ep\in \{0,1\}^{k-1}}T^{c_j(h_1\ep_1+\cdots+h_{k-1}\ep_{k-1})}f_j)\HK_d^{2^d}\Big)^{\frac{1}{2^d}}\\
 = & |c_j|^{1/2^d} \HK\Big(\prod_{\ep\in \{0,1\}^{k-1}}T^{c_j(h_1\ep_1+\cdots+h_{k-1}\ep_{k-1})}f_j\Big)\HK_{d+1}^2,
\end{split}
\end{equation*}}
where the inequality holds by using the inequality $(\frac{1}{N}\sum_{n=1}^N B_n^{k_1})^{1/k_1}\le (\frac{1}{N}\sum_{n=1}^N B_n^{k_2})^{1/k_2}$, for all $B_1,\ldots, B_N>0$ and $k_1\le k_2\in \N$, the last equality is by Lemma \ref{lemmaE1}.

Similarly, we have
\begin{equation*}
  \begin{split}
 &\limsup_{H_{k-1}\to\infty} \frac{1}{H_{k-1}}\sum_{h_{k-1}=1}^{H_{k-1}} \limsup_{H_k\to\infty} \frac{1}{H_k}\sum_{h_k=1}^{H_k} \HK\Big(\prod_{\ep\in \{0,1\}^{k}}T^{c_j(h_1\ep_1+\cdots+h_k\ep_k)}f_j\Big)\HK_d\\
 &\quad \le |c_j|^{1/2^d} \limsup_{H_{k-1}\to\infty} \frac{1}{H_{k-1}}\sum_{h_{k-1}=1}^{H_{k-1}} \HK\Big(\prod_{\ep\in \{0,1\}^{k-1}}T^{c_j(h_1\ep_1+\cdots+h_{k-1}\ep_{k-1})}f_j\Big)\HK_{d+1}^2\\
 &\quad \le |c_j|^{1/2^d} \limsup_{H_{k-1}\to\infty} \left(\Big(\frac{1}{H_{k-1}}\sum_{h_{k-1}=1}^{H_{k-1}} \HK\Big(\prod_{\ep\in \{0,1\}^{k-1}}T^{c_j(h_1\ep_1+\cdots + h_{k-1}\ep_{k-1})}f_j\Big)\HK_{d+1}^{2^{d+1}}\Big)^{1/2^{d+1}}\right)^2\\
 &\quad \le |c_j|^{1/2^{d}} \Big (|c_j|^{1/2^{d+1}}\HK\Big(\prod_{\ep\in \{0,1\}^{k-2}}T^{c_j(h_1\ep_1+\cdots+h_{k-2}\ep_{k-2})}f_j\Big)\HK_{d+2}^{2}\Big)^2\\
 &\quad =|c_j|^{1/2^{d-1}}  \HK\Big(\prod_{\ep\in \{0,1\}^{k-2}}T^{c_j(h_1\ep_1+\cdots+h_{k-2}\ep_{k-2})}f_j\Big)\HK_{d+2}^{2^2}.
\end{split}
\end{equation*}
Repeating this progress several times, we have
\begin{equation*}
  \limsup_{H_{1}\to\infty} \frac{1}{H_{1}}\sum_{h_{1}=1}^{H_{1}}\cdots \limsup_{H_k\to\infty} \frac{1}{H_k}\sum_{h_k=1}^{H_k} \HK\Big(\prod_{\ep\in \{0,1\}^{k}}T^{a_j(h_1\ep_1+\cdots+h_k\ep_k)}f_j\Big)\HK_d
 \le |c_j|^{\frac{1}{2^{d-k+1}}} \HK f_j\HK_{d+k}^{2^k}.
\end{equation*}
Combining with \eqref{app7}, we have
\begin{equation}\label{app8}
\begin{split}
  &\quad \limsup_{H_1\to\infty}\cdots \limsup_{H_k\to\infty} \big \| \frac{1}{|\Phi_H|}\sum_{\underline{h}\in \Phi_H}c_{\underline{h}}({\bf I}', a(x)) \big\|_{L^2(X,\mu)}\\
  &\le \limsup_{H_{1}\to\infty} \frac{1}{H_{1}}\sum_{h_{1}=1}^{H_{1}}\cdots \limsup_{H_k\to\infty} \frac{1}{H_k}\sum_{h_k=1}^{H_k} C \HK\Big(\prod_{\ep\in \{0,1\}^{k}}T^{c_j(h_1\ep_1+\cdots+h_k\ep_k)}f_j\Big)\HK_d\\ &\le C|c_j|^{\frac{1}{2^{d-k+1}}} \cdot \HK f_j\HK_{d+k}^{2^k}.
\end{split}
\end{equation}
By Remark \ref{rem-A1},
$$\HK a\HK_{{\bf I}', k}= \left(\lim_{H_{k}\to\infty} \frac{1}{H_{k}}\sum_{h_{k}=1}^{H_{k}}\cdots \lim_{H_1\to\infty} \frac{1}{H_1}\sum_{h_1=1}^{H_1}c_{\underline{h}}({\bf I}', a)\right)^{1/2^k}.$$
Thus by H\"{o}lder inequality and by \eqref{app8}, we get
\begin{equation*}\label{}
\begin{split}
  &\quad \left\| \HK a\HK_{{\bf I}', k}\right\|^{2^k}_{L^2(X,\mu)}= \left(\int_X \HK a\HK_{{\bf I}', k}^2 d\mu \right)^{2^{k-1}}\le \int_X \HK a\HK_{{\bf I}', k}^{2^{k} }d\mu\\
  &= \int_X \left(\lim_{H_{k}\to\infty} \frac{1}{H_{k}}\sum_{h_{k}=1}^{H_{k}}\cdots \lim_{H_1\to\infty} \frac{1}{H_1}\sum_{h_1=1}^{H_1}c_{\underline{h}}({\bf I}', a)\right) d\mu \\
  &= \lim_{H_1\to\infty}\ldots \lim_{H_k\to\infty} \int_X \left(\frac{1}{H_{1}}\sum_{h_{1}=1}^{H_{1}}\cdots \frac{1}{H_k}\sum_{h_k=1}^{H_k}c_{\underline{h}}({\bf I}', a)\right) d\mu
  \\
  &\le
  \limsup_{H_1\to\infty}\cdots \limsup_{H_k\to\infty} \big \| \frac{1}{|\Phi_H|}\sum_{\underline{h}\in \Phi_H}c_{\underline{h}}({\bf I}', a(x)) \big\|_{L^2(X,\mu)}\\ &\overset{\eqref{app8}}\le C|c_j|^{\frac{1}{2^{d-k+1}}} \cdot \HK f_j\HK_{d+k}^{2^k}.
\end{split}
\end{equation*}
Hence we have
\begin{equation}\label{app9}
  \Big\|\HK a(x)\HK_{{\bf I}', k} \Big\|_{L^2(X,\mu)}\le C^{\frac{1}{2^k}} |c_j|^{\frac{1}{2^{d+1}}}\cdot \HK f_j\HK_{d+k}=0.
\end{equation}
Thus for $\mu$-a.e. $x\in X$, $\HK a(x)\HK_{{\bf I}', k}=0$. By Theorem \ref{thm-HK09},
we have
$$\frac{1}{|I_N'|}\sum_{n\in I_N'}\phi_n(x)a_n(x)\to 0, N\to\infty, \ \mu\text{-almost everywhere},$$
and \eqref{app3} is proved. This completes the proof of Step 1.

\medskip
\noindent {\bf Step 2.} {\em For all all non-zero distinct $a_1,\ldots,a_d\in \Z$ and for all $f_1,\ldots, f_d\in L^\infty(X,\mu)$, the averages
$$\frac{1}{N-M}\sum_{n=M}^{N-1} \phi_n(x)f_1(T^{a_1n}x)f_2(T^{a_2n}x)\cdots f_d(T^{a_d n}x)$$
converge in $L^2(X,\mu)$ as $N-M \to\infty$.}
\medskip

By Step 1, for all non-zero distinct $a_1,\ldots,a_d\in \Z$ and for all $f_1,\ldots, f_d\in L^\infty(X,\mu)$, if $\HK f_j\HK_{k+d}=0$ for some $j\in \{1,\ldots, d\}$, then the averages
\begin{equation*}
  \frac{1}{N-M}\sum_{n=M}^{N-1} \phi_n(x)f_1(T^{a_1n}x)f_2(T^{a_2n}x)\cdots f_d(T^{a_d n}x)
\end{equation*}
converge to 0 in $L^2(X,\mu)$.
Thus we can restrict to the case where $f_j, 1\le j\le d$, is measurable with respect to $\ZZ_m$ for some $m\in \N$.

By Remark \ref{rem-CFH}-(2), for every $\ep>0$ there exists $\widetilde{f}_j\in L^\infty(X,\mu)$ such that $\widetilde{f}_j$ is $\ZZ_m$-measurable and $\|f_j-\widetilde{f}_j\|_{L^1(X,\mu)}<\ep, 1\le j\le d$, and for $\mu$-almost all $x\in X$ the sequence
$(\widetilde{f}_j(T^nx))_{n\in \Z}$ is an $m$-step nilsequence.
%By \cite[Proposition 3.14]{Leibman05-Isr} or \cite[Theorem 14.15]{HK18}, for $\mu$-almost all $x\in X$ the sequence $(\widetilde{g}_j(S^{p_j(n)}x))_{n\in \Z}$ is a $(k\deg{p_j})$-step nilsequence for $1\le j\le d$. Let $X_1\in \X$ be a full measure subset of $X$ for which this property holds.
Thus $(\Psi_n(x))_{n\in \Z}=\Big(\prod_{j=1}^d\widetilde{f}_j(T^{a_jn} x)\Big)_{n\in \Z}$ is a nilsequence.
Therefore, it suffices to show the existence in $L^2(X,\mu)$ of the limit
\begin{equation*}\label{}
  \lim_{N\to\infty} \frac{1}{N}\sum_{n=0}^{N-1}\phi_n(x)\Psi_n(x).
\end{equation*}
This follows from Theorem \ref{thm-ParryLeibman} since both are nilsequences. The proof of Step 2 is complete.

\medskip
\noindent {\bf Step 3.} {\em For all non-zero distinct $a_1,\ldots,a_d\in \Z$ and essentially distinct non-linear integral polynomials $p_1,\ldots, p_s$ for all $f_1,\ldots, f_d, g_1,\ldots, g_s\in L^\infty(X,\mu)$, there is some $m\in \N$ such that if $\HK g_j\HK_{m}=0$ for some $j\in \{1,\ldots, s\}$, then the averages
\begin{equation}\label{ab1}
\frac{1}{N-M}\sum_{n=M}^{N-1} \phi_n(x)f_1(T^{a_1n}x)\cdots f_d(T^{a_d n}x)\cdot g_1(T^{p_1(n)}x)\cdots g_s(T^{p_s(n)}x)\rightarrow 0
\end{equation}
in $L^2(X,\mu)$ as $N-M \to\infty$.}
\medskip

In order to  prove \eqref{ab1}, it suffices to show that every sequence of intervals ${\bf I}=(I_N)_{N\in \N}$ whose lengths $|I_N|$ tend to infinity, admits a subsequence ${\bf I}'=(I_N')_{N\in \N}$ such that
\begin{equation}\label{ab2}
  \frac{1}{|I_N'|}\sum_{n\in I_N'} \phi_n(x)f_1(T^{a_1n}x)\cdots f_d(T^{a_d n}x)\cdot g_1(T^{p_1(n)}x)\cdots g_s(T^{p_s(n)}x)\rightarrow 0,\ N\to\infty
\end{equation}
in $L^2(X,\mu)$.

For $x\in X$, we write $b(x)=(b_n(x))_{n\in \N}$ for the sequence defined by
\begin{equation*}
  b_n(x)= f_1(T^{a_1n}x)\cdots f_d(T^{a_d n}x)\cdot g_1(T^{p_1(n)}x)\cdots g_s(T^{p_s(n)}x) .
\end{equation*}
For every $\underline{h}=(h_1,\ldots, h_k)\in \Z^k$ , we study the averages
\begin{equation}\label{ab3}
\frac{1}{|I_N|}\sum_{n\in I_N} \prod_{\ep\in \{0,1\}^k}b_{n+h_1\ep_1+\cdots +h_k\ep_k}(x).
\end{equation}

Let $D=\{\underline{h}=(h_1,\ldots, h_k)\in \Z^k :$ $p_j(n+h_1\ep_1+\cdots +h_k\ep_k), 1\le j\le s, \ep\in \{0,1\}^k$ are essentially distinct integral polynomials $ \}\subset \Z^k$. By \cite[Lemma 6.1]{HSY22-1}, if $p_1, p_2$ are two essentially distinct integral polynomials of degree $\ge 2$, then there is a positive constant $L$ depending on $p_1,p_2$ such that whenever $k_1,k_2\in \mathbb{Z}$ with $|k_1-k_2|\ge L$, the integral polynomials $p_{i_1}(n+k_{j_1})-p_{i_1}(k_{j_1})$ and $p_{i_2}(n+k_{j_2})-p_{i_2}(k_{j_2})$ in one variable $n$ are essentially distinct for each $(i_1,j_1)\neq (i_2,j_2)\in \{1,2\}\times \{1,2\}$. By this fact, it is easy to see that $D$ has zero density in $\Z^k$.

For all $\underline{h}\in D$, since $p_j(n+h_1\ep_1+\cdots +h_k\ep_k), 1\le j\le s, \ep\in \{0,1\}^k$ are essentially distinct integral polynomials with degree $\ge 2$, by Theorem \ref{thm-Leibman} there is some integer $m$ (depending only on the weight of polynomials $\{p_1,\ldots, p_s\}$) such that
\eqref{ab3} converges to $0$ in $L^2(X,\mu)$.

Similar to the proof of Step 1, there exists a subsequence ${\bf I}'=(I_N')_{N\in \N}$ such that for $\mu$-almost every $x\in X$ the limit
\begin{equation}\label{}
  c_{\underline{h}}({\bf I}', b(x))=\lim_{N\to\infty}\frac{1}{|I_N'|} \sum_{n\in I_N} \prod_{\ep\in \{0,1\}^k}b_{n+h_1\ep_1+\cdots +h_k\ep_k}(x)=0
\end{equation}
for every choice of $\underline{h}=(h_1,\ldots, h_k)\in D$. This means that, for $\mu$-almost every $x\in X$, the sequence of intervals ${\bf I}'$ is $k$-adapted to the sequence $b(x)=(b_n(x))_{n\in \N}$ and $c_{\underline{h}}({\bf I}', b(x))=0$ for every $\underline{h}=(h_1,\ldots, h_k)\in D$.
Therefore for $\mu$-a.e. $x\in X$
$$\HK b(x)\HK_{{\bf I}', k}= \left(\lim_{H\to\infty}\frac{1}{H^k}\sum_{\underline{h}\in [0,H-1]^k}c_{\underline{h}}({\bf I}', b(x))\right)^{1/2^k}=0,$$
since $D$ has zero density in $\Z^k$.
By Theorem \ref{thm-HK09},
we have
$$\frac{1}{|I_N'|}\sum_{n\in I_N'}\phi_n(x)b_n(x)\to 0, N\to\infty, \ \mu\text{-almost everywhere},$$
and \eqref{ab2} is proved. This completes the proof of Step 3.

\medskip
\noindent {\bf Step 4.} {\em For all non-zero distinct $a_1,\ldots,a_d\in \Z$ and essentially distinct non-linear integral polynomials $p_1,\ldots, p_s$ for all $f_1,\ldots, f_d, g_1,\ldots, g_s\in L^\infty(X,\mu)$, the averages
\begin{equation}\label{}
\frac{1}{N-M}\sum_{n=M}^{N-1} \phi_n(x)f_1(T^{a_1n}x)\cdots f_d(T^{a_d n}x)\cdot g_1(T^{p_1(n)}x)\cdots g_s(T^{p_s(n)}x)
\end{equation}
converge in $L^2(X,\mu)$ as $N-M \to\infty$.}
\medskip

By Step 3, for all non-zero distinct $a_1,\ldots,a_d\in \Z$ and essentially distinct non-linear integral polynomials $p_1,\ldots, p_s$ for all $f_1,\ldots, f_d, g_1,\ldots, g_s\in L^\infty(X,\mu)$, there is some $m\in \N$ such that if $\HK g_j\HK_{m}=0$ for some $j\in \{1,\ldots, s\}$, then the averages
\begin{equation}\label{}
\frac{1}{N-M}\sum_{n=M}^{N-1} \phi_n(x)f_1(T^{a_1n}x)\cdots f_d(T^{a_d n}x)\cdot g_1(T^{p_1(n)}x)\cdots g_s(T^{p_s(n)}x)
\end{equation}
converge to 0 in $L^2(X,\mu)$.
Thus we can restrict to the case where $g_j, 1\le j\le s$, is measurable with respect to $\ZZ_m$ for some $m\in \N$.

By Remark \ref{rem-CFH}-(2), for every $\ep>0$ there exists $\widetilde{g}_j\in L^\infty(X,\mu)$ such that $\widetilde{g}_j$ is $\ZZ_m$-measurable and $\|g_j-\widetilde{g}_j\|_{L^1(X,\mu)}<\ep, 1\le j\le s$, and for $\mu$-almost all $x\in X$ the sequence
$(\widetilde{g}_j(T^nx))_{n\in \Z}$ is an $m$-step nilsequence.
By \cite[Proposition 3.14]{Leibman05-Isr} or \cite[Theorem 14.15]{HK18}, for $\mu$-almost all $x\in X$ the sequence $(\widetilde{g}_j(S^{p_j(n)}x))_{n\in \Z}$ is a $(m\deg{p_j})$-step nilsequence for $1\le j\le s$. Let $X_1\in \X$ be a full measure subset of $X$ for which this property holds.
Thus $(\Psi_n(x))_{n\in \Z}=\Big(\prod_{j=1}^s\widetilde{g}_j(T^{a_jn} x)\Big)_{n\in \Z}$ is a nilsequence.
Therefore, it suffices to show the existence in $L^2(X,\mu)$ of the limit
\begin{equation*}\label{}
  \lim_{N\to\infty} \frac{1}{N}\sum_{n=0}^{N-1}\phi_n(x)\Psi_n(x)f_1(T^{a_1n}x)\cdots f_d(T^{a_d n}x)
\end{equation*}
This follows from Step 2 since $\phi_n(x)\Psi_n(x)$ is a nilsequence. The whole proof  is complete.
\end{proof}

\section{A lemma about topological models}\label{appendix2}

The proof of the following result was outlined in \cite{F77}. So for completeness, we give a proof here, following the proof of \cite[Theorem 5.15]{F} and \cite[Theorem 2.15]{Glasner}.
Theorem \ref{top-model2} is a little stronger than Theorem \ref{top-model}.

\begin{thm}\label{top-model2}
Let $(X,\X,\mu,T)$ be a m.p.s. and let $\{f_k\}_{k\in \N}\subseteq L^\infty(X,\mu)$. Then there is a t.d.s. $(X',T')$, a $T'$-invariant measure $\mu'$ and an isomorphism $\psi: (X,\X,\mu,T)\rightarrow (X',\B(X'),\mu',T')$
such that for all $k\in \N$, $f'_k= f_k\circ \psi^{-1} \pmod{\mu'}$, where $f_k'\in C(X')$.
\end{thm}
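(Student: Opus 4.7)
My approach is to realize the given functions as continuous functions on the Gelfand spectrum of a carefully chosen separable $C^*$-subalgebra of $L^\infty(X,\mu)$, and then verify that this spectrum, with the induced action and measure, is actually isomorphic (not just a factor) to $(X,\X,\mu,T)$. Without loss of generality I assume $\|f_k\|_\infty\le 1$ for every $k$.

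First I would enlarge the given family. Since $(X,\X,\mu)$ is a Lebesgue space, $L^2(X,\mu)$ is separable, so one may fix a countable family $\{g_k\}_{k\in \N}\subseteq L^\infty(X,\mu)$, containing $\{f_k\}_{k\in \N}$ and the constants, bounded by $1$ in the sup norm, whose members (up to null sets) \emph{generate} the $\sigma$-algebra $\X$. Let $\mathcal{A}_0$ be the countable $\Q[i]$-algebra generated by $\{g_k\circ T^n : k\in\N, n\in\Z\}$ together with complex conjugates, and let $\mathcal{A}$ be the closure of $\mathcal{A}_0$ in $L^\infty(X,\mu)$ under the essential supremum norm. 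Then $\mathcal{A}$ is a separable unital commutative $C^*$-subalgebra of $L^\infty(X,\mu)$, invariant under the isometric $*$-automorphism induced by $T$.

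Next I would apply Gelfand--Naimark: there is a compact metric space $X'$ and a $C^*$-isomorphism $\Gamma:\mathcal{A}\to C(X')$. The $T$-automorphism of $\mathcal{A}$ transports to a $C^*$-automorphism of $C(X')$, hence to a homeomorphism $T':X'\to X'$ satisfying $\Gamma(Tf)=\Gamma(f)\circ T'$ for every $f\in \mathcal{A}$. The positive linear functional $f\mapsto \int_X f\,d\mu$ on $\mathcal{A}$ corresponds via $\Gamma$ to a positive unital functional on $C(X')$, which by Riesz representation gives a Borel probability measure $\mu'$ on $X'$; the $T$-invariance of $\mu$ yields the $T'$-invariance of $\mu'$. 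For each $k$ I set $f'_k=\Gamma(f_k)\in C(X')$.

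For the isomorphism, I would use the standard point realization. Each $x\in X$ (outside a fixed null set where representatives need to be pinned down) gives an evaluation character of $\mathcal{A}_0$; extending by continuity produces a measurable map $\psi:X\to X'$ with $f'_k(\psi(x))=f_k(x)$ a.e., $\Gamma(h)\circ \psi=h$ a.e.\ for all $h\in\mathcal{A}$, and $\psi\circ T=T'\circ\psi$ a.e. By construction $\psi_*\mu=\mu'$. Because the $\{g_k\}$ generate $\X$ modulo null sets, the pullback $\sigma$-algebra $\psi^{-1}(\B(X'))$ equals $\X$ modulo $\mu$-null sets; since $(X,\X,\mu)$ is a Lebesgue space, this forces $\psi$ to be an isomorphism of measure spaces, which then intertwines $T$ and $T'$ and carries $\mu$ to $\mu'$.

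The main obstacle is the last step: it is essential that the countably generated $C^*$-algebra $\mathcal{A}$ be rich enough that its measurable pullback recovers all of $\X$ modulo null sets; otherwise $\psi$ would only exhibit a topological model of a factor of $(X,\X,\mu,T)$, not of $(X,\X,\mu,T)$ itself. This is precisely why I must first enlarge $\{f_k\}$ to a countable separating family $\{g_k\}$, and why the Lebesgue-space assumption (which guarantees that such an enlargement suffices to invert $\psi$ modulo null sets) is used in a crucial way.
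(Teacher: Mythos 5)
Your proposal is correct, but it takes a genuinely different route from the paper. The paper's proof is combinatorial/symbolic: it chooses Borel representatives $g_k$ of the $f_k$, pulls back a countable base of each target interval to get sets $B_i^{(k)}$, embeds these in a countable family $\{A_n\}\subseteq\X$ that is dense in the measure algebra, separates points of $X$, and is permuted by $T$, and then codes $x\mapsto (1_{A_n}(x))_n$ into $X'=\{0,1\}^{\N}$; the measure $\mu'$ is obtained as a weak$^*$ limit of finite-dimensional marginals, $T'$ is the shift of coordinates induced by the permutation, and the Lusin--Souslin theorem is invoked to see that $\psi$ is a Borel isomorphism onto a Borel set of full measure, with continuity of $f_k'=g_k\circ\psi^{-1}$ read off from the fact that $(f_k')^{-1}(I_i^{(k)})=\psi(B_i^{(k)})$ is one of the basic clopen sets $A_n'$. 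Your route instead passes through Gelfand duality for a separable, $T$-invariant, unital commutative $C^*$-subalgebra of $L^\infty(X,\mu)$ containing the $f_k$ and a countable generating family, together with the standard point-realization of characters and the Lebesgue-space fact that a measure-preserving map whose pullback $\sigma$-algebra is all of $\X$ modulo null sets is an isomorphism modulo null sets. Both arguments hinge on the same two requirements --- the chosen countable structure must be $T$-invariant and must generate $\X$ modulo null sets while containing enough data to render the $f_k$ continuous --- and you correctly identify the second as the crux. The paper's construction buys a concrete zero-dimensional model and literal (not just a.e.) injectivity of $\psi$ at the cost of the Lusin--Souslin input; yours is shorter to state and yields a model on which the $f_k'$ are canonical (the Gelfand transforms), at the cost of leaning on the abstract theory of Lebesgue spaces and on the routine but fussy choice of representatives needed to make evaluation characters well defined off a single null set. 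Do make sure, when writing this up in full, to record explicitly that the countably many a.e. identities (algebra operations, the bound $|\tilde h(x)|\le\|h\|_{L^\infty}$, and equivariance of representatives under $T$) are arranged off one fixed null set before defining $\psi$; that is where all the measure-theoretic content of the point realization sits.
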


\begin{proof}
Without loss of generality, we assume that $f_k$ is real-valued function for all $k\in \N$. For each $k\in \N$, since $f_k\in L^\infty(X,\mu)$, there is an interval $[a_k,b_k]\subseteq \R$ and a measurable function $g_k: X\rightarrow [a_k,b_k]$ such that $f_k(x)=g_k(x)$ for $\mu$-a.e. $x\in X$. Let $\{I^{(k)}_i\}_{i\in \N}$ be a countable base of $[a_k,b_k]$. Let $B^{(k)}_i=g_k^{-1}(I^{(k)}_i), i\in \N$. Then $\bigcup_{k\in \N}\{B^{(k)}_i\}_{i\in \N}$ is a countable subset of $\X$.

Now choose a sequence $\{A_n\}_{n\in \N}$ of distinct elements of $\X$ such that:
 \begin{enumerate}
   \item The sequence $\{A_n\}_{n\in \N}$ is dense in the measure algebra $(\X,\mu)$, and separate points of $X$ (i.e. for every $x\neq x'$ there is an $A_n$ with $x\in A_n$ but $x'\not\in A_n$);
   \item $\{A_n\}_{n\in \N}$ is invariant under $T$, i.e. there is a permutation $\theta: \N\rightarrow \N$ such that $T^{-1}A_n=A_{\theta(n)}$;
   \item $\bigcup_{k\in \N}\{B^{(k)}_i\}_{i\in \N} \subseteq \{A_n\}_{n\in \N}$.
 \end{enumerate}

Let $\A$ be the countable algebra generated by $\{A_n\}_{n\in \N}$. Now put $X'=\{0,1\}^\N$, $\X'=\B(X')$ and $A_n'=\{\xi\in X': \xi(n)=\xi_n=1\}$. For every $N\in \N$, let $\mu_N$ be the probability measure defined on the finite space $\{0,1\}^N$ by
$$\mu_N(A'_{n_1}\cap A'_{n_2}\cap \cdots \cap A'_{n_s})=\mu(A_{n_1}\cap A_{n-2}\cap \cdots \cap A_{n_s}), 1\le n_1<n_2<\ldots <n_s\le N.$$
Let $\mu'_N$ be any extension of $\mu_N$ to a Borel probability measure on $X'$. Let $\mu_{N_j}'\to \mu', j\to\infty$ be a weak$^*$ convergence subsequence. The measure $\mu'$ satisfies the equalities above for all finite sequences $n_1<n_2<\ldots <n_s$, i.e.
$$\mu'(A'_{n_1}\cap A'_{n_2}\cap \cdots \cap A'_{n_s})=\mu(A_{n_1}\cap A_{n-2}\cap \cdots \cap A_{n_s}), \ \forall 1\le n_1<n_2<\ldots <n_s.$$
The map $\Psi(A_n')=A_n, \forall n\in\N$ can be extended to an isomorphism of measure algebra $\Psi: (\A',\mu')\rightarrow (\A,\mu)$, where $\A'$ is the sub-algebra of $\X'=\B(X')$ determined by $\{A_n'\}_{n\in \N}$. The permutation $\theta$ defines a homeomorphism $T': X'\rightarrow X', \xi\mapsto T'\xi$,
where $$(T'\xi)_n=\xi_{\theta(n)},\ \forall n\in \N .$$
Since $T^{-1}A_n=A_{\theta(n)}$, by the definition of $T'$ it follows that $(T')^{-1}A'_n=A'_{\theta(n)}$. Thus we have that $T': X'\rightarrow X'$ is measure-preserving, and $(X',\X',\mu',T')$ is a m.p.s.
%and that $\Psi: (\X',\mu', (T')^{-1})\rightarrow (\X,\mu, T^{-1})$ is an isomorphism of measure algebra systems.

Next define a map
$$\psi: X\rightarrow X', \ \big(\psi(x)\big)_n=1_{A_n}(x).$$
Since the sequence $\{A_n\}_{n\in \N}$ separates points of $X$, $\psi$ is a Borel 1-1 map of $X$ into $X'$ with $\Psi=\psi^{-1}$. Let $X_0'=\psi(X)$. By Lusin-Souslin Theorem \cite[Corollary 15.2]{Kechris}, $X_0'$
is a Borel subset of $X'$ and $\psi$ is a Borel isomorphism of $X$ with $X_0'$. Note that $\mu(X)=\mu'(X_0')=1$ and $(\psi\circ T)(x)=(T'\circ \psi)(x), \forall x\in X$. Thus $\psi: (X,\X,\mu,T)\rightarrow (X',\X',\mu',T')$ is an isomorphism.

Now for each $k\in \N$, $f'_k=g_k\circ\psi^{-1}: X'\rightarrow [a_k, b_k]$, we have that for each $i\in \N$,
$$(f_k')^{-1}(I^{(k)}_i)=(g_k\circ\psi^{-1})^{-1}(I^{(k)}_i)=\psi(g_k^{-1}(I^{(k)}_i))=\psi(B^{(k)}_i)\in \{A_n'\}_{n\in \N}$$
is an open subset of $X'$. Since $\{I^{(k)}_i\}_{i\in \N}$ be a countable base of $[a_k,b_k]$, $f_k'$ is continuous. Note that each $k\in \N$, $f_k(x)=g_k(x)$ for $\mu$-a.e. $x\in X$, and it follows that $f'_k(x')= f_k\circ \psi^{-1}(x')$ for $ \mu'$-a.e. $x'\in X'$.
The proof is complete.
\end{proof}

%%%%%%%%%%%%%%%%%%%%%%%%%%%%%%%%%%%%%%%%%%%%%%%%%%%%%%%%

\end{document}